\setlist{parsep=0pt} 
\setlist[itemize,enumerate]{nolistsep,itemsep=3pt,topsep=5pt} 
\setlist{leftmargin=5mm}
\definecolor{ThmColor}{rgb}{0.93,0.93,0.995}
\definecolor{DefColor}{rgb}{0.92,0.96,0.985}
\definecolor{RemColor}{rgb}{0.96,0.93,0.96}
\definecolor{ExoColor}{rgb}{0.905,0.995,0.905}
\newcommand{\CM}{cha\^ine de Markov}
\newcommand{\CCM}{Cha\^ine de Markov}
\newcommand{\CMs}{cha\^ines de Markov}
\newcommand{\reaches}{\rightsquigarrow}
\newcommand{\Tc}{T_{\text{c}}}
\newcommand{\myquote}[1]{\guillemotleft\;#1\;\guillemotright}
\crefname{exercise}{exercise}{exercises}
\tikzset{myxshift/.style = {shift = {(#1, 0)}}}
\tikzset{myyshift/.style = {shift = {(0, #1)}}}
\newcommand{\pos}[2]{
   \def\posx{{#1}}
   \def\posy{{#2}}
}
\newcommand{\urntikz}
  {
  \begin{scope}[myxshift = \posx]
  \begin{scope}[myyshift = \posy]
  \draw[thick,-]    
    (-1.1,1.0) -- (-1.1,0.2)
    (-1.1,0.2) arc (180:270:0.2)
    (-0.9,0.0) -- (-0.3,0.0)
    (-0.3,0.0) arc (-90:0:0.2)
    (-0.1,0.2) -- (-0.1,1.0)
  ;
  \end{scope}
  \end{scope}
  }
\DeclareMathOperator{\pgcd}{pgcd}       
\newcommand{\vone}{\mathbf{1}}
\newcommand{\myvrule}[3]{\vrule height #1 depth #2 width #3}
\begin{document}

\pagestyle{empty}
\newgeometry{margin=1in}

\hypersetup{pageanchor=false}

\thispagestyle{empty}

\vspace*{1cm}
\begin{center}

{\Huge\bfseries\scshape
Processus al\'eatoires et applications \\[1mm]
 -- Algorithmes MCMC et vitesse de convergence \\[1mm]
}

\vspace*{12mm}
{\large Nils Berglund}\\[2mm]
{\large Institut Denis Poisson -- UMR 7013}\\[2mm]
{\large Universit\'e d'Orl\'eans, Universit\'e de Tours, CNRS}

\vspace*{12mm}
{\Large Notes de cours}\\[4mm]
\vspace*{12mm}


\vspace*{27mm}
--- Version du 9 d\'ecembre 2024 ---\\[2mm]

\end{center}
\hypersetup{pageanchor=true}

\cleardoublepage

\pagestyle{fancy}
\fancyhead[RO,LE]{\thepage}
\fancyhead[LO]{\nouppercase{\rightmark}}
\fancyhead[RE]{\nouppercase{\leftmark}}
\cfoot{}
\setcounter{page}{1}
\pagenumbering{roman}
\restoregeometry

\tableofcontents

\cleardoublepage

\setcounter{page}{1}
\pagenumbering{arabic}


\part[Cha\^ines de Markov \`a espace d\'enombrable]{Cha\^ines de Markov\\ \`a espace d\'enombrable}
\label{part:cm_denombrable} 


\chapter{Exemples de cha\^ines de Markov}
\label{chap:cm_exemple} 


\section{Textes al\'eatoires}
\label{sec:ex_textes} 

Les \CMs\ ont \'et\'e introduites au d\'ebut du vingti\`eme si\`ecle par 
le math\'ematicien russe Andrey Markov, dans le but d'\'etudier des suites 
de variables al\'eatoires non ind\'ependantes. L'une des premi\`ere applications 
\'etait l'analyse de la distribution de voyelles dans des romans. 

Dans un \'etat d'esprit similaire, voici trois \myquote{textes}\ g\'en\'er\'es 
de mani\`ere al\'eatoire~:

\begin{enumerate}
\item[A.] 
\begin{mdframed}[innerleftmargin=7mm,innertopmargin=10pt,innerbottommargin=10pt]
{\sf
YxUV,luUqHCLvE?,MRiKaoiWjyhg nEYKrMFD!rUFUy.qvW;e:FflN.udbBdo!, \\
ZpGwTEOFcA;;RrSMvPjA'Xtn.vP?JNZA;xWP, Cm?;i'MzLqVsAnlqHyk,ghDT  \\
:PwSwrnJojRhVjSe?dFkoVRN!MTfiFeemBXITdj m.h d'ea;Jkjx,XvHIBPfFT \\
s I'SLcSX;'X!S, ODjX.eMoLnQttneLnNE!qGRgCJ:BuYAauJXoOCCsQkLcyPO \\
MulKLRtSm;PNpFfp'PfgvIJNrUr t l aXtlA?;TPhPxU:,ZmVGr,,'DIjqZDBY \\
DrkPRiKDYRknDhivt;, LYXDuxNKpjegMvrtfz:JpNTDj'LFmHzXxotRM u.iya \\
UUrgZRcA QmCZffwsNWhddBUPAhJIFJvs.CkKFLJoXef;kCnXrv'uWNcpULYsnl \\
Kg OURmysAnxFjHawwsSpM H;PWPsMaFYLMFyvRWOjbdPlLQIaaspNZkuO'Ns.l \\
jEXO,lxQ'GS;n;H:DH:VWJN :t'JMTUVpKCkVZ'NyKJMGiIbQFXEgDEcWxMBiyo \\
ybRIWIAC deMJnnL;SBAZ?:.UuGnC:B.!lBUT,pT?tyHHLlCvN, mKZgwlMJOJd \\
HHobua;KU.;kADVM?jr'v.SCq:hZLR;lqkmLkhn:ajhBM,gKexDAro,HlczWTv \\
cFmNPt.MudUWPO, sTrWlJdgjoiJd.:d;CpJkJCW;FIRnpMGa;umFysOMAqQtmT \\
pPaYZKtOFYppeE.KFX?SuvcbaDrQ XECelD;cfoQKf?'jCTUaISS;fV:gqoWfSq \\
k:Tf!YuPBANtKhewiNg'ImOFs:UhcExmBjsAaMhBf UVP, 'dcFk;gxJMQGyXI; \\
nVwwfWxS:YXQMELEIObTJiilUYSlOsg.gCqlrN:nEU:irHM'nOLXWUbJLTU re' \\
kk vAwMgt'KgWSxwxqJe,z'OBCrnoIshSCDlZirla,rWNPkc?UgZm GOBX.QylY \\
jOtuF
}
\end{mdframed}

\item[B.]	
\begin{mdframed}[innerleftmargin=7mm,innertopmargin=10pt,innerbottommargin=10pt]
{\sf
nsunragetnetelpnlac.  pieln tJmends d e.imnqu  caa  aneezsconns re.tc oml d
 e  c, paeisfuaul irt ssna l df.ieulat a ese t hre edn ro  m eeel
slsplotasstp etuoMeiiseeaenemzeaeuqpeer  enuoco  sfehnnir p ts 'mpisu qrd
iraLp nFetesa,opQeey rieeaduset Mu\-uisecG il e m  ru daeiafasousfnircot i
eeedracev ever.nsn iaeulu!,mtel lpa rdbjdide  tolr'murunlr bteaaua
ieasilureseuavrmoce ntvqm qnurnaunsa.mraayVarinanr  eumsu cnponf ciuo
.pssre  elreeY snrrq aani psu oqoddaiaaomrssloe'avia,loei va
eroltrsurdeduuoe ffusir 'th'niIt has,slluoooe tee  ?eoxaea slsii i u
edtvsear e,Mesatnd o o rvdocaeagiua  apugiqn rclt  smtee.te, gceade etsn e
v in eag ent so  ra te,  oi seGndd  i eeet!dii e  ese nanu d sp ul afeen
aqelonens ssisaaoe cs     eectadegotuudlru  i  'c, uuuuts 'tt , dir
atermdmuciqedn  esovsioieieerxdroie mqso,es rrvteen,r dtei xcalrionuaae e
vtmplsz miuqa   u aboir br gmcdexptedn pEua't vm vnic eeren ereaa,eegeta u
rss nlmxomas ea nsbnt s,eEpeteae teiasbo cd ee tu em ue quee en, sd
eeneepeot 
}
\end{mdframed}

\item[C.]
\begin{mdframed}[innerleftmargin=7mm,innertopmargin=10pt,innerbottommargin=10pt]
{\sf
cesalu'act, bouleuivoie melarous die ndant leuvoiblue poit pesois
deuntaciroverchu llie e lle s r lerchar, laisueuayaissabes vet s cuetr i
as, rdetite se d'iretie, de.. nendoules, le pablur e d ! copomouns ppait
limmix a r aux urars laie Le r lercret ce c. n'are four nsirepapole pa vr
s, nte le efit. itesit, le faun e ju estatusuet usoin prcilaisanonnout ssss
l tosesace cole sientt, dent pontrtires. e, l mentoufssss chat Laneus c
Chontrouc Ce e. Et deses j'ecci uleus mmon s mauit paga lanse l cont
ciquner e c Cha s l'a Jes des s'erattrlunt es de sacouen erends. ve e quns
som'a aisajouraite eux lala pour ! a levionible plaint n ss, danetrc ponce
con du lez, l danoit, dirvecs'u ce ga vesai : chleme eesanl Pa chiontotes
anent fomberie vaud'untitez e esonsan t a ! bondesal'is Ilaies, vapa e !
Lers jestsiee celesu unallas, t. ces. ta ce aielironi mmmileue cecoupe et
dennt vanen A la ajole quieet, scemmu tomtemotit me aisontouimmet Le s
Prage ges peavoneuse ! blec douffomurrd ntis.. rur, ns ablain i pouilait
lertoipr ape. leus icoitth me e e, poiroia s. ! atuepout somise e la as
}
\end{mdframed}

\end{enumerate}

Il est clair qu'aucun de ces textes n'a de signification. Toutefois,
le texte B.\ semble moins arbitraire que le texte A., et C.\ para\^\i
t moins \'eloign\'e d'un texte fran\c cais que B. Il suffit pour cela
d'essayer de lire les textes \`a haute voix. 

Voici comment ces textes ont \'et\'e g\'en\'er\'es. Dans les trois cas, on
utilise le m\^eme alphabet de 60 lettres (les 26 minuscules et majuscules, 
quelques signes de ponctuation et l'espace). 

\begin{enumerate}
\item	Pour le premier texte, on a simplement tir\'e au hasard, de
mani\`ere ind\'ependante et avec la loi uniforme, des lettres de
l'alphabet. 

\item	Pour le second texte, on a tir\'e les lettres de mani\`ere
ind\'ependante, mais pas avec la loi uniforme. Les probabilit\'es des
diff\'erentes lettres correspondent aux fr\'equences de ces lettres dans
un texte de r\'ef\'erence fran\c cais (en l’occurrence, un extrait du {\sl
Colonel Chabert}\/ de Balzac). Les fr\'equences des diff\'erentes
lettres du texte al\'eatoire sont donc plus naturelles, par exemple la
lettre {\sf e} appara\^\i t plus fr\'equemment (dans $13\%$ des cas) que la
lettre {\sf z} ($0.2\%$). 

\item	Pour le dernier texte, enfin, les lettres n'ont pas \'et\'e
tir\'ees de mani\`ere ind\'ependante, mais d\'ependant de la lettre
pr\'ec\'edente. Dans le m\^eme texte de r\'ef\'erence que
pr\'e\-c\'edemment, on a d\'etermin\'e avec quelle fr\'equence la lettre
{\sf a} est suivie de {\sf a} (jamais), {\sf b} (dans $3\%$ des cas), et
ainsi de suite, et de m\^eme pour toutes les autres lettres. Ces
fr\'equences ont ensuite \'et\'e
choisies comme probabilit\'es de transition lors de la g\'en\'eration du
texte. 
\end{enumerate}

Ce proc\'ed\'e peut facilement \^etre am\'elior\'e, par exemple en faisant
d\'ependre chaque nouvelle lettre de plusieurs lettres pr\'ec\'edentes. 
Mais m\^eme avec une seule lettre pr\'ec\'edente, il est remarquable que 
les textes engendr\'es permettent assez facilement de reconna\^\i tre la
langue du texte de r\'ef\'erence, comme en t\'emoignent ces deux exemples:

\begin{enumerate}
\item[D.]
\begin{mdframed}[innerleftmargin=7mm,innertopmargin=10pt,innerbottommargin=10pt]
{\sf 
deser Eld s at heve tee opears s cof shan; os wikey coure tstheevons irads;
Uneer I tomul moove t nendoot Heilotetateloreagis his ud ang l ars thine
br, we tinond end cksile: hersest tear, Sove Whey tht in t ce tloour ld t
as my aruswend Ne t nere es alte s ubrk, t r s; penchike sowo
Spotoucthistey psushen, ron icoowe l Whese's oft Aneds t aneiksanging t
ungl o whommade bome, ghe; s, ne. torththilinen's, peny. d llloine's anets
but whsto a It hoo tspinds l nafr Aneve powit tof f I afatichif m as tres,
ime h but a wrove Les des wined orr; t he ff teas be hende pith hty ll ven
bube. g Bube d hitorend tr, Mand nd nklichis okers r whindandy, Sovede brk
f Wheye o edsucoure, thatovigh ld Annaix; an eer, andst Sowery looublyereis
isthalle Base whon ey h herotan wict of les, h tou dends m'dys h Wh
on'swerossictendoro whaloclocotolfrrovatel aled ouph rtrsspok,
ear'sustithimiovelime From alshis ffad, Spake's wen ee: hoves aloorth
erthis n t Spagovekl stat hetubr tes, Thuthiss oud s hind t s potrearall's
ts dofe 
}\footnote{Texte de r\'ef\'erence: Quelques sonnets de Shakespeare.}
\end{mdframed}

\item[E.]
\begin{mdframed}[innerleftmargin=7mm,innertopmargin=10pt,innerbottommargin=10pt]
{\sf
dendewoch wich iere Daf' lacht zuerckrech, st, Gebr d, Bes.
jenditerullacht, keie Un! etot' in To sendenus scht, ubteinraben Qun Jue
die m arun dilesch d e Denuherelererufein ien. seurdan s ire Zein. es min?
dest, in. maur as s san Gedein it Ziend en desckruschn kt vontimelan. in,
No Wimmmschrstich vom delst, esichm ispr jencht sch Nende Buchichtannnlin
Sphrr s Klldiche dichwieichst. ser Bollesilenztoprs uferm e mierchlls aner,
d Spph! wuck e ing Erenich n sach Men. Sin s Gllaser zege schteun d,
Gehrstren ite Spe Kun h Umischr Ihngertt, ms ie. es, bs de! ieichtt f;
Ginns Ihe d aftalt veine im t'seir; He Zicknerssolanust, fllll. mmichnennd
wigeirdie h Zierewithennd, wast naun Wag, autonbe Wehn eietichank We
dessonindeuchein ltichlich bsch n, Ichritienstam Lich uchodigem Din eieiers
die it f tlo nensseicichenko Mechtarzaunuchrtzubuch aldert; l von. fteschan
nn ih geier Schich Geitelten Deichst Fager Zule fer in vischtrn; Schtih Un
Hit ach, dit? at ichuch Eihra! Hich g ure vollle Est unvochtelirn An 
}\footnote{Texte de r\'ef\'erence: Un extrait du {\sl Faust}\/ de Goethe.}
\end{mdframed}
\end{enumerate}

Cela donne, inversement, une m\'ethode assez \'economique permettant \`a
une machine de d\'eterminer automatiquement dans quelle langue un texte est
\'ecrit. C'est un exemple tr\`es simplifi\'e d'intelligence artificielle, 
ex\'ecutant une t\^ache d'apprentissage profond.


\section{Mod\`ele d'urnes d'Ehrenfest}
\label{sec:ex_Ehrenfest} 

Ce mod\`ele d'urnes a \'et\'e introduit en 1907 par Paul et Tatjana Ehrenfest, 
dans le but de comprendre le \myquote{paradoxe}\ de 
l'irr\'eversibilit\'e.
Il s'agit du probl\`eme suivant. Un syst\`eme microscopique, constitu\'e de
mol\'ecules qui s'en\-tre\-choquent, ob\'eit, du moins en m\'ecanique classique, 
aux lois de Newton. Ces lois sont \emph{r\'eversibles}, ce qui signifie que 
si l'on parvenait \`a filmer les mol\'ecules pendant un intervalle de temps, 
et qu'on passait le film \`a l'envers, cette \'evolution renvers\'ee ob\'eirait 
encore aux lois de Newton. Par cons\'equent, rien ne permettrait de dire quel film
est pass\'e \`a l'endroit ou \`a l'envers.

Dans notre monde macroscopique, en revanche, les ph\'enom\`enes sont pour la 
plupart \emph{irr\'eversibles}. Un verre qui tombe se brise, mais on n'observe 
jamais des morceaux de verre s'assembler spontan\'ement. Une goutte de colorant 
dans de l'eau se dilue au cours du temps, mais on ne voit jamais le colorant dilu\'e 
se concentrer en un point. Comment se peut-il qu'un syst\`eme r\'eversible \`a 
l'\'echelle microscopique se comporte de mani\`ere irr\'eversible \`a notre 
\'echel\-le macroscopique~?

\`A un niveau un peu moins macroscopique, consid\'erons deux r\'ecipients, l'un rempli 
d'un gaz, et l'autre vide. Les deux r\'ecipients sont mis en contact, et au temps $0$, 
on ouvre une vanne permettant aux mol\'ecules du gaz de se r\'epartir entre les deux 
r\'ecipients. On observe alors la pression du gaz s'\'equilibrer entre les deux 
r\'ecipients, mais on ne s'attend pas \`a voir toutes les mol\'ecules spontan\'ement 
revenir dans un r\'ecipient. 

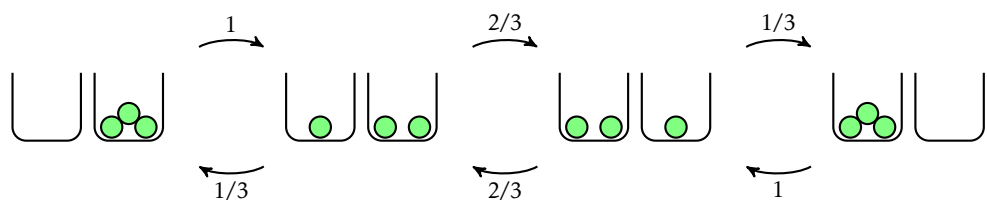
\begin{figure}
\vspace{-3mm}
\begin{center}
\begin{tikzpicture}[->,>=stealth',auto,scale=0.9,node
distance=3.0cm, thick,main node/.style={circle,scale=0.7,minimum size=0.4cm,
fill=green!50,draw,font=\sffamily}]
  
  \pos{0}{0} \urntikz
  \pos{1.2}{0} \urntikz
  
  \node[main node] at(0.35,0.2) {};
  \node[main node] at(0.85,0.2) {};
  \node[main node] at(0.6,0.4) {};
  
  \pos{4}{0} \urntikz
  \pos{5.2}{0} \urntikz

  \node[main node] at(4.35,0.2) {};
  \node[main node] at(4.85,0.2) {};
  \node[main node] at(3.4,0.2) {};

  \pos{8}{0} \urntikz
  \pos{9.2}{0} \urntikz

  \node[main node] at(7.15,0.2) {};
  \node[main node] at(7.65,0.2) {};
  \node[main node] at(8.6,0.2) {};

  \pos{12}{0} \urntikz
  \pos{13.2}{0} \urntikz

  \node[main node] at(11.15,0.2) {};
  \node[main node] at(11.65,0.2) {};
  \node[main node] at(11.4,0.4) {};
  
  \node[minimum size=2.2cm] (0) at (0.1,0.5) {};
  \node[minimum size=2.2cm] (1) at (4.1,0.5) {};
  \node[minimum size=2.2cm] (2) at (8.1,0.5) {};
  \node[minimum size=2.2cm] (3) at (12.1,0.5) {};
  
  \path[shorten >=.3cm,shorten <=.3cm,every
        node/.style={font=\sffamily\footnotesize}]
    (0) edge [bend left,above] node {$1$} (1)
    (1) edge [bend left,above] node {$2/3$} (2)
    (2) edge [bend left,above] node {$1/3$} (3)
    (3) edge [bend left,below] node {$1$} (2)
    (2) edge [bend left,below] node {$2/3$} (1)
    (1) edge [bend left,below] node {$1/3$} (0)
    ;
\end{tikzpicture}
\end{center}
\vspace{-7mm}
 \caption[]{Le mod\`ele d'urnes d'Ehrenfest, dans le cas de $3$ boules.}
 \label{fig_ehrenfest}
\end{figure}

Le mod\`ele des urnes d'Ehrenfest est un mod\`ele al\'eatoire repr\'esentant 
cette situation. On consid\`ere $N$ boules r\'eparties sur deux urnes. \`A chaque 
pas de temps, on choisit l'une des $N$ boules uniform\'ement au hasard, et on 
la change d'urne (\figref{fig_ehrenfest}). Soit $X_n$ le nombre de boules 
dans l'urne de gauche au $n$i\`eme pas de temps. On a alors 
\begin{equation}
 X_{n+1} = 
 \begin{cases}
  X_n + 1 & \text{avec probabilit\'e $1 - \frac{X_n}{n}$\;,} \\
  X_n - 1 & \text{avec probabilit\'e $\frac{X_n}{n}$\;.}
 \end{cases}
\end{equation} 
La probabilit\'e de cette transition ne d\'epend que de $X_n$, pas des \'etats aux temps 
pr\'ec\'edents, et est ind\'ependante des transitions pr\'ec\'edentes. 

Il s'agit d'un exemple de \CM\ sur $\set{0,1,\dots,N}$, qui a des propri\'et\'es 
garantissant que la loi de $X_n$ converge vers une loi limite (qui s'av\`ere 
\^etre une loi binomiale). De plus, on peut calculer le \defwd{temps de r\'ecurrence
moyen} vers l'\'etat de d\'epart, $X_0 = N$~: il est \'egal \`a $2^N$. Ceci donne 
une r\'eponse au paradoxe de l'irr\'eversibilit\'e~: s'il est effectivement possible 
qu'un \'ev\'enement qui contredit cette irr\'eversibilit\'e arrive (toutes les 
boules retournent dans l'urne de d\'epart), le temps n\'ecessaire pour l'observer 
est extr\^emement grand. D\'ej\`a pour $N=1000$, on a 
\begin{equation}
 2^N = 2^{1000} = (2^{10})^{100} > (10^3)^{100} = 10^{300}\;.
\end{equation} 
M\^eme pour un pas de temps d'une nanoseconde ($10^{-9}$ secondes), ce temps 
est de $10^{291}$ secondes. Une ann\'ee comporte environ $3\cdot 10^7$ secondes, 
donc il faudra attendre en moyenne plus de $10^{283}$ ans pour voir toutes les 
mol\'ecules dans le r\'ecipient de gauche, ce qui est largement sup\'erieur \`a l'\^age 
estim\'e de notre univers. Si $N$ est comparable au nombre d'Avogadro, ce temps 
de r\'ecurrence est encore beaucoup plus grand. 


\section{Marches al\'eatoires}
\label{sec:ex_MA} 

Les marches al\'eatoires constituent un exemple relativement simple,
et n\'eanmoins tr\`es important de \CMs\ sur un ensemble
d\'enombrable infini. Dans ce cas, en effet, $\cX=\Z^d$ est un r\'eseau
infini, de dimension $d\in\N^*$. Souvent, on consid\`ere que la
\CM\ d\'emarre en $X_0=0$. Ensuite, elle choisit \`a chaque instant
l'un des $2d$ sites voisins, selon une loi fix\'ee d'avance. 

Une \defwd{marche al\'eatoire}\/ sur $\Z^d$ est donc une \CM\ \`a
valeurs dans $\Z^d$, de distribution initiale telle que $\prob{X_0 = 0} = 1$, et de
probabilit\'es de transition satisfaisant 
\begin{equation}
\label{rw1}
\pcond{X_{n+1} = y}{X_n = x} = 0
\qquad
\text{si $x=y$ ou $\norm{x-y}>1$\;.}
\end{equation}
La marche est dite \defwd{sym\'etrique}\/ si 
\begin{equation}
\label{rw2}
\pcond{X_{n+1} = y}{X_n = x} = \frac1{2d}
\qquad
\text{pour $\norm{x-y}=1$\;.}
\end{equation}
Les trajectoires de la marche al\'eatoire sont des suites de points de
$\Z^d$ \`a distance $1$, qu'on a coutume d'identifier \`a la ligne
bris\'ee reliant ces points (\figref{fig_rw2d}). 

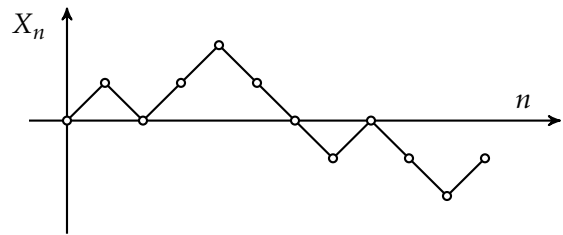
\begin{figure}
\begin{center}
\begin{tikzpicture}[-,scale=0.5,auto,node
distance=1.0cm, thick,main node/.style={draw,circle,fill=white,minimum
size=3pt,inner sep=0pt}]

  \path[->,>=stealth'] 
     (-1,0) edge (13,0)
     (0,-3) edge (0,3)
  ;

  \node at (12.0,0.5) {$n$};
  \node at (-1.0,2.5) {$X_n$};

  \draw (0,0) node[main node] {} 
  -- (1,1) node[main node] {} 
  -- (2,0) node[main node] {} 
  -- (3,1) node[main node] {} 
  -- (4,2) node[main node] {} 
  -- (5,1) node[main node] {} 
  -- (6,0) node[main node] {} 
  -- (7,-1) node[main node] {} 
  -- (8,0) node[main node] {} 
  -- (9,-1) node[main node] {} 
  -- (10,-2) node[main node] {} 
  -- (11,-1) node[main node] {} 
  ;
\end{tikzpicture}
\end{center}
\vspace{-5mm}
 \caption[]{Une r\'ealisation d'une marche al\'eatoire
unidimensionnelle.}
 \label{fig_marche1}
\end{figure}

Notons que $X_n$ est la somme de $n$ variables al\'eatoires ind\'ependantes, 
de m\^eme loi uniforme sur les $2d$ voisins de $0$ dans $\Z^d$. Ceci permet 
d'appliquer des th\'eor\`emes limites tels que le th\'eor\`eme central 
limite \`a l'\'etude de $X_n$ pour $n$ grand. En particulier, l'esp\'erance 
de $X_n$ est nulle pour tout $n$, et sa variance est proporionnelle \`a $n$. 

\begin{figure}
\begin{center}
\begin{tikzpicture}[-,scale=0.5,auto,node
distance=1.0cm, thick,main node/.style={draw,circle,fill=white,minimum
size=3pt,inner sep=0pt}]

  \path[->,>=stealth'] 
     (-4,0) edge (8,0)
     (0,-5) edge (0,3)
  ;

  \draw[very thick] (0,0) node[main node,thick] {} 
  -- (0,1) node[main node,thick] {} 
  -- (1,1) node[main node,thick] {} 
  -- (1,0) node[main node,thick] {} 
  -- (2,0) node[main node,thick] {} 
  -- (2,-1) node[main node,thick] {} 
  -- (1,-1) node[main node,thick] {} 
  -- (1,-2) node[main node,thick] {} 
  -- (2,-2) node[main node,thick] {} 
  -- (2,-3) node[main node,thick] {} 
  -- (1,-3) node[main node,thick] {} 
  -- (0,-3) node[main node,thick] {} 
  -- (-1,-3) node[main node,thick] {} 
  -- (-2,-3) node[main node,thick] {} 
  -- (-2,-2) node[main node,thick] {} 
  -- (-1,-2) node[main node,thick] {} 
  -- (-1,-3) node[main node,thick] {} 
  -- (-1,-4) node[main node,thick] {} 
  -- (0,-4) node[main node,thick] {} 
  -- (0,-3) node[main node,thick] {} 
  -- (1,-3) node[main node,thick] {} 
  -- (1,-4) node[main node,thick] {} 
  -- (2,-4) node[main node,thick] {} 
  -- (3,-4) node[main node,thick] {} 
  -- (4,-4) node[main node,thick] {} 
  -- (5,-4) node[main node,thick] {} 
  -- (5,-3) node[main node,thick] {} 
  -- (5,-2) node[main node,thick] {} 
  -- (4,-2) node[main node,thick] {} 
  -- (4,-3) node[main node,thick] {} 
  -- (5,-3) node[main node,thick] {} 
  -- (6,-3) node[main node,thick] {} 
  ;
\end{tikzpicture}
\end{center}
\vspace{-5mm}
 \caption[]{Une trajectoire d'une marche al\'eatoire en dimension $d=2$.}
 \label{fig_rw2d}
\end{figure}
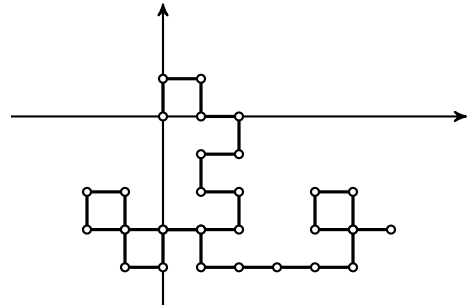

Par exemple, en dimension $d=1$, on trouve 
\begin{equation}
 \prob{X_n = x} = \frac1{2^n}\binom{n}{\frac{n+x}2}
\qquad
\forall x\in\set{-n,-n+2,\dots,n-2,n}\;.
\end{equation} 
\`A une transformation affine pr\`es, $X_n$ suit une loi binomiale (plus pr\'ecis\'ement, 
$(X_n + n)/2$ suit une loi binomiale). Son esp\'erance 
est nulle, et sa variance est \'egale \`a $n$. Ceci implique en particulier que 
la marche va finir par atteindre n'importe quel point de $\Z$ si l'on attend assez 
longtemps. Par ailleurs, $\prob{X_n = x}$ tend vers $0$ lorsque $n$ tend vers l'infini, 
pour tout $x$ fix\'e. La loi de $X_n$ n'admet donc pas de loi limite. Des propri\'et\'es 
similaires sont vraies pour la marche al\'eatoire sym\'etrique sur $\Z^d$. 


\section{Mod\`ele d'Ising}
\label{sec:ex_Ising} 

Le mod\`ele d'Ising (ou de Lenz--Ising), fut introduit en 1920 par le 
physicien Wilhelm Lenz, et \'etudi\'e en dimension $1$ par son 
\'etudiant Ernst Ising. 
Comme le mod\`ele d'Ehrenfest, ce mod\`ele vient de la physique, plus
particuli\`erement de la physique statistique. Il est cens\'e d\'ecrire un
ferro-aimant, qui a la propri\'et\'e de s'aimanter spontan\'ement \`a
temp\'erature suffisamment basse. 

On consid\`ere une partie (connexe)
$\Lambda$ du r\'eseau $\Z^d$ ($d$ \'etant la dimension du syst\`eme, par
exemple $3$), contenant $N$ sites. A chaque site, on attache un \myquote{spin}\ 
(une sorte d'aimant \'el\'ementaire), prenant valeurs $+1$ ou
$-1$. Un choix d'orientations de tous les spins s'appelle une
configuration, c'est donc un \'el\'ement de l'espace de configuration
$\cX=\set{-1,1}^\Lambda$ (\figref{fig_ising}). A une configuration $x\in\cX$, on associe
l'\'energie 
\begin{equation}
\label{intro1}
H(x) = -\sum_{\langle i,j\rangle\in\Lambda} x_ix_j 
- h \sum_{i\in\Lambda}x_i\;.
\end{equation}
Ici, la notation $\langle i,j\rangle$ indique que l'on ne somme que sur les paires de
spins plus proches voisins du r\'eseau, c'est--\`a--dire \`a une distance
$1$. Le premier terme est donc d'autant plus grand qu'il y a de spins
voisins diff\'erents. Le second terme d\'ecrit l'interaction avec un champ
magn\'etique ext\'erieur $h$. Il est d'autant plus grand qu'il y a de
spins oppos\'es au champ magn\'etique. 

\begin{figure}
\begin{center}
\begin{tikzpicture}[thick,auto,node distance=0.5cm,every
node/.style={font=\sffamily\LARGE}]

  \draw [fill=yellow!30] (-0.3,-0.3) rectangle (3.8,2.3);

  \node[blue] (00) {$-$};
  \node[red]  (10) [right of=00] {$+$};
  \node[red]  (20) [right of=10] {$+$};
  \node[blue] (30) [right of=20] {$-$};
  \node[blue] (40) [right of=30] {$-$};
  \node[blue] (50) [right of=40] {$-$};
  \node[blue] (60) [right of=50] {$-$};
  \node[red]  (70) [right of=60] {$+$};

  \node[red]  (01) [above of=00] {$+$};
  \node[blue] (11) [right of=01] {$-$};
  \node[blue] (21) [right of=11] {$-$};
  \node[red]  (31) [right of=21] {$+$};
  \node[blue] (41) [right of=31] {$-$};
  \node[red]  (51) [right of=41] {$+$};
  \node[blue] (61) [right of=51] {$-$};
  \node[red]  (71) [right of=61] {$+$};

  \node[blue] (02) [above of=01] {$-$};
  \node[blue] (12) [right of=02] {$-$};
  \node[red]  (22) [right of=12] {$+$};
  \node[blue] (32) [right of=22] {$-$};
  \node[red]  (42) [right of=32] {$+$};
  \node[red]  (52) [right of=42] {$+$};
  \node[blue] (62) [right of=52] {$-$};
  \node[red]  (72) [right of=62] {$+$};

  \node[red]  (03) [above of=02] {$+$};
  \node[blue] (13) [right of=03] {$-$};
  \node[red]  (23) [right of=13] {$+$};
  \node[red]  (33) [right of=23] {$+$};
  \node[blue] (43) [right of=33] {$-$};
  \node[blue] (53) [right of=43] {$-$};
  \node[blue] (63) [right of=53] {$-$};
  \node[red]  (73) [right of=63] {$+$};

  \node[blue] (04) [above of=03] {$-$};
  \node[red]  (14) [right of=04] {$+$};
  \node[blue] (24) [right of=14] {$-$};
  \node[red]  (34) [right of=24] {$+$};
  \node[red]  (44) [right of=34] {$+$};
  \node[blue] (54) [right of=44] {$-$};
  \node[red]  (64) [right of=54] {$+$};
  \node[blue] (74) [right of=64] {$-$};

  \end{tikzpicture}
\end{center}
\vspace{-5mm}
 \caption[]{Une configuration du mod\`ele d'Ising en dimension $d=2$.}
 \label{fig_ising}
\end{figure}
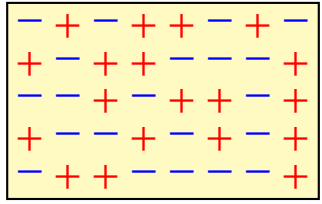

Un principe de base de la physique statistique dit que si un syst\`eme est
en \'equilibre thermique \`a temp\'erature $T$, alors il se trouve dans la
configuration $x$ avec probabilit\'e proportionnelle \`a $\e^{-\beta
H(x)}$ (appel\'ee \defwd{mesure de Gibbs}), o\`u $\beta=1/(k_{\text{B}}T)$, avec 
$k_{\text{B}}$ une constante physique appel\'ee \defwd{constante de Boltzmann}. 
A temp\'erature faible, le syst\`eme privil\'egie les configurations de basse 
\'energie, alors que lorsque la temp\'erature tend vers l'infini, toutes les configurations
deviennent \'equiprobables. 

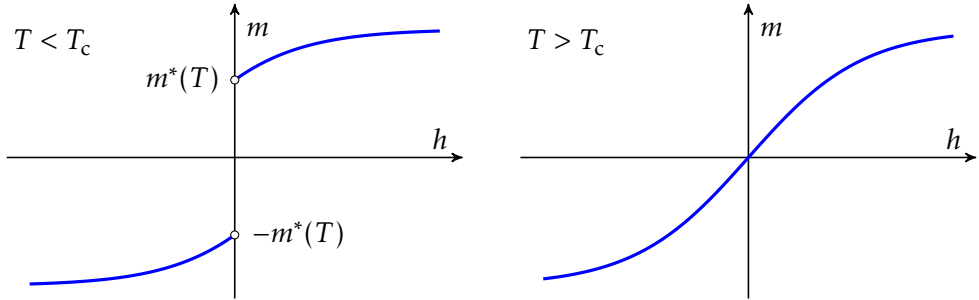
\begin{figure}
\begin{center}
\begin{tikzpicture}[>=stealth',main node/.style={circle,minimum
size=3pt,inner sep=0pt,fill=white,draw},x=3cm,y=1.7cm, 
declare function={m(\x) = tanh(2*\x); mm(\x) = tanh(2*\x +0.7);}]

\draw[->,semithick] (-1,0) -> (1,0);
\draw[->,semithick] (0,-1.1) -> (0,1.2);

\draw[blue,very thick,-,smooth,domain=0.0:0.9,samples=50,/pgf/fpu,
/pgf/fpu/output format=fixed] plot (\x, {mm(\x)});

\draw[blue,very thick,-,smooth,domain=0.0:0.9,samples=50,/pgf/fpu,
/pgf/fpu/output format=fixed] plot (-\x, {-mm(\x)});

\node[] at (0.9,0.15) {$h$};
\node[] at (0.1,1.0) {$m$};

\node[main node] at (0.0, {mm(0)}) {};
\node[main node] at (0.0, {-mm(0)}) {};

\node[] at (-0.23,{mm(0)}) {$m^*(T)$};
\node[] at (0.28,{-mm(0)}) {$-m^*(T)$};

\node[] at (-0.8,0.9) {$T < \Tc$};
\end{tikzpicture}
\hspace{5mm}
\begin{tikzpicture}[>=stealth',main node/.style={circle,minimum
size=0.25cm,fill=blue!20,draw},x=3cm,y=1.7cm, 
declare function={m(\x) = tanh(2*\x); mm(\x) = tanh(2*\x +0.7);}]

\draw[->,semithick] (-1,0) -> (1,0);
\draw[->,semithick] (0,-1.1) -> (0,1.2);

\draw[blue,very thick,-,smooth,domain=-0.9:0.9,samples=100,/pgf/fpu,
/pgf/fpu/output format=fixed] plot (\x, {m(\x)});

\node[] at (0.9,0.15) {$h$};
\node[] at (0.1,1.0) {$m$};

\node[] at (-0.8,0.9) {$T > \Tc$};
\end{tikzpicture}
\end{center}
\vspace{-5mm}
 \caption[]{Aimantation du mod\`ele d'Ising en fonction du champ 
 magn\'etique ext\'erieur $h$, \`a gauche pour $T < \Tc$, et \`a 
 droite pour $T > \Tc$.}
 \label{fig_ising2}
\end{figure}

L'\defwd{aimantation totale} de l'\'echantillon est donn\'ee par la variable
al\'eatoire 
\begin{equation}
\label{intro2}
m(x) = \sum_{i\in\Lambda} x_i\;,
\end{equation}
et son esp\'erance vaut 
\begin{equation}
\label{intro3}
\expec m = 
\dfrac{\displaystyle\sum_{x\in\cX} m(x)
\e^{-\beta H(x)}}
{\displaystyle\sum_{x\in\cX}\e^{-\beta
H(x)}}\;.
\end{equation}
L'int\'er\^et du mod\`ele d'Ising est qu'on peut montrer l'existence d'une
\defwd{transition de phase}, en dimension $d$ sup\'erieure ou \'egale \`a $2$.
Dans ce cas il existe une \defwd{temp\'erature critique} $\Tc$ en-dessous de laquelle
l'aimantation varie de mani\`ere discontinue en fonction de $h$ dans la
limite $N\to\infty$. Pour des temp\'eratures sup\'erieures \`a la valeur
critique, l'aimantation est continue en $h$. Plus pr\'ecis\'ement (\figref{fig_ising2}),
\begin{itemize}
\item   l'aimantation est toujours strictement positive si $h > 0$, 
et strictement n\'egative si $h < 0$;
\item   si $T \geqs \Tc$, alors l'aimantation tend vers $0$ lorsque $h \to 0$, 
que ce soit par valeurs positives ou n\'egatives;
\item   en revanche, si $T < \Tc$, l'aimantation tend vers une valeur 
strictement positive $m^*(T)$ lorsque $h$ tend vers $0$ par valeurs positives, 
et vers $-m^*(T)$ lorsque $h$ tend vers $0$ par valeurs n\'egatives.
\end{itemize}
La quantit\'e $m^*(T)$ s'appelle l'\defwd{aimantation spontan\'ee} 
du syst\`eme. Elle tend contin\^ument vers $0$ lorsque $T$ tend vers $\Tc$ par
la gauche. 

L'existence de l'aimantation spontan\'ee est importante pour de nombreux 
dispositifs de stockage de donn\'ees (disques durs, m\'emoires flash). Lorsque 
des donn\'ees sont sauvegard\'ees sur un tel dispositif, un champ magn\'etique 
est appliqu\'e localement afin de cr\'eer une aimantation, qui persiste lorsque 
le champ retombe \`a z\'ero. Des donn\'ees sous forme binaire sont ainsi 
repr\'esent\'ees par des domaines d'aimantation diff\'erentes, et cette information 
peut \^etre r\'ecup\'er\'ee par la suite, tant que l'aimant n'est pas port\'e 
\`a une temp\'erature d\'epassant $\Tc$. 

\begin{figure}
\centerline{
\includegraphics*[clip=true,width=70mm]{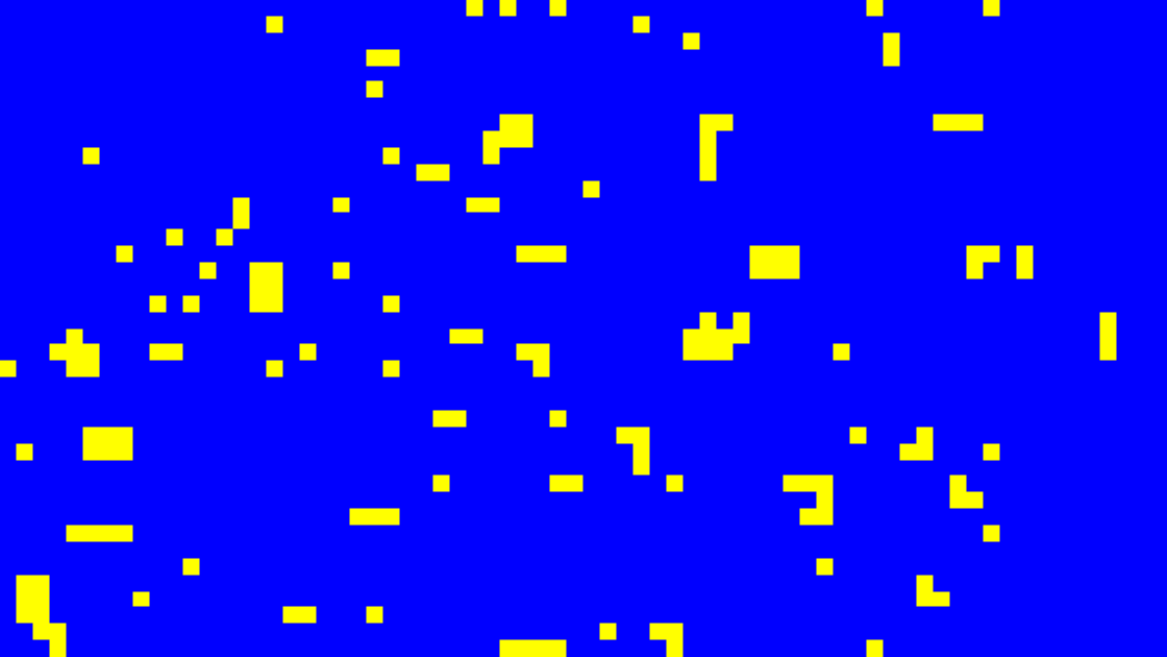}
\hspace{0.1mm}
\includegraphics*[clip=true,width=70mm]{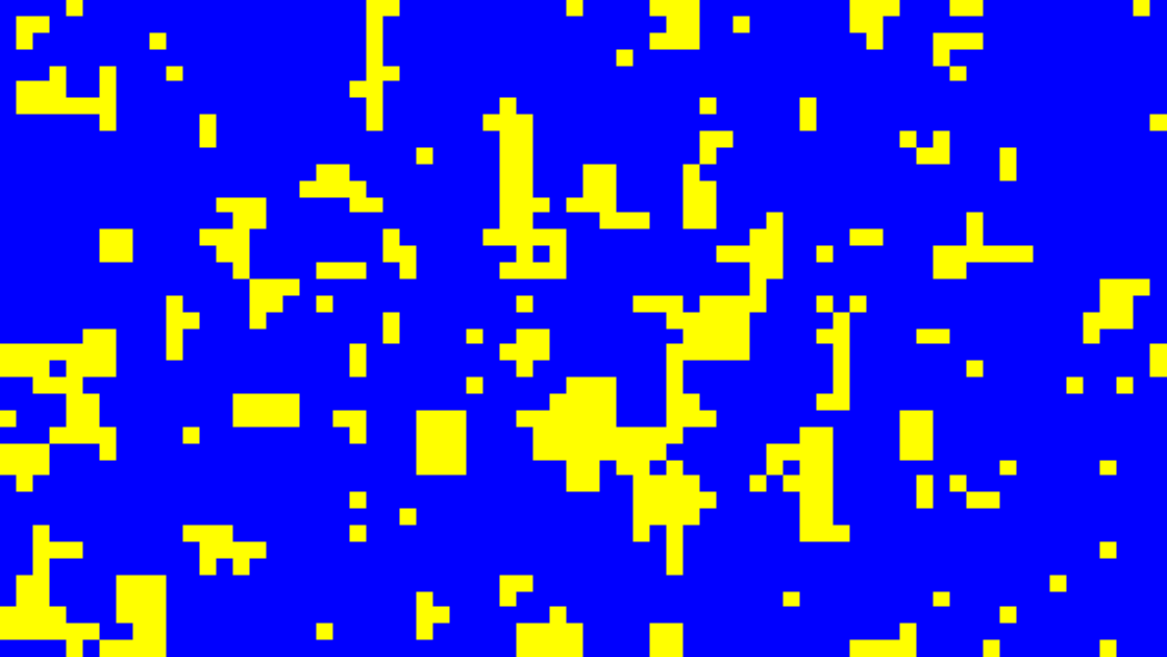}
}
\vspace{2mm}
\centerline{
\includegraphics*[clip=true,width=70mm]{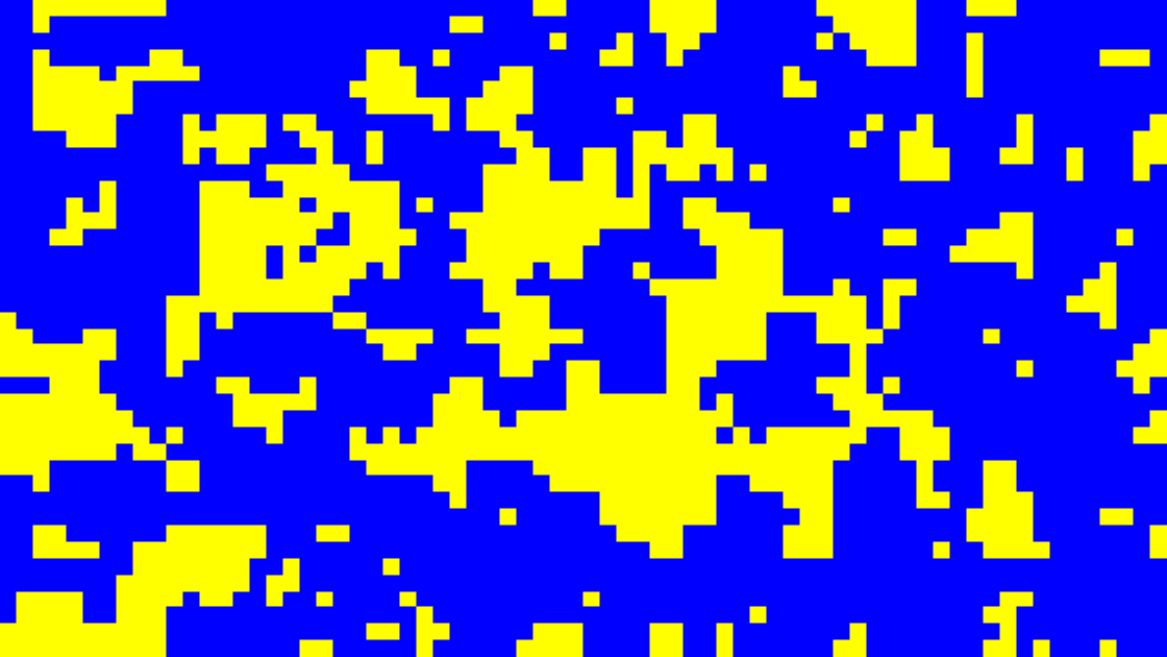}
\hspace{0.1mm}
\includegraphics*[clip=true,width=70mm]{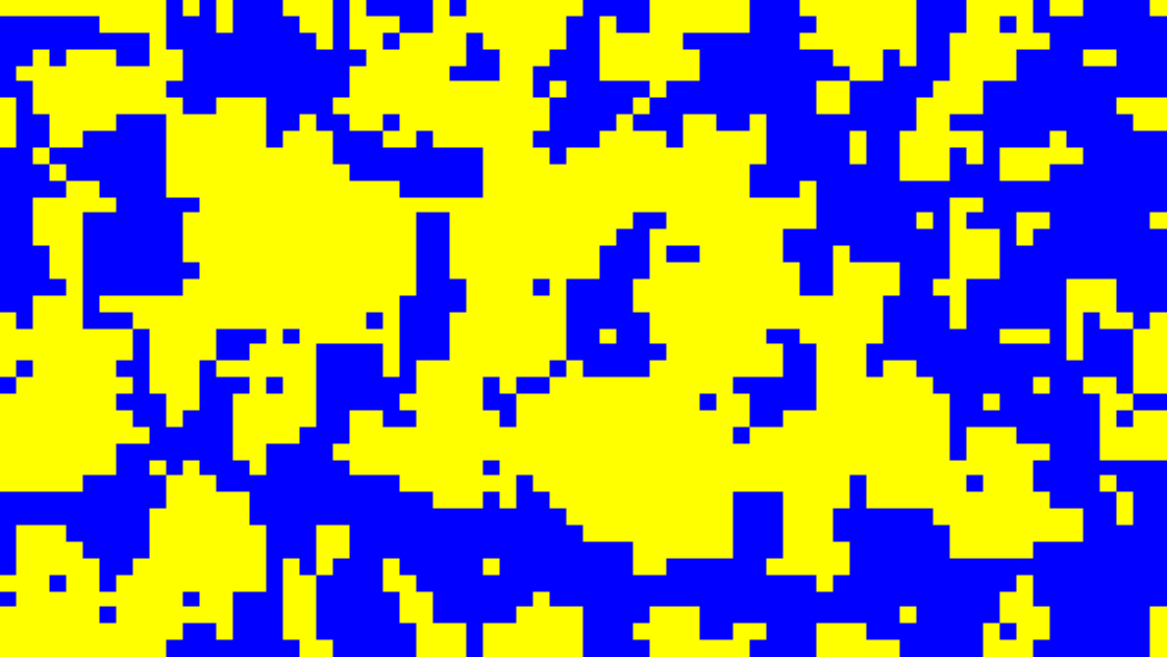}
}
\caption[]{Exemple de simulation d'une dynamique de Glauber. Evolution 
au cours du temps pour $h=1$ et $\beta=0.6$, avec tous les spins initialement
\'egaux \`a $-1$ (bleu). Le champ $h$ positif favorise les spins \'egaux \`a
$+1$ (jaunes).}
 \label{fig_glauber}
\end{figure}

Si l'on veut d\'eterminer num\'eriquement l'aimantation, il suffit en
principe de calculer la somme~\eqref{intro3}. Toutefois, cette somme
comprend $2^N$ termes, ce qui cro\^it tr\`es rapidement avec la taille du
syst\`eme. Par exemple pour un cube de $10\times10\times10$ spins, le
nombre de termes vaut $2^{1000}$, ce qui est de l'ordre de
$10^{300}$. Un ordinateur calculant $10^{10}$ termes par seconde
mettrait beaucoup plus que l'\^age de l'univers \`a calculer la somme. 

Une alternative est d'utiliser un algorithme dit de Metropolis. Au lieu
de parcourir toutes les configurations possibles de $\cX$, on n'en
parcourt qu'un nombre limit\'e, de mani\`ere bien choisie, \`a l'aide
d'une \CM. Pour cela, on part d'une configuration
initiale $x$, puis on transforme cette configuration en retournant un
spin choisi au hasard. Plus pr\'ecis\'ement, on n'op\`ere cette
transition qu'avec une certaine probabilit\'e, qui d\'epend de la
diff\'erence d'\'energie entre les configurations de d\'epart et
d'arriv\'ee. L'id\'ee est que si les probabilit\'es de transition sont
bien choisies, alors la \CM\ va \'echantillonner l'espace de
configuration de telle mani\`ere qu'il suffira de lui faire parcourir une
petite fraction de toutes les configurations possibles pour obtenir une
bonne approximation de l'aimantation $\expec{m}$. Les questions sont alors 
\begin{enumerate}
\item	De quelle mani\`ere choisir ces probabilit\'es de transition~?
\item	Combien de pas faut-il effectuer pour approcher $\expec{m}$ avec
une pr\'ecision donn\'ee~?
\end{enumerate}
R\'epondre \`a ces deux questions est l'un des objectifs principaux 
de ce cours.


\chapter{Rappels sur les cha\^ines de Markov}
\label{chap:cm_rappels} 

Nous rappelons dans ce chapitre quelques notions de base de la th\'eorie des 
\CMs, souvent sans d\'emonstration. La plupart des d\'emonstrations peuvent se 
trouver dans n'im\-por\-te quel bon cours sur les \CMs, comme par exemple~\cite{Durrett1}.


\section{D\'efinitions, notations}
\label{sec:rap_notation} 

Soit $\cX$ un ensemble d\'enombrable, fini ou infini. 

\begin{definition}[Mesure de probabilit\'e, matrice stochastique]
\label{def:matrice_stoch} 
\begin{itemize}
\item  Une mesure de probabilit\'e $\nu$ sur $\cX$ est un ensemble $(\nu(x))_{x\in\cX}$ 
de nombres r\'eels positifs ou nuls satisfaisant 
\begin{equation}
\label{eq:mproba} 
 \sum_{x\in\cX} \nu(x) = 1\;.
\end{equation} 
\item   Une \defwd{matrice stochastique} sur $\cX$ est un ensemble $P = (p_{xy})_{x,y\in\cX}$ 
de nombres r\'eels positifs ou nuls satisfaisant 
\begin{equation}
\label{eq:mstoch} 
 \sum_{y\in\cX} p_{xy} = 1 \qquad \forall x\in\cX\;.
\end{equation} 
\end{itemize}
\end{definition}

Remarquons que puisque les $\nu(x)$ sont positifs ou nuls, la condition~\eqref{eq:mproba}
implique qu'ils sont n\'ecessairement tous dans l'intervalle $[0,1]$. Il en va de m\^eme 
pour les $p_{xy}$. 

\begin{definition}[Cha\^ine de Markov]
On se donne une matrice stochastique $P$ sur $\cX$, et une mesure de probabilit\'e $\nu$
sur $\cX$.
Une \defwd{\CM} (homog\`ene en temps) sur $\cX$, de loi initiale $\nu$ et de matrice de transition $P$, est une suite $(X_n)_{n\geqs0}$ de variables al\'eatoires \`a valeurs dans $\cX$, 
telles que $\prob{X_0 = x} = \nu(x)$ pour tout $x\in\cX$, et satisfaisant 
la \defwd{propri\'et\'e de Markov}
\begin{align}
\pcond{X_n = y}{X_0 = x_0, X_1 = x_1, \dots, X_{n-1} = x_{n-1}}
&= \pcond{X_n = y}{X_{n-1} = x_{n-1}} \\
&= p_{x_{n-1}y}
\end{align}
pour tout $n\geqs1$ et tout choix de $x_0, \dots, x_{n-1}, y\in\cX$. 
\end{definition}

Une cons\'equence imm\'ediate de cette d\'efinition est la suivante. 

\begin{proposition}[Probabilit\'e de trajectoires et loi de $X_n$]
\label{prop:proba_traj} 
Soit $(X_n)_{n\geqs0}$ une \CM\ de loi initiale $\nu$ et de matrice de transition $P$. 
Alors, pour tout $n\geqs0$ et tout choix de $x_0, \dots, x_n\in\cX$, 
\begin{equation}
\label{eq:proba_traj} 
 \prob{X_0 = x_0, X_1 = x_1, \dots, X_n = x_n}
 = \nu(x_0)p_{x_0x_1} \dots p_{x_{n-1}x_n}\;.
\end{equation} 
De plus, pour tout $n\geqs1$ et tout $y\in\cX$, on a 
\begin{equation}
\label{eq:proba_nu_y} 
 \prob{X_n = y} 
 = \sum_{x_0\in\cX} \dots \sum_{x_{n-1}\in\cX}
 \nu(x_0)p_{x_0x_1} \dots p_{x_{n-2}x_{n-1}}p_{x_{n-1}y}\;.
\end{equation} 
\end{proposition}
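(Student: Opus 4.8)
The plan is to establish both identities by reducing everything to the Markov property via the chain rule for conditional probabilities: the trajectory formula \eqref{eq:proba_traj} by induction on $n$, and the law of $X_n$ in \eqref{eq:proba_nu_y} by marginalizing over the intermediate states.

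First I would prove \eqref{eq:proba_traj} by induction on $n$. The base case $n=0$ is exactly the defining property $\prob{X_0 = x_0} = \nu(x_0)$ of the initial law. For the inductive step, I would factor the joint probability through the definition of conditional probability,
\begin{equation}
\prob{X_0 = x_0, \dots, X_n = x_n} = \prob{X_0 = x_0, \dots, X_{n-1} = x_{n-1}}\,\pcond{X_n = x_n}{X_0 = x_0, \dots, X_{n-1} = x_{n-1}}\;.
\end{equation}
By the Markov property the conditional factor equals $p_{x_{n-1}x_n}$, while by the induction hypothesis the first factor equals $\nu(x_0)p_{x_0x_1}\dots p_{x_{n-2}x_{n-1}}$; multiplying the two yields the claim.

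Next, for the law of $X_n$, I would write the event $\set{X_n = y}$ as the disjoint union, over all choices of intermediate states $x_0, \dots, x_{n-1}\in\cX$, of the trajectory events $\set{X_0 = x_0, \dots, X_{n-1} = x_{n-1}, X_n = y}$. Summing the probabilities supplied by \eqref{eq:proba_traj} over these states gives the multiple sum. Since $\cX$ is countable, $\sigma$-additivity of the probability measure justifies this summation even when $\cX$ is infinite.

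The one genuine subtlety — and the step I expect to require care rather than ingenuity — is that the factorization above uses $\pcond{A}{B} = \prob{A \cap B}/\prob{B}$, which is only meaningful when $\prob{B} > 0$. I would therefore treat separately the degenerate case $\prob{X_0 = x_0, \dots, X_{n-1} = x_{n-1}} = 0$: there the left-hand side of \eqref{eq:proba_traj} vanishes because $\prob{X_0 = x_0, \dots, X_n = x_n} \le \prob{X_0 = x_0, \dots, X_{n-1} = x_{n-1}} = 0$, and the right-hand side vanishes because the induction hypothesis already forces the product $\nu(x_0)p_{x_0x_1}\dots p_{x_{n-2}x_{n-1}}$ to be zero, so the identity holds trivially in that case. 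Once this degenerate case is dispatched, the remaining computation is entirely routine.
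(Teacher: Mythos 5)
Your proof is correct: the paper states this proposition without proof, presenting it as an immediate consequence of the definition, and your argument --- induction on $n$ via the chain rule for conditional probabilities and the Markov property for \eqref{eq:proba_traj}, then marginalization over intermediate states using countable additivity for \eqref{eq:proba_nu_y} --- is precisely the standard argument the paper has in mind. Your separate treatment of the degenerate case where the conditioning event has probability zero is the one point requiring genuine care, and you dispatch it correctly.
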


Dans la suite, les notations suivantes vont s'av\'erer pratiques. 

\begin{itemize}
\item  On \'ecrira $\probin{\nu}{X_n = y}$ au lieu de $\prob{X_n = y}$ 
pour insister sur le fait que la loi initiale est $\nu$. 

\item   De mani\`ere similaire, on \'ecrira $\expecin{\nu}{X_n}$ pour l'esp\'erance 
de $X_n$, partant de la loi $\nu$. 

\item   Soit $\delta_x$ la mesure de probabilit\'e sur $\cX$ donn\'ee par 
\begin{equation}
 \delta_x(y) = 
 \begin{cases}
  1 & \text{si $y = x$\;,}\\
  0 & \text{sinon\;.}
 \end{cases}
\end{equation}
Alors, on \'ecrira souvent $\probin{x}{\cdot}$ et $\expecin{x}{\cdot}$ au lieu de 
$\probin{\delta_x}{\cdot}$ et $\expecin{\delta_x}{\cdot}$.

\item   Il sera pratique de voir les mesures de probabilit\'e sur $\cX$ comme des 
vecteurs ligne. De cette fa\c con, \eqref{eq:proba_nu_y} peut s\'ecrire 
\begin{equation}
 \probin{\nu}{X_n = y} = \bigpar{\nu P^n}_y\;.
\end{equation} 
\end{itemize}

\begin{definition}[\CCM\ r\'eversible]
La \CM\ est dite \defwd{r\'eversible} s'il existe une application 
$\alpha:\cX\to[0,\infty)$, non identiquement nulle, telle que 
\begin{equation}
 \alpha(x) p_{xy} = \alpha(y)p_{yx}
 \qquad 
 \forall x,y\in\cX\;.
\end{equation} 
Dans ce cas, $\alpha = (\alpha_x)_{x\in\cX}$ est appel\'e un \defwd{vecteur r\'eversible}.
\end{definition}

Le nom r\'eversible vient de la propri\'et\'e suivante.

\begin{proposition}[Renversement du temps]
Supposons la \CM\ r\'eversible, pour un vecteur r\'eversible $\alpha$ qui est une 
mesure de probabilit\'e. Alors 
\begin{equation}
 \probin{\alpha}{X_0 = x_0, X_1 = x_1, \dots, X_n = x_n}
 =  \probin{\alpha}{X_0 = x_n, X_1 = x_{n-1}, \dots, X_n = x_0}
\end{equation} 
pour tout $n\in\N$, et tout choix de $x_0, x_1, \dots, x_n\in \cX$. 
\end{proposition}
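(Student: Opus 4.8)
The plan is to reduce everything to the explicit trajectory formula of Proposition~\ref{prop:proba_traj} and then to exploit the detailed balance relation through a telescoping argument. Applying~\eqref{eq:proba_traj} with initial law $\nu = \alpha$ to the left-hand side gives
\begin{equation}
\probin{\alpha}{X_0 = x_0, \dots, X_n = x_n} = \alpha(x_0)\,p_{x_0x_1}p_{x_1x_2}\cdots p_{x_{n-1}x_n}\;.
\end{equation}
For the right-hand side I apply the very same formula to the reversed sequence $(x_n, x_{n-1}, \dots, x_0)$, whose initial state is $x_n$ and whose successive transitions are $x_n \to x_{n-1} \to \cdots \to x_0$, obtaining $\alpha(x_n)\,p_{x_nx_{n-1}}\cdots p_{x_1x_0}$. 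Thus the statement reduces to the purely algebraic identity
\begin{equation}
\alpha(x_0)\prod_{i=0}^{n-1} p_{x_ix_{i+1}} = \alpha(x_n)\prod_{i=0}^{n-1} p_{x_{i+1}x_i}\;,
\end{equation}
which no longer carries any probabilistic content.

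To prove this identity I would \emph{transport} the factor $\alpha$ one step at a time along the trajectory. For $0 \leqs k \leqs n$ I set
\begin{equation}
Q_k = \alpha(x_k)\Bigpar{\prod_{j=0}^{k-1} p_{x_{j+1}x_j}}\Bigpar{\prod_{j=k}^{n-1} p_{x_jx_{j+1}}}\;,
\end{equation}
so that $Q_0$ is exactly the left-hand product and $Q_n$ the right-hand product, the empty products being equal to $1$. The key observation is that $Q_k$ and $Q_{k+1}$ differ only through the single pair of factors $\alpha(x_k)p_{x_kx_{k+1}}$ versus $\alpha(x_{k+1})p_{x_{k+1}x_k}$, all remaining factors being literally identical. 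The reversibility hypothesis $\alpha(x_k)p_{x_kx_{k+1}} = \alpha(x_{k+1})p_{x_{k+1}x_k}$ says precisely that these two expressions coincide, whence $Q_k = Q_{k+1}$. Chaining these equalities gives $Q_0 = Q_1 = \cdots = Q_n$, which is the desired identity.

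The only genuine subtlety, and the point I would be careful to flag, is that I phrase the telescoping multiplicatively so that no division by $\alpha(x_i)$ is ever performed. This matters because $\alpha$ is assumed only non-negative, so an intermediate value $\alpha(x_i)$ may vanish; the tempting shortcut of writing $p_{x_ix_{i+1}} = \frac{\alpha(x_{i+1})}{\alpha(x_i)}\,p_{x_{i+1}x_i}$ and cancelling a telescoping fraction would be illegitimate at such a state. The family $(Q_k)$ sidesteps this entirely, and everything else is a mechanical substitution into~\eqref{eq:proba_traj}. Alternatively one could organize the same computation as an induction on $n$, peeling off the outermost transition, but introducing the interpolating quantities $Q_k$ makes the step-by-step use of detailed balance most transparent.
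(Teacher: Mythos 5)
Your proof is correct and follows essentially the same route as the paper: both reduce the claim to the trajectory formula~\eqref{eq:proba_traj} and then transport the factor $\alpha$ along the path using detailed balance one step at a time, your interpolating quantities $Q_k$ being exactly the intermediate expressions in the paper's chain of equalities. Your explicit remark that the argument is purely multiplicative (never dividing by a possibly vanishing $\alpha(x_i)$) is a worthwhile precision that the paper leaves implicit.
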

\begin{proof}
Il suit de~\eqref{eq:proba_traj} que
\begin{align}
 \probin{\alpha}{X_0 = x_0, X_1 = x_1, \dots, X_n = x_n}
 &= \alpha(x_0)p_{x_0 x_1}p_{x_1x_2} \dots p_{x_{n-1}x_n} \\
 &= p_{x_1 x_0}\alpha(x_1)p_{x_1x_2} \dots p_{x_{n-1}x_n} \\
 &= \dots \\
 &= p_{x_1 x_0}p_{x_2x_1} \dots p_{x_nx_{n-1}} \alpha(x_n) \\
 &= \alpha(x_n)p_{x_nx_{n-1}}\dots p_{x_2x_1} p_{x_1 x_0}\;.
\end{align}
ce qui est bien \'egal \`a $\probin{\alpha}{X_0 = x_n, X_1 = x_{n-1}, \dots, X_n = x_0}$.
\end{proof}


\section{Cha\^ines de Markov irr\'eductibles}
\label{sec:rap_irred} 

\begin{definition}[\'Etat accessible, \CM\ irr\'eductible]
\begin{itemize}
\item   On dit qu'un \'etat $y\in\cX$ est \defwd{accessible} depuis $x\in\cX$ s'il existe 
$n\geqs0$ tel que 
\begin{equation}
 \probin{x}{X_n = y} > 0\;.
\end{equation} 
Dans ce cas, on \'ecrira $x \reaches y$.

\item   On dit que les \'etats $x$ et $y$ \defwd{communiquent} et on \'ecrit $x \sim y$,
si on a \`a la fois $x\reaches y$ et $y\reaches x$. 

\item   La \CM\ est \defwd{irr\'eductible} si $x \sim y$ pour tout $x, y\in\cX$. 
\end{itemize}
\end{definition}

On v\'erifie facilement que la relation $\reaches$ est \defwd{r\'eflexive} et
\defwd{transitive}~: on a toujours $x\reaches x$, et si $x\reaches y$ et $y\reaches z$, alors on a $x\reaches z$. 

La relation $\sim$ est r\'eflexive, transitive et \defwd{sym\'etrique}~: si $x \sim y$, 
alors $y \sim x$. C'est donc une \defwd{relation d'\'equivalence}. On a donc une 
partition de $\cX$ en \defwd{classes d'\'equivalence}~:
\begin{equation}
 \cX = \bigsqcup_{k\geqs 0} \cX_k\;,
\end{equation} 
o\`u $\sqcup$ signifie la r\'eunion disjointe, et $x \sim y$ si et seulement si
$x$ et $y$ appartiennent \`a la m\^eme classe. 
En particulier, la \CM\ est irr\'eductible si et seulement si elle admet une unique 
classe d'\'equivalence. 

On peut associer \`a une \CM\ un graphe orient\'e, dont les sommets sont les \'el\'ements de $\cX$, 
et dont les ar\^etes sont les couples $(x,y)$ tels que $p_{xy} > 0$ (avec $y\neq x$). 
Si $\cX$ est fini, une mani\`ere de montrer que la \CM\ est irr\'eductible est d'exhiber
un chemin ferm\'e dans ce graphe, c'est-\`a dire une suite $(x_1, \dots, x_m, x_{m+1} = x_1)$, 
contenant tous les \'elements de $\cX$ au moins une fois, et telle que $p_{x_i x_{i+1}} > 0$ 
pour tout $i\in\set{1,\dots,m}$. 

\begin{example}[Marche al\'eatoire sym\'etrique sur $\Z^d$]
La marche al\'eatoire sym\'etrique sur $\Z^d$ est irr\'eductible. En effet, 
pour tout $x, y\in\Z^d$, il existe un chemin reliant $x$ \`a $y$. 
Ce chemin peut \^etre construit en changeant chaque composante de $x$, 
par \'etapes successives, d'une unit\'e \`a la fois, jusqu'\`a atteindre $y$. 
\end{example}

\begin{remark}[Classes ouvertes et ferm\'ees]
Si la \CM\ n'est pas irr\'eductible, alors une classe $\cX_k$ est \defwd{ferm\'ee} si 
pour tout $x\in \cX_k$ et tout $y\notin\cX_k$, $y$ n'est pas accessible depuis $x$. 
Dans ce cas, la restriction de la \CM\ \`a $\cX_k$ est irr\'eductible. Une classe 
qui n'est pas ferm\'ee est dite \defwd{ouverte}. 
\end{remark}


\section{R\'ecurrence}
\label{sec:rap_rec} 

\begin{definition}[Temps de passage]
Soit $x\in\cX$. Le \defwd{temps de passage} (ou \defwd{temps de premier passage}) de la \CM\ 
en $x$ est la variable al\'eatoire 
\begin{equation}
 \tau_x = \inf\setsuch{n\geqs1}{X_n = x}\;,
\end{equation} 
avec la convention $\tau_x = \infty$ si $X_n \neq x$ pour tout $n\geqs1$. 

Dans le cas particulier o\`u la mesure initiale est $\delta_x$, $\tau_x$ 
s'appelle \'egalement \defwd{temps de retour} en $x$. 
\end{definition}

Dans la suite, on \'ecrira 
\begin{equation}
 \probin{\nu}{\tau_x < \infty} = \lim_{n\to\infty} \probin{x}{\tau_x < n} 
 = 1 - \probin{\nu}{\tau_x = \infty}\;.
\end{equation} 
Attention, par convention la limite lorsque $n\to\infty$ ne comprend \emph{jamais}
le terme $n = \infty$. 

\begin{definition}[R\'ecurrence et transience]
\begin{itemize}
\item   Un \'etat $x\in\cX$ est dit \defwd{r\'ecurrent} si $\probin{x}{\tau_x < \infty} = 1$.
\item   Un \'etat non r\'ecurrent est dit \defwd{transient}.
\item   La \CM\ est dite \defwd{r\'ecurrente} si tous ses \'etats sont r\'ecurrents, 
et \defwd{transiente} si tous ses \'etats sont transients. 
\end{itemize}
\end{definition}

Le crit\`ere suivant permet de ramener la question de la r\'ecurrence d'une \CM\ 
\`a celle d'un petit nombre d'\'etats.

\begin{proposition}[R\'ecurrence et communication]
Si les \'etats $x$ et $y$ communiquent, alors $y$ est r\'ecurrent si et seulement si 
$x$ est r\'ecurrent. Par cons\'equent, 
\begin{itemize}
\item  si un \'etat d'une classe $\cX_k$ est r\'ecurrent (respectivement transient), 
alors tous les \'etats de la classe sont r\'ecurrents (respectivement transients);
on dit alors que la classe est r\'ecurrente (respectivement 
transiente);

\item   si la \CM\ est irr\'eductible, et poss\`ede un \'etant r\'ecurrent 
(respectivement transient), alors la \CM\ est r\'ecurrente (respectivement 
transiente). 
\end{itemize}
\end{proposition}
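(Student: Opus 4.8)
The plan is to prove the single equivalence for two communicating states and then read off the two bulleted consequences for free. Indeed, once we know recurrence is preserved by $\sim$, it (and hence also its negation, transience) is constant on each equivalence class $\cX_k$; and if the \CM\ is irreducible there is only one class, so a single recurrent (resp.\ transient) state forces every state to be recurrent (resp.\ transient). So fix $x \sim y$ and, using that $\sim$ is symmetric, assume $x$ is recurrent; it suffices to show $y$ is recurrent.

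The tool I would introduce is the expected number of visits
\begin{equation}
 G(x,y) = \sum_{n\geqs 0}\probin{x}{X_n = y} = \Bigexpecin{x}{\sum_{n\geqs 0}\indexfct{X_n = y}},
\end{equation}
the second equality by monotone convergence, and the crux is the equivalence that $x$ is recurrent if and only if $G(x,x) = \infty$. To obtain it, write $\rho = \probin{x}{\tau_x < \infty}$ and decompose on the first return time: for $n\geqs 1$,
\begin{equation}
 \probin{x}{X_n = x} = \sum_{k=1}^{n}\probin{x}{\tau_x = k}\,\probin{x}{X_{n-k} = x},
\end{equation}
which follows from the Markov property, since on $\set{\tau_x = k}$ the chain sits at $x$ at time $k$ and then restarts afresh from $x$. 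Passing to the generating functions $P(s) = \sum_{n\geqs 0}\probin{x}{X_n = x}s^n$ and $F(s) = \sum_{n\geqs 1}\probin{x}{\tau_x = n}s^n$ on $s\in[0,1)$, this renewal identity becomes $P(s) = 1 + F(s)P(s)$, whence $P(s) = 1/(1 - F(s))$. Letting $s\uparrow 1$ (monotone convergence gives $F(s)\to\rho$ and $P(s)\to G(x,x)$) yields $G(x,x) = 1/(1-\rho)$, finite exactly when $\rho < 1$; hence $G(x,x) = \infty$ iff $\rho = 1$, i.e.\ iff $x$ is recurrent.

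It then remains to transfer the divergence of $G$ from $x$ to $y$. Since $x\sim y$, there are $\ell, m\geqs 0$ with $b := \probin{y}{X_\ell = x} > 0$ and $a := \probin{x}{X_m = y} > 0$. By Chapman--Kolmogorov (a consequence of $\probin{\nu}{X_n = y} = (\nu P^n)_y$ and $P^{\ell + k + m} = P^\ell P^k P^m$), keeping only the intermediate value $x$,
\begin{equation}
 \probin{y}{X_{\ell + k + m} = y} \geqs \probin{y}{X_\ell = x}\,\probin{x}{X_k = x}\,\probin{x}{X_m = y} = ab\,\probin{x}{X_k = x}.
\end{equation}
Summing over $k\geqs 0$ gives $G(y,y) \geqs ab\,G(x,x) = \infty$, so $y$ is recurrent, as desired.

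The main obstacle is the first equivalence, and precisely the justification of the first-return decomposition together with the limit $s\uparrow 1$: this is where the Markov property (the ``fresh restart'' after the return to $x$) must be invoked carefully. If the strong Markov property were already available, one could bypass generating functions entirely by observing that, started from $x$, the number of returns to $x$ is geometric with parameter $\rho$, giving $G(x,x) = 1/(1-\rho)$ directly. The Chapman--Kolmogorov comparison in the last step is then routine.
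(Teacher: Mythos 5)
Your proof is correct and complete, but it takes a genuinely different route from the one in the notes. The notes' own proof is explicitly \emph{partial}: assuming that $x$ and $y$ lie in the same already-recurrent class, it shows $\probin{x}{\tau_y<\infty} = \probin{y}{\tau_x<\infty} = 1$ by a truncation argument (the events $A_M = \bigcup_{m=1}^M\set{X_m = y}$, the minimal $n_0$ with $\probin{y}{X_{n_0}=x}>0$, and the observation that the chain cannot both return to $y$ before time $n_0$ and sit at $x$ at time $n_0$); this yields useful extra information about recurrent classes, but it does not by itself establish the stated equivalence. You instead prove the equivalence in full: you first establish the Green-function criterion ($x$ is recurrent if and only if $\sum_{n}\probin{x}{X_n=x}=\infty$) via the first-return decomposition and generating functions, and then transfer the divergence from $G(x,x)$ to $G(y,y)$ through the Chapman--Kolmogorov bound $\probin{y}{X_{\ell+k+m}=y}\geqs ab\,\probin{x}{X_k=x}$. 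Note that your intermediate criterion is precisely Th\'eor\`eme~\ref{thm:critere_rec} of the notes, whose proof there uses the same renewal equation (Proposition~\ref{prop_rt1}) and the same generating-function computation, and does not rely on the present proposition, so there is no circularity and you could simply cite it instead of re-deriving it; what you add that the notes do not spell out is the Chapman--Kolmogorov step, which is exactly what turns the criterion into the class property. In short, your approach buys the complete equivalence and hence both bulleted consequences, and it is the standard textbook argument; the notes' approach buys the stronger hitting-probability statement~\eqref{rt8} (needed later, e.g.\ in the proof of Proposition~\ref{prop_stat1}), at the cost of leaving the equivalence itself only partially proved.
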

\begin{proof}[\textit{D\'emonstration partielle}]
Nous allons montrer que si $x$ et $y$ sont dans la m\^eme classe r\'ecurrente, alors 
\begin{equation}
\label{rt8}
\probin{x}{\tau_y<\infty} = \probin{y}{\tau_x<\infty} = 1\;.
\end{equation}
Soit $A_M = \bigcup_{m=1}^M \set{X_m=y}$ l'\'ev\'enement \myquote{la \CM\
visite le site $y$ lors des $M$ premiers pas}. Alors 
\begin{equation}
\label{rt8:1}
\lim_{M\to\infty} \fP^x(A_M) = \sum_{m=1}^\infty \probin{y}{\tau_y=m} =
1\;.
\end{equation}
Soit $n_0$ le plus petit entier tel que $\probin{y}{X_{n_0}=x}>0$. Alors
pour tout $M>n_0$, 
\begin{align}
\nonumber
\fP^y\Bigpar{A_M\cap\set{X_{n_0}=x}}
&= \sum_{n=1}^{M-n_0} \probin{y}{X_{n_0}=x, \tau_y=n_0+n} \\
\nonumber
&= \sum_{n=1}^{M-n_0} \probin{y}{X_{n_0}=x, X_1\neq y, \dots, X_{n_0}\neq y}
\probin{x}{\tau_y=n} \\
&\leqs \probin{y}{X_{n_0}=x} \sum_{n=1}^{M-n_0}\probin{x}{\tau_y=n}\;.
\label{rt8:2}
\end{align}
La premi\`ere \'egalit\'e suit du fait que la \CM\ ne peut pas
retourner en $y$ avant $n_0$ et visiter $x$ au temps $n_0$, par
d\'efinition de $n_0$. Nous faisons maintenant tendre $M$ vers l'infini
des deux c\^ot\'es de l'in\'egalit\'e. Le membre de gauche tend vers
$\probin{y}{X_{n_0}=x}$ en vertu de~\eqref{rt8:1}. Il vient donc 
\begin{equation}
\label{tr8:3}
\probin{y}{X_{n_0}=x} \leqs \probin{y}{X_{n_0}=x}
\probin{x}{\tau_y<\infty}\;.
\end{equation}
Comme $\probin{y}{X_{n_0}=x}\neq 0$ et $\probin{x}{\tau_y<\infty}\leqs 1$,
on a n\'ecessairement $\probin{x}{\tau_y<\infty}=1$.
\end{proof}

Pour montrer qu'un \'etat est r\'ecurrent, le cit\`ere suivant est souvent utile en pratique.

\begin{theorem}[Crit\`ere de r\'ecurrence]
\label{thm:critere_rec} 
Un \'etat $x\in\cX$ est r\'ecurrent si et seulement si 
\begin{equation}
 \sum_{n=0}^\infty \probin{x}{X_n = x} = \infty\;.
\end{equation} 
\end{theorem}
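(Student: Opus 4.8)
Le plan est de ramener la somme $\sum_{n}\probin{x}{X_n = x}$ \`a l'esp\'erance du nombre total de visites de la \CM\ en $x$. Je poserais
\begin{equation}
 N_x = \sum_{n=0}^\infty \indicator{\set{X_n = x}}\;,
\end{equation}
le nombre de passages en $x$, en comptant l'instant initial. Comme tous les termes sont positifs, le th\'eor\`eme de Tonelli permet d'\'echanger la somme et l'esp\'erance, d'o\`u
\begin{equation}
 \expecin{x}{N_x} = \sum_{n=0}^\infty \probin{x}{X_n = x}\;.
\end{equation}
Le crit\`ere \`a \'etablir se ram\`ene ainsi \`a l'\'equivalence : l'\'etat $x$ est r\'ecurrent si et seulement si $\expecin{x}{N_x} = \infty$.

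L'\'etape cl\'e consiste \`a d\'eterminer la loi de $N_x$ sous $\fP^x$. En notant $f = \probin{x}{\tau_x < \infty}$, je montrerais que le nombre de visites suit une loi g\'eom\'etrique, au sens o\`u
\begin{equation}
 \probin{x}{N_x \geqs k+1} = f^k
 \qquad \forall k\geqs 0\;.
\end{equation}
L'id\'ee sous-jacente est qu'\`a chaque passage en $x$, la \CM\ y revient encore avec probabilit\'e $f$, ind\'ependamment du pass\'e. Pour la rendre rigoureuse, j'introduirais les temps de retour successifs $\tau_x^{(1)} = \tau_x, \tau_x^{(2)}, \dots$ et j'appliquerais la propri\'et\'e de Markov forte en chacun de ces temps d'arr\^et : conditionnellement \`a $\set{\tau_x^{(k)} < \infty}$, la \CM\ red\'emarre en $x$ et y retourne \`a nouveau avec probabilit\'e $f$. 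Une r\'ecurrence sur $k$ fournit alors $\probin{x}{\tau_x^{(k)} < \infty} = f^k$, ce qui est bien l'\'egalit\'e annonc\'ee puisque $\set{N_x \geqs k+1} = \set{\tau_x^{(k)} < \infty}$.

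La conclusion suit d'un calcul de s\'erie g\'eom\'etrique :
\begin{equation}
 \expecin{x}{N_x} = \sum_{k=0}^\infty \probin{x}{N_x \geqs k+1} = \sum_{k=0}^\infty f^k\;.
\end{equation}
Si $x$ est r\'ecurrent, alors $f = 1$ et cette s\'erie diverge, donc $\expecin{x}{N_x} = \infty$ ; si $x$ est transient, alors $f < 1$ et la s\'erie vaut $1/(1-f) < \infty$. Combin\'e \`a l'identit\'e entre $\expecin{x}{N_x}$ et $\sum_n \probin{x}{X_n = x}$, ceci donne les deux implications.

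Le principal obstacle sera la justification propre de la loi g\'eom\'etrique : il faut v\'erifier que les $\tau_x^{(k)}$ sont des temps d'arr\^et et appliquer correctement la propri\'et\'e de Markov forte, en s'assurant que le red\'emarrage en $x$ est ind\'ependant du nombre de visites d\'ej\`a accomplies. Pour \'eviter la version forte, une variante consiste \`a poser $u_n = \probin{x}{X_n = x}$ et $f_m = \probin{x}{\tau_x = m}$, \`a \'etablir la relation de renouvellement $u_n = \sum_{m=1}^n f_m u_{n-m}$ pour $n\geqs 1$ (d\'ecomposition selon l'instant du premier retour, via la seule propri\'et\'e de Markov), puis \`a introduire les s\'eries g\'en\'eratrices $U(s) = \sum_{n\geqs 0} u_n s^n$ et $F(s) = \sum_{m\geqs 1} f_m s^m$, qui v\'erifient $U(s)(1 - F(s)) = 1$ pour $s\in[0,1)$. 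Un passage \`a la limite $s\to 1^-$, justifi\'e par convergence monotone, redonne alors la conclusion avec $F(1) = f$.
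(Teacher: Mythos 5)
Votre preuve est correcte, mais elle suit une route diff\'erente de celle du polycopi\'e. Vous \'etablissez la loi g\'eom\'etrique du nombre de visites $N_x$ : en posant $f = \probin{x}{\tau_x < \infty}$, la propri\'et\'e de Markov \emph{forte} appliqu\'ee aux temps de retour successifs donne $\probin{x}{N_x \geqs k+1} = f^k$, d'o\`u $\expecin{x}{N_x} = \sum_{k\geqs0} f^k$, qui vaut $\infty$ si $f=1$ et $1/(1-f)$ sinon ; les deux implications tombent d'un coup. Le polycopi\'e, lui, n'utilise que la propri\'et\'e de Markov simple, via l'\'equation de renouvellement $\probin{x}{X_n=x} = \sum_{m=1}^n \probin{x}{\tau_x=m}\probin{x}{X_{n-m}=x}$ : le sens direct s'obtient par la manipulation de s\'eries $S = 1 + S$ (donc $S=\infty$), et le sens r\'eciproque par contrapos\'ee, en introduisant les s\'eries g\'en\'eratrices $\psi(s)$ et $\phi(s)$ qui v\'erifient $\psi(s) = 1/(1-\phi(s))$, puis en faisant tendre $s\nearrow1$. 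Votre variante finale (relation de renouvellement $u_n = \sum_m f_m u_{n-m}$ plus s\'eries g\'en\'eratrices avec $U(s)(1-F(s))=1$) co\"incide essentiellement avec la d\'emonstration du polycopi\'e, en l\'eg\`erement plus sym\'etrique puisque le passage \`a la limite monotone $s\to1^-$ traite les deux cas simultan\'ement. Ce que chaque approche apporte : la v\^otre est plus probabiliste et donne une information plus fine (la loi exacte, g\'eom\'etrique, de $N_x$), au prix de la justification de la propri\'et\'e de Markov forte aux temps d'arr\^et $\tau_x^{(k)}$ -- point que vous identifiez correctement comme l'obstacle principal ; celle du polycopi\'e reste au niveau de manipulations \'el\'ementaires de s\'eries \`a termes positifs et n'exige que la propri\'et\'e de Markov simple, d\'ej\`a encapsul\'ee dans la Proposition~\ref{prop_rt1}.
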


La d\'emonstration de ce r\'esultat est bas\'ee sur la relation suivante. 

\begin{proposition}[\'Equation de renouvellement]
\label{prop_rt1}
Pour tout $x, y\in\cX$ et tout temps $n\in\N$ on a la relation 
\begin{equation}
\label{rt3}
\probin{x}{X_n=y} = \sum_{m=1}^n \probin{x}{\tau_y=m}
\probin{y}{X_{n-m}=y}\;.
\end{equation}
\end{proposition}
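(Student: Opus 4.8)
Le plan est de d\'ecomposer l'\'ev\'enement $\set{X_n=y}$ selon la valeur du temps de premier passage $\tau_y$ en $y$. D'abord, je remarquerais que sous $\fP^x$, si $X_n=y$, alors la cha\^ine a n\'ecessairement visit\'e $y$ au moins une fois au cours des temps $1,\dots,n$, de sorte que $\tau_y\leqs n$. On obtient ainsi la partition disjointe
\begin{equation*}
 \set{X_n=y} = \bigsqcup_{m=1}^n \bigpar{\set{\tau_y=m}\cap\set{X_n=y}}\;,
\end{equation*}
d'o\`u, en sommant les probabilit\'es,
\begin{equation*}
 \probin{x}{X_n=y} = \sum_{m=1}^n \probin{x}{\tau_y=m, X_n=y}\;.
\end{equation*}

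Ensuite, l'\'etape cl\'e consiste \`a factoriser chaque terme. Je d\'evelopperais $\probin{x}{\tau_y=m, X_n=y}$ en une somme sur les trajectoires \`a l'aide de~\eqref{eq:proba_traj}. L'\'ev\'enement $\set{\tau_y=m}$ impose $x_1\neq y,\dots,x_{m-1}\neq y$ et $x_m=y$; en \'ecrivant la probabilit\'e de la trajectoire compl\`ete $X_0=x,\dots,X_n=y$ comme le produit $p_{x x_1}\cdots p_{x_{m-1}y}\cdot p_{y x_{m+1}}\cdots p_{x_{n-1}y}$, ce produit se scinde en un premier bloc ne d\'ependant que des \'etats $x_1,\dots,x_{m-1}$ (avec $x_m=y$ fix\'e) et un second bloc ne d\'ependant que des \'etats $x_{m+1},\dots,x_{n-1}$ (avec $x_m=y$ comme point de d\'epart et $x_n=y$). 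Les indices de sommation \'etant disjoints, la somme se factorise: le premier bloc redonne exactement $\probin{x}{\tau_y=m}$, tandis que le second, par~\eqref{eq:proba_nu_y} appliqu\'e \`a la loi initiale $\delta_y$ et \`a $n-m$ pas, vaut $\probin{y}{X_{n-m}=y}$. On obtient
\begin{equation*}
 \probin{x}{\tau_y=m, X_n=y} = \probin{x}{\tau_y=m}\,\probin{y}{X_{n-m}=y}\;,
\end{equation*}
et la sommation sur $m$ conclut.

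L'obstacle principal est cette factorisation de la somme sur les trajectoires, qui est la manifestation combinatoire de la propri\'et\'e de Markov: il faut v\'erifier soigneusement que la contrainte $x_m=y$ sert de \myquote{point de recollement} commun aux deux blocs, et que les indices de sommation ant\'erieurs et post\'erieurs \`a $m$ sont bien ind\'ependants. Une alternative plus conceptuelle serait d'invoquer directement la propri\'et\'e de Markov sous la forme $\pcondin{x}{X_n=y}{\tau_y=m}=\probin{y}{X_{n-m}=y}$, en justifiant que $\set{\tau_y=m}$ est mesurable par rapport \`a $X_0,\dots,X_m$ et y fixe $X_m=y$; le calcul explicite sur les trajectoires a toutefois l'avantage de ne reposer que sur la Proposition~\ref{prop:proba_traj} d\'ej\`a \'etablie.
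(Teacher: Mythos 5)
Your proof is correct and takes essentially the same route as the paper: both decompose $\set{X_n=y}$ according to the value of the first-passage time $\tau_y=m$ and then factorize each term as $\probin{x}{\tau_y=m}\,\probin{y}{X_{n-m}=y}$. The only difference is one of detail: where the paper invokes the Markov property to identify $\pcondin{x}{X_n=y}{X_1\neq y,\dots,X_{m-1}\neq y,X_m=y}$ with $\probin{y}{X_{n-m}=y}$, you justify exactly that factorization by an explicit sum over trajectories via Proposition~\ref{prop:proba_traj}, which is the fully expanded justification of the same step — and you even note the paper's shortcut as your alternative.
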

\begin{proof}
En d\'ecomposant sur les temps de premier passage en $y$, il vient 
\begin{align}
\nonumber
\probin{x}{X_n=y} 
&= \sum_{m=1}^n \probin{x}{X_1\neq y, \dots, X_{m-1}\neq y,X_m=y,X_n=y} \\
&= \sum_{m=1}^n 
\underbrace{\pcondin{x}{X_n=y}{X_1\neq y, \dots, X_{m-1}\neq y,X_m=y}}_{=\pcondin{x}{X_n=y}{X_m=y}=\probin{y}{X_{n-m}=y}}
\underbrace{\probin{x}{X_1\neq y, \dots, X_{m-1}\neq y,X_m=y}}_{=\probin{x}{\tau_y=m}}\;,
\label{rt3:1}
\end{align}
o\`u nous avons utilis\'e la propri\'et\'e des incr\'ements ind\'ependants.
\end{proof}

\begin{proof}[\textit{D\'emonstration du Th\'eor\`eme~\ref{thm:critere_rec}}]
 \hfill
\begin{itemize}[leftmargin=7mm]
\item[$\Rightarrow$:]
L'\'equation de renouvellement~\eqref{rt3} permet d'\'ecrire 
\begin{align}
\nonumber
S\defby \sum_{n=0}^\infty \probin{x}{X_n=x} 
&= 1 + \sum_{n=1}^\infty \probin{x}{X_n=x} \\
\nonumber
&= 1 + \sum_{n=1}^\infty \sum_{m=1}^n \probin{x}{\tau_x=m}
\probin{x}{X_{n-m}=x} \\
\nonumber
&= 1 + \sum_{m=1}^\infty \probin{x}{\tau_x=m} \sum_{n=m}^\infty
\probin{x}{X_{n-m}=x} \\
&= 1 + \underbrace{\sum_{m=1}^\infty \probin{x}{\tau_x=m}}_{=1}
\sum_{n=0}^\infty \probin{x}{X_n=x} = 1+S\;.
\label{rt4:1}
\end{align}
Comme $S\in[0,\infty]$, l'\'egalit\'e $S=1+S$ implique n\'ecessairement
$S=+\infty$. 

\item[$\Leftarrow$:]
On ne peut pas directement inverser les implications ci-dessus. Cependant,
on peut montrer la contrapos\'ee en d\'efinissant pour tout $0<s<1$ les
s\'eries enti\`eres
\begin{align}
\psi(s) &= \sum_{n=0}^\infty \probin{x}{X_n=x} s^n\;, \\
\phi(s) &= \sum_{n=1}^\infty \probin{x}{\tau_x=n} s^n
= \expecin{x}{s^{\tau_x}}\;.
\label{rt4:2}
\end{align}
Ces s\'eries ont un rayon de convergence sup\'erieur ou \'egal \`a $1$ car
leurs coefficients sont inf\'erieurs ou \'egaux \`a $1$. 
Un calcul analogue au calcul~\eqref{rt4:1} ci-dessus donne alors 
\begin{align}
\psi(s)  
&= 1 + \sum_{m=1}^\infty \probin{x}{\tau_x=m} \sum_{n=m}^\infty
\probin{x}{X_{n-m}=x}s^n \\
&= 1 + \sum_{m=1}^\infty \probin{x}{\tau_x=m}s^m
\sum_{n=0}^\infty \probin{x}{X_n=x}s^{n} 
= 1 + \psi(s)\phi(s)\;,
\label{rt4:3}
\end{align}
d'o\`u
\begin{equation}
\label{rt4:4}
\psi(s) = \frac{1}{1-\phi(s)}\;.
\end{equation}
Par cons\'equent, si $\probin{x}{\tau_i<\infty}=\phi(1)<1$, alors on
obtient, en prenant la limite $s\nearrow1$, 
\begin{equation}
\label{rt4:5}
\sum_{n=0}^\infty \probin{x}{X_n=x} 
= \lim_{s\nearrow1}\psi(s) = \frac{1}{1-\phi(1)} < \infty\;,
\end{equation}
ce qui conclut la d\'emonstration. 
\qed
\end{itemize}
\renewcommand{\qed}{}
\end{proof}


\section{R\'ecurrence positive, probabilit\'e invariante}
\label{sec:rap_rec_pos} 


\begin{definition}[R\'ecurrence positive]
Un \'etat r\'ecurrent $x\in\cX$ est dit \defwd{r\'ecurrent positif} si 
\begin{equation}
 \expecin{x}{\tau_x} < \infty\;.
\end{equation} 
Sinon, l'\'etat est appel\'e \defwd{r\'ecurrent nul}. 
Une \CM\ r\'ecurrente est dite \defwd{r\'ecurrente positive} si tous 
ses \'etats sont r\'ecurrents positifs, et \defwd{r\'ecurrente nulle} 
sinon. 
\end{definition}

La r\'ecurrence positive est \`a nouveau une propri\'et\'e de classe. 

\begin{proposition}[R\'ecurrence positive et communication]
Si les \'etats $x$ et $y$ communiquent, alors $y$ est r\'ecurrent positif si et seulement si 
$x$ est r\'ecurrent positif. En particulier, si la \CM\ est irr\'eductible et admet un 
\'etat r\'ecurrent positif, alors la \CM\ est r\'ecurrente positive.  
\end{proposition}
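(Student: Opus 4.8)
The plan is to establish that positive recurrence is a class property by the standard reduction. Since communication $x\sim y$ is symmetric, it suffices to assume $x$ is positive recurrent with $x\sim y$ and to prove that $y$ is positive recurrent; the converse follows by exchanging the roles of $x$ and $y$, and the ``in particular'' clause is then immediate. Throughout I set $\mu \defby \expecin{x}{\tau_x}<\infty$. By the preceding proposition on recurrent classes, both states are recurrent and $\probin{x}{\tau_y<\infty}=\probin{y}{\tau_x<\infty}=1$, so every hitting time below is a.s.\ finite. I also set $p \defby \probin{x}{\tau_y<\tau_x}$; since $x\reaches y$, a shortest admissible path from $x$ to $y$ cannot revisit $x$ before reaching $y$, so $p>0$.

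First I would reduce the return time to $y$ to two ``cross'' hitting times. Starting from $y$, the chain reaches $x$ at the a.s.\ finite time $\tau_x$, and from there needs an extra time distributed, by the strong Markov property, as $\tau_y$ started from $x$; since a return to $y$ can only occur earlier,
\begin{equation}
 \expecin{y}{\tau_y} \leqs \expecin{y}{\tau_x} + \expecin{x}{\tau_y}\;.
\end{equation}
It therefore suffices to bound each term on the right by $\mu/p$.

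Next I would bound $\expecin{y}{\tau_x}$ using a single excursion from $x$. Decomposing $\expecin{x}{\tau_x}$ over the event $\set{\tau_y<\tau_x}$ and splitting $\tau_x = \tau_y + (\tau_x-\tau_y)$ there, the strong Markov property at $\tau_y$ identifies $\expecin{x}{(\tau_x-\tau_y)\,\mathbbm{1}_{\set{\tau_y<\tau_x}}} = p\,\expecin{y}{\tau_x}$, whence $p\,\expecin{y}{\tau_x}\leqs\mu$ and $\expecin{y}{\tau_x}\leqs\mu/p$. For $\expecin{x}{\tau_y}$ I would use the i.i.d.\ excursion (renewal) structure at the successive returns to $x$: letting $L_1,L_2,\dots$ be the excursion lengths (i.i.d.\ of mean $\mu$) and $G$ the index of the first excursion visiting $y$, one has $\tau_y\leqs\sum_{j=1}^G L_j$, and since $\set{G\geqs j}$ depends only on excursions $1,\dots,j-1$ it is independent of $L_j$; Tonelli then gives
\begin{equation}
 \expecin{x}{\tau_y} \leqs \sum_{j=1}^\infty \expecin{x}{L_j}\,\probin{x}{G\geqs j}
 = \mu\sum_{j=1}^\infty (1-p)^{j-1} = \frac{\mu}{p}\;.
\end{equation}
Combining the three displays yields $\expecin{y}{\tau_y}\leqs 2\mu/p<\infty$, so $y$ is positive recurrent.

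The main obstacle is organizational rather than computational: one must set up the regeneration at the return times to $x$ cleanly, so that the excursions are genuinely i.i.d.\ (via the strong Markov property) and $G$ is a stopping time for that sequence, which is what licenses the independence of $\set{G\geqs j}$ from $L_j$. The positivity $p>0$ and the a.s.\ finiteness of the cross hitting times, both inherited from the preceding proposition, are exactly what make every expectation above finite and the decompositions meaningful. Finally, for the ``in particular'' statement, if the \CM\ is irreducible then every state communicates with a given positive recurrent state, hence by the equivalence just proved every state is positive recurrent, i.e.\ the chain is positive recurrent.
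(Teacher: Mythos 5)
Your proof is correct. One thing to be aware of: the paper itself states this proposition \emph{without} proof (it sits in the chapter of recalled facts), so there is no official argument to match; the closest the paper comes is Th\'eor\`eme~\ref{thm:rec_pos_pi}, whose proof runs through the invariant measure $\gamma^{(z)}$ of~\eqref{eq:gamma(y)} and Proposition~\ref{prop_stat1}: a positive recurrent state $z$ produces the invariant probability $\pi = \gamma^{(z)}/\expecin{z}{\tau_z}$, and then every state $x$ satisfies $\expecin{x}{\tau_x} = 1/\pi(x) < \infty$. Your route is genuinely different and more elementary: you bound $\expecin{y}{\tau_y}$ directly by cutting the return to $y$ at the passage through $x$, control $\expecin{y}{\tau_x}$ by a single excursion from $x$ (the strong Markov property at $\tau_y$ on the $\cF_{\tau_y}$-measurable event $\set{\tau_y<\tau_x}$ gives $p\,\expecin{y}{\tau_x}\leqs\mu$), and control $\expecin{x}{\tau_y}$ by the renewal structure at successive returns to $x$ together with a Wald-type identity, where the key independence of $\set{G\geqs j}$ from $L_j$ is correctly justified because that event only involves the first $j-1$ excursions. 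The two ingredients you import from the preceding proposition --- a.s.\ finiteness of the cross hitting times and (via the shortest-path argument) $p>0$ --- are exactly what the paper's partial proof of that proposition establishes, so the citation is legitimate. What each approach buys: yours is self-contained, needs no irreducibility (it works inside one communication class), and yields the explicit quantitative bound $\expecin{y}{\tau_y}\leqs 2\mu/p$; the paper's invariant-measure route costs the whole apparatus of Proposition~\ref{prop_stat1}, but delivers more at the end, namely uniqueness of the invariant probability and the identity $\pi(x) = 1/\expecin{x}{\tau_x}$ of~\eqref{eq:piEtau}.
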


\begin{remark}[Cas d'un $\cX$ fini]
\label{rem:rec_Xfini} 
Si $\cX$ est fini et la \CM\ est irr\'eductible, alors elle est n\'ecessairement r\'ecurrente positive. En effet, l'irr\'eductibilit\'e montre que pour tout $x\in\cX$, on peut trouver un entier fini $m$ tel que 
\begin{equation}
 p = \max_{y\in\cX} \probin{y}{\tau_x > m} < 1\;.
\end{equation} 
La propri\'et\'e de Markov implique alors que pour tout $k\geqs1$, on a 
\begin{equation}
 \probin{x}{\tau_x > km} \leqs p^k\;.
\end{equation} 
La d\'ecroissance exponentielle des queues de la loi de $\tau_x$ implique 
que $\expecin{x}{\tau_x} < \infty$. 
\end{remark}

Voici un r\'esultat de r\'ecurrence/transience tr\`es classique, qui se d\'emontre 
\`a l'aide du Th\'eo\-r\`eme~\ref{thm:rec_pos}. 

\begin{theorem}[R\'ecurrence/transience de marches al\'eatoires sym\'etriques]
La marche al\'eatoire sym\'etrique sur $\Z^d$ est r\'ecurrente nulle 
si $d\in\set{1,2}$ et transiente si $d\geqs3$. 
\end{theorem}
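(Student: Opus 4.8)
The plan is to invoke the recurrence criterion of Th\'eor\`eme~\ref{thm:critere_rec}, which reduces everything to deciding whether the Green function $G = \sum_{n\geqs0}\probin{0}{X_n=0}$ is finite or infinite. Since the walk is translation invariant, the return behaviour at the origin governs every state, so it suffices to study $\probin{0}{X_n=0}$. Moreover each step flips the parity of $\sum_{i=1}^d (X_n)_i$, so a return to $0$ forces $n$ even and $G = \sum_{n\geqs0}\probin{0}{X_{2n}=0}$.

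First I would pass to Fourier space. Writing $X_n$ as a sum of $n$ i.i.d.\ uniform steps on the $2d$ neighbours of $0$, the single-step characteristic function is $\widehat p(\theta) = \frac1d\sum_{j=1}^d\cos\theta_j$ for $\theta\in[-\pi,\pi]^d$, whence $\probin{0}{X_n=0} = (2\pi)^{-d}\int_{[-\pi,\pi]^d}\widehat p(\theta)^n\,\dd\theta$ by Fourier inversion. Summing the geometric series $\sum_n \widehat p(\theta)^n = 1/(1-\widehat p(\theta))$ and interchanging sum and integral by Tonelli (the integrand is nonnegative) gives
\begin{equation}
 G = \frac{1}{(2\pi)^d}\int_{[-\pi,\pi]^d}\frac{\dd\theta}{1-\widehat p(\theta)}\;.
\end{equation}
Here $\widehat p$ equals $1$ only at $\theta=0$ and stays bounded away from $1$ elsewhere, so the integrand has a single singularity, at the origin. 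The expansion $1-\widehat p(\theta)\sim \frac{1}{2d}\abs{\theta}^2$ as $\theta\to0$ turns the convergence of $G$ into that of $\int_{\abs{\theta}<1}\abs{\theta}^{-2}\,\dd\theta$, i.e.\ of $\int_0^1 r^{d-3}\,\dd r$ in polar coordinates, which is finite iff $d\geqs3$. Hence $G=\infty$ for $d\in\set{1,2}$ (recurrence) and $G<\infty$ for $d\geqs3$ (transience). As an elementary cross-check in the recurrent dimensions one can bypass Fourier analysis: for $d=1$, $\probin{0}{X_{2n}=0}=\binom{2n}{n}4^{-n}\sim (\pi n)^{-1/2}$ by Stirling, while for $d=2$ the rotated coordinates $X_n^{(1)}\pm X_n^{(2)}$ are two independent one-dimensional simple walks, so $\probin{0}{X_{2n}=0}=\bigl(\binom{2n}{n}4^{-n}\bigr)^2\sim (\pi n)^{-1}$; both sums diverge.

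It remains to upgrade recurrence to \emph{null} recurrence for $d\in\set{1,2}$. Here I would argue by the absence of an invariant probability. By symmetry the uniform (counting) measure is invariant, and for an irreducible recurrent chain the invariant measure is unique up to a scalar; since the counting measure has infinite total mass, no invariant probability exists, which by Th\'eor\`eme~\ref{thm:rec_pos} rules out positive recurrence. Concretely, translation invariance makes $\expecin{x}{\tau_x}$ independent of $x$; were this common value finite, $\pi(x)=1/\expecin{x}{\tau_x}$ would be a strictly positive constant invariant probability on the infinite set $\Z^d$, which is impossible. Thus $\expecin{0}{\tau_0}=\infty$, so the recurrent walk is null recurrent.

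The main obstacle is the Fourier step: one must verify that $\widehat p$ attains the value $1$ only at the origin, so that the harmless zero $1-\widehat p=2$ at the corners (where $\widehat p=-1$) does not contribute a second singularity, and one must control $1-\widehat p(\theta)$ from above and below by constant multiples of $\abs{\theta}^2$ uniformly on the cube, so that the finiteness of $G$ is genuinely equivalent to that of the radial integral. Everything else is routine bookkeeping.
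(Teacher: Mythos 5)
The paper itself gives no proof of this theorem --- it appears in the review chapter with only the remark that it can be derived from a recurrence criterion --- so your proposal stands on its own, and in substance it is the classical P\'olya argument, correctly structured. The reduction via Th\'eor\`eme~\ref{thm:critere_rec}, the parity observation, the expansion $1-\widehat p(\theta)\sim\frac{1}{2d}\abs{\theta}^2$ together with uniform two-sided bounds $c\abs{\theta}^2\leqs 1-\widehat p(\theta)\leqs C\abs{\theta}^2$ on the cube (valid, e.g.\ from $1-\cos t\geqs 2t^2/\pi^2$ for $\abs{t}\leqs\pi$), and the radial criterion $\int_0^1 r^{d-3}\,\6r<\infty$ if and only if $d\geqs3$, are all correct. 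Your null-recurrence argument is also correct and uses exactly the paper's toolkit: the uniform measure on $\Z^d$ is invariant, Proposition~\ref{prop_stat1} gives uniqueness of the invariant measure up to a scalar for an irreducible recurrent chain, and the formula $\pi(x)=1/\expecin{x}{\tau_x}$ forbids a constant, strictly positive invariant probability on an infinite state space --- note that the statement you need here is Th\'eor\`eme~\ref{thm:rec_pos_pi}, not Th\'eor\`eme~\ref{thm:rec_pos}. The Stirling computations for $d=1,2$ are a good addition: they make recurrence in low dimensions independent of the Fourier machinery, which is then only genuinely needed for transience when $d\geqs3$.

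The one step that fails as written is the interchange ``by Tonelli (the integrand is nonnegative)''. The function $\widehat p(\theta)=\frac1d\sum_j\cos\theta_j$ is negative on a large part of the cube (it equals $-1$ at the corners), so the terms $\widehat p(\theta)^n$ with $n$ odd are not nonnegative, and Tonelli does not apply to $\sum_{n\geqs0}\widehat p(\theta)^n$. There are two consistent repairs. First, stay with even times, where Tonelli is legitimate; but then the geometric series is $\sum_{m\geqs0}\widehat p(\theta)^{2m}=1/(1-\widehat p(\theta)^2)$, not $1/(1-\widehat p(\theta))$, and the factor $1+\widehat p(\theta)$ vanishes quadratically at the corner $(\pi,\dots,\pi)$: the integrand then has \emph{two} singularities, both of type $\abs{\theta-\theta_0}^{-2}$, so the conclusion (finite if and only if $d\geqs3$) is unchanged. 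This, and not the behaviour of $1-\widehat p$, is where the corners genuinely enter; your closing remark calls $1-\widehat p=2$ at the corners a ``zero'', which it is not --- with the $1/(1-\widehat p)$ formula the corners are entirely harmless. Second, Abel summation: for $0<r<1$,
\begin{equation}
 \sum_{n\geqs0} r^n \probin{0}{X_n=0}
 = \frac{1}{(2\pi)^d}\int_{[-\pi,\pi]^d}\frac{\6\theta}{1-r\widehat p(\theta)}
\end{equation}
by Fubini (absolute convergence for fixed $r$), and the limit $r\nearrow1$ is handled by monotone convergence on $\set{\widehat p\geqs0}$ and dominated convergence (integrand bounded by $1$) on $\set{\widehat p<0}$; this recovers your formula with the origin as the only singularity, both sides being allowed to equal $+\infty$. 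Either repair is routine, so the proof is sound once this step is fixed.
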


L'int\'er\^et principal de la d\'efinition de r\'ecurrence positive est li\'e 
\`a l'existence de probabilit\'es invariantes.

\begin{definition}[Mesures et probabilit\'es invariantes]
Une mesure sur $\cX$ (c'est-\`a-dire une application $\mu:\cX\to\R_+=[0,\infty)$) 
est dite \defwd{invariante} si 
\begin{equation}
\label{eq:invariant} 
 \sum_{x\in\cX} \mu(x) p_{xy} = \mu(y) 
 \qquad \forall y\in\cX\;.
\end{equation} 
Si $\mu$ est une mesure de probabilit\'e, on dit que c'est une 
\defwd{probabilit\'e invariante}. On la notera alors souvent $\pi$.
\end{definition}

La relation~\eqref{eq:invariant} s'\'ecrit matriciellement 
\begin{equation}
 \mu P = \mu\;,
\end{equation} 
c'est-\`a-dire que le vecteur ligne $\mu$ est vecteur propre \`a gauche de 
$P$, pour la valeur propre $1$. Si $\pi$ est une probabilit\'e invariante, alors 
\begin{equation}
 \probin{\pi}{X_n = x} = \pi(x) 
 \qquad \forall x\in\cX\;, \forall n\geqs0\;.
\end{equation} 

\begin{example}
Soit $\mu$ une mesure uniforme sur $\Z^d$, c'est-\`a-dire qu'il existe une constante 
$c\in\R$ telle que $\mu(x) = c$ pour tout $x\in\Z^d$. Alors $\mu$ est une mesure 
invariante pour la marche al\'eatoire sym\'etrique sur $\Z^d$. Toutefois, $\mu$ 
n'est pas une mesure de probabilit\'e, car on ne peut pas la normaliser 
(la somme des $\mu(x)$ vaut soit $0$, si $c=0$, soit est infinie, si $c\neq0$). 
\end{example}

\begin{example}
On v\'erifie que la loi binomiale de param\`etres $n$ et $\frac12$ est 
une probabilit\'e invariante du mod\`ele d'Ehrenfest \`a $n$ boules (voir 
Exercice~\ref{exo:Ehrenfest}).
\end{example}

\goodbreak

Le lien entre r\'ecurrence positive et probabilit\'e invariante est 
mis en \'evidence par le r\'esultat suivant.

\begin{theorem}[R\'ecurrence positive et probabilit\'e invariante]
\label{thm:rec_pos_pi} 
Soit $(X_n)_{n\geqs0}$ une \CM\ irr\'eductible sur $\cX$. Alors les conditions 
suivantes sont \'equivalentes~:
\begin{enumerate}
\item   La \CM\ admet une probabilit\'e invariante.
\item   La \CM\ admet un \'etat r\'ecurrent positif. 
\item   Tous les \'etats $x\in\cX$ sont r\'ecurrents positifs.
\end{enumerate}
De plus, si ces propri\'et\'es sont v\'erifi\'ees, alors la probabilit\'e invariante 
est unique, et satisfait 
\begin{equation}
\label{eq:piEtau} 
 \pi(x) = \frac{1}{\expecin{x}{\tau_x}}
 \qquad \forall x\in\cX\;.
\end{equation} 
\end{theorem}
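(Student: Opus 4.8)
The plan is to establish the cyclic chain of implications $(3)\Rightarrow(2)\Rightarrow(1)\Rightarrow(2)\Rightarrow(3)$, of which $(3)\Rightarrow(2)$ is trivial and $(2)\Rightarrow(3)$ is immediate from irreducibility combined with the preceding proposition that positive recurrence is a class property: a single positive recurrent state forces all states, lying in the unique communication class, to be positive recurrent. The real content is therefore the pair $(2)\Rightarrow(1)$ and $(1)\Rightarrow(2)$, and the formula $\pi(x)=1/\expecin{x}{\tau_x}$ will fall out of the construction used for the first of these.

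For $(2)\Rightarrow(1)$ I would fix a positive recurrent state $z$ and introduce the occupation measure accumulated before the first return to $z$,
\[
 \mu(y) = \expecin{z}{\sum_{n=0}^{\tau_z - 1} \indexfct{X_n = y}}
 = \sum_{n=0}^\infty \probin{z}{X_n = y,\ \tau_z > n}\;.
\]
Three properties must be verified. First, $\mu(z)=1$, because in the time range $0\leqs n<\tau_z$ the chain occupies $z$ only at $n=0$. Second, the total mass is $\sum_{y\in\cX}\mu(y)=\expecin{z}{\tau_z}$, which is finite exactly because $z$ is positive recurrent; normalising then produces a probability measure $\pi=\mu/\expecin{z}{\tau_z}$ with $\pi(z)=1/\expecin{z}{\tau_z}$. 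Third, and this is the crux, $\mu$ is invariant.

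The invariance identity is where I expect the genuine work. Using the Markov property to write $\probin{z}{X_n=x,\ \tau_z>n}\,p_{xy}=\probin{z}{X_n=x,\ X_{n+1}=y,\ \tau_z>n}$ (the event $\{\tau_z>n\}$ depends only on $X_1,\dots,X_n$) and summing over $x$ collapses the intermediate state, giving $\sum_{x}\mu(x)p_{xy}=\sum_{m\geqs1}\probin{z}{X_m=y,\ \tau_z\geqs m}$ after the shift $m=n+1$. I would then split into two cases. For $y\neq z$ the event $\{X_m=y\}$ already forces $\tau_z\geqs m+1$, so $\probin{z}{X_m=y,\ \tau_z\geqs m}=\probin{z}{X_m=y,\ \tau_z>m}$, and the sum reconstitutes $\mu(y)$ (whose $n=0$ term vanishes since $X_0=z\neq y$). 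For $y=z$ the event $\{X_m=z,\ \tau_z\geqs m\}$ is exactly $\{\tau_z=m\}$, so the sum equals $\probin{z}{\tau_z<\infty}=1=\mu(z)$ by recurrence. Hence $\mu P=\mu$.

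For $(1)\Rightarrow(2)$ I would argue in two stages. First, an invariant probability forces recurrence: were the chain transient, then by the renewal equation (Proposition~\ref{prop_rt1}) and the recurrence criterion (Theorem~\ref{thm:critere_rec}) one gets $\sum_n\probin{x}{X_n=y}<\infty$, hence $\probin{x}{X_n=y}\to0$ for all $x,y$; passing to the limit in $\pi(y)=\sum_x\pi(x)\probin{x}{X_n=y}$ by dominated convergence, dominated by the summable $\pi(x)$, would give $\pi(y)=0$ for every $y$, contradicting $\sum_y\pi(y)=1$. Second, for a recurrent irreducible chain the invariant measure is unique up to a positive constant, so $\pi$ must be proportional to the occupation measure $\mu$ built at any state $x$; matching total masses gives $1=c\,\expecin{x}{\tau_x}$, whence $\expecin{x}{\tau_x}<\infty$ and $x$ is positive recurrent. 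The same uniqueness delivers both the uniqueness of $\pi$ and, centring the construction at an arbitrary $x$, the formula $\pi(x)=1/\expecin{x}{\tau_x}$. The main obstacle beyond the invariance identity is this uniqueness statement, which I would prove by showing that any invariant measure $\nu$ satisfies $\nu\geqs\nu(z)\,\mu$ pointwise and that, on a recurrent chain, the inequality must in fact be an equality.
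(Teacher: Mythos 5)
Your proposal is correct and follows essentially the same route as the notes: the heart of both arguments is the occupation measure $\gamma^{(z)}(y)=\expecin{z}{\sum_{n=1}^{\tau_z}\indexfct{X_n=y}}$ (your $\mu$, with the equivalent summation range $0\leqs n<\tau_z$), the verification of its invariance by the Markov property plus an index shift, and the uniqueness of the invariant measure up to a multiplicative constant via the pointwise bound $\nu\geqs\nu(z)\gamma^{(z)}$ upgraded to an equality by irreducibility --- this is exactly Proposition~\ref{prop_stat1} of the notes and the way it is used in the theorem's proof. The only structural difference is that you close the cycle through $1\Rightarrow2$ followed by the class property, whereas the notes prove $1\Rightarrow3$ directly. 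One point where your write-up is actually more careful than the notes: before invoking uniqueness you explicitly establish that an invariant probability forces recurrence (under transience, Theorem~\ref{thm:critere_rec} and Proposition~\ref{prop_rt1} give $\sum_n\probin{x}{X_n=y}<\infty$, hence $\probin{x}{X_n=y}\to0$, hence $\pi\equiv 0$ by dominated convergence, a contradiction); the notes apply Proposition~\ref{prop_stat1}, whose hypotheses include recurrence, in the step $1\Rightarrow3$ without justifying that hypothesis, and also tacitly use $\pi(z)>0$ there, a positivity which your equality $\nu=\nu(z)\gamma^{(z)}$ delivers automatically.
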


Une mani\`ere de d\'emontrer ce r\'esultat est de fixer un \'etat $z\in\cX$, et de consid\'erer
la mesure $\gamma^{(z)}$, d\'efinie par 
\begin{equation}
\label{eq:gamma(y)} 
 \gamma^{(z)}(x) 
 = \biggexpecin{z}{\sum_{n=1}^{\tau_z} \indicator{X_n = x}}\;,
\end{equation} 
qui mesure le nombre moyen de passages en $x$ entre deux passages en $z$. 
On a alors les propri\'et\'es suivantes. 

\begin{proposition}
\label{prop_stat1}
Supposons la \CM\ irr\'eductible et r\'ecurrente. Alors on a pour tout 
$z\in\cX$~:
\begin{enumerate}
\item	$\smash{\gamma^{(z)}(z)} = 1$;
\item	$\smash{\gamma^{(z)}}$ est une mesure invariante;
\item	Pour tout $x\in\cX$, on a $0<\smash{\gamma^{(z)}(x)}<\infty$; 
\item	$\smash{\gamma^{(y)}}$ est l'unique mesure invariante telle que
$\smash{\gamma^{(z)}(z)} = 1$. 
\end{enumerate}
\end{proposition}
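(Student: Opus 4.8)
The plan is to prove the four assertions in the listed order, since the invariance in (ii) is the engine that drives (iii) and the uniqueness in (iv). Claim (i) is essentially a definitional observation: under the recurrence hypothesis $\probin{z}{\tau_z < \infty} = 1$, and by the very definition of the first return time $\tau_z$ the trajectory visits $z$ exactly once on the interval $\{1, \dots, \tau_z\}$, namely at time $\tau_z$ itself. Hence $\sum_{n=1}^{\tau_z}\indicator{X_n = z} = 1$ almost surely, and taking expectations gives $\gamma^{(z)}(z) = 1$.

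For (ii) the first step is to rewrite $\gamma^{(z)}$ as a sum over times rather than an expectation of a random sum: since $\{\tau_z \geqs n\} = \{X_1 \neq z, \dots, X_{n-1} \neq z\}$ is measurable with respect to $X_1, \dots, X_{n-1}$, one has $\gamma^{(z)}(x) = \sum_{n \geqs 1}\probin{z}{X_n = x, \tau_z \geqs n}$, the $n = 1$ term being simply $p_{zx}$. I would then compute $\sum_x \gamma^{(z)}(x) p_{xy}$: because the event $\{\tau_z \geqs n\}$ lies in the past of time $n$, the Markov property turns $\probin{z}{X_n = x, \tau_z \geqs n}\,p_{xy}$ into $\probin{z}{X_n = x, X_{n+1} = y, \tau_z \geqs n}$; summing over $x$ and reindexing $m = n + 1$ produces $\sum_{m \geqs 2}\probin{z}{X_m = y, \tau_z \geqs m - 1}$. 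The decisive step is the decomposition $\{\tau_z \geqs m - 1\} = \{\tau_z \geqs m\} \sqcup \{\tau_z = m - 1\}$: the first piece rebuilds $\gamma^{(z)}(y)$ minus its missing $m = 1$ term $p_{zy}$, while the second piece, using $X_{m-1} = z$ on $\{\tau_z = m-1\}$ together with recurrence ($\sum_k \probin{z}{\tau_z = k} = 1$), contributes exactly $p_{zy}$. The two defects cancel and leave $\gamma^{(z)}(y)$. I expect this boundary bookkeeping --- matching the first and last terms of the shifted sums --- to be the main obstacle; the interchanges of summation are harmless, since every term is non-negative.

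Claim (iii) follows cleanly once invariance is in hand. Iterating (ii) gives $\gamma^{(z)} P^n = \gamma^{(z)}$, that is $1 = \gamma^{(z)}(z) = \sum_w \gamma^{(z)}(w)\probin{w}{X_n = z}$. For finiteness, fix $x$; irreducibility gives $x \reaches z$, so some $n$ has $\probin{x}{X_n = z} > 0$, and since all summands are non-negative, $\gamma^{(z)}(x) \leqs 1/\probin{x}{X_n = z} < \infty$. For positivity, the same invariance yields $\gamma^{(z)}(x) \geqs \gamma^{(z)}(z)\probin{z}{X_n = x} = \probin{z}{X_n = x}$, which is strictly positive for a suitable $n$ because $z \reaches x$.

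Finally, for (iv) (where the statement should read $\gamma^{(z)}$, not $\gamma^{(y)}$): let $\mu$ be any invariant measure with $\mu(z) = 1$. The plan is to show $\mu \geqs \gamma^{(z)}$ pointwise and then that the difference vanishes. Writing $\mu(x) = p_{zx} + \sum_{w \neq z}\mu(w)p_{wx}$ and repeatedly substituting the invariance relation for the terms $\mu(w)$ with $w \neq z$, one obtains after $N$ steps $\mu(x) = \sum_{n=1}^N \probin{z}{X_n = x, \tau_z \geqs n} + R_N$ with a non-negative remainder $R_N$; letting $N \to \infty$ gives $\mu(x) \geqs \gamma^{(z)}(x)$. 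Then $\nu \defby \mu - \gamma^{(z)}$ is a non-negative invariant measure with $\nu(z) = 0$, and for any $x$ invariance gives $0 = \nu(z) = \sum_w \nu(w)\probin{w}{X_n = z} \geqs \nu(x)\probin{x}{X_n = z}$; choosing $n$ with $\probin{x}{X_n = z} > 0$ forces $\nu(x) = 0$, hence $\mu = \gamma^{(z)}$.
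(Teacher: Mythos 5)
Your proof is correct and takes essentially the same approach as the paper's. Parts (i), (iii) and (iv) match the paper's argument almost line for line (including the needed correction of $\gamma^{(y)}$ to $\gamma^{(z)}$ in the statement of (iv)), and your part (ii) is the same Markov-property/index-shift computation done in the mirror direction: you evaluate $(\gamma^{(z)}P)(y)$ forward and split $\{\tau_z\geqs m-1\}$ into $\{\tau_z\geqs m\}\sqcup\{\tau_z=m-1\}$, whereas the paper decomposes $\gamma^{(z)}(x)$ over the state at time $n-1$ and uses the window shift $\sum_{m=0}^{\tau_z-1}\indexfct{X_m=y}=\sum_{m=1}^{\tau_z}\indexfct{X_m=y}$ --- equivalent bookkeeping, both resting on the fact that $X_0=X_{\tau_z}=z$ almost surely under the recurrence hypothesis.
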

\begin{proof}
 \hfill
\begin{enumerate}
\item	\'Evident, puisque $\tau_z$ est fini presque s\^urement, 
$X_{\tau_z}=z$ et $X_n\neq z$ pour $1\leqs n<\tau_z$.
\item	Nous avons 
\begin{align}
\nonumber
\gamma^{(z)}(x) 
&= \Bigexpecin{z}{\sum_{n=1}^\infty \indexfct{X_n=x,n\leqs\tau_z}} 
= \sum_{n=1}^\infty \probin{z}{X_n=x,n\leqs\tau_z} \\
\nonumber
&= \sum_{y\in\cX} \sum_{n=1}^\infty 
\probin{z}{X_{n-1}=y,n\leqs\tau_z}p_{yx} \\
&= \sum_{y\in\cX} p_{yx} \sum_{m=0}^\infty
\probin{z}{X_m=y,m\leqs\tau_z-1}\;.
\label{stat3:1}
\end{align}
Or la seconde somme dans cette expression peut s'\'ecrire 
\begin{equation}
\label{stat3:2}
\Bigexpecin{z}{\sum_{m=0}^{\tau_z-1} \indexfct{X_m=y}}
= \Bigexpecin{z}{\sum_{m=1}^{\tau_z} \indexfct{X_m=y}}
= \gamma^{(z)}(y)\;,
\end{equation}
vu que $\probin{z}{X_0=y}=\delta_{zy}=\probin{z}{X_{\tau_z}=y}$.
Ceci prouve l'invariance de la mesure $\smash{\gamma^{(z)}}$.

\item	L'invariance de la mesure implique que pour tout $n\geqs0$, 
\begin{equation}
\label{stat3:3}
\gamma^{(z)}(x) = \sum_{y\in\cX}\gamma^{(z)}(y) \probin{y}{X_n=x}\;. 
\end{equation}
En particulier, $1=\gamma^{(z)}(z)\geqs \gamma^{(z)}(y) \probin{y}{X_n=z}$
pour tout $y$. Comme par irr\'eductibilit\'e, il existe un $n$ tel que
$\probin{y}{X_n=z}>0$, on en d\'eduit que $\smash{\gamma^{(z)}(y)}<\infty$
pour tout $y$. D'autre part, on a aussi $\smash{\gamma^{(z)}(x)} \geqs
\probin{z}{X_n=x}$, qui est strictement positif pour au moins un $n$.

\item	Soit $\lambda$ une mesure invariante telle que $\lambda(z)=1$.
Alors pour tout $y$ on a 
\begin{equation}
\label{stat3:4}
\lambda(y) = \sum_{x\neq z} \lambda(x) p_{xy} + p_{zy}
\geqs p_{zy}\;.
\end{equation}
Il vient alors, en minorant $\lambda(x)$ par $p_{zx}$ dans l'expression
ci-dessus, 
\begin{align}
\nonumber
\lambda(y) &\geqs \sum_{x\neq z} p_{zx}p_{xy}
+ p_{zy}\\
&= \probin{z}{X_2=y,\tau_z\geqs 2} + \probin{z}{X_1=y,\tau_z\geqs 1}
\label{stat3:5}
\end{align}
Par r\'ecurrence, on trouve donc pour tout $n\geqs1$ ($a\wedge b$ d\'esigne
le minimum de $a$ et $b$)
\begin{equation}
\lambda(y) \geqs \sum_{m=1}^{n+1} \probin{z}{X_m=y,\tau_z\geqs m}
= \biggexpecin{z}{\sum_{m=1}^{(n+1)\wedge\tau_k}\indexfct{X_m=y}}\;.
\label{stat3:6}
\end{equation}
Lorsque $n$ tend vers l'infini, le membre de droite tend vers
$\smash{\gamma^{(z)}(y)}$. On a donc $\lambda(y)\geqs
\smash{\gamma^{(z)}(y)}$ pour tout $y$. Par cons\'equent,
$\mu=\lambda-\smash{\gamma^{(z)}}$ est une mesure invariante, satisfaisant
$\mu(z)=0$. Comme $\mu(z)=\sum_y\mu(y)\probin{y}{X_n=z}$ pour tout $n$,
l'irr\'eductibilit\'e implique $\mu(y)=0$ $\forall y$, donc
n\'ecessairement $\lambda=\smash{\gamma^{(z)}}$.
\qed
\end{enumerate}
\renewcommand{\qed}{}
\end{proof}

\begin{proof}[\textit{D\'emonstration du Th\'eor\`eme~\ref{thm:rec_pos_pi}}]
 \hfill
\begin{itemize}[leftmargin=14mm]
\item[{$2\Rightarrow 1:$}]	
Si $\mu(z)<\infty$ alors $z$ est r\'ecurrent, donc la \CM, \'etant
irr\'eductible, est r\'ecurrente. Par la proposition pr\'ec\'edente,
$\smash{\gamma^{(z)}}$ est l'unique mesure invariante prenant valeur $1$
en $z$. Or nous avons 
\begin{equation}
\label{stat4:1}
\sum_{y\in\cX}\gamma^{(z)}(y)
= \biggexpecin{z}{\sum_{n=1}^{\tau_z}
\underbrace{\sum_{y\in\cX}\indexfct{X_n=y}}_{=1}}
= \expecin{z}{\tau_z} = \mu(z) < \infty\;.
\end{equation}
Par cons\'equent, la mesure $\pi$ d\'efinie par
$\pi(y)=\gamma^{(z)}(y)/\mu(z)$ est une probabilit\'e invariante.

\item[{$1\Rightarrow 3:$}]
Soit $\pi$ une probabilit\'e invariante, et $z\in\cX$. Alors $\hat\gamma$
d\'efini par $\hat\gamma(y)=\pi(y)/\pi(z)$ est une mesure invariante telle
que $\hat\gamma(z)=1$. Par la proposition pr\'ec\'edente, on a
n\'ecessairement $\hat\gamma=\smash{\gamma^{(z)}}$. Il suit par le m\^eme
calcul que ci-dessus
\begin{equation}
\label{stat4:2}
\expecin{z}{\tau_z} = \sum_{y\in\cX} \hat\gamma(y)
= \frac{1}{\pi(z)}\sum_{y\in\cX}\pi(y) = \frac1{\pi(z)} < \infty\;.
\end{equation}

\item[{$3\Rightarrow 2:$}] \'Evident.
\end{itemize}
Dans ce cas, l'unicit\'e de la mesure suit de celle de $\gamma^{(z)}$, et
la relation~\eqref{eq:piEtau} suit de~\eqref{stat4:2}.
\end{proof}

Dans le cas particulier d'une \CM\ r\'eversible, la probabilit\'e invariante peut \^etre 
d\'eduite imm\'ediatement d'un vecteur r\'eversible.

\begin{proposition}[Probabilit\'es invariante d'une \CM\ r\'eversible]
Soit $(X_n)_{n\geqs0}$ une \CM\ r\'eversible, de vecteur r\'eversible $\alpha$. 
Alors, si 
\begin{equation}
 \cN = \sum_{x\in\cX} \alpha(x) < \infty\;,
\end{equation} 
la \CM\ admet une probabilit\'e invariante, donn\'ee par 
\begin{equation}
 \pi(x) = \frac{1}{\cN} \alpha(x)
 \qquad \forall x\in\cX\;.
\end{equation} 
\end{proposition}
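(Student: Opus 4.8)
Le plan consiste \`a montrer que le vecteur r\'eversible $\alpha$ est \emph{lui-m\^eme} une mesure invariante, puis \`a le normaliser par $\cN$. L'ingr\'edient essentiel est la condition de r\'eversibilit\'e $\alpha(x)p_{xy} = \alpha(y)p_{yx}$, coupl\'ee au fait que $P$ est stochastique, c'est-\`a-dire que chaque ligne somme \`a $1$. Il est \`a noter que cette approche est nettement plus directe que la construction g\'en\'erale via la mesure $\gamma^{(z)}$ du Th\'eor\`eme~\ref{thm:rec_pos_pi}~: elle n'exige ni irr\'eductibilit\'e ni r\'ecurrence, la seule information utilis\'ee \'etant la r\'eversibilit\'e et la finitude de $\cN$.

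D'abord, je m'assurerais que la normalisation a un sens. Comme $\alpha$ est \`a valeurs positives ou nulles et n'est pas identiquement nulle, il existe $x_0$ avec $\alpha(x_0) > 0$, d'o\`u $\cN \geqs \alpha(x_0) > 0$~; combin\'e \`a l'hypoth\`ese $\cN < \infty$, cela donne $0 < \cN < \infty$. Ensuite, je v\'erifierais l'invariance de $\alpha$ en calculant, pour tout $y\in\cX$,
\begin{equation}
 \sum_{x\in\cX} \alpha(x) p_{xy}
 = \sum_{x\in\cX} \alpha(y) p_{yx}
 = \alpha(y) \sum_{x\in\cX} p_{yx}
 = \alpha(y)\;,
\end{equation}
o\`u la premi\`ere \'egalit\'e applique la r\'eversibilit\'e terme \`a terme et la derni\`ere la stochasticit\'e de $P$. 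En posant alors $\pi(x) = \alpha(x)/\cN$, la lin\'earit\'e fournit d'une part $\sum_{x\in\cX}\pi(x) = \cN/\cN = 1$, donc $\pi$ est une mesure de probabilit\'e, et d'autre part
\begin{equation}
 \sum_{x\in\cX} \pi(x) p_{xy}
 = \frac{1}{\cN} \sum_{x\in\cX} \alpha(x) p_{xy}
 = \frac{1}{\cN}\alpha(y)
 = \pi(y)
 \qquad \forall y\in\cX\;,
\end{equation}
ce qui \'etablit l'invariance de $\pi$ et conclut.

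Il n'y a pas d'obstacle r\'eel~: la d\'emonstration se ram\`ene \`a ce calcul de quelques lignes. Les seuls points m\'eritant un peu d'attention sont mineurs et tiennent au r\^ole respectif des deux hypoth\`eses. Toutes les sommes manipul\'ees sont \`a termes positifs ou nuls et domin\'ees par $\alpha$, puisque $\alpha(x)p_{xy} \leqs \alpha(x)$ et $\sum_{x\in\cX}\alpha(x) = \cN < \infty$~; cela rend licites la mise en facteur de $\alpha(y)$ et les r\'earrangements. On voit alors clairement que la stochasticit\'e de $P$ sert uniquement \`a obtenir $\sum_{x\in\cX} p_{yx} = 1$, tandis que la finitude de $\cN$ est exactement ce qui permet de transformer la mesure invariante $\alpha$ en une probabilit\'e.
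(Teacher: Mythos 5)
Votre d\'emonstration est correcte et suit essentiellement la m\^eme d\'emarche que celle du polycopi\'e~: la r\'eversibilit\'e $\alpha(x)p_{xy}=\alpha(y)p_{yx}$ combin\'ee \`a la stochasticit\'e de $P$ donne l'invariance, puis la finitude de $\cN$ permet de normaliser en une probabilit\'e. Vos pr\'ecisions suppl\'ementaires (le fait que $\cN>0$ car $\alpha$ est non identiquement nulle, et la justification des r\'earrangements de sommes par positivit\'e) sont des raffinements bienvenus mais ne changent pas la nature de l'argument.
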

\begin{proof}
Pour tout $x\in\cX$, on a 
\begin{equation}
 \sum_{y\in\cX} \pi(y) p_{yx} 
 = \frac{1}{\cN}\sum_{y\in\cX} \alpha(y) p_{yx}
 = \frac{1}{\cN}\sum_{y\in\cX} p_{xy} \alpha(x) 
 = \frac{1}{\cN} \alpha(x)
 = \pi(x)\;.
\end{equation}
De plus, $\pi$ est bien une mesure de probabilit\'e, puisque la somme des $\pi(x)$ vaut $1$. 
\end{proof}

\begin{figure}
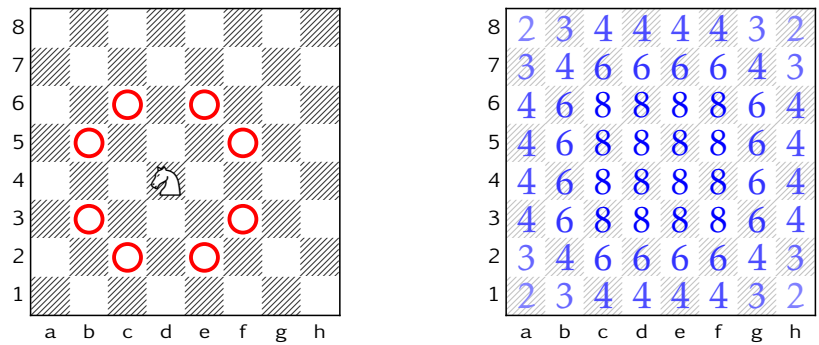

\begin{center}
\vspace{-5mm}
\chessboard[smallboard,
  boardfontsize=14.4pt,
  setwhite={nd4},showmover=false,
  color=red,
  padding=-0.2em,
  pgfstyle=circle,
  markfields={b3,b5,c2,c6,e2,e6,f3,f5}
  ]
\hspace{10mm}
\setchessboard{
blackfieldcolor=black!30,
setfontcolors}
\chessboard[smallboard,
  showmover=false,
  boardfontsize=14.4pt,
  pgfstyle=text,
  color=blue,
  text=$8$\bfseries\sffamily,
  markregion=c3-c3,
  markregion=d3-d3,
  markregion=e3-e3,
  markregion=f3-f3,
  markregion=c4-c4,
  markregion=d4-d4,
  markregion=e4-e4,
  markregion=f4-f4,
  markregion=c5-c5,
  markregion=d5-d5,
  markregion=e5-e5,
  markregion=f5-f5,
  markregion=c6-c6,
  markregion=d6-d6,
  markregion=e6-e6,
  markregion=f6-f6,
  color=blue!80,
  text=$6$\bfseries\sffamily,
  markregion=c2-c2,
  markregion=d2-d2,
  markregion=e2-e2,
  markregion=f2-f2,
  markregion=c7-c7,
  markregion=d7-d7,
  markregion=e7-e7,
  markregion=f7-f7,
  markregion=b3-b3,
  markregion=b4-b4,
  markregion=b5-b5,
  markregion=b6-b6,
  markregion=g3-g3,
  markregion=g4-g4,
  markregion=g5-g5,
  markregion=g6-g6,
  color=blue!70,
  text=$4$\bfseries\sffamily,
  markregion=c1-c1,
  markregion=d1-d1,
  markregion=e1-e1,
  markregion=f1-f1,
  markregion=c8-c8,
  markregion=d8-d8,
  markregion=e8-e8,
  markregion=f8-f8,
  markregion=a3-a3,
  markregion=a4-a4,
  markregion=a5-a5,
  markregion=a6-a6,
  markregion=h3-h3,
  markregion=h4-h4,
  markregion=h5-h5,
  markregion=h6-h6,
  markregion=b2-b2,
  markregion=g2-g2,
  markregion=b7-b7,
  markregion=g7-g7,
  color=blue!60,
  text=$3$\bfseries\sffamily,
  markregion=b1-b1,
  markregion=a2-a2,
  markregion=g1-g1,
  markregion=h2-h2,
  markregion=b8-b8,
  markregion=a7-a7,
  markregion=g8-g8,
  markregion=h7-h7,
  color=blue!50,
  text=$2$\bfseries\sffamily,
  markregion=a1-a1,
  markregion=h1-h1,
  markregion=a8-a8,
  markregion=h8-h8
  ]  
\end{center}
\vspace{-5mm}
 \caption[]{Mouvements permis du cavalier sur l'\'echiquier. Nombre de
 mouvements possibles \`a partir de chaque case.}
 \label{fig_echecs}
\end{figure}

\begin{example}[Le cavalier fou]
Un cavalier se d\'eplace sur un \'echiquier standard (de $64$ cases), en choisissant 
\`a chaque pas l'un des mouvements permis par les r\`egles du jeu des \'echecs, uniform\'ement 
au hasard (\figref{fig_echecs}). La position du cavalier est d\'ecrite par une \CM\ 
sur l'ensemble $\cX$ des $64$ cases de l'\'echiquier. 
Si $\alpha(x)$ d\'esigne le nombre de mouvements permis en partant de la case $x$, alors 
les probabilit\'es de transition sont donn\'ees par 
\begin{equation}
 p_{xy} = 
 \begin{cases}
  \frac{1}{\alpha(x)} & \text{si le mouvement de $x$ vers $y$ est permis\;,}\\
  0 & \text{sinon\;.}
 \end{cases}
\end{equation} 
On v\'erifie que $\alpha$ est un vecteur r\'eversible, et que 
$\cN = \sum_{x\in\cX} \alpha(x) = 336$ 
(voir \figref{fig_echecs}). La \CM\ est donc r\'eversible, et admet la probabilit\'e 
invariante $\pi$ donn\'ee par 
\begin{equation}
 \pi(x) = \frac{\alpha(x)}{336}\;.
\end{equation} 
Le Th\'eor\`eme~\ref{thm:rec_pos_pi} permet alors de calculer le temps de 
r\'ecurrence moyen vers n'importe quel \'etat. Celui-ci vaut 
\begin{equation}
 \expecin{x}{\tau_x} = \frac{1}{\pi(x)} = \frac{336}{\alpha(x)}\;.
\end{equation} 
\end{example}


\section{Ap\'eriodicit\'e, convergence vers la probabilit\'e invariante}
\label{sec:rap_conv} 

\begin{definition}[P\'eriode]
La \defwd{p\'eriode} d'un \'etat $x\in\cX$ est le nombre 
\begin{equation}
 d_x = \pgcd\bigsetsuch{n\geqs1}{\probin{x}{X_n = i} > 0}\;.
\end{equation} 
Si $d_x = 1$, alors on dit que $x$ est \defwd{ap\'eriodique}. 
Si tout $x\in\cX$ est ap\'eriodique, on dit que la \CM\ est ap\'eriodique. 
\end{definition}

La p\'eriode est \`a nouveau un propri\'et\'e de classe. 

\begin{proposition}[P\'eriode et communication]
Si $x \sim y$, alors $d_x = d_y$. Par cons\'equent, si la \CM\ est irr\'eductible 
et admet un \'etat ap\'eriodique, alors la \CM\ est ap\'eriodique. 
\end{proposition}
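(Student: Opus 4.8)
The plan is to reduce the statement to an elementary divisibility argument built on the concatenation of paths. Throughout, write $p^{(n)}_{xy} = \probin{x}{X_n = y}$, and recall that decomposing on the state occupied at an intermediate time gives the Chapman--Kolmogorov inequality
\begin{equation}
 p^{(m+n)}_{xz} \geqs p^{(m)}_{xy}\, p^{(n)}_{yz}
 \qquad \forall x,y,z\in\cX,\ \forall m,n\geqs0\;,
\end{equation}
which is immediate from $\probin{x}{X_n = y} = (\nu P^n)_y$ specialised to $\nu=\delta_x$. The case $x=y$ being trivial, I assume $x\neq y$. Since $x\sim y$, i.e. $x\reaches y$ and $y\reaches x$, I may fix integers $a,b\geqs1$ with $p^{(a)}_{xy}>0$ and $p^{(b)}_{yx}>0$.

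First I would show that $d_x$ divides every return time of $y$. Concatenating the two chosen paths gives $p^{(a+b)}_{xx}\geqs p^{(a)}_{xy}p^{(b)}_{yx}>0$, so $a+b$ belongs to the set whose gcd defines $d_x$; hence $d_x\mid (a+b)$. Next, for any $n\geqs1$ with $p^{(n)}_{yy}>0$, inserting the loop at $y$ between the two paths yields
\begin{equation}
 p^{(a+n+b)}_{xx}\geqs p^{(a)}_{xy}\,p^{(n)}_{yy}\,p^{(b)}_{yx}>0\;,
\end{equation}
so that $d_x\mid (a+n+b)$ as well. Subtracting the two divisibility relations gives $d_x\mid\bigpar{(a+n+b)-(a+b)}=n$. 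Thus $d_x$ divides every element of $\setsuch{n\geqs1}{p^{(n)}_{yy}>0}$, and therefore divides their gcd, i.e. $d_x\mid d_y$.

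The argument is symmetric in $x$ and $y$ (swap their roles and the two chosen paths), which yields $d_y\mid d_x$; combined with $d_x\mid d_y$ this forces $d_x=d_y$, proving the first assertion. For the consequence, irreducibility means $x\sim y$ for all $x,y$, so all periods coincide by what precedes; if some state is aperiodic then the common period equals $1$, i.e. every state is aperiodic and the chain is aperiodic.

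I do not expect a genuine obstacle here: the only step requiring care is the passage $d_x\mid(a+b)$ and $d_x\mid(a+n+b)\Rightarrow d_x\mid n$, which is precisely where the gcd structure is exploited, together with the mild bookkeeping of ensuring $a,b\geqs1$ (automatic once $x\neq y$, the case $x=y$ being vacuous). Everything else is the routine concatenation inequality above.
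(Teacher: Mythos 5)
Your proof is correct. Note that the paper itself states this proposition without proof (Chapter 2 explicitly recalls basic facts about Markov chains \myquote{souvent sans d\'emonstration}), so there is no argument of the author's to compare against; what you give is the standard concatenation-and-divisibility proof, and it is complete: the Chapman--Kolmogorov inequality, the reduction to $a,b\geqs1$ via $x\neq y$, the two divisibility relations $d_x\mid(a+b)$ and $d_x\mid(a+n+b)$ yielding $d_x\mid n$, and the fact that a common divisor of a set divides its gcd are all correctly deployed, and the symmetry step closes the equality. The only point left implicit is that the return-time set of $y$ is nonempty (so that $d_y$ is well defined); this follows from the same concatenation, since $p^{(b+a)}_{yy}\geqs p^{(b)}_{yx}\,p^{(a)}_{xy}>0$, and is worth one line if you want the write-up airtight.
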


\begin{example}[Marche al\'eatoire sym\'etrique sur $\Z^d$]
Pour la marche al\'eatoire sym\'etrique sur $\Z^d$, la p\'eriode de l'\'etat $0$ 
vaut $d_0 = 2$. En effet, partant de $0$, la marche ne peut retourner en $0$ 
qu'au temps pairs. Par cons\'equent, la marche n'est pas ap\'eriodique (tous les 
\'etats sont de p\'eriode $2$). 
\end{example}

L'importance de la notion d'ap\'eriodicit\'e vient du r\'esultat crucial suivant.

\begin{theorem}[Convergence vers la probabilit\'e invariante]
\label{thm:convergence_aperiodique} 
Soit $(X_n)_{n\geqs0}$ une \CM\ irr\'eductible, ap\'eriodique et r\'ecurrente 
positive, et soit $\pi$ son unique probabilit\'e invariante. Alors pour toute 
loi initiale $\nu$ et tout $x\in\cX$, on a 
\begin{equation}
 \lim_{n\to\infty} \probin{\nu}{X_n = x} = \pi(x)\;. 
\end{equation} 
\end{theorem}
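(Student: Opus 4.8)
La démonstration que je propose repose sur la \emph{méthode de couplage}. L'idée est de faire évoluer simultanément deux copies de la chaîne, l'une issue de la loi initiale $\nu$, l'autre issue de la probabilité invariante $\pi$, puis de montrer qu'elles finissent par se rencontrer et peuvent alors poursuivre ensemble. Concrètement, je considère deux \CMs\ \emph{indépendantes} $(X_n)_{n\geqs0}$ et $(Y_n)_{n\geqs0}$ de matrice de transition $P$, de lois initiales respectives $\nu$ et $\pi$. Le couple $(X_n,Y_n)_{n\geqs0}$ est alors une \CM\ sur $\cX\times\cX$, dont les probabilités de transition sont $\tilde p_{(x,y)(x',y')}=p_{xx'}\,p_{yy'}$.

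Il s'agit d'abord de vérifier que cette chaîne produit est irréductible et récurrente positive. L'irréductibilité est l'étape cruciale, et la seule où intervient l'apériodicité. Je l'obtiens à partir du lemme suivant: si la chaîne de départ est irréductible et apériodique, alors pour tout $x,y\in\cX$ il existe $n_0$ tel que $\probin{x}{X_n=y}>0$ pour tout $n\geqs n_0$. Pour le démontrer, j'observe que l'ensemble $R=\setsuch{n\geqs1}{\probin{x}{X_n=x}>0}$ est stable par addition (par la propriété de Markov) et de $\pgcd$ égal à $1$ par apériodicité; un fait arithmétique classique sur les semi-groupes numériques assure alors que $R$ contient tous les entiers assez grands, et l'on conclut en composant avec un chemin de $x$ vers $y$ fourni par l'irréductibilité. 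Ce lemme entraîne que $\tilde p^{\,n}_{(x,y)(x',y')}>0$ dès que $n$ est assez grand, d'où l'irréductibilité du produit. Quant à la récurrence positive, elle découle du Théorème~\ref{thm:rec_pos_pi}: la mesure produit $\pi\otimes\pi$, donnée par $(\pi\otimes\pi)(x,y)=\pi(x)\pi(y)$, est une probabilité invariante de la chaîne produit (calcul immédiat par séparation des variables), donc cette chaîne, étant irréductible et admettant une probabilité invariante, est récurrente positive, et en particulier récurrente.

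Je fixe ensuite un état $z\in\cX$ et j'introduis le temps de couplage $T=\inf\setsuch{n\geqs0}{X_n=Y_n}$. Comme la chaîne produit est irréductible et récurrente, elle atteint presque sûrement l'état $(z,z)$, donc $T<\infty$ presque sûrement. Je définis alors une suite $(X'_n)_{n\geqs0}$ par $X'_n=X_n$ pour $n<T$ et $X'_n=Y_n$ pour $n\geqs T$; la propriété de Markov forte garantit que $(X'_n)$ a la même loi que $(X_n)$, de sorte que $\probin{\nu}{X_n=x}=\prob{X'_n=x}$. Puisque $X'_n=Y_n$ sur l'événement $\set{n\geqs T}$, l'inégalité de couplage donne
\[
 \bigabs{\probin{\nu}{X_n=x}-\pi(x)}
 = \bigabs{\prob{X'_n=x}-\prob{Y_n=x}}
 \leqs \prob{X'_n\neq Y_n} \leqs \prob{T>n}\;.
\]
Comme $T<\infty$ presque sûrement, $\prob{T>n}\to0$ lorsque $n\to\infty$, ce qui établit la convergence annoncée.

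Le point délicat est l'irréductibilité de la chaîne produit: c'est le seul endroit où l'apériodicité est indispensable. Sans elle, le couplage peut échouer — par exemple pour la marche symétrique sur $\Z$, la différence $X_n-Y_n$ conserve la parité de sa valeur initiale, de sorte que les deux copies peuvent ne jamais se rencontrer. Le cœur technique est donc le lemme arithmétique garantissant la positivité de $\probin{x}{X_n=y}$ pour tout $n$ assez grand.
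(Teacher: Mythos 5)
Votre d\'emonstration est correcte et suit essentiellement la m\^eme approche que celle du cours\,: le couplage de Doeblin, avec deux copies ind\'ependantes de la cha\^ine partant de $\nu$ et de $\pi$, l'irr\'eductibilit\'e de la cha\^ine produit obtenue gr\^ace \`a l'ap\'eriodicit\'e, la r\'ecurrence positive d\'eduite de l'existence de la probabilit\'e invariante $\pi\otimes\pi$ via le Th\'eor\`eme~\ref{thm:rec_pos_pi}, puis l'in\'egalit\'e de couplage $\bigabs{\probin{\nu}{X_n=x}-\pi(x)}\leqs \prob{T>n}$. Vous explicitez simplement le lemme arithm\'etique (stabilit\'e par addition et $\pgcd$ \'egal \`a $1$ de l'ensemble des temps de retour) que le texte r\'esume par une r\'ef\'erence au th\'eor\`eme de B\'ezout, ce qui compl\`ete utilement l'esquisse du cours sans en changer la structure.
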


Nous allons esquisser l'id\'ee principale d'une d\'emonstration de ce th\'eor\`eme, 
due \`a Wolfgang Doeblin. Consid\'erons deux \CMs\ ind\'ependantes, $(X_n)_{n\geqs0}$ 
et $(Y_n)_{n\geqs0}$, ayant les deux la m\^eme matrice de transition $P$, mais 
la premi\`ere partant de $\nu$, alors que la seconde part de $\pi$. Le couple 
$(X_n,Y_n)$ est une \CM\ sur $\cX\times\cX$, de probabilit\'es de transition 
\begin{equation}
 p^\star_{(x,y),(u,v)} = p_{xu}p_{yv}\;, 
\end{equation} 
et de loi initiale $\rho = \nu\otimes\pi$, d\'efinie par 
\begin{equation}
 \rho(x,y) = \nu(x)\pi(y)\;.
\end{equation} 
On montre alors (\`a l'aide du th\'eor\`eme de B\'ezout) que cette \CM\ est 
encore irr\'eductible et ap\'eriodique. Comme elle admet la probabilit\'e 
invariante $\pi\otimes\pi$, elle est aussi r\'ecurrente positive. Soit alors 
\begin{equation}
\label{eq:tau_Delta} 
 \tau_\Delta = \inf\bigsetsuch{n\geqs0}{X_n = Y_n}
\end{equation} 
le temps de passage sur la \defwd{diagonale} 
$\Delta = \setsuch{(x,x)}{x\in\cX}$. On d\'eduit de la r\'ecurrence positive 
que $\tau_\Delta$ est presque s\^urement fini. Introduisons alors le processus 
$(Z_n)_{n\geqs0}$, d\'efini par 
\begin{equation}
 Z_n = 
 \begin{cases}
  X_n & \text{si $n<\tau_\Delta$\;,}\\
  Y_n & \text{si $n\geqs\tau_\Delta$\;.}
 \end{cases}
\end{equation} 
Il suit de l'expression~\eqref{eq:proba_traj} de la probabilit\'e d'une trajectoire 
que $(Z_n)_{n\geqs0}$ est une \CM\ de loi initiale $\nu$ et de matrice de transition $P$. 
Par cons\'equent, $Z_n$ est \'egal en loi \`a $X_n$ pour tout $n\geqs0$. Ceci implique 
que pour tout $n\in\N$ et tout $x\in\cX$, on a 
\begin{equation}
\label{eq:proof_conv_Doeblin} 
 \probin{\rho}{X_n = x,\tau_\Delta \leqs n} 
 = \probin{\rho}{Z_n = x,\tau_\Delta \leqs n} 
 = \probin{\rho}{Y_n = x,\tau_\Delta \leqs n}\;. 
\end{equation} 
La premi\`ere \'egalit\'e suit de l'\'egalit\'e en loi de $X_n$ et $Y_n$, alors que la 
seconde vient du fait que $Z_n = Y_n$ pour $\tau_\Delta \leqs n$. 

On observe maintenant que pour tout $n\in\N$ et tout $x\in\cX$, on a 
\begin{align}
 \probin{\nu}{X_n = x} &= \probin{\rho}{X_n = x, \tau_\Delta \leqs n}
 + \probin{\rho}{X_n = x, \tau_\Delta > n}\;, \\
 \pi(x) = 
 \probin{\pi}{Y_n = x} &= \probin{\rho}{Y_n = x, \tau_\Delta \leqs n}
 + \probin{\rho}{Y_n = x, \tau_\Delta > n}\;.
\end{align}
En prenant la diff\'erence et en utilisant~\eqref{eq:proof_conv_Doeblin}, on obtient 
\begin{equation}
 \bigabs{\probin{\nu}{X_n = x} - \pi(x)} 
 \leqs \bigabs{\probin{\rho}{X_n = x, \tau_\Delta > n} 
 - \probin{\rho}{Y_n = x, \tau_\Delta > n}}
 \leqs 2 \probin{\rho}{\tau_\Delta > n}\;.
\end{equation} 
La \CM\ $(X_n,Y_n)_{n\geqs0}$ \'etant r\'ecurrente positive, cette quantit\'e tend 
vers $0$ lorsque $n$ tend vers l'infini, ce qui prouve le th\'eor\`eme. En 
fait, on a m\^eme obtenu un peu mieux~: pour tout $n\geqs0$, on a 
\begin{equation}
\label{eq:majo_couplage} 
 \sum_{x\in\cX} \bigabs{\probin{\nu}{X_n = x} - \pi(x)}
 \leqs 2 \probin{\rho}{\tau_\Delta > n}\;.
\end{equation} 
Si on arrive \`a majorer la probabilit\'e $\probin{\rho}{\tau_\Delta > n}$, 
on obtient donc une majoration d'une distance entre la loi de $X_n$ et $\pi$ 
(il s'agit d'une distance du type $\ell^1$). C'est un exemple de ce qu'on appelle 
un \defwd{argument de couplage}. 


\section{Exercices}
\label{sec:rap_exo} 

\begin{exercise}
\label{exo:Ehrenfest} 
On consid\`ere le mod\`ele des urnes d'Ehrenfest \`a $N$ boules, c'est-\`a-dire la \CM\ sur l'ensemble $\cX = \set{0,1,\dots N}$ de probabilit\'es de transition 
\[
p_{xy} = 
\begin{cases}
\frac{x}{N} & \text{si $y=x-1$\;,} \\
1-\frac{x}{N} & \text{si $y=x+1$\;,} \\
0 & \text{sinon\;.} 
\end{cases} 
\]

\begin{enumerate}
\item   Montrer que cette cha\^ine de Markov est irr\'eductible. Est-elle ap\'eriodique\,?
 
\item   Montrer que la distribution de probabilit\'e invariante de cette cha\^ine de Markov 
suit une loi bin\^omiale, dont on pr\'ecisera les param\`etres. 
\end{enumerate}
\end{exercise}

\begin{exercise}
Soit $\cG=(V,E)$ un graphe non orient\'e connexe fini. Soit $(X_n)_{n\geqs0}$ la \CM\
 sur $V$ construite en choisissant pour $X_{n+1}$, de mani\`ere
\'equiprobable, l'un des sommets adjacents \`a $X_n$. 
\begin{enumerate}
\item	Montrer que le nombre de voisins de chaque site forme un vecteur
r\'eversible.
\item	En d\'eduire une expression pour la probabilit\'e invariante de la
\CM.
\end{enumerate} 
\end{exercise}

\begin{exercise}
Soit $p\in[0,1]$. 
On consid\`ere la \CM\ suivante sur $\cX=\N$: 
 
\begin{center}
\begin{tikzpicture}[->,>=stealth',shorten >=2pt,shorten <=2pt,auto,node
distance=3.0cm, thick,main node/.style={circle,scale=0.7,minimum size=1.1cm,
fill=blue!20,draw,font=\sffamily\Large}]

  \node[main node] (0) {$0$};
  \node[main node] (1) [right of=0] {$1$};
  \node[main node] (2) [right of=1] {$2$};
  \node[main node] (3) [right of=2] {$3$};
  \node[node distance=2cm] (4) [right of=3] {$\dots$};

  \path[every node/.style={font=\sffamily\small}]
    (0) edge [loop left,left,distance=1.5cm,out=-150,in=150] node {$1-p$} (0)
    (0) edge [bend left,above] node {$p$} (1)
    (1) edge [bend left,above] node {$p$} (2)
    (2) edge [bend left,above] node {$p$} (3)
    (3) edge [bend left,above] node {$p$} (4)
    (1) edge [bend left,below] node {$1-p$} (0)
    (2) edge [bend left,below] node {$1-p$} (1)
    (3) edge [bend left,below] node {$1-p$} (2)
    (4) edge [bend left,below] node {$1-p$} (3)
    ;
\end{tikzpicture}
\end{center}

\begin{enumerate}
\item	Pour quelles valeurs de $p$ la \CM\ est-elle irr\'eductible?

On suppose dans la suite que $p$ est tel que la \CM\ soit irr\'eductible.

\item	La \CM\ est-elle ap\'eriodique?

\item	On suppose que la \CM\ est r\'eversible, et soit $\alpha$ un
vecteur r\'eversible. Ecrire une relation de r\'ecurrence pour les composantes
de $\alpha$, et en d\'eduire $\alpha_n$ en fonction de $\alpha_0$.

\item	Pour quelles valeurs de $p$ la \CM\ admet-elle une probabilit\'e 
invariante $\pi$? D\'eter\-miner $\pi$ pour ces valeurs de $p$.

\item	Pour quelles valeurs de $p$ la \CM\ est-elle r\'ecurrente?
R\'ecurrente positive?

\item	D\'eterminer le temps de r\'ecurrence moyen $\expecin{0}{\tau_0}$. 

\item	Calculer la position moyenne $\expecin{\pi}{X_n}$ pour les valeurs de
$p$ telles que $\pi$ existe. 
\end{enumerate}
\end{exercise}

\begin{exercise}
On consid\`ere une marche al\'eatoire unidimensionnelle sym\'etrique sur 
l'en\-semble $\cX = \set{0,1,\dots,N}$ avec conditions aux bords absorbantes,
c'est-\`a-dire que l'on suppose que $p_{00} = p_{NN} = 1$. Soit 
\[
\tau = \tau_0 \wedge \tau_N
= \inf\bigsetsuch{n\geqs0}{X_n\in\set{0,N}}
\]
le temps d'absorption, et soit 
\[
p(x) = \probin{i}{X_\tau=N}\;.
\]

\begin{enumerate}
\item	D\'eterminer $p(0)$ et $p(N)$. 
\item	Montrer que pour tout $x\in\set{1,\dots,N-1}$, on a 
\[
p(x) = \frac12 \bigbrak{p(x-1)+p(x+1)}\;.
\]

Une fonction $f:\Z\supset A\to\R$ telle que 
$f(x) = \frac12 \brak{f(x-1)+f(x+1)}$ pour tout $x\in A$ est appel\'ee
\emph{harmonique}\/ (discr\`ete). 

\item	Montrer (par l'absurde) le \emph{principe du maximum}: Une fonction
harmonique sur $A$ ne peut atteindre son minimum et son maximum qu'au bord
de $A$ (on pourra supposer $A$ de la forme $A=\set{a,a+1,\dots,b-1,b}$,
dans ce cas son bord est $\partial A=\set{a,b}$). 

\item	Montrer que si $f$ et $g$ sont deux fonctions harmoniques sur $A$,
alors toute combinaison lin\'eaire de $f$ et $g$ est encore harmonique.

\item	Montrer que si $f$ et $g$ sont deux fonctions harmoniques sur $A$,
qui co\"\i ncident sur le bord de $A$, alors elles sont \'egales partout
dans $A$ (consid\'erer $f-g$).

\item	Montrer que toute fonction lin\'eaire $f(x)=cx+h$ est harmonique. 

\item	En utilisant les points 1., 2., 5.~et 6., d\'eterminer la fonction
$p$.
\end{enumerate}
\end{exercise}

\begin{exercise}
On consid\`ere une marche al\'eatoire sym\'etrique sur
$\cX=\set{0,1,\dots,N}$, avec conditions au bord absorbantes,
c'est-\`a-dire que d\`es que la marche atteint l'un des \'etats $0$ ou
$N$, elle y reste ind\'efiniment. Soit 
\[
\tau = \inf\setsuch{n\geqs 0}{X_n\in\set{0,N}}
\]
le temps d'absorption. Par convention, $\tau=0$ si $X_0\in\set{0,N}$. 
Pour $\lambda\in\R$ et $i\in\cX$ on pose 
\[
f(x,\lambda) = \bigexpecin{x}{\e^{-\lambda\tau}\indexfct{X_\tau=N}}
= 
\begin{cases}
\bigexpecin{x}{\e^{-\lambda\tau}} & \text{si $X_\tau=N$\;,} \\
0 & \text{sinon\;.}
\end{cases}
\]

\begin{enumerate}
\item	Que valent $f(0,\lambda)$ et $f(N,\lambda)$?

\item	Montrer que pour tout $x\in\set{1,\dots,N-1}$, 
\[
\probin{x}{\tau=n} = \frac12 
\bigbrak{\probin{x-1}{\tau=n-1} + \probin{x+1}{\tau=n-1}}\;.
\]

\item	Montrer que pour tout $x\in\set{1,\dots,N-1}$, 
\[
f(x,\lambda) = \frac12\e^{-\lambda}
\bigbrak{f(x-1,\lambda) + f(x+1,\lambda)}\;.
\]

\item	Trouver une relation entre $c$ et $\lambda$ telle que l'\'equation
ci-dessus pour $f$ admette des solutions de la forme $f(x,\lambda)=\e^{cx}$. 
Montrer \`a l'aide d'un d\'eveloppement limit\'e que 
\[
c^2 = 2\lambda + \Order{\lambda^2}\;.
\]

\item	D\'eterminer des constantes $a$ et $b$ telles que 
\[
\bigexpecin{x}{\e^{-\lambda\tau}\indexfct{X_\tau=N}}
= a \e^{cx} + b \e^{-cx}\;.
\]

\item	Effectuer un d\'eveloppement limit\'e  au
premier ordre en $\lambda$ de l'\'egalit\'e ci-dessus.
En d\'eduire 
\[
\probin{x}{X_\tau=N}\;.
\]

\item   Calculer
\[
\bigexpecin{x}{\tau \indexfct{X_\tau=N}}\;.
\]

\item	Sans faire les calculs, indiquer comment proc\'eder pour d\'eterminer
la variance de la variable al\'eatoire $\tau \indexfct{X_\tau=N}$ et l'esp\'erance et la variance de $\tau$. 
\end{enumerate}

On rappelle les d\'eveloppements limit\'es suivants:
\begin{align}
\cosh(x) &= \frac{\e^x+\e^{-x}}{2}
= 1 + \frac{1}{2!}x^2 + \Order{x^4}\;, \\
\sinh(x) &= \frac{\e^x-\e^{-x}}{2}
= x + \frac{1}{3!}x^3 + \Order{x^5}\;.
\end{align} 
\end{exercise}


\chapter{Th\'eorie spectrale et vitesse de convergence}
\label{chap:cm_spectrale} 

Dans ce chapitre et le suivant, nous allons consid\'erer des \CMs\ $(X_n)_{n\geqs0}$ 
irr\'eductibles, r\'ecurrentes positives et ap\'eriodiques sur un ensemble 
d\'enombrable $\cX$. Soit $f:\cX\to\R$ une fonction born\'ee, et soit $\pi$ 
la probabilit\'e invariante de la \CM. Le but est d'estimer la quantit\'e 
\begin{equation}
 \expecin{\pi}{f} = \sum_{x\in\cX} \pi(x) f(x)\;.
\end{equation} 
Nous savons par le Th\'eor\`eme~\ref{thm:convergence_aperiodique} que l'on a 
\begin{equation}
 \expecin{\pi}{f} 
 = \lim_{n\to\infty} \sum_{x\in\cX} \probin{\nu}{X_n = x}f(x)
 = \lim_{n\to\infty} \expecin{\nu}{f(X_n)}\;,
\end{equation} 
pour toute loi initiale $\nu$. Notre but est maintenant de majorer l'erreur 
\begin{equation}
\label{eq:erreur_expecf} 
 \bigabs{\expecin{\nu}{f(X_n)} - \expecin{\pi}{f}}\;.
\end{equation} 
Une premi\`ere mani\`ere de le faire est la suivante.

\begin{lemma}[Couplage et vitesse de convergence]
Si la \CM\ est ap\'eriodique, alors 
\begin{equation}
 \bigabs{\expecin{\nu}{f(X_n)} - \expecin{\pi}{f}}
 \leqs 2 \probin{\nu\otimes\pi}{\tau_\Delta > n} \sup_{x\in\cX} \abs{f(x)}\;,
\end{equation} 
o\`u $\tau_\Delta$ est d\'efini dans~\eqref{eq:tau_Delta}. 
\end{lemma}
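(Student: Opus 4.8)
The plan is to reduce the statement to the $\ell^1$ bound already obtained in the coupling proof of Theorem~\ref{thm:convergence_aperiodique}, namely inequality~\eqref{eq:majo_couplage}, which controls $\sum_{x}\bigabs{\probin{\nu}{X_n=x}-\pi(x)}$ by $2\probin{\nu\otimes\pi}{\tau_\Delta>n}$. First I would write both expectations as sums over the state space. By definition $\expecin{\pi}{f}=\sum_{x\in\cX}\pi(x)f(x)$ and $\expecin{\nu}{f(X_n)}=\sum_{x\in\cX}\probin{\nu}{X_n=x}f(x)$; since $f$ is bounded and $\sum_x\probin{\nu}{X_n=x}=\sum_x\pi(x)=1$, both series converge absolutely and their difference may be recombined termwise as
\begin{equation}
 \expecin{\nu}{f(X_n)} - \expecin{\pi}{f}
 = \sum_{x\in\cX} \bigpar{\probin{\nu}{X_n = x} - \pi(x)}\, f(x)\;.
\end{equation}

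Next I would apply the triangle inequality and bound $\abs{f(x)}$ by its supremum, pulling the latter out of the sum:
\begin{equation}
 \bigabs{\expecin{\nu}{f(X_n)} - \expecin{\pi}{f}}
 \leqs \sum_{x\in\cX} \bigabs{\probin{\nu}{X_n = x} - \pi(x)}\,\abs{f(x)}
 \leqs \Bigpar{\sup_{x\in\cX}\abs{f(x)}} \sum_{x\in\cX} \bigabs{\probin{\nu}{X_n = x} - \pi(x)}\;.
\end{equation}
The remaining factored sum is precisely the quantity estimated by the Doeblin coupling: taking $\rho=\nu\otimes\pi$ in~\eqref{eq:majo_couplage} gives $\sum_{x}\bigabs{\probin{\nu}{X_n=x}-\pi(x)}\leqs 2\probin{\nu\otimes\pi}{\tau_\Delta>n}$, and substituting this into the previous display yields the claimed inequality.

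The argument is essentially immediate once~\eqref{eq:majo_couplage} is available, so there is no genuine obstacle; the only points deserving a word of care are the absolute convergence used to split the series, and the fact that the chapter's standing hypotheses (irreducibility, positive recurrence, aperiodicity) are exactly those under which the coupling construction and~\eqref{eq:majo_couplage} were established. If one preferred a self-contained derivation not quoting~\eqref{eq:majo_couplage}, I would instead re-run the coupling at the level of $f$: with $(X_n)$, $(Y_n)$ independent, $Y_0\sim\pi$, and $Z_n$ the spliced process of~\eqref{eq:tau_Delta}, one writes $\expecin{\nu}{f(X_n)}-\expecin{\pi}{f}=\expecin{\rho}{f(Z_n)-f(Y_n)}$ using that $Z_n$ is equal in law to $X_n$ and that $Y_n\sim\pi$; the integrand vanishes on $\set{\tau_\Delta\leqs n}$ because $Z_n=Y_n$ there, while on $\set{\tau_\Delta>n}$ it is bounded in absolute value by $2\sup_x\abs{f(x)}$, giving the same estimate.
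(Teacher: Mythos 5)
Your proposal is correct and follows exactly the paper's own proof: the paper likewise writes $\expecin{\nu}{f(X_n)} - \expecin{\pi}{f} = \sum_{x\in\cX} \bigbrak{\probin{\nu}{X_n = x} - \pi(x)} f(x)$ and then invokes~\eqref{eq:majo_couplage}, with the triangle-inequality step you spell out left implicit. Your remarks on absolute convergence and the alternative self-contained coupling derivation are sound but not needed beyond what the paper does.
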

\begin{proof}
On a 
\begin{equation}
 \expecin{\nu}{f(X_n)} - \expecin{\pi}{f} 
 = \sum_{x\in\cX} \bigbrak{\probin{\nu}{X_n = x} - \pi(x)} f(x)\;.
\end{equation} 
Le r\'esultat suit donc de~\eqref{eq:majo_couplage}. 
\end{proof}

Si l'on arrive \`a contr\^oler $\probin{\rho}{\tau_\Delta > n}$, on obtient donc 
la majoration souhait\'ee. Toutefois, cela n'est pas toujours possible, et on doit 
alors avoir recours \`a d'autres approches. 
Dans ce chapitre, nous allons discuter comment l'erreur~\eqref{eq:erreur_expecf} 
d\'epend de quantit\'es li\'ees aux valeurs propres et vecteurs propres de la matrice de transition $P$. Une autre approche, plus robuste, bas\'ee sur les fonctions de Lyapounov, sera 
discut\'ee dans le chapitre suivant. 


\section{Quelques exemples simples}
\label{sec:spec_exemples} 

\begin{example}
Consid\'erons la matrice stochastique 
\begin{equation}
 P = 
 \begin{pmatrix}
  0 & 1 \\ 1 & 0
 \end{pmatrix}\;.
\end{equation} 
La \CM\ sur $\cX=\set{1,2}$ associ\'ee est irr\'eductible, r\'ecurrente positive, 
mais pas ap\'eriodique~: sa p\'eriode est \'egale \`a $2$. En fait, on a 
\begin{equation}
 P^n = 
 \begin{cases}
  P & \text{si $n$ est impair\;,}\\
  \one & \text{si $n$ est pair\;,}
 \end{cases}
\end{equation} 
o\`u $\one$ d\'enote la matrice identit\'e. Par cons\'equent,
\begin{equation}
 \expecin{\nu}{f(X_n)} 
 = \nu P^n f = 
 \begin{cases}
  \nu(1)f(2) + \nu(2)f(1) & \text{si $n$ est impair\;,}\\
  \nu(1)f(1) + \nu(2)f(2) & \text{si $n$ est pair\;,}
 \end{cases}
\end{equation} 
D'un autre c\^ot\'e, la \CM\ \'etant r\'ecurrente positive, elle admet une 
unique probabilit\'e invariante $\pi$, satisfaisant $\pi P = \pi$. On trouve facilement 
que $\pi = (\frac12, \frac12)$, ce qui implique 
\begin{equation}
 \expecin{\pi}{f(X_n)} 
 = \pi f = \frac12 (f(1) + f(2))\;. 
\end{equation} 
On s'aper\c coit que si $\nu \neq \pi$, alors $\expecin{\nu}{f}$ ne converge pas vers 
$\expecin{\pi}{f}$, sauf dans le cas parti\-culier $f(1) = f(2)$. 

Les valeurs propres de $P$ sont $1$ et $-1$. Des vecteurs propres \`a gauche associ\'es 
sont $\pi$ et $(1, -1)$. La valeur propre $-1$ est associ\'ee au fait que la \CM\ est 
$2$-p\'eriodique. 
\end{example}

\begin{example}
 On peut facilement g\'en\'eraliser cet exemple \`a des p\'eriodes sup\'erieures. Par exemple, 
la matrice stochastique 
\begin{equation}
 P = 
 \begin{pmatrix}
  0 & 1 & 0 \\
  0 & 0 & 1 \\
  1 & 0 & 0
 \end{pmatrix}
\end{equation}
satisfait $P^3 = \one$. Ses valeurs propres sont les trois racines cubiques de $1$, \`a 
savoir $1$ et $\e^{\pm\icx 2\pi/3}$.
La \CM\ associ\'ee est irr\'eductible, r\'ecurrente positive, et de p\'eriode $3$.
Elle admet l'unique probabilit\'e invariante $\pi = (\frac13,\frac13,\frac13)$. 
\`A nouveau, si $\nu \neq \pi$, alors $\expecin{\nu}{f}$ ne converge pas vers 
$\expecin{\pi}{f}$, sauf dans le cas particulier o\`u $f$ est constante.  
\end{example}

\begin{example}
Par contraste, consid\'erons la matrice stochastique 
\begin{equation}
 P = 
 \begin{pmatrix}
  \frac13 & \frac23 \\[3pt]
  \frac23 & \frac13
 \end{pmatrix}\;.
\end{equation} 
La \CM\ associ\'ee est irr\'eductible, r\'ecurrente positive, et ap\'eriodique 
(car, par exemple, on a $\probin{1}{X_1 = 1} = \frac13 > 0$ et 
$\probin{1}{X_2 = 1} = \frac59 > 0$). Les valeurs propres de $P$ sont 
$\lambda_0 = 1$ et $\lambda_1 = -\frac13$. Une mani\`ere de calculer $P^n$ 
est d'utiliser la \defwd{d\'ecomposition de Dunford} (que nous rappellerons \`a 
la section~\ref{sec:spec_Dunford})
\begin{equation}
 P = \lambda_0 \Pi_0 + \lambda_1 \Pi_1\;, 
 \qquad \Pi_0 = 
 \begin{pmatrix}
  \frac12 & \frac12 \\[3pt]
  \frac12 & \frac12
 \end{pmatrix}\;, 
 \qquad \Pi_0 = 
 \begin{pmatrix}
  \frac12 & -\frac12 \\[3pt]
  -\frac12 & \frac12
 \end{pmatrix}\;.
\end{equation} 
Les matrices $\Pi_0$ et $\Pi_1$ sont des \defwd{projecteurs}~: elles satisfont 
$\Pi_0^2 = \Pi_0$, et $\Pi_1^2 = \Pi_1$. Elles sont obtenues chacune en multipliant 
un vecteur propre \`a droite et un vecteur propre \`a gauche de $P$, proprement 
normalis\'es. De plus, on v\'erifie que $\Pi_0\Pi_1 = \Pi_1\Pi_0 = 0$. Ceci implique, 
par la formule du bin\^ome de Newton, que 
\begin{equation}
 P^n = \lambda_0^n \Pi_0 + \lambda_1^n \Pi_1 
 = \Pi_0 + \biggpar{-\frac13}^n \Pi_1\;.
\end{equation} 
Par cons\'equent, nous avons 
\begin{equation}
 \nu P^n f 
 = \frac12 \bigpar{f(1) + f(2)} 
 + \frac12 \biggpar{-\frac13}^n \bigpar{\nu(1) - \nu(2)} \bigpar{f(1) - f(2)}\;. 
\end{equation} 
Comme par ailleurs, $\pi = (\frac12, \frac12)$, on a 
\begin{equation}
 \expecin{\pi}{f}
 = \pi f 
 = \frac12 \bigpar{f(1) + f(2)}\;. 
\end{equation} 
Par cons\'equent, $\expecin{\nu}{f(X_n)}$ converge exponentiellement vite 
vers $\expecin{\pi}{f}$, avec une diff\'erence d'ordre $3^{-n}$. 
\end{example}

Ces exemples sugg\`erent que 
\begin{itemize}
\item   si la \CM\ est p\'eriodique, alors $P$ admet plusieurs 
valeurs propres diff\'erentes de module $1$, $P^n$ ne converge pas lorsque 
$n\to\infty$, et $\expecin{\nu}{f(X_n)}$ ne converge pas vers $\expecin{\pi}{f}$ 
si $\nu\neq\pi$, sauf pour des $f$ tr\`es particuliers;
\item  si la \CM\ est ap\'eriodique, alors $P$ admet $1$ comme valeur propre 
simple, toutes les autres valeurs propres de $P$ sont strictement inf\'erieures 
\`a $1$ en module, et $\expecin{\nu}{f(X_n)}$ converge vers $\expecin{\pi}{f}$ 
si $\nu\neq\pi$.
\end{itemize}
Nous allons voir dans les sections suivantes que ceci est effectivement le cas.


\section{Normes de vecteurs et de matrices}
\label{sec:spec_norm} 

Soit $P$ la matrice de transition d'une \CM\ irr\'eductible et r\'ecurrente 
positive. 
Nous savons que $P$ admet la valeur propre $\lambda_0 = 1$. Un vecteur propre \`a 
gauche associ\'e est $\pi$, alors qu'un vecteur propre \`a droite est le vecteur 
\begin{equation}
 \vone = 
 \begin{pmatrix}
  1 \\ 1 \\ \vdots \\ 1
 \end{pmatrix}\;.
\end{equation} 
En effet, la propri\'et\'e~\eqref{eq:mstoch} d'une matrice stochastique 
\'equivaut \`a $P\vone = \vone$. 

Dans la suite, il sera naturel de travailler avec les normes suivantes.

\begin{definition}[Normes de vecteurs]
La \defwd{norme $\ell^1$} d'un vecteur ligne $\mu$ est d\'efinie par 
\begin{equation}
 \norm{\mu}_1 = \sum_{x\in\cX} \abs{\mu(x)}\;.
\end{equation} 
La \defwd{norme $\ell^\infty$} (ou \defwd{norme sup}) d'un vecteur colonne est d\'efinie par 
\begin{equation}
 \norm{v}_\infty = \sup_{x\in\cX} \abs{v(x)}\;.
\end{equation} 
\end{definition}

Dans la suite, nous utiliserons souvent la majoration \'el\'ementaire 
\begin{equation}
\label{eq:l1_linfty} 
 \bigabs{\mu v} 
 = \biggabs{\sum_{x\in\cX} \mu(x)v(x)} 
 \leqs \sum_{x\in\cX} \abs{\mu(x)v(x)} 
 \leqs \norm{\mu}_1 \norm{v}_\infty\;.
\end{equation} 

\begin{lemma}[Normes et matrice stochastique]
Pour une matrice stochastique $P$, et tout vecteur ligne $\mu$ et vecteur colonne 
$v$ de dimension ad\'equate, on a 
\begin{equation}
 \norm{Pv}_\infty \leqs \norm{v}_\infty 
 \qquad\text{et}\qquad 
 \norm{\mu P}_1 \leqs \norm{\mu}_1\;.
\end{equation} 
De plus, il existe des vecteurs $\mu$ et $v$ non nuls tels que 
$\norm{Pv}_\infty = \norm{v}_\infty$ et $\norm{\mu P}_1 = \norm{\mu}_1$.
\end{lemma}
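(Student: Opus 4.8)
The plan is to derive both inequalities directly from the two defining properties of a stochastic matrix---nonnegativity of the entries $p_{xy}\geqs 0$ and the normalisation $\sum_{y}p_{xy}=1$---using nothing beyond the triangle inequality. For the sup-norm bound I would fix $x\in\cX$ and estimate the $x$-th component of $Pv$:
\[
 \bigabs{(Pv)(x)} = \biggabs{\sum_{y\in\cX} p_{xy}v(y)} \leqs \sum_{y\in\cX} p_{xy}\abs{v(y)} \leqs \norm{v}_\infty \sum_{y\in\cX} p_{xy} = \norm{v}_\infty\;,
\]
the last equality being exactly~\eqref{eq:mstoch}. Note that the series $\sum_y p_{xy}v(y)$ converges absolutely since $v$ is bounded and the weights $p_{xy}$ sum to $1$, so $Pv$ is well defined. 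Taking the supremum over $x$ yields $\norm{Pv}_\infty \leqs \norm{v}_\infty$.

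For the $\ell^1$ bound I would argue symmetrically, but now summing over the output index:
\[
 \norm{\mu P}_1 = \sum_{y\in\cX} \biggabs{\sum_{x\in\cX}\mu(x)p_{xy}} \leqs \sum_{y\in\cX}\sum_{x\in\cX} \abs{\mu(x)}p_{xy} = \sum_{x\in\cX}\abs{\mu(x)}\sum_{y\in\cX}p_{xy} = \norm{\mu}_1\;.
\]
The only place where the countable (possibly infinite) nature of $\cX$ enters is the interchange of the two summations, and this is precisely the step I would take some care over: it is legitimate because every term $\abs{\mu(x)}p_{xy}$ is nonnegative, so Tonelli's theorem lets the double series be summed in either order, the inner sum over $y$ collapsing to $1$ by~\eqref{eq:mstoch}.

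To establish the equality cases I would simply exhibit the extremal vectors. For the column vector, $v=\vone$ works: the identity $P\vone=\vone$ (itself just a restatement of $\sum_y p_{xy}=1$) gives $\norm{P\vone}_\infty = 1 = \norm{\vone}_\infty$. For the row vector there are two natural choices. The cheapest is $\mu=\delta_x$ for any fixed $x$: then $\mu P$ is the $x$-th row of $P$, all of whose entries are nonnegative, so $\norm{\mu P}_1 = \sum_{y\in\cX} p_{xy} = 1 = \norm{\delta_x}_1$. Alternatively, since the chain is irreducible and positive recurrent, Theorem~\ref{thm:rec_pos_pi} furnishes the invariant probability $\pi$, for which $\mu=\pi$ satisfies $\mu P=\pi=\mu$ and hence trivially $\norm{\mu P}_1 = \norm{\mu}_1$.

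I do not expect any genuine obstacle here: both estimates are immediate consequences of the triangle inequality and row-stochasticity, and the statement is in fact the discrete avatar of the fact that a Markov operator is a contraction on $L^\infty$ whose dual contracts $L^1$. The single item worth flagging is the Tonelli interchange above, valid because all terms are nonnegative; it is exactly what keeps the argument correct without any finiteness assumption on $\cX$.
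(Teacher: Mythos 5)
Your proof is correct and follows essentially the same route as the paper's: triangle inequality plus row-stochasticity for both bounds (the paper performs the same interchange of sums, leaving the Tonelli justification implicit), and $v=\vone$ for the sup-norm equality case. The one small difference is your choice $\mu=\delta_x$ for the $\ell^1$ equality, which works for an arbitrary stochastic matrix, whereas the paper takes $\mu=\pi$ and thus implicitly relies on the surrounding standing assumption that the chain is irreducible and positive recurrent so that $\pi$ exists.
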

\begin{proof}
On a 
\begin{equation}
 \norm{Pv}_\infty
 = \sup_{x\in\cX} \biggabs{\sum_{y\in\cX}p_{xy}v(y)}
 \leqs \sup_{x\in\cX} \biggbrak{\norm{v}_\infty \sum_{y\in\cX}p_{xy}}
 = \norm{v}_\infty\;, 
\end{equation} 
et
\begin{equation}
 \norm{\mu P}_1 
 = \sum_{x\in\cX} \biggabs{\sum_{y\in\cX} \mu(y) p_{yx}}
 \leqs \sum_{y\in\cX} \abs{\mu(y)} \sum_{x\in\cX} p_{yx} 
 = \norm{\mu}_1\;.
\end{equation} 
Pour avoir \'egalit\'e, il suffit de prendre $v=\vone$ et $\mu=\pi$. 
\end{proof}

\begin{remark}[Norme subordonn\'ee]
\label{rem:norme_subordonnee} 
On peut associer \`a $P$ une \defwd{norme subordonn\'ee} $\norm{P}$, 
correspondant \`a la norme $\norm{\cdot}_1$ pour la multiplication \`a gauche 
et \`a la norme $\norm{\cdot}_\infty$ pour la multiplication \`a droite, 
satisfaisant 
\begin{equation}
\norm{P}
:= \sup_{v\neq0} \frac{\norm{Pv}_\infty}{\norm{v}_\infty}
= \sup_{\mu\neq0} \frac{\norm{\mu P}_1}{\norm{\mu}_1}
= 1\;.
\end{equation}
\end{remark}

\begin{corollary}[Module des valeurs propres]
Toute valeur propre $\lambda$ d'une matrice stochastique $P$ satisfait 
$\abs{\lambda} \leqs 1$. 
\end{corollary}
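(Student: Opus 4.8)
The plan is to exploit the non-expansiveness of $P$ in the sup norm, just established in the preceding Lemma, by feeding it an eigenvector as a test vector. Concretely, let $\lambda$ be an eigenvalue of $P$ and let $v\neq0$ be a corresponding (right) eigenvector, so that $Pv=\lambda v$. Even though $\lambda$ and $v$ may be complex, the quantity $\norm{v}_\infty=\sup_{x\in\cX}\abs{v(x)}$ still makes sense, and the computation proving the Lemma goes through verbatim for complex $v$, since the entries $p_{xy}$ are nonnegative reals:
\begin{equation}
 \norm{Pv}_\infty
 = \sup_{x\in\cX} \biggabs{\sum_{y\in\cX} p_{xy} v(y)}
 \leqs \sup_{x\in\cX} \sum_{y\in\cX} p_{xy} \abs{v(y)}
 \leqs \norm{v}_\infty\;.
\end{equation}

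Next I would invoke the homogeneity of the norm together with the eigenvalue equation. Since $Pv=\lambda v$, one has $\norm{Pv}_\infty=\norm{\lambda v}_\infty=\abs{\lambda}\,\norm{v}_\infty$, and combining this with the inequality above yields
\begin{equation}
 \abs{\lambda}\,\norm{v}_\infty \leqs \norm{v}_\infty\;.
\end{equation}
Because $v$ is an eigenvector it is nonzero, hence $\norm{v}_\infty>0$, and dividing through by this strictly positive number gives $\abs{\lambda}\leqs1$, which is exactly the claim.

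The argument is genuinely short, so I do not expect a serious obstacle; the only point that deserves a moment's care is the appearance of complex eigenvalues, since the Lemma was phrased for real vectors. I would therefore make explicit that the estimate $\bigabs{\sum_{y} p_{xy} v(y)}\leqs\sum_{y} p_{xy}\abs{v(y)}$ rests only on the triangle inequality and on $p_{xy}\geqs0$, so that the sup-norm bound remains valid over $\C$. For completeness on an infinite countable $\cX$, I would also note that one must take $v$ in the space on which $\norm{\cdot}_\infty$ is finite (a bounded eigenvector); on a finite $\cX$ this is automatic, and in any case the very same argument can be run on the left, using $\norm{\cdot}_1$ and the companion bound $\norm{\mu P}_1\leqs\norm{\mu}_1$ with a left eigenvector $\mu$.
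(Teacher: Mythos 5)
Your proof is correct and follows essentially the same route as the paper: apply the sup-norm non-expansiveness lemma to an eigenvector, use $\norm{Pv}_\infty = \abs{\lambda}\,\norm{v}_\infty$, and divide by $\norm{v}_\infty > 0$. Your extra remarks on complex eigenvectors and boundedness in the infinite case are sound refinements that the paper leaves implicit.
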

\begin{proof}
Soit $\lambda$ une valeur propre de $P$, et $v$ un vecteur propre \`a droite 
associ\'e. Alors 
\begin{equation}
 \abs{\lambda}\norm{v}_\infty 
 = \norm{\lambda v}_\infty 
 = \norm{Pv}_\infty 
 \leqs \norm{v}_\infty\;,
\end{equation} 
d'o\`u le r\'esultat, car on peut diviser des deux c\^ot\'es par $\norm{v}_\infty > 0$. 
\end{proof}


\section{Th\'eor\`eme de Perron--Frobenius et trou spectral}
\label{sec:spec_perron-Frobenius}

Le r\'esultat suivant est un cas particulier du th\'eor\`eme de Perron--Frobenius 
(ce th\'eor\`eme est plus g\'en\'eral, car il admet des versions s'appliquant \`a 
des matrices non stochastiques, \`a condition que tous leurs \'el\'ements soient 
r\'eels non n\'egatifs). 

\begin{theorem}[Perron--Frobenius]
Soit $P$ une matrice stochastique irr\'eductible. Alors 
\begin{itemize}
\item   $P$ admet $\lambda_0 = 1$ comme valeur propre \defwd{simple} (de multiplicit\'e alg\'ebrique $1$);
\item   si $P$ est ap\'eriodique, alors toutes ses valeurs propres autres que 
$\lambda_0$ sont de module strictement inf\'erieur \`a $1$;
\item   si $P$ est p\'eriodique, de p\'eriode $p$, alors elle admet exactement 
$p$ valeurs propres de module $1$, qui sont des racines $p$i\`emes de $1$.
\end{itemize}
\end{theorem}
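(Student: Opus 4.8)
Le plan est de traiter successivement les trois points, en supposant $\cX$ fini (cadre dans lequel valeurs propres et multiplicit\'es ont un sens imm\'ediat). On dispose d\'ej\`a du Corollaire pr\'ec\'edent ($\abs{\lambda}\leqs 1$ pour toute valeur propre) et, par le Th\'eor\`eme~\ref{thm:rec_pos_pi} joint \`a la Remarque~\ref{rem:rec_Xfini}, d'une probabilit\'e invariante $\pi$, vecteur propre \`a gauche pour $\lambda_0 = 1$, strictement positive puisque $\pi(x) = 1/\expecin{x}{\tau_x} > 0$~; le vecteur $\vone$ en est un vecteur propre \`a droite. Pour la simplicit\'e de $\lambda_0$, je montrerais d'abord que la multiplicit\'e g\'eom\'etrique vaut $1$~: si $v$ est r\'eel avec $Pv = v$ et si $M = \max_x v(x)$ est atteint en $x_0$, l'\'egalit\'e $v(x_0) = \sum_y p_{x_0 y}v(y) \leqs M$ force $v(y) = M$ pour tout voisin $y$ de $x_0$, et l'irr\'eductibilit\'e rend $v$ constant, donc proportionnel \`a $\vone$. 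Pour la multiplicit\'e alg\'ebrique, il suffit d'exclure un vecteur $w$ tel que $(P - \one)w = \vone$~: en multipliant \`a gauche par $\pi$, on aurait $0 = \pi(P-\one)w = \pi\vone = 1$, une contradiction~; il n'y a donc pas de bloc de Jordan, et $\lambda_0$ est simple.

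Pour le cas ap\'eriodique, soit $\lambda$ avec $\abs{\lambda} = 1$ et $v$ un vecteur propre \`a droite associ\'e. En posant $M = \max_x \abs{v(x)}$, atteint en $x_0$, la cha\^ine $M = \abs{\sum_y p_{x_0 y}v(y)} \leqs \sum_y p_{x_0 y}\abs{v(y)} \leqs M$ est une \'egalit\'e. Le cas d'\'egalit\'e dans l'in\'egalit\'e triangulaire impose que tous les $v(y)$ avec $p_{x_0 y} > 0$ aient m\^eme module $M$ et m\^eme argument, d'o\`u $v(y) = \lambda v(x_0)$ le long de chaque ar\^ete issue de $x_0$. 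Chaque voisin \'etant encore un maximiseur, l'irr\'eductibilit\'e propage la relation $v(y) = \lambda v(x)$ \`a toutes les ar\^etes~; en suivant un chemin ferm\'e de longueur $n$ passant par $x_0$, on obtient $\lambda^n = 1$ pour tout temps de retour $n$. Le $\pgcd$ de ces temps valant $1$ (apériodicit\'e) et l'ensemble des temps de retour \'etant stable par addition, il contient deux entiers cons\'ecutifs, d'o\`u $\lambda = 1$~: toute valeur propre autre que $\lambda_0$ est de module strictement inf\'erieur \`a $1$.

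Pour le cas p\'eriodique de p\'eriode $p$, le m\^eme argument de module maximal montre que toute valeur propre de module $1$ v\'erifie $\lambda^n = 1$ pour tout temps de retour $n$, donc $\lambda^p = 1$ puisque le $\pgcd$ des temps de retour vaut $p$~: les valeurs propres de module $1$ sont des racines $p$i\`emes de l'unit\'e. Pour montrer qu'elles le sont toutes, j'utiliserais la d\'ecomposition cyclique $\cX = C_0 \sqcup \dots \sqcup C_{p-1}$ associ\'ee \`a la p\'eriodicit\'e, pour laquelle $P$ envoie $C_k$ sur $C_{(k+1)\bmod p}$~; pour chaque racine $p$i\`eme $\omega$ de l'unit\'e, le vecteur $v_\omega$ valant $\omega^k$ sur $C_k$ v\'erifie $Pv_\omega = \omega v_\omega$ (le cas $k=p-1$ marche car $\omega^p = 1$). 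On exhibe ainsi $p$ valeurs propres distinctes de module $1$. Pour la simplicit\'e et le d\'ecompte exact, j'observerais que dans cette base $P^p = \diag(Q_0, \dots, Q_{p-1})$, chaque $Q_k$ \'etant stochastique, irr\'eductible et ap\'eriodique sur $C_k$~; par les points pr\'ec\'edents, chaque $Q_k$ admet $1$ comme valeur propre simple, de sorte que l'espace propre g\'en\'eralis\'e de $P^p$ pour la valeur $1$ est de dimension $p$. Cet espace est $P$-invariant et contient tous les vecteurs propres g\'en\'eralis\'es de $P$ associ\'es aux valeurs propres de module $1$~; la somme de leurs multiplicit\'es vaut donc $p$, et comme on en a d\'ej\`a trouv\'e $p$ distinctes, chacune est simple et il y en a exactement $p$.

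Le point le plus d\'elicat est l'analyse du cas d'\'egalit\'e dans l'in\'egalit\'e triangulaire et sa propagation par irr\'eductibilit\'e (aux deuxi\`eme et troisi\`eme \'etapes), ainsi que la mise en place propre de la d\'ecomposition cyclique dans le cas p\'eriodique~; c'est l\`a qu'interviennent de fa\c con essentielle la p\'eriode et l'argument arithm\'etique sur les temps de retour, le reste se ramenant au principe du maximum et \`a l'appariement par le vecteur $\pi$.
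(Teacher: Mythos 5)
Your proof is correct, and it takes a genuinely different route from the paper's. The paper in fact \emph{admits} the theorem and only sketches indications, all of which lean on the probabilistic machinery developed earlier: simplicity of $\lambda_0=1$ is deduced from the \emph{uniqueness} of the invariant probability (Th\'eor\`eme~\ref{thm:rec_pos_pi}), and only in the diagonalizable case (\myquote{sinon l'argument est un peu plus compliqu\'e}); the aperiodic case is deduced by contradiction from the convergence theorem (Th\'eor\`eme~\ref{thm:convergence_aperiodique}, itself proved by le couplage de Doeblin); and the periodic case is a one-line idea about invariant subspaces of $P^p$. You instead give the classical Wielandt-style linear-algebra proof: a maximum principle for the geometric simplicity of $1$, the pairing with $\pi$ (using only its existence and strict positivity, not uniqueness) to exclude a Jordan block, the equality case of the triangle inequality propagated by irreducibility plus the semigroup arithmetic of return times for the spectral statements, and the explicit cyclic decomposition with eigenvectors $x\mapsto\omega^k$ together with the block-diagonal form of $P^p$ for the exact count. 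What each approach buys: the paper's route recycles theorems already available in the course at the cost of being incomplete (diagonalizability caveat, third bullet unproven), while yours is self-contained, requires no probabilistic input beyond the existence of $\pi>0$, handles non-diagonalizable $P$, and actually establishes the stronger statement that in the periodic case each of the $p$ unit-modulus eigenvalues is simple. Two small points you should make explicit if you write this up fully: the fact that a set of integers stable under addition with $\pgcd$ equal to $1$ contains two consecutive integers is the B\'ezout-type fact the paper alludes to elsewhere, and the claim that each block $Q_k$ of $P^p$ is irreducible and aperiodic on $C_k$ is standard but deserves a line of justification.
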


Nous admettrons ce r\'esultat. Voici toutefois quelques indications sur sa d\'emonstration.

\begin{itemize}
\item  Si la valeur propre $\lambda_0 = 1$ n\'etait pas de multiplicit\'e $1$, on 
pourrait trouver au moins deux vecteurs lignes $\pi$ et $\mu$, lin\'eairement
ind\'ependants, tels que $\pi P = \pi$ et $\mu P = \mu$ (dans le cas diagonalisable, 
sinon l'argument est un peu plus compliqu\'e). 
Le vecteur $\mu$ n'est pas n\'ecessairement une 
mesure de probabilit\'e. Mais on peut trouver $\theta\in[0,1]$ tel que la 
combinaison convexe 
\begin{equation}
 \nu = \theta \mu + (1-\theta)\pi
\end{equation} 
soit une mesure de probabilit\'e. Dans le cas diagonalisable, on trouve 
\begin{equation}
 \nu P^n = \nu
 \qquad \forall n\geqs 0\;.
\end{equation} 
Mais ceci contredit l'unicit\'e de la probabilit\'e invariante. 

\item   Si $P$ est ap\'eriodique, supposons par l'absurde que $P$ admet une valeur 
propre $\lambda$ de module $1$, diff\'erente de $1$. Si $\lambda$ est r\'eelle, 
pour un vecteur propre \`a gauche $\mu$, 
on peut proc\'eder comme au point pr\'ec\'edent, pour construire une mesure 
de probabilit\'e $\nu$ satisfaisant 
\begin{equation}
 \nu P^n = \theta \lambda^n \mu + (1-\theta)\pi\;. 
\end{equation} 
Mais alors $\nu P^n$ ne converge pas vers $\pi$ lorsque $n$ tend vers l'infini, 
ce qui contredit le Th\'eor\`eme~\ref{thm:convergence_aperiodique}. 
Si $\lambda$ est complexe, alors $\bar\lambda$ est \'egalement valeur propre, 
de vecteur propre $\bar\mu$, et on peut appliquer un argument analogue 
avec le vecteur r\'eel $\mu + \bar\mu$.

\item   Si $P$ est p\'eriodique de p\'eriode $P$, l'id\'ee de base est que 
$P^p$ admet $p$ sous-espaces invariants suppl\'ementaires. La restriction de 
$P$ \`a chacun de ces sous-espaces doit admettre la valeur propre $1$, ce qui 
correspond \`a une valeur propre racine $p$i\`eme de l'unit\'e de $P$.
\end{itemize}

Concentrons-nous maintenant sur le cas o\`u $P$ est ap\'eriodique.

\begin{lemma}[Limite de $P^n$]
Si $P$ est ap\'eriodique, alors 
\begin{equation}
\label{eq:convergence_Pn} 
 \lim_{n\to\infty} P^n = \Pi_0
 = \vone \pi\;.
\end{equation} 
La matrice $\Pi_0$ est un \defwd{projecteur}, c'est-\`a-dire qu'elle satisfait 
$\Pi_0^2 = \Pi_0$. 
\end{lemma}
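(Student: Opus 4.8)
The plan is to read the limit off entrywise directly from the convergence theorem, and then to check the idempotence of $\Pi_0$ by a one-line matrix computation; there is essentially no hard analytic step once the right objects are identified.

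First I would recall that, viewing probability measures as row vectors, the $(x,y)$ entry of the iterated matrix is
\[
 (P^n)_{xy} = (\delta_x P^n)_y = \probin{x}{X_n = y}\;,
\]
since $\probin{\nu}{X_n = y} = (\nu P^n)_y$ and $\probin{x}{\cdot} = \probin{\delta_x}{\cdot}$. The \CM\ being irr\'eductible, ap\'eriodique et r\'ecurrente positive, Th\'eor\`eme~\ref{thm:convergence_aperiodique} applies with the initial law $\nu = \delta_x$ and gives, for every pair $x,y\in\cX$,
\[
 \lim_{n\to\infty} (P^n)_{xy} = \lim_{n\to\infty}\probin{x}{X_n = y} = \pi(y)\;.
\]
On the other hand, $\vone\pi$ is the matrix whose $(x,y)$ entry equals $1\cdot\pi(y) = \pi(y)$, independently of the row index $x$. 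Hence the entrywise limit of $P^n$ is precisely $\Pi_0 = \vone\pi$, which is exactly~\eqref{eq:convergence_Pn}.

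It then remains to verify that $\Pi_0$ is a projector. Here I would simply use associativity of the matrix product together with the fact that $\pi$ is a probability measure:
\[
 \Pi_0^2 = (\vone\pi)(\vone\pi) = \vone(\pi\vone)\pi = \vone\pi = \Pi_0\;,
\]
where the scalar $\pi\vone = \sum_{x\in\cX}\pi(x) = 1$. Equivalently, one checks entrywise that $(\Pi_0^2)_{xy} = \sum_{z\in\cX}\pi(z)\pi(y) = \pi(y) = (\Pi_0)_{xy}$, the series $\sum_z \pi(z)$ converging to $1$.

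The only point demanding a little care is the sense in which the limit is taken: on an infinite $\cX$ the convergence obtained above is entrywise (pointwise in the pair $(x,y)$), which is the reading required for the spectral arguments that follow. I do not expect this to be an obstacle, but I would flag it explicitly, noting that a strengthening to convergence in the subordinate norm of Remarque~\ref{rem:norme_subordonnee} is a genuinely stronger statement that would need an extra uniformity in $x$ — controlled, when available, by the coupling estimate~\eqref{eq:majo_couplage}, which bounds $\norm{\delta_x P^n - \pi}_1$ row by row. For the present lemma the entrywise statement suffices, so the proof reduces to the citation of Th\'eor\`eme~\ref{thm:convergence_aperiodique} and the short algebraic identity above.
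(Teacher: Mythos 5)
Your proof is correct and follows essentially the same route as the paper: Th\'eor\`eme~\ref{thm:convergence_aperiodique} applied to the Dirac measures $\delta_x$ gives the row-by-row convergence of $P^n$ to $\vone\pi$, and idempotence of $\Pi_0$ reduces to $\pi\vone = 1$. Your added remark distinguishing entrywise convergence from convergence in the subordinate norm is a sensible clarification, but it does not change the argument, which coincides with the paper's.
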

\begin{proof}
Le th\'eor\`eme~\ref{thm:convergence_aperiodique} implique que $\nu P^n$ converge 
vers $\pi$ pour toute loi initiale $\nu$. La relation~\eqref{eq:convergence_Pn} 
s'obtient en appliquant ceci \`a $\delta_x$ pour tout $x\in\cX$. 
La relation $\Pi_0^2 = \Pi_0$ suit du fait que $\pi\vone = 1$, en vertu de~\eqref{eq:mproba}.
\end{proof}

\begin{remark}
La matrice $\Pi_0$ est une matrice dont toutes les lignes sont \'egales. 
En particulier, si $\cX$ est fini, de cardinal $N$, alors 
\begin{equation}
 \Pi_0 = 
\begin{pmatrix}
 \pi(1) & \dots & \pi(N) \\
 \vdots & & \vdots \\
 \pi(1) & \dots & \pi(N) 
\end{pmatrix}\;.
\end{equation} 
\end{remark}

\begin{definition}[Rayon spectral et trou spectral]
Soit $P$ une matrice stochastique irr\'eductible et ap\'eriodique, 
et soit $P_\perp = P - \Pi_0$. Alors le \defwd{rayon spectral} de 
$P_\perp$ est 
\begin{align}
 \rho 
 &= \sup\Bigsetsuch{\abs{\lambda_j}}{\text{$\lambda_j$ 
 est valeur propre de $P_\perp$}} \\
 &= \sup\Bigsetsuch{\abs{\lambda_j}}{\text{$\lambda_j$ 
 est valeur propre de $P$}, \lambda \neq 1}\;.
\end{align} 
Le \defwd{trou spectral} de $P$ est par d\'efinition $1 - \rho$.
\end{definition}

Le th\'eor\`eme de Perron--Frobenius implique que $0 \leqs \rho < 1$, donc 
que $1-\rho > 0$. L'int\'er\^et de cette d\'efinition est li\'e \`a l'observation 
suivante.

\begin{proposition}[Vitesse de convergence et trou spectral]
On a 
\begin{equation}
 \expecin{\nu}{f(X_n)} - \expecin{\pi}{f} 
 = (\nu - \pi)P_\perp^n f\;.
\end{equation} 
\end{proposition}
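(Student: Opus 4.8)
The plan is to reduce the whole statement to two algebraic identities satisfied by the rank-one projector $\Pi_0 = \vone\pi$, and then to expand the right-hand side. The starting point is the matrix form of the law of $X_n$, namely $\expecin{\nu}{f(X_n)} = \nu P^n f$, together with the fact that $\pi P = \pi$ gives $\expecin{\pi}{f} = \pi f = \pi P^n f$.

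First I would record the relations $P\Pi_0 = \Pi_0 P = \Pi_0$ and $\Pi_0^2 = \Pi_0$. These follow immediately from the three facts already at hand: $P\vone = \vone$ (equivalent to~\eqref{eq:mstoch}), $\pi P = \pi$ (invariance of $\pi$), and $\pi\vone = 1$ (which is~\eqref{eq:mproba}). Indeed $P\Pi_0 = (P\vone)\pi = \vone\pi = \Pi_0$, similarly $\Pi_0 P = \vone(\pi P) = \Pi_0$, and $\Pi_0^2 = \vone(\pi\vone)\pi = \vone\pi = \Pi_0$. The same computation, using $P^n\vone = \vone$ (an easy induction from $P\vone = \vone$), also gives $P^n\Pi_0 = \Pi_0$.

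Next I would prove by induction on $n\geqs 1$ the closed form
\[
 P_\perp^n = P^n - \Pi_0\;.
\]
The case $n=1$ is the definition of $P_\perp$. For the inductive step I expand $P_\perp^{n+1} = (P^n-\Pi_0)(P-\Pi_0) = P^{n+1} - P^n\Pi_0 - \Pi_0 P + \Pi_0^2$, and the three relations from the previous paragraph collapse the last three terms into a single $-\Pi_0$, yielding $P^{n+1}-\Pi_0$. Substituting this into the target expression and splitting the product gives $(\nu-\pi)P_\perp^n f = (\nu-\pi)P^n f - (\nu-\pi)\Pi_0 f$. The first term is exactly $\expecin{\nu}{f(X_n)} - \expecin{\pi}{f}$, since $\nu P^n f = \expecin{\nu}{f(X_n)}$ and $\pi P^n f = \pi f = \expecin{\pi}{f}$.

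The whole argument is routine, so I do not anticipate a real obstacle; the only point deserving care is the vanishing of the second term. Here one uses $(\nu-\pi)\Pi_0 = \bigpar{(\nu-\pi)\vone}\pi = 0$, because $\nu$ and $\pi$ are both probability measures and hence $(\nu-\pi)\vone = 1-1 = 0$. This is precisely the place where the normalization $\sum_x\nu(x)=\sum_x\pi(x)=1$ enters, and it explains conceptually why subtracting the projection $\Pi_0$ onto the invariant direction leaves the error $\expecin{\nu}{f(X_n)}-\expecin{\pi}{f}$ unchanged, thereby proving the claim.
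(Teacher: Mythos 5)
Your proof is correct. It is, however, organised differently from the paper's. The paper does not establish the operator identity $P_\perp^n = P^n - \Pi_0$; instead it decomposes the space of signed measures into the two $P$-invariant supplementary subspaces $\vspan(\pi)$ and $\vone_\perp = \setsuch{\mu}{\mu\vone = 0}$, observes that $\mu P_\perp = \mu P$ for every $\mu\in\vone_\perp$ while $\pi P_\perp = 0$, writes $\nu = \pi + \mu$ with $\mu = \nu - \pi \in \vone_\perp$, and deduces by iteration that $\nu P^n = \pi + \mu P_\perp^n$, which gives the claim after multiplying by $f$. The ingredients are identical in both arguments ($P\vone = \vone$, $\pi P = \pi$, $\pi\vone = 1$, and the normalization $(\nu-\pi)\vone = 1 - 1 = 0$), so the proofs are close cousins; the difference is where the induction lives. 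Your route produces the stronger and reusable closed form $P_\perp^n = P^n - \Pi_0$, valid as a matrix identity rather than only when applied to the particular vector $\nu-\pi$; for instance it makes it immediate that $P^n$ converges to $\Pi_0$ exactly when $P_\perp^n$ converges to $0$. The paper's route instead makes explicit the invariant-subspace picture that the rest of the chapter (diagonalization, Dunford decomposition, spectral radius of $P_\perp$) builds on. One minor caveat, which applies equally to the paper's own proof: when $\cX$ is infinite, steps like $P^n\Pi_0 = \Pi_0$ and the expansion of $(P^n-\Pi_0)(P-\Pi_0)$ silently interchange infinite sums; this is harmless here because all series involved converge absolutely ($f$ is bounded, the measures are finite, and the matrix entries are nonnegative with summable rows), but it is worth being aware that associativity of these infinite matrix products is what you are implicitly using.
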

\begin{proof}
On a une d\'ecomposition de l'espace des mesures 
en deux sous-espace suppl\'ementaires, invariants par $P$, l'un associ\'e \`a 
$\Pi_0$, et l'autre associ\'e \`a $P_\perp$. Le premier est simplement le 
sous-espace vectoriel de dimension $1$ engendr\'e par $\pi$, alors que le 
second est 
\begin{equation}
 \vone_\perp = 
 \Bigsetsuch{\mu:\cX\to\R}{\mu \vone = 0} = 
 \biggsetsuch{\mu:\cX\to\R}{\sum_{x\in\cX}\mu(x) = 0}\;.
\end{equation} 
En effet, si $\mu\in\vone_\perp$, alors 
\begin{equation}
 \mu P \vone = \mu \vone = 0\;,
\end{equation} 
ce qui implique que $\mu P\in\vone_\perp$, ou encore $\vone_\perp P \subset \vone_\perp$. 
De plus, on a 
\begin{align}
\mu P_\perp &= \mu P - \mu\Pi_0 = \mu P \\
 \pi P_\perp &= \pi P - \pi\Pi_0 
 = \pi - \pi\vone \pi 
 = 0 
\label{eq:invarianceP} 
\end{align}
puisque $\mu\Pi_0 = \mu\vone\pi = 0$ et $\pi\vone = 1$. 

D\'ecomposons alors $\nu$ en $\nu = \pi + \mu$. On a $\mu\in\vone_\perp$,
puisque $\mu\vone = \nu\vone - \pi\vone = 1 - 1 = 0$. Il suit de~\eqref{eq:invarianceP}
que pour tout $n\geqs0$, 
\begin{equation}
 \nu P^n = (\pi + \mu)P^n = \pi + \mu P_\perp^n\;.
\end{equation} 
Par cons\'equent,
\begin{equation}
 \expecin{\nu}{f(X_n)} = \nu P^n f = \pi f + \mu P_\perp^n f\;,
\end{equation} 
d'o\`u le r\'esultat. 
\end{proof}

Par la majoration~\eqref{eq:l1_linfty}, on a 
\begin{equation}
\label{eq:decroissance_EfXn} 
 \bigabs{\expecin{\nu}{f(X_n)} - \expecin{\pi}{f}}
 \leqs \norm{\nu-\pi}_1 \norm{P_\perp^n f}\infty\;.
\end{equation} 
On s'attend \`a avoir 
\begin{equation}
\label{eq:borne_Pperp} 
 \norm{P_\perp^n f}_\infty \leqs C\rho^n\norm{f}_\infty
\end{equation} 
pour une constante $C$ \`a d\'eterminer. Si c'est bien le cas, alors on aura 
montr\'e que $\expecin{\nu}{f(X_n)}$ converge exponentiellement vite 
vers $\expecin{\pi}{f}$, avec une erreur qui d\'ecro\^it comme $\rho^n$. 


\section{Diagonalisation et d\'ecomposition de Dunford}
\label{sec:spec_Dunford} 

Notre objectif est maintenant de v\'erifier~\eqref{eq:borne_Pperp}. 
Nous supposons pour l'instant que $\cX$ est fini, de cardinal $N$. 
Consid\'erons d'abord le cas o\`u $P_\perp$ est diagonalisable. Alors il existe une matrice 
non singuli\`ere $S$ telle que 
\begin{equation}
 S^{-1}P_\perp S = \Lambda_\perp = 
 \begin{pmatrix}
  0 & 0 & \dots & \dots & 0 \\
  0 & \lambda_1 & & & \vdots \\
  \vdots & & \ddots & & \vdots \\
  \vdots & & & \lambda_{N-2} & 0 \\
  0 & \dots & \dots & 0 & \lambda_{N-1} 
 \end{pmatrix}\;.
\end{equation} 
En effet, la premi\`ere valeur propre de $P_\perp$ est nulle, puisque $\pi P_\perp = 0$, 
cf.~\eqref{eq:invarianceP}. 
On a alors $P_\perp = S\Lambda_\perp S^{-1}$, et 
\begin{equation}
 P_\perp^n = S\Lambda_\perp^n S^{-1}
 \qquad \forall n\geqs 0\;.
\end{equation} 
On remarque que $\norm{\Lambda_\perp^n g}_\infty \leqs \rho^n \norm{g}_\infty$ 
par d\'efinition du rayon spectral, et que par cons\'equent
\begin{equation}
 \norm{P_\perp^n}_\infty 
 \leqs \norm{S} \, \norm{\Lambda_\perp^n S^{-1}f}_\infty
 \leqs \rho^n \norm{S}\,\norm{S^{-1}}\, \norm{f}_\infty\;,
\end{equation} 
o\`u les normes de $S$ et $S^{-1}$ sont des normes subordonn\'ees, comme 
d\'efinies dans la remarque~\ref{rem:norme_subordonnee}. On conclut donc que~\eqref{eq:borne_Pperp} est v\'erifi\'e, avec $C = \norm{S}\,\norm{S^{-1}}$. 

Si $P_\perp$ n'est pas diagonalisable, on a 
\begin{equation}
 S^{-1}P_\perp S = T_\perp\;,
\end{equation} 
o\`u $T_\perp$ est une matrice triangulaire, diagonale par blocs, o\`u les blocs 
sont des \defwd{blocs de Jordan} de la forme $B(\lambda_j,b_j)$, avec 
\begin{equation}
 B(\lambda,b) = 
 \begin{pmatrix}
  \lambda & 1 & 0 & \dots & 0 \\
  0 & \lambda & 1 & & \vdots \\
  \vdots & & \ddots & \ddots & \\
  \vdots & & & \lambda & 1 \\
  0 & \dots & \dots & 0 & \lambda 
 \end{pmatrix}
 \in \C^{b\times b}\;.
\end{equation} 
La dimension $b_j$ de $B(\lambda_j,b_j)$ d\'epend de la diff\'erence entre la 
\defwd{multiplicit\'e alg\'ebrique} de $\lambda_j$ (sa multiplicit\'e en tant que racine 
du polyn\^ome caract\'eristique), et sa \defwd{multiplicit\'e g\'eom\'etrique} (la dimension 
du noyau de $P - \lambda_j\one$). Dans ce cas, on a 
\begin{equation}
 P_\perp^n = ST_\perp^n S^{-1}
 \qquad \forall n\geqs 0\;.
\end{equation} 
On pourrait alors essayer de majorer $\norm{T_\perp^n g}_\infty$ par une constante 
fois $\rho^n \norm{g}_\infty$. Il est toutefois plus commode de passer par la 
\defwd{d\'ecomposition de Dunford}, que nous rappelons ici.

\begin{proposition}[D\'ecomposition de Dunford]
Soit $P$ une matrice, admettant les valeurs propres diff\'erentes $\lambda_0, \dots, 
\lambda_{k}$. On note $m_i$ la multiplicit\'e alg\'ebrique de $\lambda_i$, et 
$g_i$ sa multiplicit\'e g\'eom\'etrique (on rappelle que $1\leqs g_i\leqs m_i$). 
Alors on a la d\'ecomposition
\begin{equation}
 P = \sum_{i=0}^k \bigpar{\lambda_i \Pi_i + N_i}\;,
\end{equation} 
o\`u 
\begin{itemize}
\item   les $\Pi_i$ sont des projecteurs, satisfaisant 
$\Pi_i\Pi_j = \delta_{ij}\Pi_i$;
\item   les $N_i$ sont nilpotentes~: elles satisfont 
$N_i^{m_i-g_i} = 0$;
\item   on a $N_iN_j = 0$ si $i\neq j$ 
et $P_i N_j = N_j P_i = \delta_{ij}N_i$. 
\end{itemize}
\end{proposition}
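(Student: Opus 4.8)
The plan is to realize the $\Pi_i$ as the spectral projectors of $P$, built explicitly as polynomials in $P$ via B\'ezout; once they are in this form, every commutation relation in the statement becomes automatic. Over $\C$ the characteristic polynomial splits as $\chi_P(\lambda)=\prod_{i=0}^k(\lambda-\lambda_i)^{m_i}$. For each $i$ I put $p_i(\lambda)=\prod_{j\neq i}(\lambda-\lambda_j)^{m_j}$; these $k+1$ polynomials have no common root, hence are globally coprime, so B\'ezout furnishes polynomials $a_i$ with $\sum_{i=0}^k a_i(\lambda)p_i(\lambda)=1$. I then \emph{define}
\begin{equation}
 \Pi_i = a_i(P)\,p_i(P)\;,
\end{equation}
so that all the $\Pi_i$ are polynomials in $P$ and therefore commute with $P$ and with one another.

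Evaluating the B\'ezout identity at $P$ yields $\sum_i\Pi_i=\one$. For $i\neq j$ the product $p_ip_j$ is divisible by $\chi_P$, so Cayley--Hamilton gives $p_i(P)p_j(P)=0$ and hence $\Pi_i\Pi_j=0$; multiplying $\sum_j\Pi_j=\one$ by $\Pi_i$ then forces $\Pi_i^2=\Pi_i$. Together these are precisely $\Pi_i\Pi_j=\delta_{ij}\Pi_i$. A short argument identifies the range of $\Pi_i$ with the generalized eigenspace $E_i=\ker\bigl((P-\lambda_i\one)^{m_i}\bigr)$, and comparing $\chi_P$ with the product of the characteristic polynomials of the restrictions of $P$ to the $E_i$ shows $\dim E_i=m_i$.

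Setting $N_i=(P-\lambda_i\one)\Pi_i$, the identity $\sum_i\Pi_i=\one$ gives the decomposition at once,
\begin{equation}
 P=P\sum_{i=0}^k\Pi_i=\sum_{i=0}^k\bigl(\lambda_i\Pi_i+(P-\lambda_i\one)\Pi_i\bigr)=\sum_{i=0}^k\bigl(\lambda_i\Pi_i+N_i\bigr)\;.
\end{equation}
Since $\Pi_i$ is a polynomial in $P$ it commutes with $P-\lambda_j\one$, whence $\Pi_iN_j=(P-\lambda_j\one)\Pi_i\Pi_j=\delta_{ij}N_i=N_j\Pi_i$ and $N_iN_j=(P-\lambda_i\one)(P-\lambda_j\one)\Pi_i\Pi_j=\delta_{ij}N_i^2$, which vanishes for $i\neq j$. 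Finally $N_i$ is nilpotent because $(P-\lambda_i\one)^{m_i}$ annihilates $E_i$, the range of $\Pi_i$, while $N_i$ is zero on the complementary summands, so $N_i^{m_i}=0$.

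I expect the only genuine work to be the identification of the range of $\Pi_i$ with $E_i$ together with $\dim E_i=m_i$; everything else is formal manipulation with commuting polynomials in $P$. One caveat on the stated exponent: the sharp nilpotency index of $N_i$ is the size of the largest Jordan block at $\lambda_i$, which is at most $m_i-g_i+1$ (there being $g_i$ blocks summing to $m_i$), so in general only $N_i^{m_i-g_i+1}=0$ is guaranteed and $N_i^{m_i-g_i}=0$ may fail, as a single Jordan block of size at least $2$ already shows. For the intended estimate this is immaterial: nilpotency alone turns $(\lambda_i\Pi_i+N_i)^n$ into the finite sum $\sum_j\binom{n}{j}\lambda_i^{n-j}N_i^j$, which is exactly what produces the polynomial-times-$\rho^n$ bound on $P_\perp^n$.
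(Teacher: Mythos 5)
The paper never proves this proposition: it is only \emph{recalled} (\myquote{que nous rappelons ici}) as a standard fact of linear algebra, so your argument supplies something the text deliberately omits rather than paralleling an existing proof. Your route is the classical one and it is correct: the cofactors $p_i(\lambda)=\prod_{j\neq i}(\lambda-\lambda_j)^{m_j}$ of the characteristic polynomial are globally coprime, B\'ezout gives $\sum_i a_ip_i=1$, and defining $\Pi_i=a_i(P)p_i(P)$ makes everything a commuting polynomial in $P$, so that $\sum_i\Pi_i=\one$, Cayley--Hamilton yields $\Pi_i\Pi_j=\delta_{ij}\Pi_i$, and the relations for $N_i=(P-\lambda_i\one)\Pi_i$ follow formally. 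The one step you leave as a sketch, identifying the range of $\Pi_i$ with $\ker\bigl((P-\lambda_i\one)^{m_i}\bigr)$, is indeed routine: $(P-\lambda_i\one)^{m_i}\Pi_i=a_i(P)\chi_P(P)=0$ gives one inclusion, and for $v$ in the generalized eigenspace one has $p_j(P)v=0$ for all $j\neq i$, hence $v=\Pi_i v$, giving the other.

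Your caveat on the nilpotency exponent is well taken and worth recording as a correction to the statement itself: with $g_i$ Jordan blocks of total size $m_i$, the largest block has size at most $m_i-g_i+1$, so only $N_i^{m_i-g_i+1}=0$ is guaranteed; a single Jordan block of size $2$ (where $m_i=2$, $g_i=1$, $N_i\neq0$) already contradicts the stated $N_i^{m_i-g_i}=0$. The same off-by-one propagates to the display after the proposition, whose binomial sum should run to $p=m_i-g_i$ rather than $m_i-g_i-1$; none of this affects the conclusion that $\norm{P_\perp^nf}_\infty$ decays like a constant times $\rho^n$. Two small further points: the statement's $P_iN_j$ is a typo for $\Pi_iN_j$, which is exactly what your computation establishes; and in your closing remark the $j=0$ term of $(\lambda_i\Pi_i+N_i)^n$ is $\lambda_i^n\Pi_i$, not $\lambda_i^n\one$ — the paper's form $\Pi_i\sum_p\binom{n}{p}\lambda_i^{n-p}N_i^p$ is the clean way to write it, since $\Pi_iN_i^p=N_i^p$.
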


Il suit de la derni\`ere propri\'et\'e que 
\begin{equation}
 P_\perp^n = \sum_{i=1}^k \bigpar{\lambda_i \Pi_i + N_i}^n\;,
\end{equation} 
et la formule du bin\^ome de Newton implique 
\begin{equation}
 \bigpar{\lambda_i \Pi_i + N_i}^n 
 = \Pi_i \sum_{p=0}^{m_i - g_i - 1} \lambda_i^{n-p} \binom{n}{p} N_i^p\;. 
\end{equation} 
En effet, le fait que $N_i^{m_i-g_i} = 0$ implique que tous les termes 
avec $p \geqs m_i - g_i$ sont nuls. 

Le point important ici est que puisque $m_i - g_i$ est born\'e, $\norm{P_\perp^n f}_\infty$ 
d\'ecro\^it toujours comme $\rho^n$, m\^eme si ce terme est multipli\'e par une constante qui 
d\'epend de mani\`ere plus compliqu\'ee de $P_\perp$ (mais pas de $n$). Ainsi,~\eqref{eq:borne_Pperp} reste vrai, avec un $C$ d\'ependant des termes de la d\'ecomposition de Dunford.

Nous avons suppos\'e jusqu'ici que $\cX$ \'etait fini. Si $\cX$ est infini, la 
matrice stochastique d\'efinit un op\'erateur lin\'eaire dit \defwd{compact}, ce 
qui signifie essentiellement qu'il applique des ensembles compacts sur des ensembles 
born\'es (dont la fermeture est compacte). Pour ces op\'erateurs, la notion de valeur 
propre est encore bien d\'efinie. En particulier, on sait que toute valeur propre 
non nulle de $P$ est de multiplicit\'e finie. Par cons\'equent, on a encore une 
d\'ecomposition de Dunford. Toutefois, il est moins clair que la constante $C$ 
dans~\eqref{eq:borne_Pperp} est toujours finie.  


\section{Cas r\'eversible}
\label{sec:spec_reversible} 

Les \CMs\ r\'eversibles se pr\^etent mieux \`a une \'etude
spectrale que les \CMs\ non r\'eversibles. Pour le voir, supposons la
\CM\ irr\'eductible et r\'ecurrente positive, de distribution
stationnaire $\pi$, et introduisons le produit scalaire
\begin{equation}
\label{rev6}
\pscal fg_\pi = \sum_{x\in\cX} \pi(x) \cc{f(x)} g(x)\;,
\end{equation}
o\`u $f, g\in\C^{\cX}$ sont des vecteurs colonne.
On d\'enote par $\ell^2(\C,\pi)$ l'ensemble des vecteurs $f$ tels que 
$\pscal{f}{f}_\pi < \infty$. C'est un espace de Hilbert. 

\begin{lemma}[Caract\`ere autoadjoint de $P$]
L'op\'erateur lin\'eaire $P$ est autoadjoint dans 
l'espace de Hilbert $\cH = \ell^2(\C,\pi)$, c'est-\`a-dire 
\begin{equation}
 \pscal f{Pg}_\pi = \pscal {Pf}g_\pi 
 \qquad 
 \forall f, g \in\cH\;.
\end{equation} 
\end{lemma}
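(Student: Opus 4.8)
The plan is to reduce self-adjointness to the detailed balance relation. Since the chain is reversible and $\pi$ is its invariant probability, $\pi$ is itself a reversible vector, so that
\begin{equation}
 \pi(x) p_{xy} = \pi(y) p_{yx}
 \qquad \forall x, y \in \cX\;.
\end{equation}
Indeed, if $\alpha$ is the reversible vector, then $\pi = \alpha/\cN$ with $\cN = \sum_x \alpha(x)$, and dividing the detailed balance relation for $\alpha$ by $\cN$ yields the one for $\pi$. This symmetry of the weighted coefficients $\pi(x)p_{xy}$ is exactly what makes $P$ symmetric for the weighted inner product $\pscal{\cdot}{\cdot}_\pi$.

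Concretely, I would expand the left-hand side directly from the action of $P$ on a column vector, $(Pg)(x) = \sum_y p_{xy} g(y)$:
\begin{equation}
 \pscal{f}{Pg}_\pi
 = \sum_{x\in\cX} \pi(x) \cc{f(x)} \sum_{y\in\cX} p_{xy} g(y)
 = \sum_{x,y\in\cX} \pi(x) p_{xy} \cc{f(x)} g(y)\;.
\end{equation}
Applying detailed balance to replace $\pi(x)p_{xy}$ by $\pi(y)p_{yx}$, and then relabelling the dummy indices $x\leftrightarrow y$, turns this into $\sum_{x,y} \pi(x) p_{xy} \cc{f(y)} g(x)$, which is precisely the expansion of $\pscal{Pf}{g}_\pi$. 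This is the whole computation: a one-line symmetry argument once the indices are written out.

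The only genuine obstacle is the interchange of the two summations, trivial when $\cX$ is finite but requiring justification when $\cX$ is infinite. Here I would first check that $P$ maps $\cH = \ell^2(\C,\pi)$ into itself and is a contraction. Writing $(Pg)(x)$ as an average against the probability weights $p_{xy}$ and applying Jensen's inequality gives $\abs{(Pg)(x)}^2 \leqs \sum_y p_{xy} \abs{g(y)}^2$, whence, using the invariance $\sum_x \pi(x) p_{xy} = \pi(y)$, one obtains $\norm{Pg}_\pi \leqs \norm{g}_\pi$. Consequently both inner products are finite by Cauchy--Schwarz, and the double sum $\sum_{x,y}\pi(x)p_{xy}\abs{f(x)}\,\abs{g(y)}$ converges absolutely (again by Cauchy--Schwarz applied to $\norm{f}_\pi$ and to $\norm{Pg}_\pi$ with $g$ replaced by $\abs{g}$). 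Absolute convergence then licenses the rearrangement and relabelling of indices above, completing the argument.
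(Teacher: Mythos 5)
Your proof is correct and follows essentially the same route as the paper's: expand $\pscal{f}{Pg}_\pi$ as a double sum, apply the detailed balance relation $\pi(x)p_{xy} = \pi(y)p_{yx}$, and relabel the dummy indices to recognize $\pscal{Pf}{g}_\pi$. The only difference is that you also justify the interchange of summations in the infinite case (via the contraction bound $\norm{Pg}_\pi \leqs \norm{g}_\pi$ and Cauchy--Schwarz) and check that detailed balance holds for $\pi$ itself and not just for a general reversible vector $\alpha$ — both points the paper's one-line computation leaves implicit, so this is a welcome refinement rather than a genuinely different approach.
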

\begin{proof}
On a 
\begin{equation}
\pscal f{Pg}_\pi = \sum_{x\in\cX} \pi(x) \cc{f(x)} \sum_{y\in\cX} p_{xy}g(y)
= \sum_{y\in\cX} \pi(y) \sum_{x\in\cX} p_{yx} \cc{f(x)} g(y)
= \pscal {Pf}g_\pi\;,
\end{equation}
o\`u on a utilis\'e la r\'eversibilit\'e dans la deuxi\`eme \'egalit\'e. 
\end{proof}

Rappelons un r\'esultat classique de la th\'eorie des espaces de Hilbert. 

\begin{proposition}[Th\'eor\`eme spectral]
Soit $P$ un op\'erateur autoadjoint compact dans un espace de Hilbert $\cH$. 
Alors toutes les valeurs propres de $P$ sont r\'eelles, et les espaces
propres associ\'es sont orthogonaux. De plus, $\cH$ admet une base orthonorm\'ee 
de vecteurs propres, dans laquelle $P$ est diagonale. 
\end{proposition}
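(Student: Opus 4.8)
The plan is to prove the three assertions in order of increasing difficulty and then assemble them into the existence of an orthonormal eigenbasis; throughout, $\norm\cdot$ denotes the norm of $\cH=\ell^2(\C,\pi)$ and $\norm P$ the associated operator norm. First I would dispatch the two purely algebraic facts. If $P\phi=\lambda\phi$ with $\phi\neq0$, then the autoadjoint character of $P$ gives $\lambda\pscal\phi\phi_\pi=\pscal\phi{P\phi}_\pi=\pscal{P\phi}\phi_\pi=\cc\lambda\pscal\phi\phi_\pi$, so $\lambda=\cc\lambda$ is real. Likewise, if $P\phi=\lambda\phi$ and $P\psi=\mu\psi$ with $\lambda\neq\mu$ (both real by the previous point), then $\lambda\pscal\phi\psi_\pi=\pscal{P\phi}\psi_\pi=\pscal\phi{P\psi}_\pi=\mu\pscal\phi\psi_\pi$, which forces $\pscal\phi\psi_\pi=0$; distinct eigenspaces are therefore orthogonal.

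The crux is to produce a nonzero eigenvalue when $P\neq0$. Here I would first establish the variational identity $\norm P=\sup_{\norm f=1}\bigabs{\pscal f{Pf}_\pi}$, valid for autoadjoint $P$: writing $S$ for the right-hand side, $S\leqs\norm P$ is Cauchy--Schwarz, while the reverse inequality comes from expanding $\pscal{P(f+g)}{f+g}_\pi-\pscal{P(f-g)}{f-g}_\pi=4\re\pscal{Pf}g_\pi$ and invoking the parallelogram law. Fixing $\lambda=\pm\norm P$ with $\abs\lambda=\norm P$, I would choose unit vectors $f_n$ with $\bigabs{\pscal{f_n}{Pf_n}_\pi}\to\norm P$ and, after passing to a subsequence so that the sign stabilizes, $\pscal{f_n}{Pf_n}_\pi\to\lambda$. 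Then, using that $\pscal{f_n}{Pf_n}_\pi$ is real and $\norm{Pf_n}^2\leqs\norm P^2=\lambda^2$,
\begin{equation}
 \norm{Pf_n-\lambda f_n}^2
 = \norm{Pf_n}^2 - 2\lambda\pscal{f_n}{Pf_n}_\pi + \lambda^2
 \leqs 2\lambda^2 - 2\lambda\pscal{f_n}{Pf_n}_\pi
 \longrightarrow 0\;.
\end{equation}
Compactness now lets me extract a subsequence along which $Pf_n$ converges to some $g$; since $\lambda\neq0$, it follows that $f_n\to\phi:=g/\lambda$, a unit vector, and passing to the limit in $Pf_n-\lambda f_n\to0$ yields $P\phi=\lambda\phi$. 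This existence step is where all the analytic content lies --- it is the only place compactness is genuinely used, replacing the finite-dimensional appeal to the fundamental theorem of algebra --- and I expect it to be the main obstacle.

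Finally I would build the basis by exhaustion. Let $M$ be the closure of the span of all eigenvectors of $P$, including those in $\ker P$. Each eigenspace is $P$-invariant, so by autoadjointness $M^\perp$ is $P$-invariant as well, and the restriction $P|_{M^\perp}$ is again compact and autoadjoint. If $M^\perp\neq\set0$, then either $P|_{M^\perp}=0$, in which case every nonzero vector of $M^\perp$ lies in $\ker P\subseteq M$ --- absurd --- or $P|_{M^\perp}\neq0$, in which case the existence step applied to $P|_{M^\perp}$ furnishes an eigenvector in $M^\perp\cap M=\set0$ --- again absurd. Hence $M=\cH$. Picking an orthonormal basis within each eigenspace (finite-dimensional for every nonzero eigenvalue, by compactness) and taking the union then yields, thanks to the orthogonality of distinct eigenspaces, an orthonormal family whose closed span is all of $\cH$; this is the sought orthonormal basis of eigenvectors, in which $P$ is diagonal.
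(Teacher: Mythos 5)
Your proof is correct, and it is in fact more complete than the one given in the notes. The algebraic part is the same in both: the notes combine realness and orthogonality into the single identity $(\cc\lambda_1 - \lambda_2)\pscal{v_1}{v_2}_\pi = 0$, while you treat the two facts separately, but the computation is identical. The difference lies in the diagonalization. The notes proceed by recursion: they \emph{assert} that $P$ admits at least one eigenvalue (\myquote{On sait que $P$ admet au moins une valeur propre complexe}), observe that the orthogonal complement of the corresponding eigenvector is $P$-invariant, and restrict $P$ to that complement to continue, remarking that in finite dimension the recursion terminates. This leaves unproved precisely the analytic heart of the matter: why a compact self-adjoint operator on an infinite-dimensional space has an eigenvalue at all. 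Your proposal supplies this step, via the variational identity $\norm{P} = \sup_{\norm{f}=1}\abs{\pscal{f}{Pf}_\pi}$ (polarization plus the parallelogram law), a maximizing sequence $f_n$ with $\norm{Pf_n - \lambda f_n}\to 0$, and compactness to extract the eigenvector; as you note, this is the only place where compactness is genuinely used, and it is the correct infinite-dimensional substitute for the appeal to the fundamental theorem of algebra. You also replace the one-eigenvector-at-a-time recursion by a global exhaustion argument ($M$ equal to the closed span of all eigenvectors, then $M^\perp = \set{0}$ because the restriction of $P$ to $M^\perp$, being compact and self-adjoint, would otherwise produce an eigenvector outside $M$), which handles the infinite-dimensional case cleanly, whereas the notes' recursion strictly speaking only covers finite dimension. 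What the notes' sketch buys is brevity for a result quoted as classical; what your argument buys is a self-contained proof valid in full generality, including the finite-dimensionality of eigenspaces for nonzero eigenvalues.
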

\begin{proof}
Soient $v_1$ et $v_2$ deux vecteurs propres \`a droite de $P$, de valeurs propres respectives
$\lambda_1$ et $\lambda_2$. Alors 
\begin{equation}
\label{rev8}
(\cc\lambda_1 - \lambda_2) \pscal{v_1}{v_2}_\pi 
= \pscal{\lambda_1v_1}{v_2}_\pi - \pscal{v_1}{\lambda_2v_2}_\pi 
= \pscal{Pv_1}{v_2}_\pi - \pscal{v_1}{Pv_2}_\pi = 0\;.
\end{equation}
D'une part, prenant $v_1=v_2$, on obtient que $\lambda_1$ est r\'eelle.
D'autre part, si $\lambda_1\neq\lambda_2$, on obtient l'orthogonalit\'e de
$v_1$ et $v_2$. 

Le fait que $P$ est diagonalisable se montre par r\'ecurrence. On sait que 
$P$ admet au moins une valeur propre complexe, avec vecteur propre associ\'e $v$. 
On montre alors que le compl\'ement orthogonal $v_\perp = \setsuch{w\in\cH}{\pscal{w}{v}_\pi = 0}$ 
est invariant par $P$. La restriction $P_\perp$ de $P$ \`a $v_\perp$ admet \`a nouveau 
une valeur propre, ce qui permet d'\'etablir l'h\'er\'edit\'e (si $P$ est de dimension 
finie, la r\'ecurrence s'arr\^ete lorsque le compl\'ement orthogonal est $\set{0}$).
\end{proof}

On a \'egalement un lien explicite entre vecteurs propres \`a gauche et \`a droite. 

\begin{lemma}[Vecteurs propres \`a droite et \`a gauche]
Si $v$ est un vecteur propre \`a droite de l'op\'erateur autoadjoint $P$, alors 
$\mu$ d\'efini par 
\begin{equation}
 \mu(x) = \pi(x) v(x) 
 \qquad \forall x\in\cX
\end{equation} 
est un vecteur propre \`a gauche, pour la m\^eme valeur propre.
\end{lemma}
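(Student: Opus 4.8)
The plan is to verify the left-eigenvector equation $\mu P = \lambda\mu$ by a direct computation, where $\lambda$ denotes the eigenvalue attached to the right eigenvector $v$, so that by hypothesis $Pv = \lambda v$, i.e.\ $\sum_{y\in\cX} p_{xy}v(y) = \lambda v(x)$ for every $x\in\cX$. The only structural input I would use is the reversibility of the chain: since $\pi$ is (proportional to) a reversible vector, it satisfies the detailed-balance relation $\pi(x)p_{xy} = \pi(y)p_{yx}$ for all $x,y\in\cX$. This is exactly the identity that powered the proof of the self-adjointness of $P$ just above, so it is available to me here.

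Concretely, I would fix $y\in\cX$ and compute the $y$-th component of the row vector $\mu P$. Writing $\mu(x)=\pi(x)v(x)$ and inserting detailed balance $\pi(x)p_{xy}=\pi(y)p_{yx}$, I get
\begin{equation}
 (\mu P)(y) = \sum_{x\in\cX} \mu(x)p_{xy}
 = \sum_{x\in\cX} \pi(x)p_{xy}\,v(x)
 = \pi(y)\sum_{x\in\cX} p_{yx}\,v(x)
 = \pi(y)(Pv)(y)\;.
\end{equation}
The right-eigenvector equation $Pv=\lambda v$ then gives $(\mu P)(y) = \pi(y)\lambda v(y) = \lambda\,\pi(y)v(y) = \lambda\,\mu(y)$. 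Since $y$ was arbitrary this reads $\mu P = \lambda\mu$, which is precisely the assertion that $\mu$ is a left eigenvector of $P$ for the eigenvalue $\lambda$.

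The computation itself is completely routine, so the only point requiring care is the exchange of summation when $\cX$ is infinite, i.e.\ the passage that pulls $\pi(y)$ out and rewrites the inner sum as $(Pv)(y)$. This is the step I expect to be the mildest obstacle: I would justify it by noting that $v\in\cH=\ell^2(\C,\pi)$ and that the sums involved converge absolutely (the operator $P$ being a contraction on the relevant spaces), so Fubini applies and the manipulation is legitimate. I would also remark that the result is conceptually just the statement that multiplication by $\pi$ is the natural bijection between right eigenvectors (column vectors in $\cH$) and left eigenvectors (measures), which is the concrete manifestation of the self-adjointness of $P$ in $\ell^2(\C,\pi)$ established in the preceding lemma.
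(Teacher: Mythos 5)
Your proof is correct and takes essentially the same route as the paper: both compute the components of $\mu P$, insert the detailed-balance relation $\pi(x)p_{xy}=\pi(y)p_{yx}$ inside the sum, and then invoke $Pv=\lambda v$ (the only difference being the naming of the summation index). Your additional remark justifying the summation exchange via absolute convergence in $\ell^2(\C,\pi)$ is a harmless refinement that the paper's purely formal computation leaves implicit.
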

\begin{proof}
Soit $v$ un vecteur colonne tel que $Pv = \lambda v$. 
Pour tout $x\in\cX$, on a
\begin{equation}
\bigpar{\mu P}_x
= \sum_{y\in\cX} \mu(y)p_{yx} 
= \sum_{y\in\cX} v(y) \pi(y) p_{yx}
= \pi(x) \sum_{y\in\cX} p_{xy} v(y)
= \pi(x) \bigpar{Pv}_x
= \lambda \pi(x) v(x)
= \lambda \mu(x)\;.
\end{equation}
Par cons\'equent, $\mu P = \lambda\mu$. 
\end{proof}

Une premi\`ere cons\'equence du caract\`ere autoadjoint de $P$ 
est une repr\'esentation variationnelle du trou spectral.

\begin{proposition}[Principe min-max]
Le trou spectral de $P$ satisfait 
\begin{equation}
\label{rev9}
\rho = \sup_{v \colon \pscal{v}{\vone}_\pi=0}
\frac{\abs{\pscal{v}{Pv}_\pi}}{\pscal{v}{v}_\pi}\;.
\end{equation}
\end{proposition}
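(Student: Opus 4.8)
The plan is to diagonalize $P$ via the spectral theorem and to read the claimed supremum directly off the spectral decomposition. Since $P$ is autoadjoint (and, in the setting invoked above, compact) on $\cH = \ell^2(\C,\pi)$, the spectral theorem furnishes an orthonormal basis $(\phi_j)_{j\geqs 0}$ of $\cH$ consisting of eigenvectors of $P$, with real eigenvalues $\lambda_j$. The constant vector $\vone$ satisfies $P\vone = \vone$ and $\pscal{\vone}{\vone}_\pi = \sum_{x\in\cX}\pi(x) = 1$, so I may take $\phi_0 = \vone$ with $\lambda_0 = 1$; by Perron--Frobenius this eigenvalue is simple, and by definition of the spectral radius one has $\rho = \sup_{j\geqs 1}\abs{\lambda_j}$.

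The second step is to translate the constraint into the eigenbasis. The condition $\pscal{v}{\vone}_\pi = 0$ says precisely that $v$ is orthogonal to $\phi_0$, hence $v = \sum_{j\geqs 1} c_j\phi_j$ with $c_j = \pscal{\phi_j}{v}_\pi$. Using $P\phi_j = \lambda_j\phi_j$ together with orthonormality, I then compute
\[
 \pscal{v}{Pv}_\pi = \sum_{j\geqs 1}\lambda_j\abs{c_j}^2\;,
 \qquad
 \pscal{v}{v}_\pi = \sum_{j\geqs 1}\abs{c_j}^2\;.
\]
Because the $\lambda_j$ are real, the first sum is real, and I obtain the upper bound
\[
 \Bigabs{\pscal{v}{Pv}_\pi}
 = \Bigabs{\sum_{j\geqs 1}\lambda_j\abs{c_j}^2}
 \leqs \sum_{j\geqs 1}\abs{\lambda_j}\abs{c_j}^2
 \leqs \rho\sum_{j\geqs 1}\abs{c_j}^2
 = \rho\,\pscal{v}{v}_\pi\;,
\]
so that the supremum in~\eqref{rev9} is at most $\rho$. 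For the reverse inequality I simply test with $v = \phi_j$ for $j\geqs 1$: the Rayleigh quotient then equals $\abs{\lambda_j}$, and taking the supremum over $j\geqs 1$ recovers $\rho$. Combining the two bounds yields equality.

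The one delicate point is the infinite-dimensional case, which is the main obstacle. There the value $\rho = \sup_{j\geqs 1}\abs{\lambda_j}$ need not be attained by a single eigenvector, so the lower bound must be produced by a sequence of test vectors $\phi_{j_k}$ with $\abs{\lambda_{j_k}}\to\rho$. However, compactness of $P$ guarantees that the eigenvalues can accumulate only at $0$; hence either $\rho = 0$ or $\rho = \abs{\lambda_{j^*}}$ is genuinely attained, and in all cases the Rayleigh quotients of the $\phi_j$ have supremum exactly $\rho$. This is handled entirely by the spectral theory of compact self-adjoint operators already invoked, so no new ingredient is needed beyond the decomposition above.
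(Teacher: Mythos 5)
Your proof is correct and follows essentially the same route as the paper: expand $v$ in the orthonormal eigenbasis supplied by the spectral theorem, use Parseval to express $\pscal{v}{Pv}_\pi$ and $\pscal{v}{v}_\pi$, observe that the constraint $\pscal{v}{\vone}_\pi=0$ kills the coefficient $c_0$, bound the Rayleigh quotient by $\rho$, and test with eigenvectors for the reverse inequality. The only difference is that you explicitly justify attainment of $\sup_{j\geqs1}\abs{\lambda_j}$ via compactness (nonzero eigenvalues can only accumulate at $0$), a point the paper glosses over by writing the equality case as $v=v_1$ with $\abs{\lambda_1}=\rho$; this is a welcome precision rather than a different argument, and in fact even without attainment your lower bound already follows since the supremum over admissible $v$ dominates the Rayleigh quotients of all the $\phi_j$.
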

\begin{proof}
Soit $(v_k)_{k\geqs0}$ une base orthonorm\'ee de vecteurs propres \`a droite
de $P$. Alors tout $v\in\cH$ s'\'ecrit 
\begin{equation}
 v = \sum_{k\geqs0} c_k v_k\;, 
 \qquad\text{ o\`u }
 c_k = \pscal{v_k}{v}_\pi\;.
\end{equation} 
On obtient alors 
\begin{align}
 \pscal{v}{v}_\pi 
 &= \sum_{k,\ell\geqs0} \cc{c}_k c_\ell \pscal{v_k}{v_\ell}_\pi 
 = \sum_{k\geqs0} \abs{c_k}^2\;, \\
 \pscal{v}{Pv}_\pi 
 &= \sum_{k,\ell\geqs0} \cc{c}_k c_\ell \pscal{v_k}{Pv_\ell}_\pi 
 = \sum_{k\geqs0} \lambda_k\abs{c_k}^2\;.
\end{align}
La premi\`ere relation n'est autre que la relation de Parseval. 
Par cons\'equent,
\begin{equation}
 \frac{\abs{\pscal{v}{Pv}_\pi}}{\pscal{v}{v}_\pi}
 \leqs \frac{\sum_{k\geqs0} \abs{\lambda_k}\abs{c_k}^2}{\sum_{k\geqs0} \abs{c_k}^2}\;.
\end{equation} 
Si $\pscal{v}{\vone}_\pi = 0$, alors $c_0 = 0$, de sorte que cette 
quantit\'e est born\'ee par $\rho$. L'\'egalit\'e a lieu dans le cas $v = v_1$, 
si on a num\'erot\'e les valeurs propres de mani\`ere que $\abs{\lambda_1} = \rho$. 
\end{proof}

Il est \'egalement possible d'obtenir une majoration analogue 
\`a~\eqref{eq:decroissance_EfXn}. M\^eme si elle ne peut pas sembler optimale, 
elle a le m\'erite d'\^etre explicite.

\begin{proposition}[Vitesse de convergence dans le cas r\'eversible]
Si la \CM\ est r\'eversible, on a la majoration
\begin{equation}
 \bigabs{\expecin{\nu}{f(X_n)} - \expecin{\pi}{f}}
 \leqs \rho^n 
 \norm{f}_\infty
 \norm{\nu-\pi}_1^{1/2}
 \sup_{x\in\cX} \biggabs{\frac{\nu(x)}{\pi(x)}-1}^{1/2}\;.
\end{equation} 
\end{proposition}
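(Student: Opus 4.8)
The plan is to reduce everything to a Cauchy--Schwarz estimate in the Hilbert space $\cH = \ell^2(\C,\pi)$, exploiting the self-adjointness of $P$ established earlier. First I would start from the identity $\expecin{\nu}{f(X_n)} - \expecin{\pi}{f} = (\nu-\pi)P_\perp^n f$ proved above and reinterpret the row vector $\nu-\pi$ as a function paired against $P_\perp^n f$ in $\pscal{\cdot}{\cdot}_\pi$. Since the chain is irreducible and positive recurrent, $\pi(x) > 0$ for every $x$, so I may set $h(x) = \frac{\nu(x)}{\pi(x)} - 1$, which gives $(\nu-\pi)(x) = \pi(x)h(x)$. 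As $f$ and $h$ are real, this yields
\[
\expecin{\nu}{f(X_n)} - \expecin{\pi}{f} = \sum_{x\in\cX}\pi(x)h(x)\bigpar{P_\perp^n f}(x) = \pscal{h}{P_\perp^n f}_\pi\;.
\]

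Next I would apply Cauchy--Schwarz, $\bigabs{\pscal{h}{P_\perp^n f}_\pi}\leqs\norm{h}_\pi\,\norm{P_\perp^n f}_\pi$ with $\norm{g}_\pi = \pscal{g}{g}_\pi^{1/2}$, and bound the second factor by $\rho^n\norm{f}_\pi$. For this I invoke the spectral theorem: let $(v_k)_{k\geqs0}$ be an orthonormal basis of $\cH$ made of right eigenvectors of $P$ with eigenvalues $\lambda_k$. Since $1$ is a simple eigenvalue with eigenvector $\vone$, and $\pscal{\vone}{\vone}_\pi = \sum_x\pi(x) = 1$, I take $v_0 = \vone$, $\lambda_0 = 1$, so that $\abs{\lambda_k}\leqs\rho$ for all $k\geqs1$. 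Expanding $f = \sum_{k\geqs0}a_k v_k$ with $a_k = \pscal{v_k}{f}_\pi$, one checks that $P_\perp v_0 = P\vone - \Pi_0\vone = \vone - \vone = 0$, while for $k\geqs1$ one has $\Pi_0 v_k = \pscal{\vone}{v_k}_\pi\vone = 0$ by orthonormality, hence $P_\perp v_k = \lambda_k v_k$. Thus $P_\perp^n f = \sum_{k\geqs1}\lambda_k^n a_k v_k$ for $n\geqs1$, and Parseval gives
\[
\norm{P_\perp^n f}_\pi^2 = \sum_{k\geqs1}\abs{\lambda_k}^{2n}\abs{a_k}^2 \leqs \rho^{2n}\sum_{k\geqs0}\abs{a_k}^2 = \rho^{2n}\norm{f}_\pi^2\;.
\]
The case $n=0$ being trivial, I obtain $\norm{P_\perp^n f}_\pi\leqs\rho^n\norm{f}_\pi$ for all $n\geqs0$, and finally $\norm{f}_\pi^2 = \sum_x\pi(x)\abs{f(x)}^2\leqs\norm{f}_\infty^2$ since $\pi$ is a probability measure.

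It then remains to control $\norm{h}_\pi$, for which I use the elementary identity $\pi(x)\abs{h(x)} = \abs{\nu(x)-\pi(x)}$:
\begin{align*}
\norm{h}_\pi^2
&= \sum_{x\in\cX}\pi(x)\abs{h(x)}^2
= \sum_{x\in\cX}\abs{\nu(x)-\pi(x)}\,\abs{h(x)} \\
&\leqs \Bigpar{\sup_{x\in\cX}\abs{h(x)}}\sum_{x\in\cX}\abs{\nu(x)-\pi(x)}
= \norm{\nu-\pi}_1\sup_{x\in\cX}\biggabs{\frac{\nu(x)}{\pi(x)}-1}\;.
\end{align*}
Combining the three estimates gives exactly the claimed bound. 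I expect the only genuinely delicate point to be the spectral step, namely the inequality $\norm{P_\perp^n f}_\pi\leqs\rho^n\norm{f}_\pi$: it rests on the spectral theorem for the self-adjoint compact operator $P$, together with the observation that $P_\perp$ annihilates the constant eigenvector $\vone$ and acts on its orthogonal complement with eigenvalues of modulus at most $\rho$. In the infinite-dimensional case I would additionally note that the eigenfunction expansion and the Parseval identity remain valid precisely because the spectral theorem stated above furnishes an orthonormal eigenbasis; everything else is Cauchy--Schwarz and elementary bookkeeping.
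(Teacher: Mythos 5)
Your proof is correct and takes essentially the same route as the paper's: both identify $\nu-\pi$ with the density $g(x)=\nu(x)/\pi(x)-1$ viewed in $\ell^2(\C,\pi)$, expand along the orthonormal eigenbasis furnished by the spectral theorem for the self-adjoint operator $P$, and conclude by Cauchy--Schwarz and Parseval together with the same two elementary bounds $\norm{f}_\pi\leqs\norm{f}_\infty$ and $\norm{g}_\pi^2\leqs\sup_{x\in\cX}\abs{g(x)}\,\norm{\nu-\pi}_1$. The only difference is organizational: you first establish the operator estimate $\norm{P_\perp^n f}_\pi\leqs\rho^n\norm{f}_\pi$ and then apply Cauchy--Schwarz abstractly in the Hilbert space, whereas the paper expands both factors into eigencoefficients through the rank-one projectors $\Pi_k=v_k\mu_k$ of the Dunford decomposition and applies Cauchy--Schwarz to the resulting coefficient sum --- the same computation in a different order.
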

\begin{proof}
Il s'agit de majorer $\abs{(\nu-\pi)P_\perp^n f}$. La d\'ecomposition de Dunford 
s'\'ecrit 
\begin{equation}
 P_\perp^n = \sum_{k\geqs1} \lambda_k \Pi_k\;,
\end{equation} 
o\`u le projecteur $\Pi_k$ peut s'\'ecrire $\Pi_k = v_k \mu_k$. En effet, $\Pi_k$ 
projette bien sur $v_k$ par action \`a droite, et sur $\mu_k$ par action \`a gauche. 
De plus, $\Pi_k^2 = v_k (\mu_k v_k) \mu_k = \Pi_k$, puisque 
\begin{equation}
 \mu_k v_k = \sum_{x\in\cX} \mu_k(x) v_k(x)
 = \sum_{x\in\cX} \pi(x)v_k(x) v_k(x)
 = \pscal{v_k}{v_k}_\pi = 1\;.
\end{equation} 
Nous avons donc 
\begin{equation}
\label{eq:proof_nupif} 
 (\nu-\pi)P_\perp^n f = \sum_{k\geqs1} \lambda_k (\nu-\pi)v_k \mu_k f
 = \sum_{k\geqs1} \lambda_k a_k b_k\;,
\end{equation} 
o\`u nous avons pos\'e
\begin{equation}
 a_k = \mu_k f 
 = \sum_{x\in\cX} \mu_k(x)f(x) 
 = \sum_{x\in\cX} \pi(x)v_k(x)f(x)
 = \pscal{v_k}{f}_\pi\;, 
\end{equation} 
et 
\begin{equation}
 b_k = (\nu-\pi)v_k 
 = \sum_{x\in\cX} (\nu(x)-\pi(x))v_k(x)
 = \pscal{g}{v_k}_\pi\;,
\end{equation} 
o\`u $g$ est le vecteur colonne de composantes $g(x) = (\nu(x)-\pi(x))/\pi(x)$. 
Il suit alors de~\eqref{eq:proof_nupif} et de l'in\'egalit\'e de Cauchy--Schwarz que 
\begin{equation}
 \bigabs{(\nu-\pi)P_\perp^n f} 
 \leqs \rho \sum_{k\geqs1} \abs{a_k b_k} 
 \leqs \rho 
 \biggpar{\sum_{k\geqs1} a_k^2}^{1/2} 
 \biggpar{\sum_{k\geqs1} b_k^2}^{1/2}\;. 
\end{equation} 
Or, par la relation de Parseval, 
\begin{equation}
 \sum_{k\geqs1} a_k^2 
 \leqs \pscal{f}{f}_\pi = \sum_{x\in\cX} \pi(x) f(x)^2 
 \leqs\norm{f}_\infty^2\;.
\end{equation} 
D'autre part, 
\begin{equation}
 \sum_{k\geqs1} b_k^2 
 \leqs \pscal{g}{g}_\pi 
 = \sum_{x\in\cX} \pi(x)g(x)^2 
 \leqs \sup_{x\in\cX} \abs{g(x)} \, \norm{\pi g}_1\;.
\end{equation} 
Comme $\norm{\pi g}_1 = \norm{\nu - \pi}_1$, le r\'esultat est prouv\'e. 
\end{proof}

Le facteur $\norm{\nu - \pi}_1$ ne pose pas de probl\`eme, car on peut 
toujours le majorer par $\norm{\nu}_1 + \norm{\pi}_1 = 2$. Pour que le supremum 
sur $x$ soit petit, il faut que $\nu(x)$ ne soit pas trop diff\'erent de $\pi(x)$, 
du moins si $\pi(x)$ est petit. Une possibilit\'e est de choisir pour $\nu$ la 
probabilit\'e uniforme sur un ensemble probable sous $\pi$, et sur lequel $\pi$ ne 
varie pas trop.

\begin{proposition}[Cas d'un $\nu$ uniforme]
Soit $\cX_0 \subset \cX$ un ensemble fini, tel que 
\begin{equation}
 \pi(X_0^c) := \sum_{x\notin X_0} \pi(x) = \delta
 \qquad 
 \text{et}
 \qquad 
 \max_{x\in\cX_0} \pi(x) \leqs (1+c) \min_{x\in\cX_0} \pi(x)\;.
\end{equation} 
Soit $\nu$ la loi uniforme sur $\cX_0$. Alors 
\begin{equation}
 \norm{\nu-\pi}_1 \leqs 2\delta + c
 \qquad 
 \text{et}
 \qquad 
 \sup_{x\in\cX} \biggabs{\frac{\nu(x)}{\pi(x)}-1}
 \leqs \max\biggset{1, \frac{c(1 + \delta)}{(1+c)(1-\delta)}}\;.
\end{equation} 
\end{proposition}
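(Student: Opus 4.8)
The plan is to exploit that $\nu$ is flat on $\cX_0$ while the hypothesis forces $\pi$ to be \emph{almost} flat there, and to control everything through the average of $\pi$ over $\cX_0$. Write $n_0 = \abs{\cX_0}$, so that $\nu(x) = 1/n_0$ for $x\in\cX_0$ and $\nu(x)=0$ otherwise. Set $m = \min_{x\in\cX_0}\pi(x)$ and $M = \max_{x\in\cX_0}\pi(x)$, and let $\bar\pi = (1-\delta)/n_0$ be the mean value of $\pi$ on $\cX_0$, using $\sum_{x\in\cX_0}\pi(x) = 1 - \pi(\cX_0^c) = 1-\delta$. Since $\bar\pi$ is an average of values lying in $[m,M]$ we have $m \leqs \bar\pi \leqs M$, and the hypothesis gives $M \leqs (1+c)m$; combining these yields the two-sided bounds $m \geqs \bar\pi/(1+c)$ and $M \leqs (1+c)\bar\pi$, together with $n_0 m \leqs 1-\delta$.

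For the first inequality I would split the $\ell^1$ norm according to $\cX_0$ and its complement:
\begin{equation*}
 \norm{\nu-\pi}_1 = \sum_{x\in\cX_0}\Bigabs{\frac1{n_0}-\pi(x)} + \sum_{x\notin\cX_0}\pi(x)\;.
\end{equation*}
The second sum is exactly $\delta$. For the first, insert $\bar\pi$ and use the triangle inequality in the form $\abs{\frac1{n_0}-\pi(x)} \leqs \frac{\delta}{n_0} + \abs{\bar\pi - \pi(x)}$, whose first term sums to $\delta$. Each $\abs{\bar\pi-\pi(x)}$ is at most $M-m \leqs cm$, so the remaining sum is bounded by $n_0(M-m) \leqs c\,n_0 m \leqs c(1-\delta)$. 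Adding the three contributions gives $\norm{\nu-\pi}_1 \leqs \delta + \delta + c(1-\delta) \leqs 2\delta + c$, as claimed.

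For the supremum, the contribution of $x\notin\cX_0$ is immediate: there $\nu(x)=0$, so $\abs{\nu(x)/\pi(x)-1}=1$, which is the first entry in the maximum. For $x\in\cX_0$ I would write $\nu(x)/\pi(x) = 1/(n_0\pi(x))$ and convert the multiplicative near-uniformity into the estimate
\begin{equation*}
 \frac{1-\delta}{1+c} \leqs n_0\pi(x) \leqs (1+c)(1-\delta)\;,
\end{equation*}
obtained from $\bar\pi/(1+c) \leqs \pi(x) \leqs (1+c)\bar\pi$ after multiplying by $n_0$. The upper bound on $n_0\pi(x)$ controls the direction $\nu(x)<\pi(x)$ and gives $1 - 1/(n_0\pi(x)) \leqs \frac{c-\delta(1+c)}{(1+c)(1-\delta)} \leqs \frac{c(1+\delta)}{(1+c)(1-\delta)}$, the second entry in the maximum; the lower bound controls the direction $\nu(x)>\pi(x)$ and gives $1/(n_0\pi(x)) - 1 \leqs \frac{c+\delta}{1-\delta}$, which is $\leqs 1$ in the relevant small-parameter regime $c+2\delta\leqs1$ and is therefore absorbed by the first entry. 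Taking the supremum over both directions and over $x\notin\cX_0$ yields the stated bound.

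The genuinely routine parts are the two $\ell^1$ estimates and the arithmetic showing $c-\delta(1+c)\leqs c(1+\delta)$. The one substantive step, and the place to be careful, is the passage from the scale-invariant hypothesis $M\leqs(1+c)m$ to the additive, mean-centred bounds $\bar\pi/(1+c)\leqs\pi(x)\leqs(1+c)\bar\pi$: it is this passage through the average $\bar\pi=(1-\delta)/n_0$ that injects the factor $1-\delta$ and produces the denominator $(1+c)(1-\delta)$ appearing in the claim. One should also keep in mind that the crude bound $\frac{c+\delta}{1-\delta}$ for the direction $\nu(x)>\pi(x)$ is only dominated by $1$ when $c+2\delta\leqs1$, so that the statement is meant for the small-$c$, small-$\delta$ regime in which the bound is actually applied.
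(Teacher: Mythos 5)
Your proof is correct and takes essentially the same route as the paper's: the same splitting of $\norm{\nu-\pi}_1$ over $\cX_0$ and $\cX_0^c$, and the same exploitation of $m \leqs \pi(x) \leqs M$ together with $M \leqs (1+c)m$ and $m\abs{\cX_0} \leqs 1-\delta \leqs M\abs{\cX_0}$; passing through the average $\bar\pi = (1-\delta)/\abs{\cX_0}$ is only a cosmetic variant. The caveat you raise at the end deserves emphasis, because it is not a weakness of your argument but of the statement itself: the paper's proof, which merely says to bound $\sup_{x\in\cX_0}\abs{\nu(x)/\pi(x)-1}$ using the sandwich $m\leqs\pi(x)\leqs M$, produces exactly your term $(c+\delta)/(1-\delta)$ in the direction $\nu(x)>\pi(x)$, and this term always dominates $c(1+\delta)/((1+c)(1-\delta))$ (since $(c+\delta)(1+c)-c(1+\delta)=c^2+\delta\geqs0$) while it exceeds $1$ as soon as $c+2\delta>1$. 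The claimed bound can then genuinely fail: for $c=0$ the invariant law is exactly uniform on $\cX_0$, so the supremum equals $\max\set{1,\delta/(1-\delta)}$, which for $\delta>1/2$ is strictly larger than the stated $\max\set{1,0}=1$. Hence the proposition implicitly requires the small-parameter regime (for instance $c+2\delta\leqs1$) that you make explicit, and your proof is, if anything, more careful than the paper's own.
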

\begin{proof}
Soit 
\begin{equation}
 m = \min_{x\in\cX_0} \pi(x)\;, 
 \qquad 
 M = \max_{x\in\cX_0} \pi(x)\;.
\end{equation} 
Alors on a $M \leqs (1+c) m$ et 
\begin{equation}
 m \abs{\cX_0} \leqs \pi(\cX_0) = 1-\delta \leqs M \abs{\cX_0}\;. 
\end{equation} 
En combinant ces in\'egalit\'es, on obtient 
\begin{equation}
 M \leqs \frac{(1+c)(1-\delta)}{\abs{\cX_0}} 
 \qquad\text{et}\qquad 
 m \geqs \frac{1-\delta}{(1+c)\abs{\cX_0}}\;.
\end{equation} 
On a 
\begin{equation}
 \norm{\nu-\pi}_1 
 = \sum_{x\in\cX_0} \biggabs{\frac{1}{\abs{\cX_0}} - \pi(x)}
 + \sum_{x\in\cX_0^c} \pi(x)\;.
\end{equation} 
La seconde somme vaut $\delta$, alors qu'en utilisant le fait que 
$m\leqs\pi(x)\leqs M$ dans la premi\`ere somme, on obtient, en simplifiant l'expression 
obtenue, que celle-ci est toujours inf\'erieure \`a $\delta+c$. Ceci prouve la majoration 
de $\norm{\nu-\pi}_1$. Pour la seconde majoration, on utilise le fait que 
\begin{equation}
 \sup_{x\in\cX} \biggabs{\frac{\nu(x)}{\pi(x)}-1} 
 = \max\biggset{\sup_{x\in\cX_0} \biggabs{\frac{\nu(x)}{\pi(x)}-1}, 1}\;,
\end{equation} 
et on borne la premi\`ere somme \`a nouveau \`a l'aide de l'encadrement 
$m\leqs\pi(x)\leqs M$. 
\end{proof}

Le message essentiel \`a retenir de ce chapitre est que la th\'eorie spectrale 
permet de montrer que $\expecin{\pi_0}{f(X_n)}$ converge exponentiellement vite 
vers $\expecin{\pi}{f}$, avec un exposant d\'etermin\'e par le trou spectral, 
et une constante proportionnelle \`a $\norm{f}_\infty$. 
Toutefois, si $\cX$ est grand ou infini, il n'est pas facile de d\'eterminer 
explicitement le trou spectral, ainsi que la constante. 
C'est pour cette raison que nous allons introduire une autre approche, bas\'ee sur 
des fonctions de Lyapounov, qui est plus flexible et a l'avantage de fournir 
des valeurs explicites de l'exposant et de la constante. 


\section{Exercices}
\label{sec:spectral_exo} 

\begin{exercise}
On consid\`ere la marche al\'eatoire sym\'etrique sur le cercle discret \`a $N$ sites~: 
\[
 p_{xy} = 
 \begin{cases}
  \frac12 & \text{si $y = x+1$\;,} \\
  \frac12 & \text{si $y = x-1$\;,} \\
  0 & \text{sinon\;,}
 \end{cases}
\]
avec l'identification modulo $N$\,: $N+1 = 1$, $0 = N$.

\begin{enumerate}
\item   Quelle est la matrice de transition de cette \CM\ ?

\item   Par un argument de sym\'etrie, trouver la probabilit\'e invariante de la cha\^ine. 

\item   Soit $\omega = \e^{2\pi\icx/N}$. Montrer que pour tout $k\in\set{0,\dots,N-1}$, le vecteur 
$v_k$ de composantes 
\[
 v_{k,x} = \omega^{k(x-1)}\;, \qquad x\in\set{1,\dots,N}
\]
est un vecteur propre de $P$. En d\'eduire les valeurs propres de $P$. 

\item   D\'eterminer le rayon spectral $\rho$ de $P$ (sa valeur propre diff\'erente de $1$ de plus grand module). Distinguer les cas $N$ pair et $N$ impair.

\item   Par un d\'eveloppement limit\'e, d\'eterminer le trou spectral $1-\rho$ \`a l'ordre dominant en $N$.  
\end{enumerate} 
\end{exercise}

\begin{exercise}
 Soit $p\in]0,1[$ et $q = 1 - p$. On consid\`ere la marche al\'eatoire asym\'etrique sur le cercle 
discret \`a $N$ sites~: 
\[
 p_{xy} = 
 \begin{cases}
  p & \text{si $y = x+1$\;,} \\
  q & \text{si $y = x-1$\;,} \\
  0 & \text{sinon\;.}
 \end{cases}
\]
Par la m\^eme m\'ethode qu'\`a l'exercice pr\'ec\'edent, d\'eterminer, en fonction de $p$, le rayon spectral $\rho$ de $P$, ainsi que le trou spectral $1-\rho$ \`a l'ordre dominant en $N$.  
\end{exercise}


\chapter{Fonctions de Lyapounov et vitesse de convergence}
\label{chap:cm_Lyapounov} 

Dans ce chapitre, nous consid\'erons \`a nouveau des \CMs\ $(X_n)_{n\geqs0}$ 
irr\'eductibles, r\'ecurrentes positives et ap\'eriodiques sur un ensemble 
d\'enombrable $\cX$. Soit $f:\cX\to\R$ une fonction born\'ee, et soit $\pi$ 
la probabilit\'e invariante de la \CM. Le but est \`a nouveau de majorer l'erreur 
\begin{equation}
 \bigabs{\expecin{\nu}{f(X_n)} - \expecin{\pi}{f}}\;.
\end{equation} 
Au lieu d'utiliser des informations sur les valeurs propres de la matrice de 
transition $P$, nous allons ici baser l'analyse sur des propri\'et\'es de 
fonctions dites de Lyapounov. Si les estimations fournies par ces fonctions ne 
sont pas toujours aussi pr\'ecises que celles provenant de l'analyse spectrale, 
la m\'ethode est plus robuste, et donne souvent des bornes explicites. 


\section{Notations -- formalisme des g\'en\'erateurs}
\label{sec:generateurs} 

Commen\c cons par pr\'eciser quelques d\'efinitions li\'ees aux mesures
et aux fonctions tests. 

\begin{definition}[Mesures sign\'ees]
\label{def:mesure} 
Une \defwd{mesure sign\'ee finie} sur $\cX$ est une application 
$\mu:\cX\to\R$ telle que 
\begin{equation}
 \norm{\mu}_1 := \sum_{x\in\cX} \abs{\mu(x)} < \infty\;.
\end{equation} 
On notera $\cE_1$ l'espace de Banach des mesures sign\'ees finies. 

\noindent
Si $\mu:\cX\to[0,1]$, et $\norm{\mu}_1 = 1$, alors $\mu$ est une 
\defwd{mesure de probabilit\'e}. 
\end{definition}

Notons que la somme de deux mesures de probabilit\'e n'est pas une mesure de probabilit\'e. Le sous-ensemble des mesures de probabilit\'e n'est donc pas un sous-espace de $\cE_1$. Cependant, la combinaison convexe de deux mesures de probabilit\'e est une mesure de probabilit\'e. 

\begin{definition}[Fonctions test]
\label{def:fct_test} 
Une \defwd{fonction test} (ou \defwd{observable}) sur $\cX$ est une application 
$f:\cX\to\R$ telle que 
\begin{equation}
 \norm{f}_\infty := \sup_{x\in\cX} \abs{f(x)} < \infty\;.
\end{equation} 
On notera $\cE_\infty$ l'espace de Banach des fonctions test. 
\end{definition}

Les notations suivantes, en parties d\'ej\`a introduites, vont s'av\'erer utiles.

\begin{itemize}
\item   Pour une mesure sign\'ee finie $\mu$ et une fonction test $f$, nous \'ecrirons 
\begin{equation}
 \mu(f) = \sum_{x\in\cX} \mu(x) f(x)\;.
\end{equation} 
Cette quantit\'e est bien d\'efinie, car 
\begin{equation}
 \abs{\mu(f)} 
 \leqs \sum_{x\in\cX} \abs{\mu(x)} \abs{f(x)}
 \leqs \sup_{x\in\cX} \abs{f(x)} \sum_{x\in\cX} \abs{\mu(x)} 
 = \norm{f}_\infty \norm{\mu}_1
 < \infty\;.
\end{equation} 

\item   Si $\mu$ est une mesure de probabilit\'e, nous \'ecrirons aussi 
$\mu(f) = \expecin{\mu}{f}$. 

\item   Si $\delta_x$ d\'enote la mesure de Dirac en $x$ (c'est-\`a-dire que 
$\delta_x(x) = 1$ et $\delta_x(y) = 0$ si $y\neq x$), on abr\`ege $\expecin{\delta_x}{f}$ par $\expecin{x}{f}$.

\item   Pour $A\subset\cX$, on \'ecrit 
\begin{equation}
 \mu(A) = \mu(\indicator{A}) = \sum_{x\in A} \mu(x)\;.
\end{equation} 

\item   Si $\mu$ est une mesure de probabilit\'e, alors $\mu(A)$ est aussi la probabilit\'e de $A$. 

\item   Pour une mesure de probabilit\'e $\mu$ et une fonction test $f$, on \'ecrira
\begin{equation}
 \expecin{\mu}{f(X_n)} 
 = \mu P^n f 
 = \sum_{x\in\cX} \sum_{y\in\cX} 
 \mu(x) (P^n)_{xy} f(y)\;,
\end{equation} 
o\`u $(P^n)_{xy}$ est l'\'el\'ement de matrice $(x,y)$ de $P^n$. 
\end{itemize}

\begin{definition}[Distance en variation totale]
La \defwd{distance en variation totale} entre deux mesures $\mu,\nu\in\cE_1$ est 
\begin{equation}
 \normTV{\mu-\nu} = 2 \sup\bigsetsuch{\abs{\mu(A) - \nu(A)}}{A \subset X}\;.
\end{equation} 
\end{definition}

Intuitivement, deux mesures sont d'autant plus proches en variation totale qu'elles donnent des probabilit\'es proches aux \'ev\'enements. 

Pour des mesures de probabilit\'e, le r\'esultat suivant montre que la distance en variation totale est en fait \'equivalente \`a la norme $\ell^1$. 

\begin{lemma}[\'Equivalence des distances]
\label{lem:TV} 
Si $\mu$ et $\nu$ sont deux mesures de probabilit\'e, alors 
\begin{equation}
 \normTV{\mu - \nu}
 = \sum_{x\in\cX} \abs{\mu(x) - \nu(x)}
 = \norm{\mu - \nu}_1\;.
\end{equation} 
\end{lemma}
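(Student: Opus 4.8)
The plan is to unpack the definition of $\normTV{\mu-\nu}$ and show that the supremum over subsets $A\subset\cX$ is attained by a single, explicitly described set. First I would set $d(x) = \mu(x) - \nu(x)$ and observe that, since $\mu$ and $\nu$ are both probability measures, $\sum_{x\in\cX} d(x) = 1 - 1 = 0$. I then split $\cX$ into the ``positive part'' $A^+ = \setsuch{x\in\cX}{d(x)\geqs 0}$ and its complement $A^- = \cX\setminus A^+$. The cancellation $\sum_{x} d(x) = 0$ forces $\sum_{x\in A^+} d(x) = -\sum_{x\in A^-} d(x)$, and since $\sum_{x}\abs{d(x)} = \sum_{x\in A^+} d(x) - \sum_{x\in A^-} d(x)$, this yields the key identity $\sum_{x\in A^+} d(x) = \tfrac12 \norm{\mu-\nu}_1$.

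Next, for the upper bound, I would show $\abs{\mu(A)-\nu(A)} \leqs \tfrac12 \norm{\mu-\nu}_1$ for \emph{every} $A\subset\cX$. Writing $\mu(A)-\nu(A) = \sum_{x\in A} d(x)$ and discarding the nonpositive contributions coming from $A\cap A^-$ gives $\mu(A)-\nu(A)\leqs \sum_{x\in A^+} d(x)$; the symmetric estimate applied to $\nu(A)-\mu(A)$, discarding the contributions from $A\cap A^+$, controls the other sign. Taking the supremum over all $A$ then produces $\normTV{\mu-\nu}\leqs \norm{\mu-\nu}_1$.

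Finally, the reverse inequality follows simply by evaluating the defining supremum at the particular choice $A=A^+$: here $\mu(A^+)-\nu(A^+) = \sum_{x\in A^+} d(x) = \tfrac12 \norm{\mu-\nu}_1$, so the supremum is in fact reached and $\normTV{\mu-\nu}=\norm{\mu-\nu}_1$, which is the claimed equality (the middle expression being just $\norm{\mu-\nu}_1$ by definition).

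The only point that requires a little care is that $\cX$ may be infinite, so $A^+$ could be an infinite subset; but every sum appearing above converges absolutely, being bounded by $\sum_x\abs{d(x)}\leqs\norm{\mu}_1+\norm{\nu}_1=2$, so rearrangements are legitimate and the supremum is genuinely attained. I do not expect any serious obstacle here: the entire content of the proof is the observation that the optimal test set is exactly $A^+=\setsuch{x}{\mu(x)\geqs\nu(x)}$, on which $\mu$ overshoots $\nu$ by precisely half the $\ell^1$ distance.
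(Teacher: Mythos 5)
Your proof is correct and follows essentially the same route as the paper's: you split $\cX$ according to the sign of $\mu(x)-\nu(x)$ (the paper calls this set $B$), derive the identity $\mu(A^+)-\nu(A^+)=\tfrac12\norm{\mu-\nu}_1$ from the cancellation $\sum_x(\mu(x)-\nu(x))=0$, bound $\abs{\mu(A)-\nu(A)}$ for arbitrary $A$ by discarding terms of the wrong sign, and observe that equality holds at $A=A^+$, which together with the factor $2$ in the definition of $\normTV{\cdot}$ gives the claim. The only cosmetic difference is that the paper uses the strict inequality $\mu(x)>\nu(x)$ to define the optimal set, which changes nothing since points with $\mu(x)=\nu(x)$ contribute zero.
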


\begin{proof}
Soit $B = \setsuch{x\in\cX}{\mu(x) > \nu(x)}$. Alors on a 
\begin{equation}
\label{eq:equiv_proof1} 
 0 \leqs \mu(B) - \nu(B)
 = 1 -  \mu(B^c) + (1 - \nu(B^c)) 
 = \nu(B^c) -  \mu(B^c)\;,
\end{equation} 
ce qui implique 
\begin{align}
 \sum_{x\in\cX} \abs{\mu(x) - \nu(x)}
 &= \sum_{x\in B} (\mu(x) - \nu(x)) + \sum_{x\in B^c} (\nu(x) - \mu(x)) \\
 &= \mu(B) - \nu(B) + \nu(B^c) - \mu(B^c) \\
 &= 2 \bigbrak{\mu(B) - \nu(B)}
\label{eq:equiv_proof2} 
\end{align}
par~\eqref{eq:equiv_proof1}.
De plus, pour tout $A \subset \cX$, 
\begin{equation}
 \mu(A) - \nu(A)
 \leqs \sum_{x\in A\cap B} (\mu(x) - \nu(x)) 
 \leqs \sum_{x\in B} (\mu(x) - \nu(x))
 = \mu(B) - \nu(B)\;,
\end{equation} 
o\`u nous avons utilis\'e \`a deux reprises le fait que $\mu(x) \leqs \nu(x)$ sur $A\cap B^c$. De m\^eme, 
\begin{equation}
 \nu(A) - \mu(A)
 \leqs \sum_{x\in A\cap B^c} (\nu(x) - \mu(x)) 
 \leqs \nu(B^c) - \mu(B^c) 
 = \mu(B) - \nu(B)\;.
\end{equation} 
Il suit de~\eqref{eq:equiv_proof2} que 
\begin{equation}
 \abs{\mu(A) - \nu(A)} \leqs \mu(B) - \nu(B) = \frac12\norm{\mu-\nu}_1\;.
\end{equation} 
De plus, si $A=B$, on a \'egalit\'e. 
\end{proof}

\begin{definition}[G\'en\'erateur]
Soit $P$ la matrice de transition d'une \CM\ sur un ensemble d\'enombrable $\cX$. Le \defwd{g\'en\'erateur} de la \CM\ est l'application $\cL:\cE_\infty\to\cE_\infty$ donn\'ee par 
\begin{equation}
\label{eq:def_gen} 
 (\cL f)(x) = \sum_{y\in \cX} p_{xy} \bigbrak{f(y) - f(x)}\;.
\end{equation} 
\end{definition}

Remarquons que comme $ \sum_{y\in \cX} p_{xy} = 1$, on a l'expression \'equivalente 
\begin{equation}
 (\cL f)(x) = \biggbrak{\sum_{y\in \cX} p_{xy}f(y)} - f(x)
 = \expecin{x}{f(X_1)} - f(x)\;.
\end{equation} 
On peut donc \'ecrire $\cL = P - \one$, o\`u $\one$ d\'enote la matrice identit\'e.


\section{Fonctions de Lyapounov}
\label{sec:Lyap} 

Dans la suite, nous supposons que $P$ est la matrice de transition d'une \CM\ 
\defwd{irr\'eductible} sur $\cX$. De plus, nous supposons que $\cX$ est \'equip\'e d'une norme $\norm{\cdot}$. Par exemple, si $\cX \subset \Z$, on peut prendre $\norm{x} = \abs{x}$. Si $\cX \subset \Z^d$, on peut prendre la norme Euclidienne (ou toute autre norme \'equivalente). 

\begin{definition}[Fonction de Lyapounov]
Une \defwd{fonction de Lyapounov} est une fonction 
$V: \cX\to \R_+ = [0,\infty[$ satisfaisant
\begin{equation}
\label{eq:gen} 
 V(x) \to +\infty 
 \qquad \text{pour $\norm{x}\to\infty$\;.}
\end{equation} 
\end{definition}

\begin{proposition}[Formule de Dynkin]
\label{prop:Dynkin} 
Pour toute fonction de Lyapounov $V$, on a 
\begin{equation}
\label{eq:Dynkin} 
 \bigexpecin{x}{V(X_n)} 
 = V(x) + \biggexpecin{x}{\sum_{m=0}^{n-1} (\cL V)(X_m)}\;.
\end{equation} 
De plus, si $\tau$ est un temps d'arr\^et tel que $\expecin{x}{\tau} < \infty$, alors 
\begin{equation}
 \bigexpecin{x}{V(X_\tau)} 
 = V(x) + \biggexpecin{x}{\sum_{m=0}^{\tau-1} (\cL V)(X_m)}\;.
\end{equation} 
\end{proposition}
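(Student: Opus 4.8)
The plan is to exhibit the discrete process
\begin{equation}
 M_n = V(X_n) - V(x) - \sum_{m=0}^{n-1} (\cL V)(X_m)
\end{equation}
as a martingale for the natural filtration $\cF_n = \sigma(X_0,\dots,X_n)$ under $\probin{x}{\cdot}$, to deduce the first formula as the identity $\expecin{x}{M_n} = 0$, and the second as the stopped-martingale property at the time $\tau\wedge n$ followed by a passage to the limit $n\to\infty$.

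First I would establish the one-step identity. By the Markov property and the fact that $X_0 = x$ almost surely under $\probin{x}{\cdot}$, we have for every $m\geqs 0$
\begin{equation}
 \bigecondin{x}{V(X_{m+1})}{\cF_m} = \sum_{y\in\cX} p_{X_m,y} V(y) = (PV)(X_m)\;,
\end{equation}
so that, subtracting $V(X_m)$ and using $\cL = P - \one$,
\begin{equation}
 \bigecondin{x}{V(X_{m+1}) - V(X_m)}{\cF_m} = (PV)(X_m) - V(X_m) = (\cL V)(X_m)\;.
\end{equation}
Taking total expectation gives $\expecin{x}{V(X_{m+1}) - V(X_m)} = \expecin{x}{(\cL V)(X_m)}$, and summing this telescoping relation over $m = 0,\dots,n-1$ yields
\begin{equation}
 \expecin{x}{V(X_n)} - V(x) = \sum_{m=0}^{n-1} \expecin{x}{(\cL V)(X_m)} = \biggexpecin{x}{\sum_{m=0}^{n-1} (\cL V)(X_m)}\;,
\end{equation}
the last equality being a legitimate interchange since the sum is finite. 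This is exactly~\eqref{eq:Dynkin}, and it shows equivalently that $(M_n)_{n\geqs0}$ is an $\cF_n$-martingale with $M_0 = 0$.

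For the second formula I would use that the stopped process $M_{n\wedge\tau}$ is again a martingale, whence $\expecin{x}{M_{n\wedge\tau}} = \expecin{x}{M_0} = 0$, i.e.
\begin{equation}
 \expecin{x}{V(X_{\tau\wedge n})} = V(x) + \biggexpecin{x}{\sum_{m=0}^{\tau\wedge n - 1} (\cL V)(X_m)}\;.
\end{equation}
Since $\expecin{x}{\tau} < \infty$ forces $\tau < \infty$ almost surely, both $X_{\tau\wedge n} \to X_\tau$ and the partial sum converge almost surely as $n\to\infty$, so it only remains to pass the limit inside the two expectations. This is where the main difficulty lies: one needs a domination uniform in $n$. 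The natural way to secure it is to exploit boundedness of the increments of $V$ along admissible transitions, say $\abs{V(y) - V(x)} \leqs C$ whenever $p_{xy} > 0$, which in particular bounds $\cL V$. Under such a bound one has $\bigabs{\sum_{m=0}^{\tau\wedge n - 1}(\cL V)(X_m)} \leqs C\tau$ and, by telescoping, $\abs{V(X_{\tau\wedge n})} \leqs V(x) + C\tau$, both dominated by the integrable variable $V(x) + C\tau$ because $\expecin{x}{\tau}<\infty$. Dominated convergence then lets me pass to the limit on both sides and recover the claimed identity. I expect this integrability step to be the only non-routine point; the martingale identity itself is an immediate consequence of the Markov property and a telescoping sum.
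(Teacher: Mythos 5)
Your treatment of the first identity is correct and is essentially the paper's own proof: the paper argues by induction on $n$, and its second variant of the induction step (the one phrased via conditional expectations) is exactly your one-step identity $\bigecondin{x}{V(X_{m+1})-V(X_m)}{\cF_m} = (\cL V)(X_m)$ followed by telescoping.

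For the stopping-time identity the situation is different, because the paper gives no proof of it at all. Your route (optional stopping at the bounded time $n\wedge\tau$, then $n\to\infty$) is the standard one, and you are right that exchanging limit and expectation is the crux. But note that the bounded-increment hypothesis you introduce to obtain domination is not part of the proposition, and it cannot be dropped: as stated, the second formula is false. Consider $\cX = \set{0}\cup\setsuch{2^k}{k\in\N}$ with $p_{0,1}=1$ and $p_{x,2x} = p_{x,0} = \frac12$ for $x\geqs1$. This chain is irreducible, aperiodic and positive recurrent, and $V(x)=x$ is a Lyapunov function with $(\cL V)(x) = 0$ for every $x\geqs1$. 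Starting from $x=1$, the first-passage time $\tau_0$ to $0$ satisfies $\expecin{1}{\tau_0} = 2 < \infty$, while $X_m\neq 0$ for all $m<\tau_0$, so that
\begin{equation}
 \bigexpecin{1}{V(X_{\tau_0})} = V(0) = 0
 \qquad\text{whereas}\qquad
 V(1) + \biggexpecin{1}{\sum_{m=0}^{\tau_0-1} (\cL V)(X_m)} = 1\;.
\end{equation}
Domination fails here exactly in the way you anticipated: $\probin{1}{\tau_0 > n} = 2^{-n}$ while $V(X_n) = 2^n$ on that event, so $\expecin{1}{V(X_{n\wedge\tau_0})} = 1$ for every $n$, yet $\expecin{1}{V(X_{\tau_0})} = 0$. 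So your additional hypothesis --- or any substitute, such as uniform integrability of $(V(X_{n\wedge\tau}))_{n\geqs0}$, or a one-sided bound on $\cL V$ allowing monotone convergence --- is genuinely needed, and your argument is correct once such a condition is assumed. It is worth observing that in the paper's subsequent uses of the formula (e.g.\ the proof of Theorem~\ref{thm:rec_pos}) it is only ever invoked for the truncated times $\tau_K\wedge T$, for which your pre-limit identity $\expecin{x}{M_{n\wedge\tau}} = 0$ already suffices and no domination argument is required.
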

\begin{proof}
Montrons~\eqref{eq:Dynkin}. 
On proc\`ede par r\'ecurrence sur $n$. L'initialisation se fait pour $n=1$, o\`u la d\'efinition~\eqref{eq:def_gen} du g\'en\'erateur implique 
\begin{equation}
 \bigexpecin{x}{V(X_1)} = V(x) + (\cL V)(x)\;.
\end{equation} 
Pour v\'erifier l'h\'er\'edit\'e, 
une premi\`ere fa\c con de proc\'eder est d'\'ecrire 
\begin{align}
\bigexpecin{x}{V(X_{n+1})} 
&= \sum_{y\in\cX} V(y) \probin{x}{X_{n+1} = y} \\
&= \sum_{y\in\cX} V(y) \sum_{z\in\cX} 
\underbrace{\pcondin{x}{X_{n+1}=y}{X_n=z}}_{=p_{zy}}
\bigprobin{x}{X_n = z} \\
&= \sum_{z\in\cX} \bigprobin{x}{X_n = z}
\underbrace{\sum_{y\in\cX} V(y) p_{zy}}_{=(\cL V)(z) + V(z)} \\
&= \biggexpecin{x}{\sum_{z\in\cX}\indicator{X_n=z}(\cL V)(z)}
+ \sum_{z\in\cX} \bigprobin{x}{X_n = z}V(z) \\
&= \bigexpecin{x}{(\cL V)(X_n)} + \bigexpecin{x}{V(X_n)}\;.
\end{align}
Une autre mani\`ere de proc\'eder est d'utiliser le formalisme des 
esp\'erances conditionnelles, en \'ecrivant 
\begin{equation}
 \bigexpecin{x}{V(X_{n+1})} = \bigexpecin{x}{V(X_n)} +
 \bigexpecin{x}{V(X_{n+1}) - V(X_n)}\;.
\end{equation} 
Or, si $\cF_n$ d\'enote la tribu engendr\'ee par $(X_0, X_1, \dots, X_n)$, on a 
\begin{align}
 \bigexpecin{x}{V(X_{n+1}) - V(X_n)}
 &= \bigexpecin{x}{\bigecondin{x}{V(X_{n+1}) - V(X_n)}{\cF_n}} \\
 &= \bigexpecin{x}{\bigexpecin{X_n}{V(X_{n+1}) - V(X_n)}}
 = \bigexpecin{x}{(\cL V)(X_n)}\;.
\end{align} 
Avec l'hypoth\`ese de r\'ecurrence, ceci conclut la d\'emonstration. 
\end{proof}

\begin{theorem}[Croissance sous-exponentielle]
\label{thm:sous_exp} 
Supposons qu'il existe une fonction de Lyapounov $V$ et $c > 0$, $d\geqs0$ tels que 
\begin{equation}
 (\cL V)(x) \leqs c V(x) + d
 \qquad \forall x\in\cX\;.
\end{equation} 
Alors on a 
\begin{equation}
 \bigexpecin{x}{V(X_n)} \leqs (1+c)^n V(x) + \frac{(1+c)^n-1}{c}d
\end{equation} 
pour tout $n\in\N$ et tout $x\in\cX$. 
\end{theorem}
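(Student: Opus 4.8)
The plan is to reduce the statement to a scalar linear recursion for the sequence $u_n := \bigexpecin{x}{V(X_n)}$ and then solve that recursion by induction. The substantive input is already available in Proposition~\ref{prop:Dynkin}: writing Dynkin's formula at time $n$ and at time $n+1$ and subtracting, the telescoping sum collapses to a single term, yielding the one-step identity $u_{n+1} = u_n + \bigexpecin{x}{(\cL V)(X_n)}$. Equivalently, one reads this off directly from the conditioning argument in the second half of the proof of Proposition~\ref{prop:Dynkin}, which has the advantage of never manipulating a possibly infinite sum of generator terms.

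First I would bound the correction term using the hypothesis. Since $(\cL V)(y) \leqs c V(y) + d$ holds pointwise for every $y\in\cX$, applying it at $y = X_n$ and taking expectations under $\probin{x}{\cdot}$, by monotonicity of the expectation, gives $\bigexpecin{x}{(\cL V)(X_n)} \leqs c\,u_n + d$. Combined with the identity above, this produces the recursion $u_{n+1} \leqs (1+c) u_n + d$, valid for every $n\in\N$, with initial value $u_0 = V(x)$.

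Next I would solve this recursion by induction on $n$. The base case $n=0$ is an equality, since the claimed right-hand side reduces to $V(x)$ there. For the inductive step, assuming $u_n \leqs (1+c)^n V(x) + \frac{(1+c)^n - 1}{c}\,d$ and feeding it into $u_{n+1} \leqs (1+c)u_n + d$ yields an upper bound whose $V(x)$-coefficient is $(1+c)^{n+1}$ and whose $d$-coefficient is $(1+c)\frac{(1+c)^n - 1}{c} + 1$. A one-line simplification, using $-(1+c)+c=-1$, shows the latter equals $\frac{(1+c)^{n+1}-1}{c}$, which closes the induction and produces exactly the stated bound.

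The argument is essentially bookkeeping, so there is no deep obstacle. The only points requiring a little care are that each $u_n$ is finite, a property that propagates from $u_0 = V(x) < \infty$ through the recursion, so that the inequalities are genuine rather than of the vacuous form $\infty \leqs \infty$; and that the hypothesis $c > 0$ is exactly what is needed to evaluate the geometric sum $\sum_{k=0}^{n-1}(1+c)^k = \frac{(1+c)^n - 1}{c}$, equivalently to telescope the $d$-coefficient in the inductive step. Note that the sign condition $d \geqs 0$ is not required for the recursion to close; it merely reflects that the bound controls a nonnegative quantity.
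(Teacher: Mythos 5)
Your proof is correct, and it rests on the same substantive input as the paper's proof --- the Dynkin formula of Proposition~\ref{prop:Dynkin} --- but it organizes the recursion differently, in a way that is arguably cleaner. The paper keeps the formula in cumulative form, obtaining $f_n(x) \leqs V(x) + c\sum_{m=0}^{n-1} f_m(x) + nd$ with $f_m(x) = \expecin{x}{V(X_m)}$, and then argues in two stages: first the case $d=0$, then the general case via the ansatz $f_n(x) \leqs (1+c)^n V(x) + k_n d$, which leads to the auxiliary recursion $k_n = c\sum_{m=0}^{n-1} k_m + n$ whose closed form $k_n = ((1+c)^n - 1)/c$ must itself be verified by induction. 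You instead difference the Dynkin formula (equivalently, apply one conditioning step, which is exactly the induction step inside the paper's proof of Proposition~\ref{prop:Dynkin}) to get the one-step bound $u_{n+1} \leqs (1+c)\,u_n + d$, a standard affine recursion that a single induction solves, handling the $V(x)$-term and the $d$-term simultaneously. What this buys is the avoidance of both the cumulative sums and the auxiliary sequence $k_n$; what the paper's cumulative form buys is that the intermediate estimate on $\expecin{x}{\sum_{m} (\cL V)(X_m)}$ is precisely the shape reused later (for instance in the proof of Th\'eor\`eme~\ref{thm:rec_pos}), so its proof doubles as a template. Your two side remarks are also sound: finiteness of each $u_n$ does propagate from $u_0 = V(x) < \infty$ (since $V \geqs 0$, the tower property applies without integrability caveats, so the inequalities are never vacuous), and the sign of $d$ plays no role in closing the algebra.
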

\begin{proof}
Commen\c cons par consid\'erer le cas $d = 0$. 
Notons $f_n(x) = \expecin{x}{V(X_n)}$. Alors la formule de Dynkin implique 
\begin{align}
 f_n(x) &= V(x) + \biggexpecin{x}{\sum_{m=0}^{n-1} (\cL V)(X_m)} \\
 &\leqs V(x) + c \sum_{m=0}^{n-1} f_m(x)\;.
\end{align}
En utilisant $f_0(x) = V(x)$ comme initialisation, on obtient facilement 
par r\'ecurrence sur $n$ que $f_n(x) \leqs (1+c)^n V(x)$. 
Dans le cas $d > 0$, la relation de r\'ecurrence devient 
\begin{equation}
 f_n(x) \leqs V(x) + c \sum_{m=0}^{n-1} f_m(x) + nd\;.
\end{equation} 
En posant 
\begin{equation}
 f_n(x) \leqs (1+c)^n V(x) + k_n d\;,
\end{equation} 
on obtient pour $k_n$ la relation de r\'ecurrence 
\begin{equation}
 k_n = c\sum_{m=0}^{n-1} k_m + n\;, 
 \qquad 
 k_0 = 0\;.
\end{equation}
On v\'erifie par r\'ecurrence que ceci \'equivaut \`a 
\begin{equation}
 k_n = \frac1c \Bigpar{(1+c)^n - 1}\;,
\end{equation} 
d'o\`u le r\'esultat. 
\end{proof}

\begin{theorem}[Non-explosion]
\label{thm:non_explosion} 
Supposons qu'il existe $d \geqs 0$ et un ensemble born\'e $K\subset\cX$ tel que 
pour tout $x\in\cX$, on ait 
\begin{equation}
 (\cL V)(x) 
 \leqs d \indicator{K}(x) 
 = 
 \begin{cases}
  d    & \text{si $x\in K$\;,} \\
  0    & \text{sinon\;.}
 \end{cases}
\end{equation} 
Alors 
\begin{equation}
 \biggprobin{x}{\lim_{n\to\infty} \norm{X_n} = \infty} = 0
 \qquad \forall x\in\cX\;.
\end{equation} 
\end{theorem}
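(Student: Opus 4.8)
The plan is to exploit the fact that, outside the bounded set $K$, the hypothesis forces $V(X_n)$ to behave like a nonnegative supermartingale, and a nonnegative supermartingale cannot run off to infinity. The event to control is $A = \bigset{\lim_{n\to\infty}\norm{X_n} = \infty}$, and I want to cover it by countably many null events.

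First I would record the elementary consequence of Dynkin's formula (Proposition~\ref{prop:Dynkin}): since $(\cL V)(x) \leqs d\indicator{K}(x) \leqs d$, we get $\expecin{x}{V(X_n)} \leqs V(x) + nd < \infty$, so each $V(X_n)$ is integrable. Together with the Markov property this yields the one-step identity $\expecin{x}{V(X_{n+1}) \mid \cF_n} = V(X_n) + (\cL V)(X_n)$, exactly as in the proof of Dynkin's formula, where $\cF_n$ is the filtration generated by $(X_0,\dots,X_n)$.

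Next, I would fix $j \geqs 0$, introduce the stopping time $\sigma_j = \inf\setsuch{n \geqs j}{X_n \in K}$ (with $\inf\emptyset = \infty$), and show that the stopped process $Y_n = V(X_{n\wedge\sigma_j})$, for $n \geqs j$, is a nonnegative supermartingale. Indeed, on the $\cF_n$-measurable event $\set{\sigma_j > n}$ one has $X_n \notin K$, hence $(\cL V)(X_n) \leqs 0$ and $\expecin{x}{Y_{n+1}\mid\cF_n} = V(X_n) + (\cL V)(X_n) \leqs Y_n$, while on $\set{\sigma_j \leqs n}$ the process is frozen; integrability of $Y_n$ follows from the bound above. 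By the martingale convergence theorem, $Y_n$ converges almost surely to a finite limit. On the event $B_j = \set{\sigma_j = \infty}$, on which the chain never returns to $K$ from time $j$ onwards, we have $Y_n = V(X_n)$ for all $n \geqs j$, so $V(X_n)$ converges to a finite limit almost surely on $B_j$; in particular $\probin{x}{B_j \cap \set{V(X_n)\to\infty}} = 0$.

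Finally I would tie the pieces together. Since $K$ is bounded, on $A$ the chain must leave $K$ permanently, so $A \subset \bigcup_{j\geqs 0} B_j$; and since $V(y)\to\infty$ as $\norm{y}\to\infty$, on $A$ we have $V(X_n)\to\infty$, i.e. $A \subset \set{V(X_n)\to\infty}$. Hence $A \subset \bigcup_{j\geqs0}\bigpar{B_j \cap \set{V(X_n)\to\infty}}$, a countable union of null events, whence $\probin{x}{A}=0$. The main obstacle is the bookkeeping around the stopping time: the supermartingale inequality holds only where $X_n\notin K$, so one cannot argue on $A$ directly (it is not $\cF_n$-measurable) and must instead stop at $\sigma_j$ and decompose $A$ over the events $B_j$, checking integrability of the stopped values before invoking the convergence theorem.
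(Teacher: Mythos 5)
Your proof is correct, and it takes a genuinely different --- and more complete --- route than the paper's. The paper's argument is explicitly only a sketch (\myquote{D\'emonstration (id\'ee)}) and treats only the case $d=0$: there $(\cL V)\leqs 0$ holds everywhere, so Dynkin's formula gives the uniform bound $\expecin{x}{V(X_n)}\leqs V(x)$, and splitting this expectation over $\Omega_1=\set{\lim_n\norm{X_n}=\infty}$ and its complement yields $\probin{x}{\Omega_1}=0$ by a Fatou-type argument; the case of a bounded set $K$ on which the drift may be positive is not handled there. Your argument covers the full statement: stopping at $\sigma_j=\inf\setsuch{n\geqs j}{X_n\in K}$ makes $Y_n=V(X_{n\wedge\sigma_j})$ a nonnegative supermartingale, because the drift inequality $(\cL V)(X_n)\leqs 0$ is only needed on the $\cF_n$-measurable event $\set{\sigma_j>n}$, where $X_n\notin K$; the supermartingale convergence theorem then gives an a.s.\ finite limit, and the explosion event is covered by the countably many events $B_j=\set{\sigma_j=\infty}$, which is exactly where the boundedness of $K$ and the Lyapunov property $V(y)\to\infty$ enter. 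What the paper's route buys is elementarity (Dynkin plus a Fatou argument, no martingale theory), at the price of the restrictive hypothesis $d=0$; what your route buys is the theorem as actually stated, and your localization via $\sigma_j$ is the natural way to upgrade the paper's sketch. You also correctly flagged the two points that need care --- integrability of the stopped values (dominated by $\sum_{m=j}^{n}V(X_m)$, each term integrable by your first estimate) and the inclusion $\Omega_1\subset\bigcup_{j}B_j$ --- so the write-up is complete as it stands.
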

\begin{proof}[D\'emonstration (id\'ee)] 
Soit $\Omega_1$ l'\'ev\'enement 
\begin{equation}
 \Omega_1 = \biggsetsuch{\omega}{\lim_{n\to\infty} \norm{X_n(\omega)} = \infty}\;.
\end{equation} 
Consid\'erons 
le cas $d=0$. Alors la formule de Dynkin implique 
\begin{equation}
 \bigexpecin{x}{V(X_n)} \leqs V(x)\;.
\end{equation} 
Or, on a aussi 
\begin{equation}
 \bigexpecin{x}{V(X_n)} 
 = \underbrace{\bigexpecin{x}{V(X_n) \indicator{\Omega_1^c}}}_{\geqs0}  
 + \bigexpecin{x}{V(X_n)\indicator{\Omega_1}} 
\end{equation} 
Par cons\'equent, $\bigexpecin{x}{V(X_n)\indicator{\Omega_1}} 
\leqs  \bigexpecin{x}{V(X_n)}\leqs V(x)$. 
Comme $V(X_n)$ tend vers l'infini sur $\Omega_1$, ceci n'est possible que si $\bigexpecin{x}{\indicator{\Omega_1}} = \probin{x}{\Omega_1} = 0$. 
%
\end{proof}

\begin{theorem}[R\'ecurrence positive]
\label{thm:rec_pos} 
Soit $f: \cX\to[1,\infty[$ et $V$ une fonction de Lyapounov telle que 
\begin{equation}
 (\cL V)(x) \leqs -cf(x) + d\indicator{K}(x) 
 \qquad \forall x\in \cX\;,
\end{equation} 
pour un ensemble born\'e $K\subset \cX$ et des constantes $c>0$ et $d\geqs0$. 
Supposons de plus qu'il existe $\delta > 0$ tel que $K$ satisfait 
\begin{equation}
\label{eq:proba_lb}
p_{xy} \geqs \delta    
\qquad \forall x, y\in K\;.
\end{equation} 
Alors la \CM\ est r\'ecurrente positive, et admet donc une mesure de probabilit\'e invariante $\pi$. De plus, 
\begin{equation}
 \pi(f) < \infty\;.
\end{equation} 
\end{theorem}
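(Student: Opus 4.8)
The plan is to run a Foster--Lyapunov argument in three stages: turn the drift inequality into a bound on the expected return time to $K$, then use the minorization hypothesis~\eqref{eq:proba_lb} to upgrade this to finiteness of the expected return time to a single state, and finally integrate the drift inequality over one excursion to control $\pi(f)$. Observe first that $p_{xy}\geqs\delta$ for all $x,y\in K$ forces $\abs{K}\delta \leqs \sum_{y\in K} p_{xy} \leqs 1$, so $K$ is finite and $M_V := \max_{z\in K} V(z) < \infty$.

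First I would bound the return time to $K$. Write $\tau_K = \inf\setsuch{n\geqs1}{X_n\in K}$. Applying the stopping-time form of Dynkin's formula (Proposition~\ref{prop:Dynkin}) to the bounded stopping time $\tau_K\wedge n$, starting from $z\in K$, and using that $(\cL V)(X_m) \leqs -cf(X_m) + d\indicator{K}(X_m)$ where $\indicator{K}(X_m)=0$ for $1\leqs m<\tau_K$ (so only $m=0$ contributes to the $K$-term), gives
\begin{equation}
 0 \leqs \expecin{z}{V(X_{\tau_K\wedge n})} \leqs V(z) + d - c\,\expecin{z}{\sum_{m=0}^{(\tau_K\wedge n)-1} f(X_m)}\;.
\end{equation}
Since $f\geqs1$, this yields $c\,\expecin{z}{\tau_K\wedge n} \leqs V(z)+d$, and letting $n\to\infty$ by monotone convergence gives $\expecin{z}{\tau_K} \leqs (M_V+d)/c =: M < \infty$ for every $z\in K$.

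The crux is to pass from a finite expected return to $K$ to a finite expected return to a fixed state $z_0\in K$; this is exactly what~\eqref{eq:proba_lb} buys. I would consider the successive visit times $T_0=0 < T_1 < T_2 < \dots$ of the chain to $K$ and the induced chain $Y_k = X_{T_k}$ on the finite set $K$. Because $p_{wz_0}\geqs\delta$ for $w,z_0\in K$, a direct jump already realises the next visit, so $\pcond{Y_{k+1}=z_0}{Y_k=w}\geqs\delta$; hence $N = \inf\setsuch{k\geqs1}{Y_k=z_0}$ is stochastically dominated by a geometric variable and $\expec{N}\leqs1/\delta$. By the strong Markov property each increment $T_{k}-T_{k-1}$ has conditional expectation at most $M$, so a Wald-type estimate gives $\expecin{z_0}{\tau_{z_0}} = \expecin{z_0}{T_N} \leqs M\,\expec{N} \leqs M/\delta < \infty$. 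Thus $z_0$ is positively recurrent, and by irreducibility and the proposition on positive recurrence and communication the whole chain is positively recurrent, so it admits a unique invariant probability $\pi$ by Theorem~\ref{thm:rec_pos_pi}. I expect this regeneration step to be the main obstacle, in particular the careful verification that $N$ is a stopping time for the induced filtration and that the Wald-type bound genuinely applies.

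Finally, for $\pi(f)<\infty$ I would use the representation $\pi(x) = \gamma^{(z_0)}(x)/\expecin{z_0}{\tau_{z_0}}$ from Proposition~\ref{prop_stat1} and Theorem~\ref{thm:rec_pos_pi}, which gives $\pi(f)\,\expecin{z_0}{\tau_{z_0}} = \expecin{z_0}{\sum_{n=1}^{\tau_{z_0}} f(X_n)}$. Applying Dynkin to $\tau_{z_0}\wedge n$, using $V\geqs0$ together with the fact that the number of visits to $K$ strictly before $\tau_{z_0}$ equals $N$ with $\expec{N}\leqs1/\delta$, the drift bound yields after letting $n\to\infty$ by monotone convergence
\begin{equation}
 c\,\expecin{z_0}{\sum_{m=0}^{\tau_{z_0}-1} f(X_m)} \leqs V(z_0) + \frac{d}{\delta} < \infty\;.
\end{equation}
Since $X_0=X_{\tau_{z_0}}=z_0$ we have $\sum_{n=1}^{\tau_{z_0}}f(X_n)=\sum_{m=0}^{\tau_{z_0}-1}f(X_m)$, whence $\expecin{z_0}{\sum_{n=1}^{\tau_{z_0}}f(X_n)}<\infty$ and therefore $\pi(f)<\infty$, as claimed.
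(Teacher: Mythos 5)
Your proof is correct and follows essentially the same route as the paper's: Dynkin's formula plus the drift bound to control the expected return time to $K$, the trace chain on $K$ combined with the minorization~\eqref{eq:proba_lb} to reach a fixed state $z_0\in K$ in finite expected time, and the occupation-measure representation of $\pi$ over an excursion from $z_0$ to bound $\pi(f)$. Where the paper is sketchy you are more explicit — the geometric domination giving $\expec{N}\leqs 1/\delta$ and the Wald-type estimate make quantitative the paper's appeal to \myquote{a recurrent chain on a finite set is positive recurrent}, and your constant $V(z_0)+d/\delta$ instantiates the paper's closing remark that $d$ gets multiplied by the expected number of visits to $K$ before reaching $z_0$.
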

\begin{proof}
Commen\c cons par consid\'erer le cas o\`u $K = \set{x_0}$ contient un seul point. Nous voulons montrer que $x_0$ est r\'ecurrent positif, car dans ce cas, la \CM\ \'etant irr\'eductible, elle sera r\'ecurrente positive. Soit donc 
\begin{equation}
 \tau_K = \tau_{x_0} 
 = \inf\setsuch{n\geqs1}{X_n\in K}\;.
\end{equation} 
Pour tout $T\in\N$, posons $\tau_K\wedge T = \min\set{\tau_K,T}$. 
Par la formule de Dynkin, qui s'applique puisque $\tau_K\wedge T$ 
est un temps d'arr\^et fini presque s\^urement, donc d'esp\'erance finie, nous avons 
\begin{align}
 0 \leqs \bigexpecin{x_0}{V(X_{\tau_K\wedge T)}}
 &= V(x_0) + \biggexpecin{x_0}{\sum_{m=0}^{\tau_K\wedge T-1} (\cL V)(X_m)} \\
 \label{eq:proof_recpos1} 
 &\leqs V(x_0) - c \biggexpecin{x_0}{\sum_{m=0}^{\tau_K\wedge T-1} f(X_m)} + d \\
 &\leqs V(x_0) - c\bigexpecin{x_0}{\tau_K\wedge T} + d\;.
\end{align}
Il suit que 
\begin{equation}
 \bigexpecin{x_0}{\tau_{x_0}\wedge T} = \bigexpecin{x_0}{\tau_K\wedge T}
 \leqs \frac{V(x_0)+d}{c}\;, 
\end{equation} 
ce qui est fini. Comme la quantit\'e de droite est ind\'ependante de $T$, on peut prendre 
la limite lorsque $T$ tend vers l'infini, pour laquelles $\tau_K\wedge T$ tend vers 
$\tau_K$. Ceci implique que la \CM\ est r\'ecurrente positive. 

Dans le cas o\`u $K$ contient au moins deux points, nous savons d\'ej\`a par le raisonnement ci-dessus que $\bigexpecin{x_0}{\tau_K}$ est fini pour tout $x_0\in K$. 
Il nous faut montrer que $\bigexpecin{x_0}{\tau_{x_0}}$ est \'egalement fini pour tout $x_0\in K$.

Soit  $\tau_{K,n}$ le temps du $n$i\`eme passage de la cha\^ine en $K$, et soit 
\begin{equation}
 Y_n = X_{\tau_{K,n}}\;.
\end{equation} 
(La \CM\ $(Y_n)_{n\geqs0}$ est appel\'ee la \defwd{trace} de $(X_n)_{n\geqs0}$ sur $K$.) On peut d\'eduire de l'hypoth\`ese~\eqref{eq:proba_lb} que $Y_n$ atteint $x_0$ en un temps d'esp\'erance finie. Ceci suit du fait qu'une cha\^ine r\'ecurrente sur un ensemble fini est r\'ecurrente positive. Comme le temps de retour vers $K$ est born\'e, il suit qu'on a bien $\bigexpecin{x_0}{\tau_{x_0}} < \infty$. 

La \CM\ \'etant r\'ecurrente positive, elle admet une unique probabilit\'e invariante $\pi$. Il reste \`a montrer que $\pi(f)$ est fini. En fait (voir~\eqref{eq:gamma(y)}), la mesure $\mu$ donn\'ee par 
\begin{equation}
 \mu(y) = \biggexpecin{x_0}{\sum_{n=1}^{\tau_{x_0}} \indicator{X_n = y}}
\end{equation} 
est invariante pour tout $x_0\in\cX$. La probabilit\'e invariante $\pi$ est obtenue en normalisant $\mu$. Comme 
\begin{equation}
 \sum_{y\in\cX} \mu(y)
 = \biggexpecin{x_0}{\sum_{n=1}^{\tau_{x_0}} 
 \underbrace{\sum_{y\in\cX}\indicator{X_n = y}}_{=1}}
 = \bigexpecin{x_0}{\tau_{x_0}}\;,
\end{equation} 
on conclut que 
\begin{equation}
 \pi(y) = \frac{1}{\bigexpecin{x_0}{\tau_{x_0}}}
 \biggexpecin{x_0}{\sum_{n=1}^{\tau_{x_0}} \indicator{X_n = y}}\;.
\end{equation} 
Il suit que 
\begin{equation}
 \pi(f) 
 = \frac{1}{\bigexpecin{x_0}{\tau_{x_0}}}
 \biggexpecin{x_0}{\sum_{n=1}^{\tau_{x_0}} 
 \underbrace{\sum_{y\in\cX} f(y)\indicator{X_n = y}}_{=f(X_n)}}
 = \frac{1}{\bigexpecin{x_0}{\tau_{x_0}}}
 \biggexpecin{x_0}{\sum_{n=1}^{\tau_{x_0}} f(X_n)}\;.
\end{equation} 
Si $K=\set{x_0}$~\eqref{eq:proof_recpos1} implique que pour tout $T\in\N$, 
\begin{equation}
  \biggexpecin{x_0}{\sum_{n=1}^{\tau_{x_0}\wedge T} f(X_n)}
  =  \biggexpecin{x_0}{\sum_{n=0}^{\tau_{x_0}\wedge T-1} f(X_n)}
  \leqs \frac{V(x_0)+d}{c}\;,
\end{equation}  
ce qui permet de majorer $\pi(f)$, en prenant le limite $T\to\infty$. 
Si $K$ contient deux points ou plus, on a une majoration analogue, 
mais avec $d$ multipli\'e par l'esp\'erance du nombre de points de $K$
visit\'es avant d'atteindre $x_0$, qui est finie. 
\end{proof}

\begin{remark}
On peut affaiblir l'hypoth\`ese~\eqref{eq:proba_lb} sur $K$ de plusieurs mani\`eres. 
\begin{itemize}
\item   Il suffit de supposer qu'il existe un $k\geqs1$ tel que 
$(P^k)_{xy} \geqs \delta$ pour tout $x,y\in K$ et un $\delta>0$.

\item   Une condition suffisante encore plus faible est qu'il existe des r\'eels 
$a_1, a_2, \dots \geqs 0$, de somme \'egale \`a $1$, tels que 
\begin{equation}
 \sum_{k=1}^\infty a_k (P^k)_{xy} \geqs \delta
\end{equation} 
pour tout $x,y\in K$ et un $\delta>0$. Des ensembles $K$ satisfaisant ce crit\`ere sont appel\'es \defwd{petits} (\myquote{petite set}\ en anglais). 
\end{itemize}

\end{remark}


\section{Normes \`a poids}
\label{sec:normes_poids} 

Pour obtenir des r\'esultats de convergence de la loi de $X_n$ vers $\pi$, il est 
plus utile de travailler avec des normes \`a poids.

\begin{definition}[Norme \`a poids sur les fonctions test]
\label{def:norme_poids_fct_test} 
Soit $W: \cX\to [1,\infty[$. La \defwd{norme \`a poids $W$} d'une fonction 
test est d\'efinie comme 
\begin{equation}
 \norm{f}_W = \sup_{x\in\cX} \frac{\abs{f(x)}}{W(x)}\;.
\end{equation} 
On notera $\cE_\infty^W$ l'espace de Banach des fonctions test $f$ telles que 
$\norm{f}_W < \infty$.
\end{definition}

Notons les propri\'et\'es suivantes\,:
\begin{itemize}
\item   Pour tout $x\in\cX$, on a $\abs{f(x)} \leqs \norm{f}_W W(x)$.

\item   On a $\norm{f}_W \leqs \norm{f}_\infty$. Par cons\'equent $\cE_\infty \subset \cE_\infty^W$.

\item   Plus g\'en\'eralement, si $W_1$ et $W_2$ sont deux poids tels que 
$W_1(x) \leqs W_2(x)$ pour tout $x\in\cX$, alors $\norm{f}_{W_2} \leqs \norm{f}_{W_1}$. 
Par cons\'equent $\cE_\infty^{W_1} \subset \cE_\infty^{W_2}$.
\end{itemize}

On peut \'egalement d\'efinir une m\'etrique duale \`a $\norm{\cdot}_W$ entre mesures sign\'ees finies de la mani\`ere suivante.

\begin{definition}[Distance \`a poids entre mesures]
Pour une fonction poids $W: \cX\to [1,\infty[$ et deux mesures sign\'ees finies $\mu, \nu$, 
on pose 
\begin{align}
\label{eq:def_dist_W} 
 \rho_W(\mu, \nu) 
 &= \sup_{f \colon \norm{f}_W \leqs 1}
 \sum_{x\in\cX} f(x) \abs{\mu(x) - \nu(x)} \\
 &= \sup_{f \colon \norm{f}_W \neq 0}
 \frac{1}{\norm{f}_W}
 \sum_{x\in\cX} f(x) \abs{\mu(x) - \nu(x)}\;.
\end{align} 
\end{definition}

On a alors les propri\'et\'es suivantes\,:

\begin{itemize}
\item   On peut remplacer $f(x)$ par $\abs{f(x)}$ dans la d\'efinition~\eqref{eq:def_dist_W}. 
En effet, cette transformation ne change pas $\norm{f}_W$.

\item   On peut \'egalement remplacer $\abs{\mu(x) - \nu(x)}$ par $\mu(x) - \nu(x)$. 
En effet, il suffit de changer le signe de $f(x)$ selon le signe de $\mu(x) - \nu(x)$
pour trouver le m\^eme r\'esultat. 

\item   Si $W(x) = 1$ pour tout $x\in\cX$, alors on a 
\begin{equation}
 \rho_W(\mu, \nu) = \normTV{\mu - \nu}\;.
\end{equation} 
En effet, le supremum dans~\eqref{eq:def_dist_W} est alors atteint pour la fonction test $f$ valant $1$ partout. 

\item   Pour un poids $W$ g\'en\'eral, on a  
\begin{equation}
\label{eq:rhoW} 
 \rho_W(\mu, \nu) = \sum_{x\in\cX} W(x) \abs{\mu(x) - \nu(x)}\;.
\end{equation} 
En effet, le supremum dans~\eqref{eq:def_dist_W} est atteint pour $f(x) = W(x)$ pour tout $x\in\cX$.

\item   On a la majoration 
\begin{equation}
\label{eq:majo_munuf} 
 \bigabs{(\mu - \nu)(f)} \leqs \norm{f}_W \rho_W(\mu,\nu)\;.
\end{equation} 
En effet, 
\begin{equation}
 \bigabs{(\mu - \nu)(f)} \leqs \sum_{x\in\cX} \abs{\mu(x) - \nu(x)} \, \abs{f(x)} 
 \leqs \norm{f}_W \sum_{x\in\cX} \abs{\mu(x) - \nu(x)} W(x)\;,
\end{equation} 
d'o\`u le r\'esultat, par~\eqref{eq:rhoW}.
\end{itemize}


\section{Un crit\`ere de convergence}
\label{sec:convergence} 

Les th\'eor\`emes de la section~\ref{sec:Lyap} sont d\^us \`a Meyn et Tweedie~\cite{Meyn_Tweedie_92}. 
Leurs travaux fournissent \'egalement un r\'esultat de convergence de $\expecin{x}{f(X_n)}$ 
vers $\pi(f)$, mais les hypoth\`eses sont assez difficiles \`a v\'erifier (notamment, 
tous les ensembles born\'es doivent \^etre petits), et les bornes obtenues ne sont 
pas explicites. 

Le r\'esultat suivant est d\^u \`a Hairer et Mattingly~\cite{Hairer_Mattingly_11}. Les hypoth\`eses sont plus faciles \`a v\'erifier en pratique, et la majoration obtenue pour $\expecin{x}{f(X_n)} - \pi(f)$ a l'avantage de faire intervenir des constantes explicites.

\begin{theorem}[Crit\`ere de convergence pour esp\'erances]
\label{thm:convergence} 
Supposons que les deux conditions suivantes soient satisfaites.
\begin{enumerate}
\item  \textbf{Condition de d\'erive g\'eom\'etrique\,:}
Il existe $d\geqs0$, $c>0$ et une fonction de Lyapounov $V$ tels que 
\begin{equation}
\label{eq:derive_geom} 
 (\cL V)(x) \leqs -c V(x) + d
 \qquad \forall x\in\cX\;.
\end{equation} 

\item   \textbf{Condition de minoration\,:}
Pour un $R > 2d/c$, soit $K = \setsuch{x\in\cX}{V(x) < R}$.
Alors il existe $\alpha\in]0,1[$ et une mesure de probabilit\'e $\nu$ telle que 
\begin{equation}
\label{eq:minoration} 
 \inf_{x\in K} p_{xy} =
 \inf_{x\in K} \bigprobin{x}{X_1 = y} \geqs \alpha \nu(y)
 \qquad \forall y\in\cX\;.
\end{equation} 
\end{enumerate}

\noindent
Alors il existe des constantes $M>0$ et $\bar\gamma < 1$ telles que 
\begin{equation}
\label{eq:borne_cv_expec} 
 \norm{\expecin{\cdot}{f(X_n)} - \pi(f)}_{1+V}
 \leqs M\bar\gamma^n \norm{f - \pi(f)}_{1+V}
\end{equation} 
pour toute fonction test $f\in\cE_\infty^{1+V}$. 
\end{theorem}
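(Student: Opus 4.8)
The plan is to establish the Harris--type contraction of Hairer and Mattingly in its optimal--transport form and then to dualise it to the weighted supremum norm. First I would rewrite the two hypotheses for the one--step operator. Since $\cL = P - \one$, the drift condition~\eqref{eq:derive_geom} becomes
\[
 (PV)(x) = V(x) + (\cL V)(x) \leqs \gamma_0 V(x) + d\;, \qquad \gamma_0 := 1-c < 1\;,
\]
for every $x\in\cX$, while the minoration~\eqref{eq:minoration} says exactly that $\delta_x P \geqs \alpha\nu$ as measures whenever $x$ lies in the sublevel set $K = \setsuch{x\in\cX}{V(x) < R}$. For a parameter $\beta>0$ to be fixed later I introduce the distance on $\cX$
\[
 d_\beta(x,y) = \bigpar{2 + \beta V(x) + \beta V(y)}\indicator{x\neq y}\;,
\]
and denote by $\cW_\beta$ the associated Kantorovich--Wasserstein distance on probability measures of finite $V$--moment. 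The whole argument rests on the one--step contraction
\[
 \cW_\beta\bigpar{\delta_x P, \delta_y P} \leqs \bar\gamma\, d_\beta(x,y) \qquad \forall\, x,y\in\cX\;,
\]
for suitable $\beta$ and $\bar\gamma\in(0,1)$.

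The heart of the proof, and the step I expect to be the main obstacle, is this contraction, which I would obtain by a coupling argument split into two regimes. For any coupling $\Gamma$ of $\delta_x P$ and $\delta_y P$, using the marginals together with the drift,
\[
 \sum_{u,v} d_\beta(u,v)\,\Gamma(u,v) \leqs 2\,\Gamma\{u\neq v\} + \beta(PV)(x) + \beta(PV)(y) \leqs 2\,\Gamma\{u\neq v\} + \beta\gamma_0 S + 2\beta d\;,
\]
where $S = V(x)+V(y)$. If both $x,y\in K$, the minoration lets me couple the two laws so that they agree with probability at least $\alpha$, giving $\Gamma\{u\neq v\}\leqs 1-\alpha$; if at least one of $x,y$ lies outside $K$, then $S\geqs R$ and I keep the trivial bound $\Gamma\{u\neq v\}\leqs 1$. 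Comparing each estimate with $\bar\gamma\bigpar{2+\beta S}$ reduces the contraction to the two requirements $\bar\gamma \geqs (1-\alpha) + \beta d$ (small--set regime) and $(\bar\gamma-\gamma_0)\beta R \geqs 2(1-\bar\gamma) + 2\beta d$ (drift regime, worst at $S=R$). Writing $\bar\gamma = 1-\beta\eta$, both hold simultaneously as soon as $0 < \eta < \tfrac{c}{2}R - d$ and $\beta$ is small enough; such an $\eta$ exists precisely because the hypothesis $R > 2d/c$ guarantees $\tfrac{c}{2}R - d > 0$. This simultaneous calibration of $\beta$ and $\bar\gamma$ is the delicate point of the whole proof.

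Once the contraction on Dirac masses is known, the gluing lemma upgrades it to $\cW_\beta(\mu P, \mu' P)\leqs \bar\gamma\,\cW_\beta(\mu,\mu')$ for arbitrary $\mu,\mu'$, so $P$ is a strict contraction of the complete metric space $\bigpar{\{\mu:\mu(V)<\infty\},\cW_\beta}$; the Banach fixed point theorem then yields the unique invariant $\pi$ (consistent with Theorem~\ref{thm:rec_pos}) together with $\pi(V)<\infty$. Iterating and coupling $\delta_x$ with $\pi$ through the product measure gives
\[
 \cW_\beta(\delta_x P^n, \pi) \leqs \bar\gamma^n \sum_{y\in\cX} d_\beta(x,y)\,\pi(y) \leqs \bar\gamma^n\bigpar{2 + \beta V(x) + \beta\pi(V)} \leqs C\,\bar\gamma^n\,\bigpar{1+V(x)}
\]
for a finite constant $C$. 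Finally I would dualise: by Kantorovich duality $\cW_\beta(\mu,\nu) = \sup\setsuch{(\mu-\nu)(\phi)}{\mathrm{Lip}_{d_\beta}(\phi)\leqs 1}$, so for any test function $f$,
\[
 \bigabs{(P^n f)(x) - \pi(f)} = \bigabs{(\delta_x P^n - \pi)(f)} \leqs \mathrm{Lip}_{d_\beta}(f)\,\cW_\beta(\delta_x P^n,\pi) \leqs C\,\bar\gamma^n\,\mathrm{Lip}_{d_\beta}(f)\,\bigpar{1+V(x)}\;.
\]
Dividing by $1+V(x)$ and taking the supremum gives $\norm{P^n f - \pi(f)}_{1+V}\leqs C\bar\gamma^n\,\mathrm{Lip}_{d_\beta}(f)$. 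The last ingredient is the elementary comparison $\mathrm{Lip}_{d_\beta}(f)\leqs \tfrac1\beta\norm{f-\pi(f)}_{1+V}$, which follows from $\bigabs{f(x)-f(y)}\leqs \norm{f-\pi(f)}_{1+V}\bigpar{2+V(x)+V(y)}$ and $2+V(x)+V(y)\leqs \tfrac1\beta\bigpar{2+\beta V(x)+\beta V(y)}$ for $\beta\leqs 1$. Combining these, and noting that $P^n f - \pi(f) = P^n\bigpar{f-\pi(f)\vone}$ because $\pi(P^n f)=\pi(f)$, yields exactly~\eqref{eq:borne_cv_expec} with $M = C/\beta$.
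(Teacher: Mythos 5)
Your proof is correct, and it is essentially the mirror image, through Kantorovich duality, of the proof given in the paper. Both arguments revolve around the same metric $d_\beta(x,y) = (2+\beta V(x)+\beta V(y))\indicator{x\neq y}$, the same two-regime split (both points in $K$ versus $V(x)+V(y)\geqs R$), and the same calibration made possible by $R>2d/c$; your parametrization $\bar\gamma = 1-\beta\eta$ with $0<\eta<\tfrac{c}{2}R-d$ is a clean alternative to the paper's explicit formulas~\eqref{eq:beta_gammabar}. The difference is which side of the duality carries the work. The paper stays on the functional side: it proves by hand that $\rho_{1+\beta V}$ equals the supremum over $d_\beta$-Lipschitz test functions (Lemme~\ref{lem:equiv_distances}), then establishes the one-step contraction $\normDgamma{\cP f}_\beta\leqs\bar\gamma\normDgamma{f}_\beta$ (Proposition~\ref{prop:contraction}), where the small-set case is handled analytically via the residual kernel $\tilde p_{xy} = \frac{1}{1-\alpha}p_{xy}-\frac{\alpha}{1-\alpha}\nu(y)$ --- exactly the kernel your coupling samples from with probability $1-\alpha$. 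You instead work on the primal side, constructing the coupling explicitly to get the contraction in the Wasserstein distance $\cW_\beta$, and then invoke the Kantorovich--Rubinstein theorem, the gluing lemma, and completeness of the Wasserstein space as black boxes. What your route buys is probabilistic transparency and a formulation that transfers verbatim to the continuous state spaces of the second part of the paper; what the paper's route buys is self-containedness (its duality lemma is proved in a page with no optimal-transport machinery --- on a countable $\cX$ your three black boxes are true but not free) and fully explicit constants, cf.~\eqref{eq:M}. Two implicit points in your write-up, neither of which is a gap: the reduction of the small-set regime to $\bar\gamma\geqs(1-\alpha)+\beta d$ silently uses $\bar\gamma\geqs\gamma_0$ so that the worst case is $S=0$, and your final comparison of the Lipschitz seminorm of $f$ with $\norm{f-\pi(f)}_{1+V}$ requires $\beta\leqs1$; both conditions hold for your final choice of small $\beta$.
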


Pr\'ecisons qu'on a 
\begin{equation}
 \norm{\expecin{\cdot}{f(X_n)} - \pi(f)}_{1+V}
 = \sup_{x\in\cX} \frac{\abs{\expecin{x}{f(X_n)} - \pi(f)}}{1+V(x)}\;.
\end{equation} 
La majoration~\eqref{eq:borne_cv_expec} peut donc s'\'ecrire 
\begin{equation}
 \bigabs{\expecin{x}{f(X_n)} - \pi(f)}
 \leqs (1+V(x)) M\bar\gamma^n \norm{f - \pi(f)}_{1+V}
 \qquad \forall x\in\cX\;.
\end{equation} 
Comme $\bar\gamma < 1$, on a donc convergence exponentielle de $\expecin{x}{f(X_n)}$ 
vers $\pi(f)$. Comme en pratique, on peut souvent choisir $x$ tel que $V(x)$ ne soit 
pas trop grand, la d\'ependance en $V(x)$ ne pose pas de probl\`eme. 

La pr\'esence de $\norm{f - \pi(f)}_{1+V}$ n'est pas vraiment restrictive non plus.
Si par exemple $f\in\cE_\infty$, ou si $f$ est \`a support compact (c'est-\`a-dire 
nulle en-dehors d'un ensemble compact), cette quantit\'e est finie. 

La condition de minoration~\eqref{eq:minoration} est un peu plus faible que la 
condition~\eqref{eq:proba_lb} du Th\'eo\-r\`eme~\ref{thm:rec_pos}. Le point crucial est 
que l'on ait une borne inf\'erieure sur les probabilit\'es de transition qui soit 
ind\'ependante du point de d\'epart dans $K$. 

Dans la suite, nous utiliserons les notations
\begin{align}
 (\mu\cP)(y) 
 &= \probin{\mu}{X_1 = y} 
 = \sum_{x\in\cX} \mu(x) p_{xy}\;, \\
 (\cP f)(x) 
 &= \expecin{x}{f(X_1)} 
 = \sum_{y\in\cX} p_{xy} f(y)\;.
\end{align} 
La condition de d\'erive g\'eom\'etrique~\eqref{eq:derive_geom} est \'equivalente \`a
\begin{equation}
 (\cP V)(x) \leqs \gamma V(x) + d
 \qquad \forall x\in\cX\;.
\end{equation} 
avec $\gamma = 1-c$.
L'ingr\'edient essentiel de la d\'emonstation du Th\'eor\`eme~\ref{thm:convergence} 
est la borne suivante.

\begin{proposition}[L'application $\cP$ est contractante pour la distance $\rho_{1+\beta V}$]
\label{prop:contraction} 
Il existe $\bar\gamma\in]0,1[$ et $\beta>0$ tels que
\begin{equation}
 \rho_{1+\beta V}(\mu\cP,\nu\cP) \leqs \bar\gamma \rho_{1+\beta V}(\mu,\nu)\;.
\end{equation} 
\end{proposition}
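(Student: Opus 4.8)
The plan is to reduce the asserted contraction to a pointwise estimate on Dirac masses and then to prove that estimate by splitting according to whether the two base points both lie in the small set $K$ or not. Throughout I write $W = 1+\beta V$, so that by~\eqref{eq:rhoW} the distance is the weighted norm $\rho_W(\mu,\nu) = \norm{\mu-\nu}_W = \sum_{x\in\cX} W(x)\bigabs{\mu(x)-\nu(x)}$, and I treat $\mu,\nu$ as probability measures (only mass-zero differences can be contracted).

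First I would set up the coupling reduction. Given probability measures $\mu,\nu$, take the maximal coupling $\Gamma$ on $\cX\times\cX$, placing mass $\min(\mu(x),\nu(x))$ on the diagonal and coupling the residuals $(\mu-\nu)^+$ and $(\mu-\nu)^-$ by their normalized product. Since $\mu\cP-\nu\cP = \sum_{x,y}\Gamma(x,y)\bigpar{\delta_x\cP-\delta_y\cP}$ and $\norm{\cdot}_W$ is convex, the triangle inequality gives
\[
 \rho_W(\mu\cP,\nu\cP) \leqs \sum_{x,y}\Gamma(x,y)\,\rho_W(\delta_x\cP,\delta_y\cP)\;.
\]
Because the residual supports are disjoint, the off-diagonal part of $\Gamma$ only charges pairs with $x\neq y$, and a direct computation shows $\sum_{x,y}\Gamma(x,y)\bigpar{W(x)+W(y)}\indicator{x\neq y} = \rho_W(\mu,\nu)$. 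Hence it suffices to establish the Dirac bound
\[
 \rho_W(\delta_x\cP,\delta_y\cP) = \sum_{z\in\cX} W(z)\bigabs{p_{xz}-p_{yz}} \leqs \bar\gamma\bigpar{W(x)+W(y)} \qquad \forall\, x\neq y\;,
\]
for suitable $\beta>0$ and $\bar\gamma<1$.

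To prove this bound I would use that the drift condition~\eqref{eq:derive_geom} reads $(\cP V)(x)\leqs \gamma V(x)+d$ with $\gamma=1-c$, whence $(\cP W)(x) = 1+\beta(\cP V)(x)\leqs 1+\beta\gamma V(x)+\beta d$. If both $x,y\in K$, the minorization~\eqref{eq:minoration} yields $\min(p_{xz},p_{yz})\geqs \alpha\nu(z)$, so the identity $\bigabs{p_{xz}-p_{yz}} = p_{xz}+p_{yz}-2\min(p_{xz},p_{yz})$ and $\nu(W)\geqs 1$ give
\[
 \sum_{z} W(z)\bigabs{p_{xz}-p_{yz}} \leqs (\cP W)(x)+(\cP W)(y)-2\alpha\nu(W) \leqs 2-2\alpha+\beta\gamma\bigpar{V(x)+V(y)}+2\beta d\;.
\]
If instead one of $x,y$ lies outside $K$, so that $V(x)+V(y)\geqs R$, I would discard the minorization and use only $\bigabs{p_{xz}-p_{yz}}\leqs p_{xz}+p_{yz}$, obtaining the same estimate without the $-2\alpha$ term.

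Finally I would balance the constants. In the first case the target inequality, whose worst point is $V(x)+V(y)=0$, reduces to $\beta d\leqs \alpha+\bar\gamma-1$, forcing $\bar\gamma>1-\alpha$ and $\beta$ small; in the second case, evaluated at the worst point $V(x)+V(y)=R$, it reduces to $2(1-\bar\gamma)\leqs \beta(cR-2d)$, which forbids $\beta$ from being too small. The compatibility of these opposite requirements is exactly where $R>2d/c$ enters: it makes $cR-2d>0$, so choosing $\bar\gamma$ close enough to $1$ pushes the lower bound $2(1-\bar\gamma)/(cR-2d)$ for $\beta$ below the upper bound $(\alpha+\bar\gamma-1)/d$, leaving a nonempty admissible interval. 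I expect this constant-balancing to be the main obstacle, since each case pulls $\beta$ in the opposite direction and one must verify that the hypothesis $R>2d/c$ reconciles them while keeping $\bar\gamma<1$.
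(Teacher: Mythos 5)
Your proof is correct, but it follows a genuinely different route from the paper's. The paper never introduces a coupling: it first proves an equivalence-of-distances lemma (Lemme~\ref{lem:equiv_distances}) identifying $\rho_{1+\beta V}$ with the dual distance attached to the Lipschitz seminorm $\normDgamma{\cdot}_\beta$ for the metric $d_\beta(x,y)=2+\beta V(x)+\beta V(y)$, and then proves the contraction on the \emph{function} side, $\normDgamma{\cP f}_\beta\leqs\bar\gamma\normDgamma{f}_\beta$, using the same two-case split as you (drift when $V(x)+V(y)\geqs R$, minorization via the residual kernel $\tilde p_{xy}=\frac{1}{1-\alpha}p_{xy}-\frac{\alpha}{1-\alpha}\nu(y)$ when both points lie in $K$). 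Your maximal-coupling decomposition of $\mu-\nu$ into Dirac differences, together with the identity $\sum_{x\neq y}\Gamma(x,y)\bigpar{W(x)+W(y)}=\rho_W(\mu,\nu)$, replaces the paper's duality lemma entirely; and since $W(x)+W(y)=d_\beta(x,y)$ for $x\neq y$, your Dirac bound is exactly the dual of the paper's pointwise estimate, with your inequality $\abs{p_{xz}-p_{yz}}\leqs p_{xz}+p_{yz}-2\alpha\nu(z)$ playing the role of the passage through $\widetilde\cP$ (both yield the bound $2-2\alpha+\beta\gamma\brak{V(x)+V(y)}+2\beta d$). What your route buys is elementarity: it bypasses the most delicate step of the paper's argument, the proof that $\normDgamma{f}_\beta=\inf_{c\in\R}\norm{f+c}_{1+\beta V}$. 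What the paper's route buys is the dual, test-function formulation itself, which is of independent use and is the form reused verbatim for the continuous-state version (Th\'eor\`eme~\ref{thm:convergence_cont}). Two bookkeeping points to fix, neither of which breaks your argument: (i) in your Case B, the inequality at the worst point $V(x)+V(y)=R$ is $2(1-\bar\gamma)+2\beta d\leqs\beta R\bigpar{\bar\gamma-(1-c)}$, not $2(1-\bar\gamma)\leqs\beta(cR-2d)$ --- you replaced $\bar\gamma-(1-c)$ by $c$, which is only its limit as $\bar\gamma\to1$; (ii) both of your \myquote{worst point} reductions presuppose $\bar\gamma\geqs 1-c$ (so that the right-hand side grows at least as fast in $V(x)+V(y)$ as the left), a condition your Case B forces anyway. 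With the corrected Case B, the balancing still closes: for $\bar\gamma>1-c+2d/R$ the lower bound $2(1-\bar\gamma)/\bigpar{R(\bar\gamma-1+c)-2d}$ on $\beta$ is well defined, and as $\bar\gamma\nearrow1$ it tends to $0$ while the Case A upper bound $(\alpha+\bar\gamma-1)/d$ tends to $\alpha/d>0$, so the hypothesis $R>2d/c$ indeed leaves a nonempty admissible interval for $\beta$.
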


En fait, la proposition donne des expressions explicites pour les constantes $\beta$ et 
$\bar\gamma$\,: pour tout choix de $\alpha_0$ et $\gamma_0$ tels que 
\begin{equation}
\label{eq:cond_gamma0} 
 0 < \alpha_0 < \alpha 
 \qquad\text{et}\qquad 
 \gamma + \frac{2d}{R} < \gamma_0 < 1\;,
\end{equation} 
on peut prendre 
\begin{align}
 \beta &= \frac{\alpha_0}{d}\;,\\
 \bar\gamma &= \max\biggset{1 - (\alpha-\alpha_0), \frac{2+R\beta\gamma_0}{2+R\beta}}
 = 1 - \min\biggset{\alpha - \alpha_0, \frac{R\beta(1-\gamma_0)}{2 + R\beta}}\;.
\label{eq:beta_gammabar} 
\end{align} 
De plus, nous verrons ci-dessous que l'on a  
\begin{equation}
\label{eq:M} 
 M = \frac{1}{1-\bar\gamma} 
 \sup_{x\in\cX} \frac{2 + \beta\brak{(1+\gamma)V(x)+d}}{1 + \beta V(x)}
 \leqs\frac{\max\set{1+\gamma,2+\beta d}}{1-\bar\gamma}\;. 
\end{equation} 
Ces expressions ne sont par particuli\`erement \'el\'egantes, mais elles ont le m\'erite 
d'\^etre explicites, ce qui peut servir dans les applications. Notons que si $d$ tend 
vers $0$, alors $\beta$ tend vers l'infini, et on peut prendre $\bar\gamma$ arbitrairement 
proche de $\gamma$. 

Nous allons d'abord montrer que la Proposition~\ref{prop:contraction} implique bien le 
Th\'eor\`eme~\ref{thm:convergence}. 

\begin{proof}[\textit{D\'emonstration du Th\'eor\`eme~\ref{thm:convergence}}]
Nous donnons tout d'abord une d\'emonstation de l'existence de $\pi$, m\^eme si 
celle-ci suit en fait du Th\'eor\`eme~\ref{thm:rec_pos}. C'est une application 
du th\'eor\`eme du point fixe de Banach.

Fixons $x_0\in\cX$, et soit $\mu_0^{x_0} = \delta_{x_0}$ la mesure de Dirac en $x_0$. 
Soit $\mu^{x_0}_n = \mu_0^{x_0}\cP^n$. La proposition implique 
\begin{equation}
 \rho_{1+\beta V}(\mu^{x_0}_{n+1},\mu^{x_0}_n)
 \leqs \bar\gamma \rho_{1+\beta V}(\mu^{x_0}_n,\mu^{x_0}_{n-1})
 \leqs \dots 
 \leqs \bar\gamma^n \rho_{1+\beta V}(\mu^{x_0}_1,\mu^{x_0}_0)\;.
\end{equation} 
On a donc une suite de Cauchy, et comme on sait que la distance en variation totale 
est compl\`ete, donc a fortiori la distance $\rho_{1+\beta V}$, on en conclut que la 
suite des $\mu^{x_0}_n$ converge vers une mesure $\pi$ en variation totale. De plus, 
$\pi(1+V)$ est finie, car $\rho_{1+\beta V}(\pi, \mu_0^{x_0})$ l'est.

Pour montrer que $\pi$ est invariante, il suffit d'observer que 
\begin{equation}
 \pi\cP = \lim_{n\to\infty} \mu^{x_0}_0\cP^{n+1}
 = \lim_{n\to\infty} \mu^{x_0}_0\cP^n = \pi\;.
\end{equation} 
Afin de d\'emontrer~\eqref{eq:borne_cv_expec}, il est utile de centrer $f$. Posons donc 
\begin{equation}
 \hat f(x) = f(x) - \pi(f)  
 \qquad \forall x\in\cX\;.
\end{equation} 
Alors $\pi(\hat f) = \pi(f) - \pi(\pi(f)) = 0$, puisque $\pi(\pi(f)) = \pi(f)$. Ainsi
\begin{equation}
 \cP^n f - \pi(f)
 = \cP^n \hat f + \cP^n \pi(f) - \pi(\hat f) - \pi(\pi(f)) = \cP^n \hat f\;.
\end{equation} 
Ceci permet d'\'ecrire 
\begin{equation}
 \norm{\expecin{\cdot}{f(X_n)} - \pi(f)}_{1+V}
 = \norm{\cP^n f - \pi(f)}_{1+V}
  = \norm{\cP^n \hat f}_{1+V}\;.
\end{equation} 
Il s'agit donc de montrer qu'il existe une constante $M$ telle que pour toute fonction test $\hat f$ 
satisfaisant $\pi(\hat f) = 0$, on ait
\begin{equation}
 \norm{\cP^n \hat f}_{1+V}
 \leqs M\bar\gamma^n \norm{\hat f}_{1+V}\;.
\end{equation} 
Or, comme $(\cP^n \hat f)(x) = \delta_x(\cP^n \hat f) = \mu^x_n(\hat f)$
et $\pi(\hat f) = 0$, on a 
\begin{equation}
 \norm{\cP^n \hat f}_{1+\beta V}
 = \sup_{x\in\cX} \frac{\abs{(\mu^x_n - \pi)(\hat f)}}{1 + \beta V(x)}
 \leqs \norm{\hat f}_{1+\beta V} \sup_{x\in\cX} \frac{\rho_{1+\beta V}(\mu^x_n,\pi)}{1 + \beta V(x)}
\end{equation} 
en vertu de~\eqref{eq:majo_munuf}. Observons que 
\begin{align}
\rho_{1+\beta V}(\mu^x_n,\pi)
&= \rho_{1+\beta V}(\mu^x_n,\mu^x_{n+1}) + \rho_{1+\beta V}(\mu^x_{n+1},\mu^x_{n+2}) + \dots \\
&\leqs \bigbrak{\bar\gamma^n + \bar\gamma^{n+1} + \dots} \rho_{1+\beta V}(\mu^x_1,\mu^x_0) \\
&= \frac{\bar\gamma^n}{1 - \bar\gamma} \rho_{1+\beta V}(\mu^x_1,\mu^x_0)\;.
\end{align}
De plus, 
\begin{align}
\rho_{1+\beta V}(\mu^x_1,\mu^x_0)
&= \sum_{y\in\cX} (1 + \beta V(y)) \abs{\mu^x_1(y) - \mu^x_0(y)} \\
&\leqs \sum_{y\in\cX} (1 + \beta V(y)) \mu^x_1(y) + 1 + \beta V(x) \\
&= (1 + \beta (\cP V))(x) + 1 + \beta V(x)\;.
\end{align}
Comme $(\cP V)(x) = (\cL V)(x) + V(x) \leqs \gamma V(x) + d$ par l'hypoth\`ese de d\'erive 
g\'eom\'etrique~\eqref{eq:derive_geom}, on obtient finalement
\begin{equation}
 \norm{\cP^n \hat f}_{1+\beta V}
 \leqs \norm{\hat f}_{1+\beta V} \frac{\bar\gamma^n}{1 - \bar\gamma}
 \sup_{x\in\cX} \frac{2 + \beta[V(x) + \gamma V(x) + d]}{1 + \beta V(x)}
 =: M \bar\gamma^n \norm{\hat f}_{1+\beta V}\;.
\end{equation} 
En particulier, la borne est vraie pour $\beta = 1$, ce qui conclut la d\'emonstation.
\end{proof}

Il nous reste \`a d\'emontrer la Proposition~\ref{prop:contraction}.
L'id\'ee est de travailler avec une d\'efinition alternative de la distance $\rho_\beta$. 
On introduit sur $\cX$ la distance 
\begin{equation}
 d_\beta(x,y)
 = 
 \begin{cases}
  0 & \text{si $x=y$\;,} \\
  2 + \beta V(x) + \beta V(y) & \text{si $x\neq y$\;,}
 \end{cases}
\end{equation} 
(on v\'erifie facilement que $d_\beta$ satisfait bien la d\'efinition d'une distance), 
et la semi-norme de Lipschitz
\begin{equation}
\label{def:seminormf} 
 \normDgamma{f}_\beta = \sup_{x\neq y} \frac{\abs{f(x)-f(y)}}{d_\beta(x,y)}\;,
\end{equation} 
C'est une semi-norme, et non une norme, car $\normDgamma{f}_\beta = 0$ n'implique pas 
$f = 0$ (mais seulement que $f$ est constante). 
On a alors le r\'esultat suivant.

\begin{lemma}[\'Equivalence des distances]
\label{lem:equiv_distances} 
On a 
\begin{equation}
\label{eq:lem_equiv_distances} 
 \rho_{1+\beta V}(\mu,\nu) 
 = \rho^*_{1+\beta V}(\mu,\nu)
 := \sup_{f: \normDgamma{f}_\beta \leqs 1} 
 \sum_{x\in\cX} f(x) (\mu(x) - \nu(x))\;.
\end{equation}  
\end{lemma}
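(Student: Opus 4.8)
The plan is to show that both $\rho_{1+\beta V}(\mu,\nu)$ and $\rho^*_{1+\beta V}(\mu,\nu)$ coincide with the weighted $\ell^1$ expression $\sum_{x\in\cX}(1+\beta V(x))\abs{\mu(x)-\nu(x)}$. For the left-hand side this is exactly formula~\eqref{eq:rhoW}, so all the work lies in identifying $\rho^*_{1+\beta V}(\mu,\nu)$ with the same quantity. The starting observation is that, writing $W(x) = 1+\beta V(x) \geqs 1$, the metric $d_\beta$ factorises additively:
\begin{equation}
 d_\beta(x,y) = W(x) + W(y) \qquad \text{pour } x\neq y\;.
\end{equation}
This additive structure is what ties the Lipschitz seminorm $\normDgamma{\cdot}_\beta$ to the weight $W$.

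The crux is the following equivalence, which I would establish first: a function $f$ satisfies $\normDgamma{f}_\beta \leqs 1$ if and only if there is a constant $c\in\R$ with $\norm{f-c}_W \leqs 1$, i.e.\ $\abs{f(x)-c}\leqs W(x)$ for all $x$. The implication $\Leftarrow$ is immediate from the triangle inequality, since $\abs{f(x)-f(y)} \leqs \abs{f(x)-c} + \abs{f(y)-c} \leqs W(x)+W(y) = d_\beta(x,y)$. For $\Rightarrow$, the bound $\abs{f(x)-f(y)}\leqs W(x)+W(y)$ rearranges into $f(x)-W(x) \leqs f(y)+W(y)$ for all $x,y$ (the case $x=y$ holding because $W\geqs 0$), so that
\begin{equation}
 a := \sup_{x\in\cX}\bigpar{f(x)-W(x)} \leqs \inf_{y\in\cX}\bigpar{f(y)+W(y)} =: b\;,
\end{equation}
both quantities being finite by comparison with any fixed reference point. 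Choosing any $c\in[a,b]$ then yields $-W(x)\leqs f(x)-c\leqs W(x)$, as required. I expect this constant-shift step to be the main obstacle, since it is the only place where the specific form of $d_\beta$ enters and where one must check that the admissible interval $[a,b]$ is nonempty.

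Finally I would conclude using that $\mu$ and $\nu$ are probability measures, so $\sum_{x\in\cX}(\mu(x)-\nu(x)) = 0$ and adding a constant to the test function leaves $\sum_x f(x)(\mu(x)-\nu(x))$ unchanged. Combined with the equivalence above, the supremum defining $\rho^*_{1+\beta V}$ over $\set{f : \normDgamma{f}_\beta\leqs 1}$ equals the supremum of the same linear functional over $\set{f : \norm{f}_W\leqs 1}$. On the latter set, $\abs{f(x)}\leqs W(x)$ gives
\begin{equation}
 \sum_{x\in\cX} f(x)(\mu(x)-\nu(x)) \leqs \sum_{x\in\cX} W(x)\abs{\mu(x)-\nu(x)}\;,
\end{equation}
while the choice $f(x) = W(x)\sign(\mu(x)-\nu(x))$ lies in the set and attains this value. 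Hence $\rho^*_{1+\beta V}(\mu,\nu) = \sum_x W(x)\abs{\mu(x)-\nu(x)} = \rho_{1+\beta V}(\mu,\nu)$ by~\eqref{eq:rhoW}, which is the claim.
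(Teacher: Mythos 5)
Your proof is correct and follows essentially the same route as the paper's: the heart of both arguments is the observation that $\normDgamma{f}_\beta\leqs 1$ holds exactly when some constant shift $f-c$ satisfies $\abs{f(x)-c}\leqs 1+\beta V(x)$ pointwise (the paper takes the specific shift $c^*=\inf_x\bigpar{1+\beta V(x)-f(x)}$, which is just the endpoint of your interval $[a,b]$), combined with the invariance of $f\mapsto(\mu-\nu)(f)$ under adding constants because $\mu$ and $\nu$ are probability measures. The only cosmetic difference is that you finish by evaluating both suprema explicitly as $\sum_{x\in\cX}(1+\beta V(x))\abs{\mu(x)-\nu(x)}$ via~\eqref{eq:rhoW}, whereas the paper concludes directly from the equality of the translated balls $\widehat\cB=\widehat\cB^*$ without computing their common value.
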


Notons que la seule diff\'erence entre la d\'efinition~\eqref{eq:def_dist_W} de 
$\rho_{1+\beta V}(\mu,\nu)$ et cette \'egalit\'e est l'ensemble des $f$ sur lequel on prend le supremum. 

Montrons tout d'abord que ce lemme implique la proposition.

\begin{proof}[\textit{D\'emonstration de la Proposition~\ref{prop:contraction}}]
L'id\'ee est de montrer que $\cP$ est contractante dans la seminorme $\normDgamma{\cdot}_\beta$, 
\`a savoir
\begin{equation}
\label{eq:contraction_seminorm} 
 \normDgamma{\cP f}_\beta \leqs \bar\gamma \normDgamma{f}_\beta\;.
\end{equation} 
En effet, ceci implique 
\begin{align}
 \rho_{1+\beta V}(\mu\cP, \nu\cP)
 &= \sup_{f\colon \normDgamma{f}_\beta \neq 0}
 \frac{1}{\normDgamma{f}_\beta} (\mu\cP - \nu\cP)(f) \\
 &= \sup_{f\colon \normDgamma{f}_\beta \neq 0}
 \frac{1}{\normDgamma{f}_\beta} (\mu - \nu)(\cP f) \\
 &\leqs \sup_{f_1\colon \normDgamma{f_1}_\beta \neq 0}
 \frac{\bar\gamma}{\normDgamma{f_1}_\beta} (\mu - \nu)(f_1) \\
 &= \bar\gamma\rho_{1+\beta V}(\mu, \nu)\;.
\end{align}
Afin de d\'emontrer~\eqref{eq:contraction_seminorm}, nous fixons un $f$ tel que 
$\norm{f}_{1+\beta V} \leqs 1$ et $\normDgamma{f}_\beta\leqs1$. Il s'agit de montrer que 
\begin{equation}
 \bigabs{(\cP f)(x) - (\cP f)(y)} 
 \leqs \bar\gamma d_\beta(x,y)
\end{equation} 
pour tout $x, y\in\cX$. La relation est vraie pour $x=y$, donc nous supposons 
$x\neq y$. Nous consid\'erons deux cas s\'epar\'ement.
\begin{itemize}
\item   \textbf{Cas 1:} $V(x) + V(y) \geqs R$. Dans ce cas, on a 
\begin{equation}
\label{eq:borne_cas1} 
  \bigabs{(\cP f)(x)}
  = \biggabs{\sum_{y\in\cX} p_{xy} f(y)}
  \leqs \underbrace{\norm{f}_{1+\beta V}}_{\leqs 1} \sum_{y\in\cX} (1+\beta V(y)) p_{xy}
  \leqs 1 + \beta (\cP V)(x)\;, 
\end{equation} 
d'o\`u, par la condition~\eqref{eq:derive_geom} de d\'erive g\'eom\'etrique, 
\begin{align}
\bigabs{(\cP f)(x) - (\cP f)(y)}
&\leqs 2 + \beta (\cP V)(x) + \beta (\cP V)(y) \\
&\leqs 2 + \beta\gamma V(x) + \beta\gamma V(y) + 2\beta d\\
&\leqs 2 + \beta\gamma_0 [V(x) + V(y)]
\end{align}
pour tout $\gamma_0$ satisfaisant~\eqref{eq:cond_gamma0}. Un calcul \'el\'ementaire 
montre alors qu'en posant 
\begin{equation}
 \gamma_1 = \frac{2 + \beta R\gamma_0}{2+\beta R}\;,
\end{equation} 
on obtient bien la majoration requise
\begin{equation}
 \bigabs{(\cP f)(x) - (\cP f)(y)}
 \leqs \gamma_1 \bigpar{2 + V(x) + V(y)}
 = \gamma_1 d_\beta(x,y)\;.
\end{equation} 

\item   \textbf{Cas 2:} $V(x) + V(y) < R$. Dans ce cas, on a $x, y\in K$.
La matrice $\widetilde\cP$ d'\'el\'ements 
\begin{equation}
 \tilde p_{xy} = \frac{1}{1-\alpha} p_{xy} - \frac{\alpha}{1-\alpha} \nu(y)
\end{equation} 
est une matrice stochastique. En effet, la condition de minoration~\eqref{eq:minoration} 
montre que ces \'el\'ements sont tous positifs ou nuls, et on v\'erifie imm\'ediatement 
que la somme sur $y$ des $\tilde p_{xy}$ vaut $1$. De plus, on a 
\begin{equation}
 (\cP f)(x) = (1-\alpha) (\widetilde\cP f)(x) + \alpha \nu(f)\;,
\end{equation} 
d'o\`u, par un calcul analogue \`a~\eqref{eq:borne_cas1},  
\begin{align}
\bigabs{(\cP f)(x) - (\cP f)(y)}
&= (1-\alpha) \bigabs{(\widetilde\cP f)(x) - (\widetilde\cP f)(y)} \\
&\leqs (1-\alpha) \bigbrak{2 + \beta (\widetilde\cP V)(x) + \beta (\widetilde\cP V)(y)}\\
&\leqs (1-\alpha) + \beta (\cP V)(x) + \beta (\cP V)(y)\\
&\leqs 2(1-\alpha) + \beta\gamma [V(x) + V(y)] + 2\beta d\\
&\leqs \gamma_2 d_\beta(x,y)\;,
\end{align}
si $\beta = \frac{\alpha_0}{d}$ est donn\'e par~\eqref{eq:beta_gammabar} et 
$\gamma_2 = \max\set{\gamma, 1 - (\alpha-\alpha_0)}$. La troisi\`eme ligne suit du fait que 
\begin{equation}
 (\widetilde \cP V)(x) \leqs \frac{1}{1-\alpha} (\cP V)(x)\;.
\end{equation} 
\end{itemize}
Le r\'esultat suit, avec $\bar\gamma = \max\set{\gamma_1,\gamma_2}$. 
\end{proof}

Il nous reste \`a d\'emontrer le Lemme~\ref{lem:equiv_distances}.

\begin{proof}[\textit{D\'emonstration du Lemme~\ref{lem:equiv_distances}}]
D\'efinissons les boules 
\begin{equation}
 \cB = \setsuch{f\in\cE_\infty}{\norm{f}_{1+\beta V}\leqs 1}\;, 
 \qquad 
 \cB^* = \setsuch{f\in\cE_\infty}{\normDgamma{f}_{\beta}\leqs 1}\;.
\end{equation} 
Alors on peut \'ecrire 
\begin{equation}
 \rho_{1+\beta V}(\mu,\nu) = \sup_{f\in\cB} f(\mu - \nu)\;, 
 \qquad 
 \rho^*_{1+\beta V}(\mu,\nu) = \sup_{f\in\cB^*} f(\mu - \nu)\;.
\end{equation}
L'observation cruciale est la suivante. Pour $c\in\R$, soit $f+c$ la fonction 
translat\'ee de $c$, d\'efinie par $(f+c)(x) = f(x) + c$ pour tout $x\in\cX$. 
Alors on a 
\begin{equation}
 (f+c)(\mu-\nu)
 = \sum_{x\in\cX}(f(x)+c)(\mu(x)-\nu(x)) 
 = f(\mu-\nu) + c\sum_{x\in\cX}\mu(x) - c\sum_{x\in\cX}\nu(x)
 = f(\mu-\nu)\;, 
\end{equation} 
puisque $\mu$ et $\nu$ sont des mesures de probabilit\'e. 
Par cons\'equent, on a aussi 
\begin{equation}
 \rho_{1+\beta V}(\mu,\nu) = \sup_{f\in\widehat\cB} f(\mu - \nu)\;, 
 \qquad 
 \rho^*_{1+\beta V}(\mu,\nu) = \sup_{f\in\widehat\cB^*} f(\mu - \nu)\;, 
\end{equation}
o\`u on a d\'efini les \myquote{cylindres}\ 
\begin{equation}
 \widehat\cB = \setsuch{f+c}{f\in\cB, c\in\R}\;, 
 \qquad 
 \widehat\cB^* = \setsuch{f+c}{f\in\cB^*, c\in\R}\;.
\end{equation} 
Ces cylindres sont obtenus en \'etendant les boules $\cB$ et $\cB^*$ dans la direction 
des fonctions constantes. 

Nous allons montrer ci-dessous que 
\begin{equation}
\label{eq:triple_norm_inf} 
 \normDgamma{f}_\beta = \inf_{c\in\R} \norm{f+c}_{1+\beta V}\;.
\end{equation} 
Cela impliquera que $\widehat\cB = \widehat\cB^*$, et par cons\'equent que 
$\rho_{1+\beta V}(\mu,\nu) = \rho^*_{1+\beta V}(\mu,\nu)$, qui est l'\'egalit\'e voulue.

Comme remarqu\'e juste apr\`es la D\'efinition~\ref{def:norme_poids_fct_test}, 
on a, pour tout $x\in\cX$, 
\begin{equation}
 \abs{f(x)} \leqs \norm{f}_{1+\beta V}(1 + \beta V(x))\;.
\end{equation} 
Par cons\'equent, on a pour tout $x\neq y\in\cX$ 
\begin{equation}
 \frac{\abs{f(x)-f(y)}}{d_\beta(x,y)}
 \leqs \frac{\abs{f(x)} + \abs{f(y)}}{2 + \beta V(x) + \beta V(y)}
 \leqs \norm{f}_{1+\beta V}\;.
\end{equation} 
Ceci implique 
\begin{equation}
 \normDgamma{f}_\beta \leqs \norm{f}_{1+\beta V}\;.
\end{equation} 
Comme de plus la d\'efinition de $\normDgamma{f}_\beta$ ne d\'epend que 
de diff\'erences de $f$ en des points diff\'erents, on a 
$\normDgamma{f+c}_\beta = \normDgamma{f}_\beta$ pour tout $c\in\R$. On a donc 
obtenu 
\begin{equation}
 \normDgamma{f}_\beta \leqs \inf_{c\in\R} \norm{f+c}_{1+\beta V}\;.
\end{equation} 
Pour montrer l'in\'egalit\'e inverse, posons 
\begin{equation}
 c^* = \inf_{x\in\cX} \Bigpar{1 + \beta V(x) - f(x)}\;.
\end{equation} 
Commen\c cons par montrer que $\abs{c^*} < \infty$. 
Comme $c^* \leqs \bigpar{1 + \beta V(x_0) - f(x_0)}$ pour tout $x_0\in\cX$, 
il suffit pour cela de montrer que $c^*$ est born\'ee inf\'erieurement.
Cela suit du fait que pour tout $x,y\in\cX$, on a 
\begin{equation}
 f(x) \leqs \abs{f(y)} + \abs{f(x)-f(y)} 
 \leqs \abs{f(y)} + 2 + \beta V(x) + \beta V(y)\;,
\end{equation} 
qui implique 
\begin{equation}
 1 + \beta V(x) - f(x) \geqs -1 -\beta V(y) - \abs{f(y)}\;.
\end{equation} 
Comme $V(y)$ est finie pour au moins un $y\in\cX$, on a bien la minoration voulue. 

On observe maintenant que, d'une part, 
\begin{equation}
\label{eq:ff_bornesup} 
 f(x) + c^* \leqs f(x) + 1 + \beta V(x) - f(x)
 = 1 + \beta V(x)\;,
\end{equation} 
alors que d'autre part, 
\begin{align}
\label{eq:ff_borneinf} 
 f(x) + c^* 
 &= \inf_{y\in\cX} \bigpar{f(x) + 1 + \beta V(y) - f(y)} \\
 &\geqs \inf_{y\in\cX} \bigpar{1 + \beta V(y) 
 - \underbrace{\normDgamma{f}_\beta}_{= 1} d_\beta(x,y)} \\
 &\geqs \inf_{y\in\cX} \bigpar{1 + \beta V(y) - 2 - \beta V(x) - \beta V(y)} \\
 &\geqs -1 - \beta V(x)\;.
\end{align}
Il suit de~\eqref{eq:ff_bornesup} et~\eqref{eq:ff_borneinf} que 
\begin{equation}
 \abs{f(x) + c^*} \leqs 1+\beta V(x)\;,
\end{equation} 
d'o\`u l'in\'egalit\'e inverse 
\begin{equation}
 \inf_{c\in\R} \norm{f+c}_{1+\beta V} 
 \leqs \norm{f+c^*}_{1+\beta V}
 \leqs 1 = \normDgamma{f}_\beta\;.
\end{equation} 
Nous avons donc d\'emontr\'e~\eqref{eq:triple_norm_inf}, et par suite 
aussi~\eqref{eq:lem_equiv_distances}.
\end{proof}


\section{Exercices}
\label{sec:Lyapounov_exo} 

\begin{exercise}
On consid\`ere la marche al\'eatoire sym\'etrique sur $\Z$. 

\begin{enumerate}
\item   Calculer $(\cL V)(x)$ pour la fonction de Lyapounov $V(x) = x^2$. 
Expliciter la formule de Dynkin pour un temps $n$ d\'eterministe. 
Appliquer, si possible, les th\'eor\`emes de croissance sous-exponentielle, 
non-explosion, r\'ecurrence positive et de convergence.

\item   Calculer $(\cL V)(x)$ pour la fonction de Lyapounov $V(x) = \abs{x}$. 
Comme au point pr\'ec\'edent, expliciter la formule de Dynkin, et appliquer, 
si possible les diff\'erents th\'eor\`emes.

\item   Que se passe-t-il pour la marche al\'eatoire asym\'etrique~?
\end{enumerate} 
\end{exercise}

\begin{exercise}
Soit $p\in]0,1[$ et $q = 1 - p$. On consid\`ere la marche al\'eatoire sur $\N$ de probabilit\'es 
de transition 
\[
 p_{xy} = 
 \begin{cases}
  p & \text{si $y = x+1$\;,} \\
  q & \text{si $y = 0$\;,} \\
  0 & \text{sinon\;.}
 \end{cases}
\]
Calculer $(\cL V)(x)$ pour la fonction de Lyapounov $V(x) = \abs{x}$. 
Que peut-on en d\'eduire~? 
\end{exercise}

\begin{exercise}
Pour $x\in\N$, on d\'enote par 
\[
 \intpart{x} = \max\setsuch{y\in\N}{y\leqs x}
\]
la partie enti\`ere de $x$. 

Soit $p\in[0,1]$. On consid\`ere la cha\^ine de Markov sur $\N=\set{0,1,2,\dots}$ de probabilit\'es de transition 
\[
 p_{xy} = 
 \begin{cases}
  p & \text{si $y = x + 1$\;,}\\
  1 - p & \text{si $y = \intpart{x/2}$\;,}\\
  0 & \text{sinon\;.}
 \end{cases}
\]

Pour $\alpha\in\R$, on pose $V(x) = \e^{\alpha x}$.

\begin{enumerate}
\item   Pour quelles valeurs de $\alpha$ la fonction $V$ est-elle une 
fonction de Lyapounov~? On dira que ces valeurs de $\alpha$ sont \emph{admissibles}.

\item   Calculer $(\cL V)(x)$, o\`u $\cL$ est le g\'en\'erateur de la cha\^ine de Markov. On distinguera les cas $x$ pair et $x$ impair. 

\item   Pour quels $p$ la cha\^ine est-elle \`a croissance sous-exponentielle~?

\item   D\'eterminer une fonction $f_p(\alpha)$ telle que 
\[
 (\cL V)(x) \leqs - f_p(\alpha) V(x)
\]
pour tout $x\in\N^* = \set{1,2,\dots}$. 

\item   \'Etudier la fonction $\alpha\mapsto f_p(\alpha)$ pour les valeurs 
de $\alpha$ admissibles~: comportement aux 
bords du domaine, croissance/d\'ecroissance, convexit\'e. 

\item   Pour quelles valeurs de $p$ existe-t-il un $\alpha$ admissible 
tel que $f_p(\alpha) > 0$~?

\item   Pour quelles valeurs de $p$ peut-on affirmer l'existence d'une unique probabilit\'e 
invariante $\pi$ telle que la loi de $X_n$ converge exponentiellement vite vers $\pi$~? 
\end{enumerate}
\end{exercise}


\chapter{Algorithmes MCMC}
\label{chap:cm_MCMC} 


\section{M\'ethodes Monte Carlo}
\label{sec:MC} 

On appelle \defwd{m\'ethodes Monte Carlo}\/ un ensemble d'algorithmes
stochastiques, introduits dans les ann\'ees 1940 par le mathématicien 
polonais Stanis\l{}aw Ulam, permettant d'estimer num\'eriquement des grandeurs pouvant
\^etre consid\'er\'ees comme des esp\'erances. En voici pour
commencer un exemple tr\`es simple.

\begin{example}[Calcul d'un volume]
On aimerait calculer num\'eriquement le volume $\abs{V}$ d'un
sous-ensemble compact $V$ de $\R^N$. On suppose que $V$ est donn\'e par un
certain nombre $M$ d'in\'egalit\'es: 
\begin{equation}
\label{mcmc1}
V = \bigsetsuch{x\in\R^N}{f_1(x)\geqs0, \dots, f_M(x)\geqs0}\;. 
\end{equation}
Par exemple, si $M=1$ et $f_1(x)=1-\norm{x}^2$, alors $V$ est une boule.
Dans ce cas, bien s\^ur, le volume de $V$ est connu explicitement. On
s'int\'eresse \`a des cas o\`u $V$ a une forme plus compliqu\'ee, par
exemple une intersection d'un grand nombre de boules et de demi-espaces.
Dans la suite nous supposerons, sans limiter la g\'en\'eralit\'e, que $V$
est contenu dans le cube unit\'e $[0,1]^N$.
\end{example}

Une premi\`ere m\'ethode de calcul num\'erique du volume consiste \`a
discr\'etiser l'espace. Divisons le cube $[0,1]^N$ en cubes de cot\'e
$\eps$ (avec $\eps$ de la forme $1/K$, $K\in\N$). Le nombre total de
ces cubes est \'egal \`a $1/\eps^N=K^N$. On compte alors le nombre $n$ de
cubes dont le centre est contenu dans $V$, et le volume de $V$ est 
approximativement \'egal \`a $n\eps^N$. Plus pr\'ecis\'ement, on peut
encadrer $\abs{V}$ par $n_-\eps^N$ et $n_+\eps^N$, o\`u $n_-$ est le
nombre de cubes enti\`erement contenus dans $V$, et $n_+$ est le nombre de
cubes dont l'intersection avec $V$ est non vide (Figure~\ref{fig:discretisation}). Toutefois, effectuer ces tests n'est en g\'en\'eral pas ais\'e num\'eriquement.

Quelle est la pr\'ecision de l'algorithme~? Si le bord $\partial V$ est
raisonnablement lisse, l'erreur faite pour $\eps$ petit est de l'ordre de
la mesure $\abs{\partial V}$ du bord fois $\eps$. Pour calculer $\abs{V}$ avec une
pr\'ecision donn\'ee $\delta$, il faut donc choisir $\eps$ d'ordre
$\delta/\abs{\partial V}$. Cela revient \`a un nombre de cubes d'ordre 
\begin{equation}
\label{mcmc2}
\biggpar{\frac{\abs{\partial V}}{\delta}}^N\;,
\end{equation}
ou encore, comme on effectue $M$ tests pour chaque cube, \`a un nombre de
calculs d'ordre $(M\abs{\partial V}/\delta)^N$. Ce nombre ne pose pas de
probl\`eme pour les petites dimensions ($N=1,2$ ou $3$ par exemple), mais
cro\^\i t vite avec la dimension $N$. C'est ce qu'on appelle le \defwd{fl\'eau 
de la dimension}.

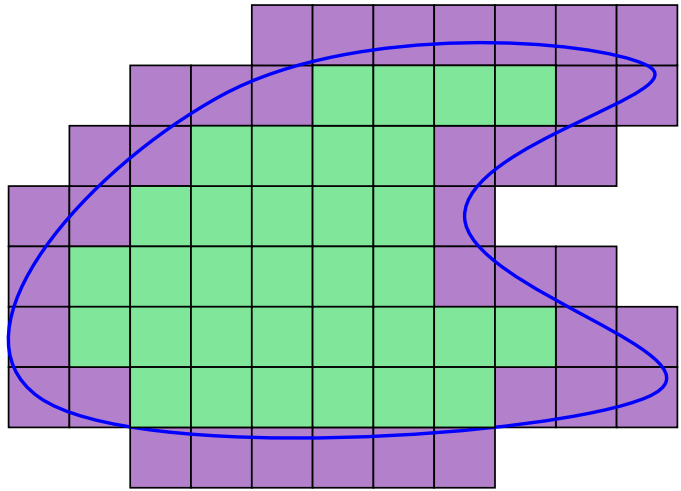
\begin{figure}
\vspace{-3mm}
\begin{center}
\scalebox{0.8}{
\begin{tikzpicture}[->,>=stealth',shorten >=2pt,shorten <=2pt,auto,node
distance=3.0cm,thick,
cont/.style={thick,rectangle,scale=1,minimum size=1cm,
fill=green!80!blue!50,draw,font=\sffamily\Large},
int/.style={thick,,rectangle,scale=1,minimum size=1cm,
fill=red!40!blue!50,draw,font=\sffamily\Large}]

\node[int] at (-5,0) {};
\node[int] at (-4,0) {};
\node[int] at (2,0) {};

\node[int] at (-5,-1) {};
\node[int] at (2,-1) {};
\node[int] at (3,-1) {};
\node[int] at (4,-1) {};

\node[int] at (-5,-2) {};
\node[int] at (4,-2) {};
\node[int] at (5,-2) {};

\node[int] at (-5,-3) {};
\node[int] at (-4,-3) {};
\node[int] at (4,-3) {};
\node[int] at (3,-3) {};
\node[int] at (5,-3) {};

\node[int] at (-3,-4) {};
\node[int] at (-2,-4) {};
\node[int] at (-1,-4) {};
\node[int] at (0,-4) {};
\node[int] at (1,-4) {};
\node[int] at (2,-4) {};

\node[int] at (-4,1) {};
\node[int] at (-3,1) {};
\node[int] at (2,1) {};
\node[int] at (3,1) {};
\node[int] at (4,1) {};

\node[int] at (-3,2) {};
\node[int] at (-2,2) {};
\node[int] at (-1,2) {};
\node[int] at (4,2) {};
\node[int] at (5,2) {};

\node[int] at (-1,3) {};
\node[int] at (0,3) {};
\node[int] at (1,3) {};
\node[int] at (2,3) {};
\node[int] at (3,3) {};
\node[int] at (4,3) {};
\node[int] at (5,3) {};

\draw[blue,ultra thick] plot[smooth cycle,tension=0.8]
  coordinates{(-5,-3) (5,-3) (2,0) (5,2.5) (-2,2)};

\node[cont] at (-3,0) {};
\node[cont] at (-2,0) {};
\node[cont] at (-1,0) {};
\node[cont] at (0,0) {};
\node[cont] at (1,0) {};

\node[cont] at (-2,1) {};
\node[cont] at (-1,1) {};
\node[cont] at (0,1) {};
\node[cont] at (1,1) {};

\node[cont] at (0,2) {};
\node[cont] at (1,2) {};
\node[cont] at (2,2) {};
\node[cont] at (3,2) {};

\node[cont] at (-4,-1) {};
\node[cont] at (-3,-1) {};
\node[cont] at (-2,-1) {};
\node[cont] at (-1,-1) {};
\node[cont] at (0,-1) {};
\node[cont] at (1,-1) {};

\node[cont] at (-4,-2) {};
\node[cont] at (-3,-2) {};
\node[cont] at (-2,-2) {};
\node[cont] at (-1,-2) {};
\node[cont] at (0,-2) {};
\node[cont] at (1,-2) {};
\node[cont] at (2,-2) {};
\node[cont] at (3,-2) {};

\node[cont] at (-3,-3) {};
\node[cont] at (-2,-3) {};
\node[cont] at (-1,-3) {};
\node[cont] at (0,-3) {};
\node[cont] at (1,-3) {};
\node[cont] at (2,-3) {};

\end{tikzpicture}
}
\end{center}
\vspace{-4mm}
 \caption[]{Calcul d'une aire par discr\'etisation. L'aire est encadr\'ee par 
 $n_-\eps^2$ et $n_+\eps^2$, o\`u $n_-$ est le nombre de carr\'es de c\^ot\'e $\eps$ enti\`erement contenus dans le domaine, et $n_+$ est le nombre de carr\'es intersectant le domaine.}
 \label{fig:discretisation}
\end{figure}

Une alternative int\'eressante pour les $N$ grands est fournie par
l'\emph{algorithme de Monte Carlo}. Dans ce cas, on g\'en\`ere une suite $X_1,
X_2, \dots, X_n, \dots$ de variables al\'eatoires ind\'epen\-dantes,
identiquement distribu\'ees (i.i.d.), de loi uniforme sur $[0,1]^N$. Ceci
est facile \`a r\'ealiser num\'eriquement, car on dispose de
g\'en\'erateurs de nombres (pseudo-)al\'eatoires distribu\'es
uniform\'ement sur $[0,1]$ (ou plut\^ot sur
$\set{0,1,\dots,n_{\text{max}}}$ o\`u $n_{\text{max}}$ est un entier du
genre $2^{31}-1$, mais en divisant ces nombres par $n_{\text{max}}$, on
obtient de bonnes approximations de variables uniformes sur $[0,1]$). Il
suffit alors de consid\'erer des $N$-uplets de tels nombres. 

Consid\'erons alors les variables al\'eatoires i.i.d. 
\begin{equation}
\label{mcmc3}
Y_n = \indicator{X_n\in V}\;, 
\qquad n = 1,2,\dots\;.
\end{equation}
On aura 
\begin{equation}
\label{mcmc4}
\expec{Y_n} = \bigprob{X_n\in V} = \abs{V}\;.
\end{equation}
Les moyennes empiriques
\begin{equation}
\label{mcmc5}
S_n = \frac1n \sum_{m=1}^n Y_m
\end{equation}
ont esp\'erance $\expec{S_n}=\abs{V}$ et variance
$\variance{S_n}=\variance{Y_1}/n$. La loi faible des grands nombres
implique que 
\begin{equation}
\label{mcmc6}
\lim_{n\to\infty} \Bigprob{\bigabs{S_n-\expec{S_n}} > \delta} = 0
\end{equation}
pour tout $\delta>0$. Par cons\'equent, $S_n$ devrait donner une bonne
approximation du volume $\abs{V}$ lorsque $n$ est suffisamment grand.\footnote{La
loi forte des grands nombres affirme par ailleurs que $S_n$ tend vers
$\abs{V}$ presque s\^urement, c'est-\`a-dire que $S_n$ n'est plus vraiment
al\'eatoire \`a la limite.} 

Pour savoir comment choisir $n$ en fonction de la pr\'ecision d\'esir\'ee,
il nous faut estimer la probabilit\'e que $\abs{S_n-\abs{V}} > \delta$,
pour $n$ grand mais fini. Une premi\`ere estimation est fournie par
l'in\'egalit\'e de Bienaym\'e--Chebychev, qui affirme que 
\begin{equation}
\label{mcmc7}
\Bigprob{\bigabs{S_n-\expec{S_n}} > \delta} \leqs 
\frac{\variance(S_n)}{\delta^2}
= \frac{\variance(Y_1)}{\delta^2 n}
< \frac{1}{\delta^2 n}\;,
\end{equation}
o\`u nous avons utilis\'e le fait que
$\variance(Y_1)\leqs\expec{Y_1^2}\leqs 1$. On peut donc affirmer que pour
que la probabilit\'e de faire une erreur sup\'erieure \`a $\delta$ soit
inf\'erieure \`a $\eps$, il suffit de choisir 
\begin{equation}
\label{mcmc8}
n > \frac1{\delta^2\eps}\;.
\end{equation}
Comme pour chaque $m$, il faut g\'en\'erer $N$ variables al\'eatoires, et
effectuer $M$ tests, le nombre de calculs n\'ecessaires est d'ordre
$MN/(\delta^2\eps)$. L'avantage de cette m\'ethode est que ce nombre ne
cro\^\i t que lin\'eairement avec $N$, par opposition \`a la croissance
exponentielle dans le cas de la discr\'etisation. On notera toutefois que
contrairement \`a la discr\'etisation, qui donne un r\'esultat certain aux
erreurs pr\`es, la m\'ethode de Monte Carlo ne fournit que des r\'esultats
vrais avec tr\`es grande probabilit\'e (d'o\`u son nom).

\begin{remark}[Estimation d'erreur am\'elior\'ee]
L'estimation~\eqref{mcmc7} est assez pessimiste, et peut \^etre 
consid\'erablement am\'elior\'ee. Par exemple, le th\'eor\`eme
central limite montre que 
\begin{equation}
\label{mcmc9}
\lim_{n\to\infty}
\Biggprob{\frac{(S_n-\expec{S_n})^2}{\variance(S_n)} > \eta^2}
= \int_{\abs{x}>\eta} \frac{\e^{-x^2/2}}{\sqrt{2\pi}} \6x\;,
\end{equation}
dont le membre de droite d\'ecro\^\i t comme $\e^{-\eta^2/2}$ pour $\eta$
grand. Cela indique que pour $n$ grand, 
\begin{equation}
\label{mcmc10}
\Bigprob{\abs{S_n-\abs{V}} > \delta} \simeq
\e^{-n\delta^2/2\variance(Y_1)}\;.
\end{equation}
Ceci permet d'am\'eliorer le crit\`ere~\eqref{mcmc8} en 
\begin{equation}
\label{mcmc11}
n > \const \frac{\log(1/\eps)}{\delta^2}\;,
\end{equation}
d'o\`u un nombre de calculs d'ordre $NM\log(1/\eps)/\delta^2$. Cette
estimation n'est pas une borne rigoureuse, contrairement \`a~\eqref{mcmc8},
parce qu'on n'a pas tenu compte de la vitesse de convergence
dans~\eqref{mcmc9}, qui par ailleurs ne s'applique que pour $\eta$
ind\'ependant de $\eps$. On devrait plut\^ot utiliser des estimations
provenant de la th\'eorie des grandes d\'eviations, que nous n'aborderons
pas ici. Les r\'esultats sont toutefois qualitativement corrects.
\end{remark}

\begin{example}[Estimation d'un volume]
A titre d'illustration, supposons qu'on veuille d\'eterminer le volume
d'un domaine de dimension $N=1000$, d\'efini par $M=10$ in\'egalit\'es,
avec une pr\'ecision de $\delta=10^{-4}$. La m\'ethode de discr\'etisation
n\'ecessite un nombre de calculs d'ordre $10^{5000}$, ce qui est
irr\'ealisable avec les ordinateurs actuels. La m\'ethode de Monte Carlo,
en revanche, fournit un r\'esultat de la m\^eme pr\'ecision, s\^ur avec
probabilit\'e $1-10^{-6}$, avec un nombre de calculs d'ordre
$\log(10^6)\cdot 10^{12} \simeq 10^{13}$, ce qui ne prend que quelques
minutes sur un PC. La Table~\ref{tab:MC} compare des co\^uts pour 
diff\'erentes valeurs de $N$. 
\end{example}

\begin{table}[ht]
\begin{center}
 \begin{tabular}{|r|r|r|r|}
\hline
\myvrule{13pt}{5pt}{0pt}
$N$ & Discr\'etisation & Monte Carlo BC & Monte Carlo TCL \\
\hline
\myvrule{13pt}{5pt}{0pt}
$1$ & $10^5$ & $10^{15}$ & $1,\!4\cdot 10^{10}$ \\
$2$ & $10^9$ & $2\cdot10^{15}$ & $2,\!8\cdot 10^{10}$ \\
$3$ & $10^{13}$ & $3\cdot10^{15}$ & $4,\!2\cdot 10^{10}$ \\
$10$ & $10^{41}$ & $10^{16}$ & $1,\!4\cdot 10^{11}$ \\
$100$ & $10^{401}$ & $10^{17}$ & $1,\!4\cdot 10^{12}$ \\
$1000$ & $10^{4001}$ & $10^{18}$ & $1,\!4\cdot 10^{13}$ \\
\hline
\end{tabular}
\end{center}
\vspace{-3mm}
\caption[]{Comparaison, pour diff\'erentes dimensions $N$ et un nombre $M=10$ d'in\'egalit\'es, des co\^uts de calcul d'un volume avec pr\'ecision $\delta = 10^{-4}$, par discr\'etisation ($10^{4N+1}$), par la m\'ethode de Monte Carlo avec $\eps=10^{-6}$ avec l'estimation de Bienaym\'e--Chebychev ($10^{15} N$), et par la m\^eme m\'ethode avec 
l'estimation bas\'ee sur le th\'eor\`eme central limite ($\log(10^6)\cdot 10^9 N$).}
\label{tab:MC} 
\end{table}

La m\'ethode de Monte Carlo se g\'en\'eralise facilement \`a d'autres
probl\`emes que des calculs de volume. Supposons par exemple donn\'e un
espace probabilis\'e $(\Omega,\cF,\pi)$, et une variable al\'eatoire
$Y:\Omega\to\R$. On voudrait estimer l'esp\'erance de $Y$. Pour cela,
l'algorithme de Monte Carlo consiste \`a g\'en\'erer des variables
al\'eatoires ind\'ependantes $Y_1, Y_2, \dots, Y_n, \dots$, toutes de loi
$\pi Y^{-1}$, et de calculer leur moyenne. Cette moyenne doit converger
vers l'esp\'erance cherch\'ee (pourvu que $Y$ soit int\'egrable). 

Cet algorithme n'est toutefois efficace que si l'on arrive \`a g\'en\'erer
les variables al\'eatoires $Y_i$ de mani\`ere efficace. Une fois de plus,
ceci est relativement facile en dimension faible, mais devient rapidement
difficile lorsque la dimension cro\^\i t. 

\begin{remark}[Cas unidimensionnel]
\label{rem:mcmc2}
Une variable al\'eatoire uni\-dimensionnelle $Y$ s'ob\-tient facilement \`a
partir d'une variable de loi uniforme. Soit en effet $U$ une variable
uniforme sur $[0,1]$. Sa fonction de r\'epartition est donn\'ee par 
\begin{equation}
\label{mcmc12}
F_U(u) = \prob{U\leqs u} = u 
\qquad
\text{pour $0\leqs u\leqs 1$.} 
\end{equation}
On cherche une fonction $\varphi$ telle que la variable $Y=\varphi(u)$
admette la fonction de r\'eparti\-tion prescrite $F_Y(y)$. Or on a 
\begin{equation}
\label{mcmc13}
F_Y(y) = \prob{Y\leqs y} = \prob{\varphi(U)\leqs y} 
= \prob{U \leqs \varphi^{-1}(y)} = \varphi^{-1}(y)\;.
\end{equation}
Il suffit donc de prendre $Y = F_Y^{-1}(U)$. 
\end{remark}

\begin{example}[Loi exponentielle]
Par exemple, pour g\'en\'erer une variable de loi exponentielle, dont la
fonction de r\'eparti\-tion est donn\'ee par $F_Y(y)=1-\e^{-\lambda y}$, il
suffit de prendre 
\begin{equation}
\label{mcmc14}
Y = -\frac1\lambda \log(1-U)\;.
\end{equation}
\end{example}

\begin{example}[Loi normale -- Algorithme de Box--Muller]
Pour la loi normale, cette m\'ethode n\'ecessiterait le calcul approch\'e
de sa fonction de r\'epartition, ce qui est num\'eriquement peu efficace. 
Il existe toutefois une alternative permettant d'\'eviter ce calcul.
Soient en effet $U$ et $V$ deux variables al\'eatoires ind\'ependantes, de
loi uniforme sur $[0,1]$. On introduit successivement les variables 
\begin{align}
\nonumber
R &= \sqrt{-2\log(1-U)}\;,
& 
Y_1 &= R\cos\Phi\;, \\
\Phi &= 2\pi V\;,
&
Y_2 &= R\sin\Phi\;.
\label{mcmc15}
\end{align}
Alors $Y_1$ et $Y_2$ sont des variables al\'eatoires ind\'ependantes, de
loi normale centr\'ee r\'eduite. Pour le voir, on v\'erifie d'abord que
$R$ a la fonction de r\'epartition $1-\e^{-r^2/2}$, donc la densit\'e
$r\e^{-r^2/2}$. Le couple $(R,\Phi)$ a donc la densit\'e jointe
$r\e^{-r^2/2}/(2\pi)$, et la formule de changement de variable montre que
le couple $(Y_1,Y_2)$ a la densit\'e jointe
$\e^{-(y_1^2+y_2^2)/2}/(2\pi)$, qui est bien celle d'un couple de
variables normales centr\'ees r\'eduites ind\'ependantes.
\end{example}

Bien entendu, les esp\'erances de variables al\'eatoires de loi
unidimensionnelle sont soit connues explicitement, soit calculables
num\'eriquement par la simple estimation d'une int\'egrale. Nous nous
int\'eressons ici \`a des situations o\`u la loi de $Y$ n'est pas aussi
simple \`a repr\'esenter. 


\section{M\'ethodes Monte Carlo par cha\^ines de Markov}
\label{sec:MCMC} 

Consid\'erons le cas d'un espace probabilis\'e discret
$(\cX,\cP(\cX),\pi)$, o\`u $\cX$ est un ensemble d\'enombrable, mais
tr\`es grand. Par exemple, dans le cas du mod\`ele d'Ising (voir la 
section~\ref{sec:ex_Ising}), $\cX=\set{-1,1}^N$ est de cardinal $2^N$, et
on s'int\'eresse \`a des $N$ grands, par exemple d'ordre $1000$. La mesure de
probabilit\'e $\pi$ est dans ce cas une application de $\cX$ vers $[0,1]$
telle que la somme des $\pi(x)$ vaut $1$. 

On voudrait estimer l'esp\'erance d'une variable al\'eatoire $Y:\cX\to\R$,
comme par exemple l'aimantation dans le cas du mod\`ele d'Ising~: 
\begin{equation}
\label{mcmc16}
\expecin{\pi}{Y} = \sum_{x\in\cX} Y(x)\pi(x)\;.
\end{equation}
La m\'ethode de Monte Carlo usuelle consiste alors \`a g\'en\'erer une
suite de variables al\'eatoires $X_0, X_1, \dots$ sur $\cX$,
ind\'ependantes et de loi $\pi$, puis de calculer la moyenne des
$Y(X_m)$. 

Pour g\'en\'erer ces $X_m$, on pourrait envisager de proc\'eder comme suit~: on
d\'efinit un ordre total sur $\cX$, et on d\'etermine la fonction de
r\'epartition 
\begin{equation}
\label{mcmc17}
x \mapsto
F_\pi(x) = \sum_{y\leqs x} \pi(y)\;.
\end{equation}
Si $U$ est une variable de loi uniforme sur $[0,1]$, alors $F_\pi^{-1}(U)$
suit la loi $\pi$. Toutefois, en proc\'edant de cette mani\`ere, on ne
gagne rien, car le calcul des sommes~\eqref{mcmc17} est aussi long que le
calcul de la somme~\eqref{mcmc16}, que l'on voulait pr\'ecis\'ement
\'eviter~!

Les m\'ethodes MCMC (pour \emph{Monte Carlo Markov Chain}\/) \'evitent cet
inconv\'enient. L'id\'ee est de simuler \emph{en m\^eme temps}\/ la
loi $\pi$ et la variable al\'eatoire $Y$, \`a l'aide d'une \CM\ sur $\cX$, 
de probabilit\'e invariante $\pi$. 

Soit donc $(X_n)_{n\in\N}$ une telle cha\^ine, suppos\'ee
irr\'eductible, r\'ecurrente positive, ap\'eriodique, et de loi initiale 
$\nu$ arbitraire. On lui associe une suite $Y_n=Y(X_n)$ de variables al\'eatoires.
Celles-ci peuvent se d\'ecomposer comme suit~: 
\begin{equation}
\label{mcmc18}
Y_n = \sum_{x\in\cX} Y(x) \indicator{X_n=x}\;.
\end{equation}
Consid\'erons les moyennes empiriques 
\begin{equation}
\label{mcmc19}
S_n = \frac1n \sum_{m=0}^{n-1} Y_m\;. 
\end{equation}
Le Th\'eor\`eme~\ref{thm:convergence_aperiodique} permet d'\'ecrire 
\begin{align}
\nonumber
\lim_{n\to\infty} \bigexpecin{\nu}{S_n} 
&= \lim_{n\to\infty} \frac1n \biggexpecin{\nu}{\sum_{m=0}^{n-1}Y_m} \\
\nonumber
&= \sum_{x\in\cX} Y(x) \lim_{n\to\infty} \frac1n 
\biggexpecin{\nu}{\sum_{m=0}^{n-1}\indicator{X_m=x}} \\
\nonumber
&= \sum_{x\in\cX} Y(x) \pi(x) \\
&= \expecin{\pi}{Y}\;.
\label{mcmc20}
\end{align}
L'esp\'erance de $S_n$ converge bien vers l'esp\'erance cherch\'ee. Pour
pouvoir appliquer l'id\'ee de la m\'ethode Monte Carlo, il nous faut plus,
\`a savoir la convergence (au moins) en probabilit\'e de $S_n$ vers
$\expec{Y}$. On ne peut pas invoquer directement la loi des grands nombres,
ni le th\'eor\`eme central limite, car les $Y_n$ ne sont plus
ind\'ependants. Mais il s'av\`ere que des r\'esultats analogues restent
vrais dans le cas de cha\^ines de Markov. 

\begin{theorem}[R\'eduction de variance partant de la probabilit\'e invariante]
Supposons la cha\^ine\ r\'eversible, et de loi initiale \'egale
\`a sa probabilit\'e invariante. Soit $\rho$ le rayon spectral associ\'e \`a la cha\^ine. Alors 
\begin{equation}
\label{mcmc21}
\variance(S_n) \leqs \frac1n
\Biggpar{\frac{1+\rho}{1-\rho}} \variance^\pi(Y)\;.
\end{equation}
\end{theorem}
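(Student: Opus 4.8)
Le plan est de ramener le calcul de $\variance(S_n)$ à une somme de covariances que la réversibilité et le départ sous $\pi$ permettent d'exprimer spectralement. Quitte à remplacer $Y$ par $\hat Y = Y - \expecin{\pi}{Y}$, je supposerais d'emblée que $\pscal{\hat Y}{\vone}_\pi = 0$, ce qui ne change ni $\variance(S_n)$ ni $\variance^\pi(Y) = \pscal{\hat Y}{\hat Y}_\pi$. En développant le carré, j'écrirais
\begin{equation}
 \variance(S_n) = \frac{1}{n^2}\sum_{m=0}^{n-1}\sum_{k=0}^{n-1} \cov(Y_m, Y_k)\;.
\end{equation}

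Je calculerais ensuite chaque covariance. Comme la chaîne part de $\pi$, chaque $X_m$ suit la loi $\pi$, et pour $m\leqs k$, en conditionnant par $X_m$ et en utilisant la propriété de Markov,
\begin{equation}
 \cov(Y_m, Y_k) = \bigexpecin{\pi}{\hat Y(X_m)\,(P^{k-m}\hat Y)(X_m)} = \pscal{\hat Y}{P^{k-m}\hat Y}_\pi\;,
\end{equation}
quantité qui ne dépend que du décalage $\abs{k-m}$. En regroupant les paires $(m,k)$ selon ce décalage $j$ (il y en a $n$ avec $j=0$ et $2(n-j)$ avec $j\geqs1$), et en utilisant la symétrie de la covariance, j'obtiendrais
\begin{equation}
 \variance(S_n) = \frac{1}{n^2}\biggbrak{n\pscal{\hat Y}{\hat Y}_\pi + 2\sum_{j=1}^{n-1}(n-j)\pscal{\hat Y}{P^j\hat Y}_\pi}\;.
\end{equation}

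Le caractère autoadjoint de $P$ sur $\cH = \ell^2(\C,\pi)$ entre alors en jeu. En décomposant $\hat Y = \sum_{k\geqs1} c_k v_k$ dans une base orthonormée de vecteurs propres (le terme $k=0$ associé à $\vone$ est absent puisque $\hat Y\perp\vone$), le théorème spectral donne $\pscal{\hat Y}{P^j\hat Y}_\pi = \sum_{k\geqs1}\lambda_k^j\abs{c_k}^2$, où les valeurs propres sont réelles et vérifient $\abs{\lambda_k}\leqs\rho$. En reportant cette expression et en utilisant $\variance^\pi(Y) = \sum_{k\geqs1}\abs{c_k}^2$ (relation de Parseval), tout reviendrait à majorer, pour chaque $k$, la quantité
\begin{equation}
 n + 2\sum_{j=1}^{n-1}(n-j)\lambda_k^j\;.
\end{equation}

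L'étape délicate, bien que purement élémentaire, est précisément cette dernière majoration, car il faut la rendre uniforme en $k$ en contrôlant le poids $(n-j)$ et le fait que $\lambda_k$ peut être négatif, auquel cas la série géométrique est alternée. Je majorerais alors $(n-j)\leqs n$ puis $\lambda_k^j$ par $\abs{\lambda_k}^j$, ce qui donne la borne $n\bigpar{1 + 2\sum_{j\geqs1}\abs{\lambda_k}^j} = n\frac{1+\abs{\lambda_k}}{1-\abs{\lambda_k}}$. Comme $t\mapsto\frac{1+t}{1-t}$ croît sur $[0,1[$ et que $\abs{\lambda_k}\leqs\rho<1$, ce terme est au plus $n\frac{1+\rho}{1-\rho}$. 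En sommant sur $k$ avec les poids $\abs{c_k}^2$ et en divisant par $n^2$, j'aboutirais exactement à $\variance(S_n)\leqs\frac1n\frac{1+\rho}{1-\rho}\variance^\pi(Y)$.
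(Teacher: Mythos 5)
Your proof is correct and takes essentially the same route as the paper's: the same decomposition of $\variance(S_n)$ into covariances indexed by the time gap (the paper writes them as $\cov(Y_0,Y_m)$ via stationarity, you group ordered pairs, which is the same count), the same spectral bound $\bigabs{\pscal{\hat Y}{P^j\hat Y}_\pi}\leqs\rho^j\,\variance^\pi(Y)$ coming from reversibility and orthogonality of $\hat Y$ to $\vone$, and the same majorations $(n-j)\leqs n$ followed by a geometric series. The only cosmetic difference is that you expand $\hat Y$ in an orthonormal eigenbasis and bound eigenvalue by eigenvalue, whereas the paper invokes the operator bound on $\vone_\perp$ directly; these are the same argument.
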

\begin{proof}
Comme la cha\^ine\ d\'emarre dans la probabilit\'e invariante $\pi$, tous
les $Y_i$ ont m\^eme loi $\pi Y^{-1}$, m\^eme s'ils ne sont pas
ind\'ependants. Il suit que 
\begin{align}
\nonumber
\variance(S_n) &= \frac1{n^2} 
\Biggbrak{\sum_{m=0}^{n-1}\variance(Y_m) + 2\sum_{0\leqs p<q<n}
\cov(Y_p,Y_q)} \\
&= \frac1n \variance^\pi(Y) + \frac2{n^2} \sum_{m=1}^{n-1} (n-m)
\cov(Y_0,Y_m)\;,
\label{mcmc22}
\end{align}
en vertu du fait que $(Y_p,Y_q)$ a la m\^eme loi que $(Y_0,Y_{q-p})$. Or si 
$\vone=\smash{\transpose{(1,1,\dots,1)}}$ on a 
\begin{align}
\nonumber
\cov(Y_0,Y_m) 
&= \Bigexpecin{\pi}{(Y_0-\expecin{\pi}{Y_0}) (Y_m-\expecin{\pi}{Y_m})} \\
\nonumber
&= \sum_{x\in\cX} \sum_{y\in\cX} \bigpar{Y(x)-\expecin{\pi}{Y}}
\bigpar{Y(y)-\expecin{\pi}{Y}} 
\underbrace{\probin{\pi}{X_0=x,X_m=y}}_{=\pi(x)(P^m)_{xy}}\\
\nonumber
&= \sum_{x\in\cX} \pi(x)\bigpar{Y(x)-\expecin{\pi}{Y}}
\bigbrak{P^m(Y-\expecin{\pi}{Y}\vone)}_x\\
\nonumber
&= \pscal{Y-\expecin{\pi}{Y}\vone}{P^m(Y-\expecin{\pi}{Y}\vone)}_\pi\\
&\leqs \rho^m
\pscal{Y-\expecin{\pi}{Y}\vone}{Y-\expecin{\pi}{Y}\vone}_\pi
= \rho^m \variance^\pi(Y)\;.
\label{mcmc23}
\end{align}
Dans l'in\'egalit\'e \`a la derni\`ere ligne, nous avons utilis\'e le fait que 
$Y-\expecin{\pi}{Y}\vone\in \vone_\perp$ puisque la somme des $\pi(x)(Y(x)-\expecin{\pi}{Y})$ 
est nulle, et que par cons\'equent ce vecteur se trouve dans le sous-espace
compl\'ementaire au vecteur propre $\vone$. Le r\'esultat suit alors en
rempla\c cant dans~\eqref{mcmc22}, en majorant $n-m$ par $n$ et en sommant
une s\'erie g\'eom\'etrique. 
\end{proof}

Il suit de cette estimation et de l'in\'egalit\'e de Bienaym\'e--Chebychev
que pour calculer $\expecin{\pi}{Y}$ avec une pr\'ecision $\delta$ et avec
probabilit\'e $1-\eps$, il faut choisir 
\begin{equation}
\label{mcmc24}
n \geqs 
\frac{\variance^\pi(Y)}{\delta^2\eps}
\biggpar{\frac{1+\rho}{1-\rho}}\;.
\end{equation}
En pratique, on ne peut pas faire d\'emarrer la cha\^ine\ exactement avec
la probabilit\'e invariante. Ceci conduit \`a une convergence un peu plus
lente, mais du m\^eme ordre de grandeur puisque la loi des $Y_n$ converge
exponentiellement vite vers $\pi Y^{-1}$. Les r\'esultats sont bien s\^ur
meilleurs si on choisit bien la condition initiale, c'est-\`a-dire de
mani\`ere \`a ce que la loi des $Y_n$ converge rapidement. 


\section{Algorithmes de type Metropolis}
\label{sec:Metropolis} 

Nous avons vu comment estimer l'esp\'erance d'une variable al\'eatoire
$Y$ \`a l'aide d'une cha\^ine de Markov de probabilit\'e invariante
donn\'ee par la loi de $Y$. Pour que cet algorithme soit efficace, il faut
encore que l'on puisse trouver facilement, en fonction de cette loi, une 
matrice de transition donnant la probabilit\'e invariante souhait\'ee. 

Une m\'ethode pour le faire a \'et\'e d\'evelopp\'ee \`a Los Alamos au 
d\'ebut des ann\'ees 1950, par Nicholas Metropolis, Arianna Rosenbluth, 
Marshall Rosenbluth, Augusta Teller et Edward Teller (plus connu pour le 
d\'eveloppement de la bombe \`a hydrog\`ene)\footnote{Il semble par ailleurs 
que la contribution principale de Nicholas Metropolis ait \'et\'e de mettre 
\`a disposition du temps de calcul sur l'ordinateur MANIAC qu'il g\'erait.}. 
L'algorithme devrait donc \^etre appel\'e 
Metropolis--Rosenbluth--Rosenbluth--Teller--Teller, mais 
il est plus connu sous le nom d'algorithme de Metropolis, ou de 
Metropolis--Hastings, pour une forme plus g\'en\'erale d\'evelopp\'ee 
par la suite par Wilfred Keith Hastings. 

Le but de cet algorithme est d'\'echantillonner une \defwd{mesure de Gibbs}, 
de la forme 
\begin{equation}
\label{metro1}
\pi(x) = \frac{\e^{-\beta H(x)}}{Z_\beta}\;,
\qquad
\text{o\`u }
Z_\beta = \sum_{x\in\cX}\e^{-\beta H(x)}\;.
\end{equation}
Le param\`etre $\beta$ d\'esigne la temp\'erature inverse du syst\`eme, et 
la fonction $H: \cX\to\R$ associe \`a toute configuration $x\in\cX$ son \'energie. 
Nous en avons vu un exemple dans la section~\ref{sec:ex_Ising} avec le mod\`ele 
d'Ising. Il s'agit donc de construire une \CM\ sur $\cX$ admettant $\pi$ comme 
probabilit\'e invariante. Une mani\`ere simple d'approcher ce probl\`eme 
est de chercher une \CM\ r\'eversible. On cherche donc une matrice de 
transition $P$ sur $\cX$ dont les \'el\'ements satisfont 
\begin{equation}
\label{metro4}
\pi(x)p_{xy} = \pi(y)p_{yx}
\end{equation}
pour toute paire $(x,y)\in\cX\times\cX$. Cela revient \`a
imposer que 
\begin{equation}
\label{metro5}
\frac{p_{xy}}{p_{yx}}
= \e^{-\beta\Delta H(x,y)}\;,
\end{equation}
o\`u 
\begin{equation}
 \Delta H(x,y) = H(y) - H(x)
\end{equation} 
est la diff\'erence d'\'energie entre les \'etats $y$ et $x$. On notera que
cette condition ne fait pas intervenir la constante de normalisation
$Z_\beta$, ce qui est souhaitable, car le calcul de cette constante est
aussi co\^uteux que celui de $\expecin{\pi}{Y}$. 

L'algorithme de Metropolis consiste dans un premier temps \`a
d\'efinir un ensemble de transitions permises, c'est-\`a-dire une relation
sym\'etrique $\sim$ sur $\cX$ (on supposera toujours que
$x\not\sim x$). Une fois la relation $\sim$ fix\'ee, on choisit des probabilit\'es de
transition telles que 
\begin{equation}
\label{metro6}
p_{xy} = 
\begin{cases}
\myvrule{10pt}{14pt}{0pt} 
p_{yx} \e^{-\beta\Delta H(x,y)} 
&\text{si $x\sim y$\;,}\\
\displaystyle
1 - \sum_{z\sim x} p_{xz}
&\text{si $x=y$\;,}\\
0
&\text{autrement\;.}
\end{cases}
\end{equation}
On remarque que la \CM\ est irr\'eductible \`a condition que la relation $\sim$ 
le soit (deux \'etats quelconques de $\cX$ peuvent \^etre reli\'es par un 
chemin d'\'etats \'equivalents par $\sim$). De plus, la \CM\ est ap\'eriodique 
si $p_{xx} > 0$ pour tout $x\in\cX$. Si $\cX$ est fini, la cha\^ine est automatiquement 
r\'ecurrente positive. 
Pour satisfaire la condition de r\'eversibilit\'e~\eqref{metro4} 
lorsque $x\sim y$, une possibilit\'e est de prendre
\begin{equation}
\label{metro7}
p_{xy} = 
\begin{cases}
q 
&\text{si $H(y)\leqs H(x)$\;,}\\
q 
\e^{-\beta\Delta H(x,y)}
&\text{si $H(y)> H(x)$\;,}
\end{cases}
\end{equation}
o\`u $q\in]0,1]$ est une constante qui contr\^ole la vitesse de l'algorithme. Elle doit \^etre choisie assez petite pour que $p_{xx}$ soit positif. Ce choix revient \`a effectuer la transition avec probabilit\'e $q$ si elle d\'ecro\^\i t l'\'energie, et de ne l'effectuer qu'avec probabilit\'e $q\e^{-\beta\Delta H(x,y)}$ si elle fait cro\^\i tre l'\'energie. Une autre possibilit\'e est de choisir 
\begin{equation}
\label{metro8}
p_{xy} = \frac{q}{1+\e^{\beta\Delta
H(x,y)}}\;.
\end{equation}

\begin{remark}[$q$ non constant]
Au lieu de choisir un $q$ constant, on peut \'egalement choisir des coefficients 
$q_{xy}$ d\'ependant de $x$ et $y$, satisfaisant $q_{xy} = q_{yx}$, et avec 
$\sum_{y\sim x}q_{xy}$ assez petit pour avoir $p_{xx} > 0$. Cela peut permettre, 
dans certains cas, d'acc\'el\'erer la convergence de l'algorithme. 
\end{remark}

Nous allons illustrer cette m\'ethode dans le cas du mod\`ele d'Ising, mais
on voit facilement comment la g\'en\'eraliser \`a d'autres syst\`emes. 
Rappelons que dans le cas du mod\`ele d'Ising (voir la section~\ref{sec:ex_Ising}), 
l'univers est donn\'e par $\cX=\set{-1,1}^\Lambda$, o\`u $\Lambda$ est un sous-ensemble
(suppos\'e ici de cardinal fini $N$) de $\Z^d$. L'\'energie est donn\'ee par 
\begin{equation}
\label{metro2}
H(x) = -\sum_{i,j\in\Lambda\colon\norm{i-j}=1}x_ix_j 
- h \sum_{i\in\Lambda} x_i\;,
\end{equation}
o\`u $h\in\R$ est le champ magn\'etique. L'objectif est de calculer
l'esp\'erance de la variable aimantation, donn\'ee par
\begin{equation}
\label{metro3}
m(x) = \sum_{i\in\Lambda} x_i\;.
\end{equation}
Le choix de la relation sym\'etrique $\sim$ sur $\cX$ d\'epend de la physique 
que l'on souhaite mod\'eliser. Les deux choix les plus courants sont 
\begin{itemize}
\item	la \defwd{dynamique de Glauber}\/, qui consiste \`a choisir
$x\sim y$ si et seulement si les deux configurations $x$ et
$y$ diff\`erent en exactement un point de $\Lambda$; on parle de dynamique
de renversement de spin;

\item	la \defwd{dynamique de Kawasaki}\/, qui consiste \`a choisir
$x\sim y$ si et seulement si $y$ est obtenue en
intervertissant deux composantes de $x$; on parle de dynamique
d'\'echange de spin. Dans ce cas, la cha\^ine n'est pas irr\'eductible sur
$\cX$, car elle conserve le nombre total de spins $+1$ et $-1$ : elle
est en fait irr\'eductible sur chaque sous-ensemble de configurations \`a
nombre fix\'e de spins de chaque signe.
\end{itemize}

Remarquons que le calcul de la diff\'erence d'\'energie $\Delta H$ est
particuli\`erement simple dans le cas de la dynamique de Glauber, car
seuls le spin que l'on renverse et ses voisins entrent en compte. Ainsi,
si $R_k(x)$ d\'enote la configuration obtenue en renversant le spin
num\'ero $k$ de $x$, on aura 
\begin{equation}
\label{metro9}
\Delta H(x,R_k(x)) = 2x_k 
\Biggbrak{\sum_{\;j\colon\norm{j-k}=1}x_j + h}\;,
\end{equation}
qui est une somme de $2d+1$ termes pour un r\'eseau $\Lambda\subset\Z^d$.

Concr\`etement, l'algorithme de Metropolis avec dynamique de Glauber
s'impl\'emente de la mani\`ere suivante (avec $q = 1/N$)~: 

\begin{mdframed}[innerleftmargin=7mm,innertopmargin=10pt,innerbottommargin=10pt]
\begin{enumerate}
\item	{\bf \'Etape d'initialisation~:}
\begin{itemize}
\item	choisir une configuration initiale $X_0$ (de pr\'ef\'erence 
telle que $\pi(X_0)$ ne soit pas trop petit);
\item	calculer $m_0=m(X_0)$ (n\'ecessite $N$ calculs);
\item	calculer $H(X_0)$ (n\'ecessite de l'ordre de $dN$ calculs);
\item	poser $S = m_0$.
\end{itemize}

\item	{\bf Etape d'it\'eration~:} Pour $n=0, 1, \dots, n_{\max}-1$, 
\begin{itemize}
\item	choisir un spin $k$ au hasard uniform\'ement dans $\Lambda$;
\item	calculer $\Delta H(X_n,y)$, o\`u
$y=R_k(X_n)$ est obtenu en renversant le spin choisi;
\item	si $\Delta H(X_n,y) \leqs 0$, poser $X_{n+1} = y$; 
\item	si $\Delta H(X_n,y) > 0$, 
soit $B_n$ une variable de Bernoulli de param\`etre 
$\e^{-\beta\Delta H(X_n,y)}$, qui est ind\'ependante des autres al\'eas; 
poser $X_{n+1}=y$ si $B_n=1$, et $X_{n+1}=X_n$ si $B_n=0$;
\item	si $B_n=1$ (on a renvers\'e le spin $k$), alors 
$m_{n+1}=m_n+2(X_n)_k$, sinon $m_{n+1}=m_n$;
\item	ajouter $m_{n+1}$ \`a $S$.
\end{itemize}
\end{enumerate}
\end{mdframed}

Le quotient $S/(n+1)$ converge alors vers $\expecin{\pi}{m}$, avec une vitesse
d\'etermin\'ee par~\eqref{mcmc21}. La seule quantit\'e difficile \`a
estimer est le trou spectral $1-\rho$. Nous allons donner une exemple de son 
estimation \`a l'aide de la m\'ethode des fonctions de Lyapounov dans la 
section suivante. 


\section{Mod\`ele d'Ising sur le cercle discret}
\label{sec:MCMC_conv} 

Nous donnons dans cette section quelques exemples d'estimations de vitesse 
de convergence pour le mod\`ele d'Ising sur $\Lambda = \Z/N\Z$,  
pour la dynamique de Glauber et une matrice de transition d\'efinie 
par~\eqref{metro7}. Cela signifie que l'on consid\`ere $N$ spins align\'es, 
avec conditions aux bords p\'eriodiques~: on identifie $i=0$ avec $i=N$
et $i=-1$ avec $i=N-1$, de mani\`ere que chaque spin $i\in\Lambda$ ait exactement deux 
voisins $i-1$ et $i+1$. Soit $N_\pm(x)$ le nombre de spins de la configuration 
$x$ valant $\pm1$. Alors on a 
\begin{equation}
\label{eq:Nplus_Nminus} 
 N_+(x) + N_-(x) = N\;, \qquad 
 N_+(x) - N_-(x) = m(x)\;,
\end{equation} 
ce qui implique $m(x) = N - 2N_-(x)$. 
Soit par ailleurs 
\begin{equation}
 I(x) = \bigabs{\bigsetsuch{i\in\Lambda}{x_{i+1}\neq x_i}}
\end{equation} 
le nombre d'\myquote{interfaces}\ de $x$, c'est-\`a-dire 
le nombre de fois que la fonction $i\mapsto x_i$ change de signe en faisant le tour 
du cercle discret. Alors on a 
\begin{equation}
 \sum_{i,j\in\Lambda\colon\norm{i-j}=1}x_ix_j 
 = \sum_{i\in\Lambda} x_ix_{i+1} 
 = N - 2I(x)\;.
\end{equation} 
Par cons\'equent, l'\'energie du mod\`ele d'Ising peut \'egalement s'\'ecrire 
\begin{align}
 H(x) &= 2I(x) - hm(x) - N\\
 &= 2I(x) + 2hN_-(x) - N(1+h)\;.
\end{align} 
Les constantes $-N$ et $-N(1+h)$ n'ont pas d'incidence sur la dynamique de Glauber, 
qui ne fait intervenir que des diff\'erences d'\'energie entre configurations. 
D\'enotons par $\boxplus$ la configuration dont tous les spins valent $+1$, 
et par $\boxminus$ celle dont tous les spins valent $-1$. Alors on a 
\begin{equation}
 H(\boxplus) = -N(1+h)\;, \qquad 
 H(\boxminus) = -N(1-h)\;.
\end{equation} 
Dans la suite, on supposera que $N$ est pair, et que $0 < h\leqs 1$. Dans ce cas, 
la configuration d'\'energie minimale est $\boxplus$, et on v\'erifie qu'on a 
deux configurations d'\'energie maximale \'egale \`a $N$, donn\'ees par 
\begin{equation}
 (1,-1,1,-1,\dots)
 \qquad \text{et} \qquad 
 (-1,1,-1,1,\dots)\;.
\end{equation} 
Afin de pouvoir appliquer l'appro\-che par fonctions 
de Lyapounov, nous commen\c cons par d\'eterminer le g\'en\'erateur.

\begin{proposition}[G\'en\'erateur pour la dynamique de Glauber]
\label{prop:generateur_Glauber} 
Pour tout $x\in\cX$, notons 
\begin{align}
 A_+(x) &= \bigsetsuch{y\in\cX}{y\sim x, H(y) > H(x)}\;, \\
 A_0(x) &= \bigsetsuch{y\in\cX}{y\sim x, H(y) = H(x)}\;, \\
 A_-(x) &= \bigsetsuch{y\in\cX}{y\sim x, H(y) < H(x)}\;. 
\end{align}
Alors pour toute fonction $V:\cX\to\R$, on a 
\begin{equation}
\label{eq:LV(x)} 
 (\cL V)(x) = q \Biggbrak{\sum_{y\in A_-(x)\cup A_0(x)} \bigpar{V(y)-V(x)}
 + \sum_{y\in A_+(x)}\bigpar{V(y)-V(x)} \e^{-\beta \Delta H(x,y)}}\;.
\end{equation} 
\end{proposition}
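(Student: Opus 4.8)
The plan is to start directly from the definition~\eqref{eq:def_gen} of the generator and to unfold the structure of the Glauber transition probabilities~\eqref{metro6}--\eqref{metro7}. First I would write
\[
 (\cL V)(x) = \sum_{y\in\cX} p_{xy}\bigbrak{V(y)-V(x)}\;.
\]
The diagonal term $y=x$ contributes $p_{xx}\brak{V(x)-V(x)}=0$, so it drops out no matter what the value of $p_{xx}$ is. Moreover, by the third line of~\eqref{metro6}, one has $p_{xy}=0$ whenever $y\neq x$ and $x\not\sim y$. Hence the sum collapses to a sum over the neighbours of $x$ for the relation $\sim$:
\[
 (\cL V)(x) = \sum_{y\sim x} p_{xy}\bigbrak{V(y)-V(x)}\;.
\]

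Next I would partition the neighbourhood $\setsuch{y}{y\sim x}$ into the three disjoint pieces $A_+(x)$, $A_0(x)$, $A_-(x)$ according to the sign of $\Delta H(x,y) = H(y)-H(x)$. On $A_-(x)\cup A_0(x)$ one has $H(y)\leqs H(x)$, so the first line of~\eqref{metro7} gives $p_{xy}=q$; on $A_+(x)$ one has $H(y)>H(x)$, so the second line gives $p_{xy}=q\e^{-\beta\Delta H(x,y)}$. Substituting these two expressions and factoring out the common constant $q$ produces
\[
 (\cL V)(x) = q\Biggbrak{\sum_{y\in A_-(x)\cup A_0(x)}\bigpar{V(y)-V(x)}
 + \sum_{y\in A_+(x)}\bigpar{V(y)-V(x)}\e^{-\beta\Delta H(x,y)}}\;,
\]
which is exactly~\eqref{eq:LV(x)}.

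There is no real obstacle here: the identity is an immediate rewriting of the definition of $\cL$ together with the case distinction built into the Metropolis rule~\eqref{metro7}. The only point worth a word of care is that the sum over $y\sim x$ is finite --- for the Glauber dynamics each configuration on the discrete circle has exactly $N$ neighbours, one per spin that can be flipped --- so all rearrangements are legitimate and no convergence issue arises. The statement therefore follows with essentially no further work once the neighbourhood is split by the sign of $\Delta H(x,y)$.
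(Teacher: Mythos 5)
Your proof is correct and follows essentially the same route as the paper: a direct unwinding of the definition of $\cL$ combined with the case distinction in~\eqref{metro6}--\eqref{metro7} over the sign of $\Delta H(x,y)$. The only cosmetic difference is that you start from the centred form~\eqref{eq:def_gen}, so the diagonal term and the regrouping step disappear automatically, whereas the paper writes $(\cL V)(x) = (\cP V)(x) - V(x)$, keeps the term $p_{xx}V(x) - V(x)$, and regroups at the end; the computation is the same.
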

\begin{proof}
Il suit de~\eqref{metro6} et~\eqref{metro7} que 
\begin{align}
 (\cL V)(x) &=
 \sum_{y\in A_-\cup A_0} p_{xy} V(y) 
 + \sum_{y\in A_+} p_{xy} V(y) 
 + p_{xx} V(x) - V(x) \\
 &= q \sum_{y\in A_-\cup A_0} V(y)
 + q \sum_{y\in A_+} \e^{-\beta\Delta H(x,y)} V(y)
 - q \Biggbrak{\sum_{y\in A_-\cup A_0} 1
 + \sum_{y\in A_+} \e^{-\beta\Delta H(x,y)}}V(x)\;,
\end{align}
d'o\`u le r\'esultat, en regroupant les termes. 
\end{proof}

Commen\c cons par \'etudier le cas particulier $\beta = 0$, qui correspond \`a 
une temp\'erature infinie. Dans ce cas, toutes les transitions permises ont la 
m\^eme probabilit\'e $q = 1/N$, et le syst\`eme effectue une marche al\'eatoire 
sym\'etrique sur l'hypercube de dimension $N$. La probabilit\'e invariante est 
simplement la mesure uniforme. 

Remarquons que $m(X_n)$ satisfait 
\begin{equation}
\label{eq:dynamique_m} 
m(X_{n+1}) = 
\begin{cases}
m(X_n) + 2 & \text{avec probabilit\'e $\dfrac{N_-(X_n)}{N} = \dfrac12 - \dfrac{m(X_n)}{2N}$\;,} \\[4mm]
m(X_n) - 2 & \text{avec probabilit\'e $\dfrac{N_+(X_n)}{N} = \dfrac12 + \dfrac{m(X_n)}{2N}$\;.} 
\end{cases}
\end{equation} 
Comme ces probabilit\'es de transition ne d\'ependent que de $m(X_n)$, la suite des 
$M_n = m(X_n)$ est une \CM\ sur $\cM = \set{-N,-N+2,\dots,N-2,N}$. En fait, 
$(M_n + N)/2$ n'est autre que le mod\`ele d'Ehrenfest \`a $N$ boules, qui admet la 
loi binomiale de param\`etres $(N,\frac12)$ comme probabilit\'e invariante. Ceci donne 
l'id\'ee d'utiliser $V(x) = m(x)^2$ comme fonction de Lyapounov. En effet, les valeurs 
de $m$ proches de $0$ sont plus probables que les valeurs proches de $\pm N$. On obtient 
facilement la condition de d\'erive g\'eom\'etrique suivante.

\begin{proposition}[Condition de d\'erive g\'eom\'etrique pour $V = m^2$]
Si $\beta = 0$, la fonction de Lyapounov $V(x) = m(x)^2$ satisfait 
\begin{equation}
\label{eq:LV_m2} 
 (\cL V)(x) = -c V(x) + d\;, 
 \qquad \text{avec $c = \dfrac{4}{N}$ et  $d = 4$\;.}
\end{equation} 
\end{proposition}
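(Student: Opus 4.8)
Le plan est de calculer $(\cL V)(x)$ directement à partir de la formule~\eqref{eq:LV(x)} du générateur établie à la Proposition~\ref{prop:generateur_Glauber}. Le point de départ est que, pour $\beta = 0$, tous les facteurs $\e^{-\beta\Delta H(x,y)}$ valent $1$, de sorte que la distinction entre les ensembles $A_+(x)$, $A_0(x)$ et $A_-(x)$ disparaît et que les trois sommes se regroupent en une seule sur l'ensemble des voisins $y\sim x$, chacun affecté du poids $q = 1/N$~:
\begin{equation}
 (\cL V)(x) = q \sum_{y\sim x} \bigpar{V(y) - V(x)}\;.
\end{equation}

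Ensuite, je décrirais chaque voisin comme le résultat d'un retournement de spin. Si $y$ est obtenu en retournant le spin $i\in\Lambda$ de $x$, alors l'aimantation varie selon $m(y) = m(x) - 2x_i$. En posant $m = m(x)$ et en utilisant les deux identités clés $x_i^2 = 1$ et $\sum_{i\in\Lambda} x_i = m(x)$, on obtient
\begin{equation}
 V(y) - V(x) = (m - 2x_i)^2 - m^2 = -4 m x_i + 4 x_i^2 = -4 m x_i + 4\;.
\end{equation}

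Il ne reste plus qu'à sommer sur les $N$ spins $i\in\Lambda$~: les termes linéaires fournissent $-4m\sum_{i\in\Lambda} x_i = -4m^2$, tandis que les termes constants fournissent $4N$. En multipliant par $q = 1/N$, il vient
\begin{equation}
 (\cL V)(x) = \frac{1}{N}\bigpar{-4m^2 + 4N} = -\frac{4}{N} m(x)^2 + 4 = -\frac{4}{N} V(x) + 4\;,
\end{equation}
ce qui est exactement~\eqref{eq:LV_m2} avec $c = 4/N$ et $d = 4$.

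Une variante équivalente consisterait à invoquer directement le fait que $M_n = m(X_n)$ suit la chaîne de Markov de probabilités de transition~\eqref{eq:dynamique_m}, puis à calculer $\expecin{x}{V(X_1)} = (m+2)^2\bigpar{\frac12 - \frac{m}{2N}} + (m-2)^2\bigpar{\frac12 + \frac{m}{2N}}$ avant de soustraire $V(x) = m^2$~; le regroupement des termes redonne la même expression. Il n'y a pas ici d'obstacle véritable, la preuve se réduisant à un calcul élémentaire~; le seul point demandant un peu de soin est de bien vérifier la relation $m(y) = m(x) - 2x_i$ pour un retournement de spin et de ne pas omettre l'identité $x_i^2 = 1$.
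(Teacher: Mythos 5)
Votre preuve est correcte et suit essentiellement la même démarche que celle du polycopié~: partir de la formule du générateur de la Proposition~\ref{prop:generateur_Glauber} avec $\beta=0$, puis calculer la variation de $m^2$ sous un retournement de spin. La seule différence est cosmétique~: vous sommez sur les sites $i$ en utilisant $x_i^2=1$ et $\sum_{i\in\Lambda}x_i=m(x)$, là où le polycopié regroupe les voisins selon le signe du spin retourné et invoque~\eqref{eq:Nplus_Nminus}, ce qui revient exactement au même calcul.
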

\begin{proof}
Par la Proposition~\ref{prop:generateur_Glauber} avec $\beta = 0$, on a  
\begin{align}
(\cL V)(x)
&= q \sum_{y\sim x} \Bigbrak{m(y)^2 - m(x)^2} \\
&= q \Bigbrak{N_-(x)\Bigpar{(m(x)+2)^2 - m(x)^2} + N_+(x)\Bigpar{(m(x)-2)^2 - m(x)^2}} \\
&= 4q \Bigbrak{N - V(x)}
\end{align}
en vertu de~\eqref{eq:Nplus_Nminus}. Le r\'esultat suit du fait que $q=\frac1N$. 
\end{proof}

Pour appliquer la condition de minoration~\eqref{eq:minoration} du 
Th\'eor\`eme~\ref{thm:convergence}, il nous faut choisir $R > 2d/c = 2N$, donc 
par exemple $R = 4N$. Ainsi, on aura 
\begin{equation}
\label{eq:K_Ising_beta0} 
 K = \Bigsetsuch{x\in\cX}{m(x)^2 < 4N}
 = \Bigsetsuch{x\in\cX}{\abs{m(x)} < 2\sqrt{N}}\;.
\end{equation} 
On constate que la condition de minoration ne peut pas \^etre satisfaite, car $p_{xy}$ 
est non nulle seulement si $x\sim y$, ce qui n'est pas le cas pour tous les $x,y\in K$. 
Une mani\`ere de r\'esoudre ce probl\`eme est de consid\'erer une puissance de la 
matrice de transision.

\begin{lemma}[Condition de d\'erive pour processus acc\'el\'er\'e]
\label{lem:derive_itere} 
Soit $(X_n)_{n\geqs0}$ une \CM\ dont le g\'en\'erateur satisfait 
\begin{equation}
 (\cL V)(x) \leqs -c V(x) + d \qquad \forall x\in\cX 
\end{equation} 
pour une fonction de Lyapounov $V$ et des constantes $c > 0$ et $d\geqs 0$. 
Alors pour tout $T\in\N^*$, le g\'en\'erateur $\cL^T$ du processus acc\'el\'er\'e 
$(X_{nT})_{n\geqs0}$ satisfait 
\begin{equation}
 (\cL^T V)(x) \leqs -\bigbrak{1 - (1-c)^T} V(x) + \bigbrak{1 - (1-c)^T} \frac{d}{c} 
 \qquad \forall x\in\cX\;. 
\end{equation} 
\end{lemma}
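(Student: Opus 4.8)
The plan is to reduce everything to iterating the one-step drift inequality, keeping in mind that here $\cL = \cP - \one$, so that the generator of the $T$-step chain is $\cL^T = \cP^T - \one$, where $\cP^T$ (i.e.\ $P^T$) is the transition matrix of the sub-sampled chain $(X_{nT})_{n\geqs0}$. The one genuine point of vigilance is purely notational: $\cL^T$ denotes this generator of the accelerated chain, \emph{not} the operator power $(\cP-\one)^T$.

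First I would rewrite the hypothesis in multiplicative form. Since $\cL = \cP - \one$, the assumption $(\cL V)(x) \leqs -cV(x) + d$ is equivalent to
\begin{equation}
 (\cP V)(x) \leqs \gamma V(x) + d
 \qquad \forall x\in\cX\;,
\end{equation}
with $\gamma = 1-c$, exactly as noted just before Proposition~\ref{prop:contraction}. In operator notation this reads $\cP V \leqs \gamma V + d\vone$ pointwise.

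The key step is to iterate this bound under $\cP$, using two elementary facts about the stochastic matrix $P$: it is monotone, i.e.\ $f\leqs g$ pointwise implies $\cP f \leqs \cP g$ since $p_{xy}\geqs0$; and it fixes constants, $\cP\vone = \vone$, by~\eqref{eq:mstoch}. Applying $\cP$ to the one-step bound and invoking these two facts gives $\cP^2 V \leqs \gamma\,\cP V + d\vone \leqs \gamma^2 V + (1+\gamma)d\vone$, and more generally, by a straightforward induction on $k$,
\begin{equation}
 \cP^k V \leqs \gamma^k V + d\sum_{j=0}^{k-1}\gamma^j\,\vone
 \qquad \forall k\geqs0\;.
\end{equation}
Summing the finite geometric series and using $1-\gamma = c$ yields, for $k = T$,
\begin{equation}
 (\cP^T V)(x) \leqs (1-c)^T V(x) + \frac{1-(1-c)^T}{c}\,d\;.
\end{equation}

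Finally I would subtract $V(x)$ from both sides. Since $(\cL^T V)(x) = (\cP^T V)(x) - V(x)$, this gives exactly
\begin{equation}
 (\cL^T V)(x) \leqs -\bigbrak{1-(1-c)^T}V(x) + \bigbrak{1-(1-c)^T}\frac{d}{c}\;,
\end{equation}
which is the claim. There is no real obstacle here; the only things worth checking are the well-definedness of the iterates (each $\cP^k V(x)$ is finite, being bounded above by $\gamma^k V(x)$ plus a constant, and all terms are nonnegative so no cancellation issues arise) and the convention that $\cL^T$ is the generator $\cP^T-\one$ of the accelerated chain rather than a power of $\cL$.
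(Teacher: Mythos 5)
Your proof is correct and follows essentially the same route as the paper: both iterate the one-step bound $(\cP V)(x) \leqs (1-c)V(x) + d$ to get $(\cP^T V)(x) \leqs (1-c)^T V(x) + \bigbrak{1-(1-c)^T}\frac{d}{c}$, then subtract $V(x)$ using $(\cL^T V)(x) = (\cP^T V)(x) - V(x)$. The only difference is cosmetic: you spell out the justification for applying $\cP$ to both sides (positivity of the kernel and $\cP\vone = \vone$), which the paper leaves implicit by referring back to the proof of Dynkin's formula.
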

\begin{proof}
Pour $x\in\cX$ et $n\in\N$, soit  
\begin{equation}
 g_n(x) = (\cP^n V)(x) = \bigexpecin{x}{V(X_n)}\;.
\end{equation} 
Alors on obtient, comme dans la d\'emonstration de la formule de Dynkin, 
\begin{equation}
 g_{n+1}(x) \leqs -(1-c) g_n(x) + d\;.
\end{equation} 
En utilisant $g_0(x) = V(x)$ comme condition initiale, on obtient facilement 
par r\'ecurrence sur $n$ 
\begin{equation}
 g_n(x) \leqs -(1-c)^n V(x) + \bigbrak{1 - (1-c)^n} \frac{d}{c}
 \qquad \forall n\in\N^*\;.
\end{equation} 
Le r\'esultat suit alors du fait que $(\cL^T V)(x) = g_T(x) - V(x)$. 
\end{proof}

On peut simplifier l'\'etude en ne consid\'erant que la \CM\ $(M_n)_{n\geqs0}$. 
Notons $\hat\cL$ son g\'en\'erateur, et $\hat\pi$ sa probabilit\'e invariante, 
qui se d\'eduit de la loi binomiale. Alors~\eqref{eq:LV_m2} implique que si 
$V(m) = m^2$, on a
\begin{equation}
 (\hat\cL V)(m) = -c V(m) + d\;,
 \qquad \text{avec $c = \dfrac{4}{N}$ et  $d = 4$\;.}
\end{equation}
Cette relation peut aussi \^etre d\'eduite directement de~\eqref{eq:dynamique_m}. 
On obtient alors le r\'esultat de convergence suivant, en appliquant le 
Th\'eor\`eme~\ref{thm:convergence} a une puissance convenablement choisie de la matrice de transition de 
$(M_n)_{n\geqs0}$.

\begin{proposition}[Convergence pour fonctions de l'aimantation lorsque $\beta=0$]
Il existe des constants $C > 0$ et $\bar\gamma < 1$, ind\'ependantes de $N$, telles 
que pour toute fonction test $f:\cM\to\R$, on ait 
\begin{equation}
\label{eq:convergence_aimantation} 
 \bigabs{\expecin{m}{f(M_n)} - \hat\pi(f)} 
 \leqs C(1+m^2) \bar\gamma^{n/N} \norm{f - \hat\pi(f)}_{1+m^2}\;.
\end{equation} 
\end{proposition}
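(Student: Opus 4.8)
The plan is to apply the convergence criterion of Theorem~\ref{thm:convergence} not to the one-step kernel of $(M_n)$, but to a suitable power $\cP^T$. On $\cM$ the one-step transitions only move $m$ by $\pm 2$, so for a fixed target $m'$ the quantity $\inf_{m\in K}(P^T)_{mm'}$ vanishes when $T=1$; moreover the chain~\eqref{eq:dynamique_m} is then periodic of period $2$, so the minorization hypothesis~\eqref{eq:minoration} cannot possibly hold for $\cP$. I would therefore fix $T=\lceil c_0 N\rceil$ for a constant $c_0>0$ and work with $\cP^T$. I would also replace the dynamics by a lazy variant (equivalently take $q$ slightly below $1/N$ so that $p_{xx}>0$) to kill the parity obstruction; this changes $c$ and $d$ only by harmless constant factors and keeps $d/c=N$.

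Next I would install the drift condition for the accelerated chain. Starting from $(\hat\cL V)(m)=-cV(m)+d$ with $V(m)=m^2$, $c=4/N$, $d=4$, Lemma~\ref{lem:derive_itere} gives $(\cP^T V)(m)\leqs \gamma_T V(m)+d_T$ with $\gamma_T=(1-c)^T$ and $d_T=\frac{d}{c}(1-\gamma_T)=N c_T$, where $c_T=1-\gamma_T$. The whole point of choosing $T\asymp N$ is that $\gamma_T=(1-4/N)^T\to \e^{-4c_0}$ and $c_T$ are bounded away from $0$ and $1$ uniformly in $N$, while $d_T/c_T=d/c=N$. Taking $R=4N>2d_T/c_T$ in~\eqref{eq:minoration}, so that $K=\setsuch{m}{m^2<4N}$, the explicit constants of the theorem all become $N$-independent: $\gamma_0$ can be chosen in the fixed interval $(1-c_T/2,1)$, and with $\beta=\alpha_0/d_T$ one has $R\beta=4\alpha_0/c_T=\Order{1}$ and $\beta d_T=\alpha_0=\Order{1}$, so that $\bar\gamma<1$ and $M$ from~\eqref{eq:beta_gammabar}--\eqref{eq:M} are bounded uniformly in $N$. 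Theorem~\ref{thm:convergence} applied to $\cP^T$ then gives the estimate at times multiples of $T$, with a factor $\bar\gamma^{\lfloor n/T\rfloor}$; since $T\asymp N$ this is of the form $\bar\gamma_*^{\,n/N}$ for an $N$-independent $\bar\gamma_*<1$, which is the claimed rate.

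The hard part will be verifying the minorization~\eqref{eq:minoration} for $\cP^T$ on $K$ with $\alpha$ bounded below independently of $N$. The plan is to use that $(M_n)$ is a birth--death chain admitting a monotone coupling, so that $\inf_{m\in K}(P^T)_{mm'}=\min\set{(P^T)_{m_-m'},(P^T)_{m_+m'}}$ where $m_\pm$ are the two endpoints of $K$; consequently the total mass of $\inf_{m\in K}(P^T)_{m\,\cdot}$ equals $1-\normTV{(P^T)_{m_-\cdot}-(P^T)_{m_+\cdot}}/2$. Since $K$ has $m$-diameter of order $\sqrt N$ while a single step moves $m$ by $2$, two copies started at $m_-$ and $m_+$ are driven together by a difference walk that has to cover $\Order{\sqrt N}$ in $T\asymp N$ steps, and a coupling (or local central limit) estimate shows this occurs with probability bounded below uniformly in $N$. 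This yields $\alpha\geqs\alpha_*>0$, after which one takes $\nu$ proportional to $\inf_{m\in K}(P^T)_{m\,\cdot}$. Establishing this uniform overlap of the $T$-step laws emanating from all of $K$ is the genuine technical core.

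Finally I would pass from multiples of $T$ to arbitrary $n$. Writing $n=kT+r$ with $0\leqs r<T$ and using the Markov property, $\expecin{m}{f(M_n)}-\hat\pi(f)=\expecin{m}{g(M_{kT})}-\hat\pi(g)$ with $g=\cP^r f$ and $\hat\pi(g)=\hat\pi(f)$; applying the previous estimate to $g$ reduces matters to controlling $\cP^r$ on the centred observable, which I would do through the contraction of Proposition~\ref{prop:contraction} together with the drift, bounding $\rho_{1+\beta V}(\delta_m\cP^r,\hat\pi)$ by a constant multiple of $1+m^2$. The one delicate accounting point is that the weight produced naturally by the method is $1+\beta V$ with $\beta\asymp 1/N$ rather than $1+V$, and converting between these two weights is exactly what has to be tracked carefully in order to land on the stated weight $1+m^2$ with an $N$-independent constant $C$ (and $\bar\gamma$).
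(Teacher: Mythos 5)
Your plan has the same skeleton as the paper's proof: accelerate the chain by $T\asymp N$ (the paper takes $T=bN$), install the drift for $\cP^T$ via Lemma~\ref{lem:derive_itere}, note $2d_T/c_T=2d/c=2N$ so that $R=4N$ and $K=\setsuch{m}{m^2<4N}$ are unchanged, prove a minorization with $\alpha$ bounded below uniformly in $N$, check through~\eqref{eq:cond_gamma0}--\eqref{eq:beta_gammabar} that $\beta d_T=\alpha_0$ and $R\beta=1/c_T$ keep $\bar\gamma$ and $M$ $N$-independent, and apply Theorem~\ref{thm:convergence} to $\cP^T$, turning $\bar\gamma^{n/T}$ into a rate of the form $\bar\gamma_*^{\,n/N}$. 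Where you genuinely add something is the parity point, which is not pedantry but a gap in the paper: with $q=1/N$ one has $p_{xx}=0$, every step changes $m$ by exactly $\pm2$, so $(M_n)$ has period $2$; since $K$ contains points of both residue classes mod $4$, the supports of $(P^T)_{m\,\cdot}$ for $m\in K$ split into two disjoint families and $\inf_{m\in K}(P^T)_{mp}=0$ for every $p$, so no power of $P$ can satisfy~\eqref{eq:minoration}. Worse, the statement itself fails for that chain: taking $f(m)=+1$ if $m\equiv 0\pmod 4$ and $f(m)=-1$ otherwise, one has $f(M_{n+1})=-f(M_n)$ deterministically, hence $\abs{\expecin{m}{f(M_n)}}=1$ for all $n$, while $\hat\pi(f)=0$ and $\norm{f}_{1+m^2}=1$. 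Your lazy modification ($q$ strictly below $1/N$, which is exactly what the condition stated after~\eqref{metro7} requires anyway) is the correct repair, and as you say it preserves $d/c=N$.

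Two caveats on your own argument. First, your mechanism for the minorization is the one flawed step: a monotone coupling orders the distribution functions of $(P^T)_{m\,\cdot}$, $m\in K$, not the point masses, so the identity $\inf_{m\in K}(P^T)_{mm'}=\min\bigset{(P^T)_{m_-m'},(P^T)_{m_+m'}}$ does not follow from stochastic monotonicity (simple monotone birth--death counterexamples exist), and the overlap formula you derive from it is therefore unjustified as stated. The paper avoids this entirely: it proves the pointwise lower bound $\probin{m}{M_T=p}\geqs a/\sqrt{T}$ for all $m,p\in K$ by a local CLT (de Moivre--Laplace), takes $\nu$ uniform on $K$ (cardinality $\asymp\sqrt N$) and tunes $b$ in $T=bN$ so that $\alpha=\tfrac12$; since you name the local CLT as your fallback, that is the version to write out. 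Second, your final paragraph (intermediate times $n=kT+r$, and conversion between the weight $1+\beta V$, $\beta\asymp 1/N$, produced by the contraction and the weight $1+V$ of the statement) concerns real bookkeeping: a crude norm conversion costs a factor $1/\beta\asymp N$, incompatible with an $N$-independent $C$, so one must keep the $\beta$-weights on both sides until the end and exploit that one step of $\cP$ expands $\rho_{1+\beta V}$ only by $1+\Order{1/N}$. The paper compresses all of this into \myquote{quitte \`a remplacer $\bar\gamma$ par $\bar\gamma^{1/b}$}, so on this point your proposal is, if anything, more honest about where the work lies.
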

\begin{proof}
Consid\'erons le processus $(M^T_n)_{n\geqs0}$ acc\'el\'er\'e d'un facteur $T\in\N^*$, 
d\'efini comme $M^{T}_n = M_{Tn}$.
En proc\'edant comme dans la d\'emonstration du Lemme~\ref{lem:derive_itere}, 
on obtient 
\begin{equation}
 (\hat\cL^T V)(m) = -c_T V(m) + d_T\;, 
 \qquad 
 \text{avec $c_T = 1-(1-c)^T$ et $d_T = c_T\frac{d}{c}$\;.}
\end{equation} 
On remarque que $2d_T/c_T = 2d/c = 2N$. On peut donc utiliser la m\^eme 
valeur $4N$ pour $R$, et 
\begin{equation}
 K = \Bigsetsuch{m\in\cM}{\abs{m} < 2\sqrt{N}\,}\;.
\end{equation} 
En revanche, on a gagn\'e en ce qui concerne la condition de minoration. En effet, pour $m$ 
d'ordre $\sqrt{N}$, l'\'evolution de $M_n$ est essentiellement une marche al\'eatoire 
sym\'etrique (plus pr\'ecis\'ement, les probabilit\'es de transitions sont minor\'ees 
par celles d'une marche al\'eatoire de param\`etre $\frac12 - \Order{N^{-1/2}}$).
La variance d'une telle marche al\'eatoire au temps $T$ est d'ordre $T$. Il suit 
du th\'eor\`eme central limite (ou du th\'eor\`eme de de Moivre--Laplace) que 
\begin{equation}
 \bigprobin{m}{M_T = p} \geqs \frac{a}{\sqrt{T}}
 \qquad \forall m,p\in K
\end{equation} 
pour une constante $a$ ind\'ependante de $T$. Choisissons alors pour $\nu$
la probabilit\'e uniforme sur $K$. 
Prenons $T = bN$ pour un $b$ positif. Comme le cardinal de $K$ est d'ordre $\sqrt{N}$, 
on peut choisir $b$ de telle mani\`ere que 
\begin{equation}
 \inf_{m\in K} \bigprobin{m}{M_T = p} \geqs \frac12 \nu(p) 
 \qquad \forall p\in K\;.
\end{equation} 
Comme $\nu(m) = 0$ pour tout $m\not\in K$, la condition de minoration~\eqref{eq:minoration}
est satisfaite avec $\alpha = \frac12$. On v\'erifie alors que les choix 
\begin{equation}
 \alpha_0 = \frac14\;, \qquad 
 \gamma_0 = 1 - \frac{c_T}4
\end{equation} 
satisfont~\eqref{eq:cond_gamma0}, avec $\gamma = 1 - c_T$. On trouve ensuite \`a l'aide 
de~\eqref{eq:beta_gammabar} 
\begin{equation}
 \beta = \frac{1}{4Nc_T}\;, \qquad 
 R\beta = \frac{1}{c_T}\;, \qquad 
 \bar\gamma = \max\biggset{\frac14, 1 - \frac{c_T}{4(1+2c_T)}}\;.
\end{equation}
Comme de plus 
\begin{equation}
 \log(1-c_T) = T\log\biggpar{1-\frac{4}{N}} = -\frac{4T}{N} + \biggOrder{\frac{T}{N^2}}
 = -4b + \biggOrder{\frac{1}{N}}\;, 
\end{equation} 
$c_T$ converge vers une limite ind\'ependante de $N$ lorsque $N\to\infty$ (qui vaut $1-\e^{-4b}$). 

Par le Th\'eor\`eme~\ref{thm:convergence}, l'esp\'erance de 
$f(M^{T}_n)$ converge vers sa limite $\hat\pi(f)$ exponentiellement vite, 
avec un taux $\bar\gamma^n$. Le r\'esultat suit en revenant au temps non acc\'el\'er\'e, 
quitte \`a remplacer $\bar\gamma$ par $\bar\gamma^{1/b}$.
\end{proof}

La borne~\eqref{eq:convergence_aimantation} montre que pour approcher $\hat\pi(f)$ \`a 
une distance d'ordre $\delta$, il faut choisir un $n$ d'ordre $N\log(1/\delta)$. 
Cette convergence est relativement rapide. Bien entendu, comme la probabilit\'e invariante 
$\hat\pi$ est connue explicitement, on n'a pas besoin d'estimer $\hat\pi(f)$, on peut la calculer directement avec un co\^ut $N$. On notera aussi que ce r\'esultat ne marche que pour des fonctions de l'aimantation. Il n'affirme rien, par exemple, sur des fonctions 
qui d\'ependraient de $I(x)$. 

Le second cas particulier que nous allons consid\'erer est celui o\`u $0 < h \leqs 1$ et $\beta$ 
est assez grand (dans un sens \`a pr\'eciser plus bas), donc o\`u la temp\'erature est assez 
basse. Pour $h > 0$, l'\'etat d'\'energie minimale est $\boxplus$. Un candidat pour une fonction de Lyapounov est la diff\'erence d'\'energie 
\begin{equation}
\label{eq:Lyapounov_H} 
 V(x) = H(x) - H(\boxplus)
 = 2I(x) + 2h N_-(x)\;,
\end{equation} 
qui est bien positive ou nulle. Nous commen\c cons par donner une majoration 
g\'en\'erale de $\cL H$ (comme $\cL c = 0$ pour toute constante $c$, cela fournit 
\'egalement une majoration de $\cL V$). 

\begin{lemma}[Majoration de $\cL H$]
\label{lem:majo_LH} 
Supposons $\beta \geqs \frac1{2h}$. Alors 
pour tout $x\in\cX$, on a 
\begin{equation}
\label{eq:majoration_LH} 
 (\cL H)(x) \leqs 
 -2qh\abs{A_-(x)} + 2qh \e^{-2\beta h}\abs{A_+(x)}\;.
\end{equation} 
\end{lemma}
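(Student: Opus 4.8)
The plan is to apply Proposition~\ref{prop:generateur_Glauber} with $V = H$, so that $V(y) - V(x) = \Delta H(x,y)$, and then to bound the two resulting sums separately after enumerating every possible value of $\Delta H(x,y)$. Since the dynamics is of Glauber type, $y \sim x$ means $y = R_k(x)$ for some spin $k$, and formula~\eqref{metro9} gives
\begin{equation*}
 \Delta H(x, R_k(x)) = 2x_k \Biggbrak{\sum_{j\colon\norm{j-k}=1}x_j + h}\;.
\end{equation*}
On the discrete circle each spin has exactly two neighbours, so $\sum_{j\colon\norm{j-k}=1}x_j \in \set{-2,0,2}$; combined with $x_k\in\set{-1,1}$, this forces $\Delta H(x,y)$ to take values only in $\set{-4-2h,\,-4+2h,\,-2h,\,2h,\,4-2h,\,4+2h}$. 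Because $0 < h \leqs 1$, none of these equals $0$, so $A_0(x) = \emptyset$; moreover the first three are strictly negative and the last three strictly positive, hence they describe exactly $A_-(x)$ and $A_+(x)$ respectively.

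For the energy-lowering sum I would observe that every value in $\set{-4-2h,-4+2h,-2h}$ is at most $-2h$ --- the inequality $-4+2h \leqs -2h$ being precisely the place where the hypothesis $h\leqs1$ enters --- so that
\begin{equation*}
 \sum_{y\in A_-(x)} \bigpar{H(y)-H(x)} \leqs -2h\abs{A_-(x)}\;.
\end{equation*}
For the energy-raising sum the terms have the form $g(t)$ with $t = \Delta H(x,y) > 0$, where $g(t) = t\e^{-\beta t}$. The key point is that $g$ is decreasing for $t \geqs 1/\beta$, and the assumption $\beta \geqs \frac1{2h}$ forces $1/\beta \leqs 2h$. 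Since the smallest positive value of $\Delta H$ is $2h$ (again using $h\leqs1$ to guarantee $2h \leqs 4-2h$), every positive energy gap lies in the decreasing regime of $g$, whence $g(\Delta H(x,y)) \leqs g(2h) = 2h\e^{-2\beta h}$. This yields
\begin{equation*}
 \sum_{y\in A_+(x)} \bigpar{H(y)-H(x)}\e^{-\beta\Delta H(x,y)} \leqs 2h\e^{-2\beta h}\abs{A_+(x)}\;.
\end{equation*}

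Multiplying by $q$ and adding the two bounds gives exactly~\eqref{eq:majoration_LH}, and since $\cL$ annihilates constants the same estimate holds for $\cL V$ with $V$ as in~\eqref{eq:Lyapounov_H}. The only genuinely delicate step is the treatment of the positive terms: one must locate the maximiser $t = 1/\beta$ of $g$ and verify that $\beta \geqs \frac1{2h}$ pushes it at or below the smallest admissible energy gap $2h$, so that monotonicity of $g$ can be invoked uniformly over $A_+(x)$. Everything else reduces to the elementary case enumeration above, which relies throughout on $0 < h \leqs 1$.
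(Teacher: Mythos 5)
Your proof is correct and follows essentially the same route as the paper's: both apply Proposition~\ref{prop:generateur_Glauber} with $V=H$, bound the energy-lowering terms by $-2h$ via the minimal nonzero gap $\abs{\Delta H(x,y)}\geqs 2h$, and control the energy-raising terms through the monotonicity of $u\mapsto u\e^{-\beta u}$ on $[1/\beta,\infty[$ combined with $2h\geqs 1/\beta$. The only cosmetic difference is that you enumerate the six possible values of $\Delta H$ on the discrete circle explicitly, whereas the paper deduces the gap $2h$ from the parity of the neighbour spin sum; both arguments invoke $0<h\leqs 1$ at exactly the same point.
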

\begin{proof}
Il suit de l'expression~\eqref{metro9} de $\Delta H$ que 
\begin{equation}
 \Delta H(x,y) \neq 0 
 \qquad \Rightarrow \qquad 
 \abs{\Delta H(x,y)} \geqs 2h\;.
\end{equation} 
En effet, la somme des $x_j$ est n\'ecessairement un entier pair, et le minimum
de $\abs{\Delta H(x,y)}$ est atteint lorsque cette somme vaut $0$. 
Il suit que la premi\`ere somme dans~\eqref{eq:LV(x)} est major\'ee par 
$-2h\abs{A_-(x)}$, puisque les termes avec $y\in A_0(x)$ sont nuls. 
La seconde somme s'\'ecrit 
\begin{equation}
  \sum_{y\in A_+(x)}\Delta H(x,y) \e^{-\beta \Delta H(x,y)}\;.
\end{equation} 
Or la fonction $u\mapsto f(u) = u\e^{-\beta u}$ est croissante 
sur $[0,1/\beta]$ et d\'ecroissante sur $[1/\beta, \infty[$.
Comme $\Delta H(x,y)$ est minor\'e par $2h$ dans cette somme, 
et que $2h \geqs 1/\beta$, celle-ci est inf\'erieure ou \'egale \`a 
$2h\e^{-2\beta h}\abs{A_+}$. 
\end{proof}

La majoration~\eqref{eq:majoration_LH} exprime le fait que si $\beta$ est 
assez grand, alors l'\'energie a tendance \`a diminuer, sauf dans le cas 
particulier o\`u $\abs{A_-} = 0$, c'est-\`a-dire si $x$ n'a pas de 
configuration voisine d'\'energie inf\'erieure. Cela n'arrive que  
si $x \in \set{\boxplus, \boxminus}$. 

\begin{proposition}[Condition de d\'erive g\'eom\'etrique pour $V = H - H(\boxplus)$]
Si $\beta \geqs \frac1{2h}$, la fonction de Lyapounov~\eqref{eq:Lyapounov_H} 
satisfait la condition de d\'erive g\'eom\'etrique 
\begin{equation}
\label{eq:majo_LV1} 
 (\cL V)(x) \leqs -c V(x) + d
 \qquad 
 \text{avec $c = \dfrac{h}{2N}$ et $d = 2h\e^{-2\beta h} + h^2$\;.}
\end{equation} 
\end{proposition}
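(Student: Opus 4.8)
The plan is to derive the drift inequality directly from the general upper bound of Lemma~\ref{lem:majo_LH}. Since $V = H - H(\boxplus)$ differs from $H$ by an additive constant and the generator kills constants, we have $(\cL V)(x) = (\cL H)(x)$, so under the standing hypothesis $\beta\geqs\frac1{2h}$ the starting point is
\begin{equation}
 (\cL V)(x) \leqs -2qh\abs{A_-(x)} + 2qh\e^{-2\beta h}\abs{A_+(x)}\;,
\end{equation}
with $q = 1/N$. The second term is harmless: the crude bound $\abs{A_+(x)}\leqs N$ turns it into $2h\e^{-2\beta h}$, which will form half of the constant $d$. The real content is therefore to extract a term proportional to $-V(x)$ from the first summand, i.e.\ to obtain a good \emph{lower} bound on the number $\abs{A_-(x)}$ of energy-decreasing spin flips.

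The key combinatorial step, which I expect to be the main obstacle, is the estimate
\begin{equation}
\label{eq:Aminus_lb}
 \abs{A_-(x)} \geqs \tfrac12 I(x)\;.
\end{equation}
To prove it I would decompose $x$ into its maximal runs of constant sign around the circle. For $x\notin\set{\boxplus,\boxminus}$ the runs alternate in sign, so there are exactly $I(x)/2$ runs of $-1$'s, occupying pairwise disjoint sets of sites. In each such run I would exhibit a single spin lying in $A_-(x)$, namely its leftmost spin: this is a $-1$ whose left neighbour is $+1$, and a short case analysis using $\Delta H(x,y)=2x_k\brak{\sum_{j:\norm{j-k}=1}x_j+h}$ together with $0<h\leqs1$ shows that flipping it strictly lowers the energy whether the run has length one or more. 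Distinct runs give distinct spins, so $\abs{A_-(x)}$ is at least the number $I(x)/2$ of $-1$-runs. The degenerate configurations $\boxplus$ and $\boxminus$ satisfy \eqref{eq:Aminus_lb} trivially, since then $I(x)=0$.

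Finally I would assemble the pieces. Using \eqref{eq:Aminus_lb} and $q=1/N$, the first term is at most $-qhI(x)$; rewriting $I(x) = \frac12 V(x) - hN_-(x)$ from the definition \eqref{eq:Lyapounov_H} gives
\begin{equation}
 -2qh\abs{A_-(x)} \leqs -qhI(x) = -\frac{h}{2N}V(x) + \frac{h^2}{N}N_-(x)\;,
\end{equation}
and the trivial bound $N_-(x)\leqs N$ converts the last term into $h^2$. Combining with the $2h\e^{-2\beta h}$ inherited from $A_+(x)$ yields $(\cL V)(x)\leqs -\frac{h}{2N}V(x) + h^2 + 2h\e^{-2\beta h}$, which is precisely the claimed inequality with $c=\frac{h}{2N}$ and $d=2h\e^{-2\beta h}+h^2$. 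As a consistency check on the constants, note that the bound is essentially sharp at the extremal states: at $\boxminus$ every flip raises the energy, so $A_-(\boxminus)=\emptyset$ and both sides become equal.
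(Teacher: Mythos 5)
Your proof is correct and follows essentially the same route as the paper: both rest on Lemma~\ref{lem:majo_LH}, the combinatorial lower bound $\abs{A_-(x)}\geqs\tfrac12 I(x)$, and the algebra $I(x)=\tfrac12 V(x)-hN_-(x)$ together with $N_-(x)\leqs N$ and $\abs{A_+(x)}\leqs N$. The only difference is organisational: the paper treats $\boxplus$ and $\boxminus$ as separate cases before running this computation on the remaining configurations, whereas you observe that the same chain of inequalities covers them automatically (since $I=0$ there), and your run-decomposition count of the $-1$-runs is just a more explicit version of the interface argument the paper sketches.
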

\begin{proof}
Nous allons consid\'erer s\'epar\'ement les cas $x=\boxplus$, 
$x=\boxminus$, et $x\in\cX\setminus\set{\boxplus,\boxminus}$.
\begin{itemize}
\item  Si $x = \boxplus$, alors $\abs{A_-(x)} = 0$, puisque retourner un spin augmente 
toujours l'\'energie. Par cons\'equent, \eqref{eq:majoration_LH} implique 
\begin{equation}
 (\cL V)(\boxplus) \leqs 2h\e^{-2\beta h}\;,
\end{equation} 
o\`u nous avons utilis\'e $\abs{A_+(\boxplus)} = N$ et $q = 1/N$. Comme $V(\boxplus) = 0$, la borne~\eqref{eq:majo_LV1} est bien v\'erifi\'ee. 

\item   Si $x = \boxminus$, alors on a 
\begin{equation}
 (\cL V)(\boxminus) \leqs 2h\e^{-2\beta h}\;,
 \qquad 
 V(\boxminus) = 2hN\;.
\end{equation} 
Par cons\'equent,~\eqref{eq:majo_LV1} est v\'erifi\'e puisque 
$d = 2h\e^{-2\beta h} + 2hNc$. 

\item   Pour tous les autres $x$, on a toujours 
$\abs{A_-(x)} \geqs \frac12 I(x)$.
En effet, pour chaque interface, changer le spin $-1$ qui se trouve 
d'un c\^ot\'e de l'interface en $+1$ diminue l'\'energie 
de la configuration. Le facteur $\frac12$ vient du fait que ce spin $-1$
peut \^etre compris entre deux interfaces. 
Ainsi,
\begin{align}
 (\cL V)(x) &\leqs -qh I(x) + 2h\e^{-2\beta h}\\
 &= -\frac12 qhV(x) + qh^2N_-(x) + 2h\e^{-2\beta h}\\
 &\leqs -\frac12 qhV(x) + qh^2N + 2h\e^{-2\beta h}\;.
\end{align} 
Ceci montre que~\eqref{eq:majo_LV1} est bien v\'erifi\'e, puisque $q = 1/N$. 
\qed
\end{itemize}
\renewcommand{\qed}{}
\end{proof}

Si nous supposons $\beta \geqs \frac{1}{2h}\log(\frac2h)$, alors
on peut prendre $d = 2h^2$. Dans la condition de minoration, il faut 
donc prendre $R > 8hN$, de sorte que $K = \setsuch{x\in\cX}{V(x)<R}$ contient 
beaucoup d'\'etats (selon la valeur de $h$, il peut m\^eme arriver que $K = \cX$). 
Il nous faut donc \`a nouveau acc\'elerer le temps afin de pouvoir appliquer 
le Th\'eor\`eme~\ref{thm:convergence}. 

Nous n'allons pas donner une analyse d\'etaill\'ee, mais un argument heuristique. 
Si l'on prend $\nu = \delta_{\boxplus}$, on aura, pour la \CM\ acc\'el\'er\'ee 
d'un facteur $T$, 
\begin{equation}
 \alpha = \inf_{x\in K} \probin{x}{X_T = \boxplus}\;.
\end{equation} 
La question est de savoir comment choisir $T$ pour que $\alpha$ soit d'ordre $1$, 
disons $\alpha = \frac12$. 

On s'attend \`a ce que la transition la plus difficile soit celle de $\boxminus$ 
vers $\boxplus$. La mani\`ere la plus \'economique de faire cette transition est 
de renverser d'abord un spin quelconque, puis de renverser des spins adjacents, un 
par un, jusqu'\`a atteindre $\boxplus$ (Figure~\ref{fig:Ising_optimal_transition}). 
On v\'erifie que seule la premi\`ere transition fait augmenter l'\'energie. Quitte \`a 
augmenter encore $\beta$, on peut mod\'eliser la transition en n\'egligeant tout renversement de spin non optimal, faisant augmenter l'\'energie plus que n\'ecessaire. 
On aboutit alors \`a la \CM\ de la Figure~\ref{fig:Ising_transition}. 
En effet, la probabilit\'e de la premi\`ere transition est de 
$Nq\e^{-\beta\Delta H(\boxplus,R_k(\boxplus))}$, o\`u $\Delta H(\boxplus,R_k(\boxplus)) = 4-2h$ ne d\'epend pas de $k$, puisqu'on peut retourner n'importe lequel des $N$ spins. Toutes les transitions suivantes on la m\^eme probabilit\'e $2q$, car on peut choisir de quel c\^ot\'e la goutte cro\^it. 

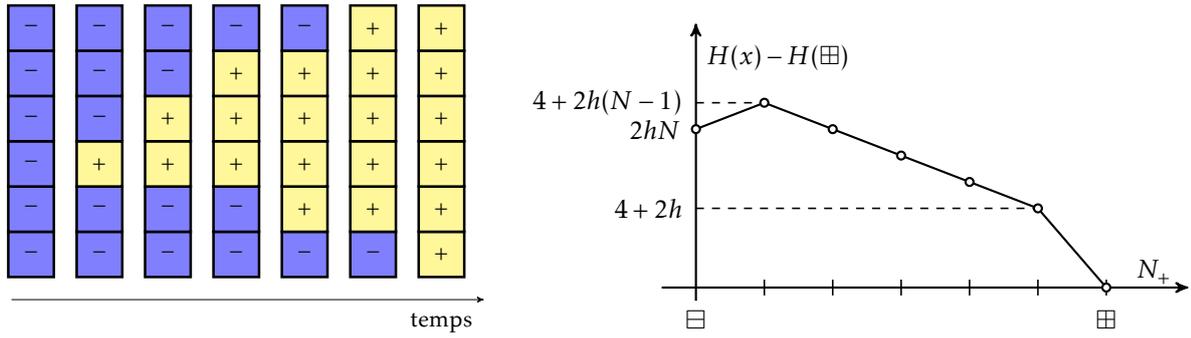
\begin{figure}
\vspace{-3mm}
\begin{center}
\scalebox{0.6}{
\begin{tikzpicture}[->,>=stealth',shorten >=2pt,shorten <=2pt,auto,node
distance=3.0cm,thick,
minus spin/.style={ultra thick,rectangle,scale=1,minimum size=1cm,
fill=blue!50,draw,font=\sffamily\Large},
plus spin/.style={ultra thick,,rectangle,scale=1,minimum size=1cm,
fill=yellow!50,draw,font=\sffamily\Large}]

\node[minus spin] at (0,0) {$-$};
\node[minus spin] at (0,1) {$-$};
\node[minus spin] at (0,2) {$-$};
\node[minus spin] at (0,3) {$-$};
\node[minus spin] at (0,4) {$-$};
\node[minus spin] at (0,5) {$-$};

\node[minus spin] at (1.5,0) {$-$};
\node[minus spin] at (1.5,1) {$-$};
\node[plus spin]  at (1.5,2) {$+$};
\node[minus spin] at (1.5,3) {$-$};
\node[minus spin] at (1.5,4) {$-$};
\node[minus spin] at (1.5,5) {$-$};

\node[minus spin] at (3,0) {$-$};
\node[minus spin] at (3,1) {$-$};
\node[plus spin]  at (3,2) {$+$};
\node[plus spin]  at (3,3) {$+$};
\node[minus spin] at (3,4) {$-$};
\node[minus spin] at (3,5) {$-$};

\node[minus spin] at (4.5,0) {$-$};
\node[minus spin] at (4.5,1) {$-$};
\node[plus spin]  at (4.5,2) {$+$};
\node[plus spin]  at (4.5,3) {$+$};
\node[plus spin]  at (4.5,4) {$+$};
\node[minus spin] at (4.5,5) {$-$};

\node[minus spin] at (6,0) {$-$};
\node[plus spin]  at (6,1) {$+$};
\node[plus spin]  at (6,2) {$+$};
\node[plus spin]  at (6,3) {$+$};
\node[plus spin]  at (6,4) {$+$};
\node[minus spin] at (6,5) {$-$};

\node[minus spin] at (7.5,0) {$-$};
\node[plus spin]  at (7.5,1) {$+$};
\node[plus spin]  at (7.5,2) {$+$};
\node[plus spin]  at (7.5,3) {$+$};
\node[plus spin]  at (7.5,4) {$+$};
\node[plus spin]  at (7.5,5) {$+$};

\node[plus spin]  at (9,0) {$+$};
\node[plus spin]  at (9,1) {$+$};
\node[plus spin]  at (9,2) {$+$};
\node[plus spin]  at (9,3) {$+$};
\node[plus spin]  at (9,4) {$+$};
\node[plus spin]  at (9,5) {$+$};

\path[->,>=stealth', semithick] (-0.5,-1) edge (10,-1);

\node at (9,-1.5) {\Large temps};
\end{tikzpicture}
}
\hspace{2mm}
\begin{tikzpicture}[-,scale=0.5,auto,node
distance=1.0cm, thick,main node/.style={draw,circle,fill=white,minimum
size=3pt,inner sep=0pt}, yscale=0.7, xscale=1.8]

\path[->,>=stealth'] 
     (-0.5,0) edge (7.2,0)
     (0,-0.5) edge (0,10)
  ;

\node at (6.7,0.6) {\small $N_+$};
\node at (1.2,8.7) {\small $H(x) - H(\boxplus)$};

\draw[semithick,dashed] (0,7) -- (1,7);  
\draw[semithick,dashed] (0,3) -- (5,3);  

\foreach \i in {1,...,6} \draw[semithick] (\i,-0.3) -- (\i,0.3);

\draw (0,6) node[main node] {} 
  -- (1,7) node[main node] {} 
  -- (2,6) node[main node] {} 
  -- (3,5) node[main node] {} 
  -- (4,4) node[main node] {} 
  -- (5,3) node[main node] {} 
  -- (6,0) node[main node] {} 
  ;

\node[] at (-0.6,6) {\small $2hN$};
\node[] at (-0.7,3) {\small $4+2h$};
\node[] at (-1.3,7) {\small $4+2h(N-1)$};

\node[] at (0,-1.2) {\small $\boxminus$};
\node[] at (6,-1.2) {\small $\boxplus$};
\end{tikzpicture}
\end{center}
\vspace{-4mm}
 \caption[]{\`A gauche, exemple d'une transition optimale de l'\'etat $\boxminus$ vers l'\'etat $\boxplus$ par croissance d'une goutte, pour $N=6$. \`A droite, valeur de la diff\'erence d'\'energie $H(x) - H(\boxplus)$ en fonction de $N_+$.}
 \label{fig:Ising_optimal_transition}
\end{figure}

Soit alors $f(y) = \expecin{y}{\tau_N}$. Pour $y\in\set{2,\dots,N-1}$, cette fonction satisfait 
\begin{equation}
 f(y) = 2q f(y+1) + (1-2q)f(y) + 1\;.
\end{equation} 
Avec la condition initiale $f(y) = 0$, on trouve 
\begin{equation}
\label{eq:f(2)} 
 \expecin{2}{\tau_N} = \frac{N-2}{2q} = \frac{N(N-2)}{2}\;.
\end{equation} 
Par ailleurs, pour $y\in\set{0,1}$ on obtient les \'equations 
\begin{align}
f(0) &= \e^{-\beta\Delta H} f(1) + (1 - \e^{-\beta\Delta H}) f(0) + 1 \\
f(1) &= qf(0) + (1-3q)f(1) + 2qf(2) + 1\;.
\end{align}
En r\'esolvant ce syst\`eme pour $f(0)$ et $f(1)$ (ce qui revient \`a calculer 
la matrice fondamentale de la \CM\ absorb\'ee en $2$), on obtient 
\begin{equation}
 f(0) = \frac32 \e^{\beta\Delta H} + \frac1{2q} + f(2)\;.
\end{equation} 
En combinant ceci avec~\eqref{eq:f(2)}, on aboutit finalement,  
dans cette approximation, \`a 
\begin{equation}
 \expecin{\boxminus}{\tau_{\boxplus}} \simeq \frac32\e^{2\beta(2-h)} + \frac{N(N-1)}{2}\;.
\end{equation} 
L'in\'egalit\'e de Markov implique alors  
\begin{equation}
 \probin{\boxminus}{\tau_{\boxplus} \geqs k } \leqs \frac{\expecin{\boxminus}{\tau_{\boxplus}}}{k}\;,
\end{equation} 
Par cons\'equent, en choisissant $T = 2 \expecin{\boxminus}{\tau_{\boxplus}}$, 
on aura $\alpha = \frac12$. 
Soient alors $c_T$ et $d_T$ les constantes donn\'ees par le Lemme~\ref{lem:derive_itere}. 
Comme
\begin{equation}
 \log(1-c_T) 
 = T\log\biggpar{1 - \frac{h}{2N}}
 = -\frac{hT}{2N} \biggpar{1 + \biggOrder{\frac{h^2}{N}}}\;,
\end{equation} 
on a $c_T = 1 - \Order{\e^{-hT/(2N)}}$. 
Un choix possible de param\`etres est 
\begin{equation}
 \alpha_0 = \frac14\;, \qquad 
 R = 16hN\;, \qquad 
 \gamma_0 = 1 - \frac{1}{4}c_T\;, 
\end{equation} 
ce qui conduit \`a $R\beta = 1/c_T$ et 
\begin{equation}
 \bar\gamma = \frac{11}{12} + \Order{\e^{-hT/(2N)}}\;.
\end{equation} 
Le point important est que $1-\bar\gamma$ est minor\'e par une 
quantit\'e ind\'ependante de $N$. On s'attend donc \`a une convergence de la forme 
\begin{equation}
\label{eq:convergence_Ising2} 
 \bigabs{\expecin{x}{f(X_n)} - \pi(f)} 
 \leqs C(1+V(x)) \bar\gamma^{n/T} \norm{f - \pi(f)}_{1+V}
\end{equation} 
avec $T = N(N-1) + 2\e^{\beta(2-h)}$. Pour atteindre une pr\'ecision $\delta$, 
il faut choisir $n$ d'ordre $T\log(1/\delta)$. Si $\beta$ n'est pas trop grand, 
ce temps varie comme $N^2$. Toutefois, si $\e^{\beta(2-h)}$ d\'epasse $N^2$, 
c'est ce terme qui d\'etermine le temps de convergence. L'algorithme converge 
donc moins rapidement \`a tr\`es faible temp\'erature, en raison du temps 
n\'ecessaire \`a renverser le premier spin de la configuration $\boxminus$.

\begin{figure}
\vspace{-3mm}
\begin{center}
\begin{tikzpicture}[->,>=stealth',shorten >=2pt,shorten <=2pt,auto,node
distance=3.0cm, thick,main node/.style={circle,scale=0.7,minimum size=1.2cm,
fill=blue!20,draw,font=\sffamily\Large}]

  \node[main node] (0) {$0$};
  \node[main node] (1) [right of=0] {$1$};
  \node[main node] (2) [right of=1] {$2$};
  \node[main node] (3) [right of=2] {$3$};
  \node[circle,scale=0.7,minimum size=1.2cm] (dots) [right of=3,distance=2cm] {\dots};
  \node[main node] (N-1) [right of=dots] {\small $N-1$};
  \node[main node] (N) [right of=N-1] {$N$};

  \path[every node/.style={font=\sffamily\small}]
    (0) edge [bend left, above] node {$\e^{-\beta\Delta H}$} (1)
    (1) edge [bend left, above] node {$2q$} (2)
    (1) edge [bend left, below] node {$q$} (0)
    (2) edge [bend left, above] node {$2q$} (3)
    (3) edge [bend left, above] node {$2q$} (dots)
    (dots) edge [bend left, above] node {$2q$} (N-1)
    (N-1) edge [bend left, above] node {$2q$} (N)
    (0) edge [loop right, above,distance=1.5cm,out=120,in=60] node {$1 - \e^{-\beta\Delta H}$} (0)
    (1) edge [loop right, above,distance=1.5cm,out=120,in=60] node {$1-3q$} (1)
    (2) edge [loop right, above,distance=1.5cm,out=120,in=60] node {$1-2q$} (2)
    (3) edge [loop right, above,distance=1.5cm,out=120,in=60] node {$1-2q$} (3)
    (N-1) edge [loop right, above,distance=1.5cm,out=120,in=60] node {$1-2q$} (N-1)
    (N) edge [loop right, above,distance=1.5cm,out=120,in=60] node {$1$} (N)
  ;
\end{tikzpicture}
\end{center}
\vspace{-2mm}
 \caption[]{\CCM\ mod\'elisant une transition optimale de l'\'etat $\boxplus$ vers l'\'etat 
 $\boxminus$, par croissance d'une goutte de spins $+1$. La valeur de $\Delta H$ pour 
 la transition entre les \'etats $0$ et $1$ est $\Delta H(\boxplus, R_k(\boxplus)) = 4 - 2h$.
 Les \'etats sont num\'erot\'es selon le nombre $N_+$ de spins valant $+1$.}
 \label{fig:Ising_transition}
\end{figure}
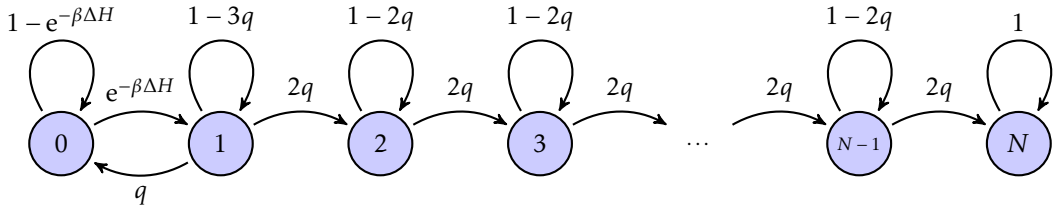

Dans le cas du mod\`ele d'Ising sur $\Lambda\subset\Z^2$, la situation est moins 
favorable. En effet, en partant de la configuration $\boxminus$, il faut d'abord 
cr\'eer une goutte de spins $+1$ d'une certaine taille avant que l'\'energie se 
mette \`a diminuer en approchant $\boxplus$. Dans ce cas, il
existe des algorithmes alternatifs, tels que l'algorithme dit de
Swendsen--Wang, qui convergent beaucoup mieux. Au lieu de retourner 
un seul spin \`a la fois, cet algorithme retourne des groupes de spins bien choisis. 


\part[Cha\^ines de Markov \`a espace continu]{Cha\^ines de Markov\\ \`a espace continu}
\label{part:cm_continu} 


\chapter{D\'efinition et exemples de \CMs\ \`a espace continu}
\label{chap:cont_ex} 

Dans ce chapitre, nous examinons comment on peut \'etendre la th\'eorie des \CMs\ 
sur un ensemble $\cX$ d\'enombrable \`a des ensembles infinis non d\'enombrables, 
plus pr\'ecis\'ement des sous-ensembles ouverts de $\R^d$. Une grande partie des 
concepts du cas discret (\'evolution de la loi de $X_n$, probabilit\'e invariante) 
peuvent \^etre transpos\'es \`a cette situation de mani\`ere assez directe, essentiellement 
en \myquote{rempla\c cant les sommes par des int\'egrales}. 
Il faut \^etre un peu prudent, toutefois, en g\'en\'eralisant les notions de 
r\'ecurrence et de r\'ecurrence positive. Nous aborderons cette question dans le 
chapitre suivant. 


\section{D\'efinitions et notations}
\label{sec:cont_def} 

Soit $\cX\subset\R^d$ un ouvert. Cet ensemble est muni de la \defwd{tribu 
des bor\'eliens}, qui contient en particulier tous les ouverts de $\cX$. 
Voici d'abord la g\'en\'eralisation de concept de matrice stochastique \`a cette 
situation. 

\begin{definition}[Densit\'e de probabilit\'e, noyau markovien \`a densit\'e]
\label{def:noyau_markovien} 
\begin{itemize}
\item  Une \defwd{densit\'e de probabilit\'e} $\nu$ sur $\cX$ est une 
application $\nu:\cX\to\R_+ = [0,\infty[$, continue par morceaux, et 
satisfaisant 
\begin{equation}
\label{eq:dproba} 
 \int_{\cX} \nu(x)\6x = 1\;.
\end{equation} 
\item   Un \defwd{noyau markovien \`a densit\'e} sur $\cX$ est une application 
$p:\cX\times\cX\to\R_+$, continue par morceaux, satisfaisant 
\begin{equation}
\label{eq:dstoch} 
 \int_{\cX} p(x,y)\6y = 1 \qquad \forall x\in\cX\;.
\end{equation} 
\end{itemize}
\end{definition}

Dans la suite, nous utiliserons la m\^eme notation pour la mesure de probabilit\'e
associ\'ee \`a la densit\'e $\nu$. Cela revient \`a poser 
\begin{equation}
 \nu(A) = \int_A \nu(x)\6x
\end{equation} 
pour tout bor\'elien $A\subset\cX$. 
La g\'en\'eralisation naturelle de la notion de \CM\ est alors la suivante. 

\begin{definition}[Cha\^ine de Markov sur un ouvert $\cX$ de $\R^d$]
\label{def:CM_continu} 
Soit $\nu$ une densit\'e de probabilit\'e sur $\cX$, et $p$ un noyau markovien 
\`a densit\'e. 
Une \defwd{\CM} (homog\`ene en temps) sur $\cX$, de loi initiale $\nu$ et de noyau de transition $p$, est une suite $(X_n)_{n\geqs0}$ de variables al\'eatoires \`a valeurs dans $\cX$, telles que $\prob{X_0 \in A} = \nu(A)$ pour tout bor\'elien $A\subset\cX$, et satisfaisant la \defwd{propri\'et\'e de Markov}
\begin{align}
\pcond{X_n \in A}{X_0 = x_0, X_1 = x_1, \dots, X_{n-1} = x_{n-1}}
&= \pcond{X_n \in A}{X_{n-1} = x_{n-1}} \\
&= \int_A p(x_{n-1},x_n) \6x_n
\label{eq:Markov_cont} 
\end{align}
pour tout $n\geqs1$, tout choix de $x_0, \dots, x_{n-1}\in\cX$, 
et tout bor\'elien $A\subset \cX$. 
\end{definition}

Comme la probabilit\'e qu'une variable al\'eatoire \`a densit\'e prenne une valeur particuli\`ere 
vaut $0$, il n'est pas imm\'ediatement \'evident que les probabilit\'es conditionnelles dans~\eqref{eq:Markov_cont} sont bien d\'efinies. Il faut en fait les interpr\'eter \`a l'aide de 
densit\'es conditionnelles. Pour ce faire, soit 
\begin{equation}
 \cB_\eps(x_0) = \bigsetsuch{x\in\cX}{\norm{x-x_0} < \eps} 
\end{equation} 
la boule ouverte de centre $x_0$ et de rayon $\eps$ (o\`u $\norm{\cdot}$ est la norme Euclidienne). 
On d\'efinit alors 
\begin{align}
\bigpcond{X_1 \in A}{X_0 = x_0}
&= \lim_{\eps\to0} \bigpcond{X_1 \in A}{X_0 \in\cB_\eps(x_0)} \\
&= \lim_{\eps\to0} \frac{\bigprob{X_1\in A, X_0 \in\cB_\eps(x_0)}}
{\bigprob{X_0 \in\cB_\eps(x_0)}}\;.
\end{align}
Si $f(x_0,x_1)$ d\'esigne la densit\'e jointe de $X_0$ et $X_1$, alors on a 
\begin{align}
 \bigprob{X_0 \in\cB_\eps(x_0)}
 &= \int_{\cB_\eps(x_0)} \nu(x)\6x\;, \\
 \bigprob{X_1\in A, X_0 \in\cB_\eps(x_0)}
 &= \int_A \int_{\cB_\eps(x_0)} f(x,x_1) \6x\6x_1\;, 
\end{align}
de sorte que 
\begin{equation}
 \bigpcond{X_1 \in A}{X_0 = x_0}
 = \int_A \lim_{\eps\to0} 
 \frac{\displaystyle\int_{\cB_\eps(x_0)} f(x,x_1) \6x}
 {\displaystyle\int_{\cB_\eps(x_0)} \nu(x)\6x} \6x_1
 = \int_A \frac{f(x_0,x_1)}{\nu(x_0)} \6x_1\;.
\end{equation} 
La derni\`ere \'egalit\'e suit du th\'eor\`eme de la valeur moyenne, qui montre que  
\begin{equation}
 \lim_{\eps\to0} \frac{1}{\abs{\cB_\eps(x_0)}}
 \int_{\cB_\eps(x_0)} \nu(x) \6x
 = \nu(x_0)\;, \qquad
 \lim_{\eps\to0} \frac{1}{\abs{\cB_\eps(x_0)}}
 \int_{\cB_\eps(x_0)} f(x,x_1) \6x
 = f(x_0,x_1)\;.
\end{equation} 
En comparant avec~\eqref{eq:Markov_cont} pour $n=1$, il vient 
\begin{equation}
\label{eq:f_x0x1} 
\frac{f(x_0,x_1)}{\nu(x_0)} = p(x_0,x_1)\;.
\end{equation} 
Le noyau markovien $p(x_0,x_1)$ s'interpr\`ete donc comme la \defwd{densit\'e 
conditionnelle de $X_1$ sachant que $X_0 = x_0$}. Par un raisonnement analogue, 
pour tout $n\in\N^*$, la densit\'e jointe de $(X_0,\dots,X_n)$ vaut 
\begin{equation}
\label{eq:f_x0xn} 
 f(x_0,\dots,x_n) = \nu(x_0)p(x_0,x_1)\dots p(x_{n-1},x_n)\;.
\end{equation} 
C'est l'analogue continu de la relation~\eqref{eq:proba_traj} pour la probabilit\'e 
des trajectoires dans le cas discret.

La loi de chaque $X_n$ est obtenue en calculant la marginale ad\'equate de la loi jointe. 
Ainsi, \eqref{eq:f_x0x1} implique 
\begin{equation}
 \bigprob{X_1\in A} 
 = \int_A \int_{\cX} f(x_0,x_1) \6x_0 \6x_1
 = \int_A \int_{\cX} \nu(x_0)p(x_0,x_1) \6x_0 \6x_1\;.
\end{equation} 
De mani\`ere analogue, \eqref{eq:f_x0xn} montre que 
\begin{align}
 \bigprob{X_2\in A}
 &= \int_A \int_{\cX} \int_{\cX} \nu(x_0)p(x_0,x_1)p(x_1,x_2) \6x_0 \6x_1 \6x_2 \\
 &= \int_A \int_{\cX} \nu(x_0)p^2(x_0,x_2)\6x_0\6x_2\;,
\end{align}
o\`u $p^2$ est un noyau markovien d\'efini par 
\begin{equation}
 p^2(x_0,x_2) = \int_{\cX} p(x_0,x_1)p(x_1,x_2)\6x_1\;.
\end{equation} 
Plus g\'en\'eralement, pour tout $n\geqs2$ on a 
\begin{equation}
 \prob{X_n\in A} = \int_A \int_{\cX} \nu(x_0)p^n(x_0,x_n)\6x_0\6x_n\;,
\end{equation} 
o\`u $p^n$ est un noyau markovien d\'efini par r\'ecurrence par 
\begin{equation}
\label{eq:Chapman-Kolmogorov} 
 p^n(x_0,x_n) = \int_{\cX} p^{n-1}(x_0,x_{n-1})p(x_{n-1},x_n)\6x_{n-1}\;,
\end{equation}
avec $p^1 = p$. 
Cette relation est appel\'ee \defwd{relation de Chapman--Kolmogorov}. 

Il sera commode d'utiliser les notations suivantes, o\`u $A\subset\cX$ est un 
bor\'elien, et $f:\cX\to\R$~:
\begin{align}
 \probin{\nu}{X_1\in A} &= (\nu\cP)(A) := \int_A\int_{\cX} \nu(x_0)p(x_0,x_1)\6x_0\6x_1\;, \\
 \expecin{x_0}{f(X_1)} &= (\cP f)(x_0) := \int_{\cX} p(x_0,x_1) f(x_1)\6x_1\;, \\
 \expecin{\nu}{f(X_1)} &= (\nu\cP)(f) := \nu(\cP f)
 = \int_{\cX} \int_{\cX} \nu(x_0) p(x_0,x_1) f(x_1)\6x_0\6x_1\;.
 \label{eq:P_cont} 
\end{align}
La relation de Chapman--Kolmogorov permet de d\'efinir $\cP^n$ pour tout $n\geqs1$ en rempla\c cant 
$p$ par $p^n$ dans~\eqref{eq:P_cont}. 
On a \'egalement des concepts de mesure sign\'ee et fonction test 
tout \`a fait analogues \`a ceux du cas discret. 

\begin{definition}[Mesures sign\'ees finies \`a densit\'e]
\label{def:mesure_cont} 
Soit $\mu:\cX\to\R$ une application continue par morceaux telle que 
\begin{equation}
 \norm{\mu}_1 := \int_{\cX} \abs{\mu(x)}\6x < \infty\;.
\end{equation} 
Elle d\'efinit une \defwd{mesure sign\'ee finie \`a densit\'e}, qui associe 
\`a tout bor\'elien $A$ le nombre 
\begin{equation}
 \mu(A) := \int_A \mu(x)\6x\;.
\end{equation} 
On notera $\cE_1$ l'espace de Banach des mesures sign\'ees finies \`a densit\'e. 
Si $\mu:\cX\to\R_+$ et $\norm{\mu}_1 = 1$, alors $\mu$ est une 
mesure de probabilit\'e. 
\end{definition}

\begin{definition}[Fonctions test]
\label{def:fct_test_cont} 
Une \defwd{fonction test} sur $\cX$ est une application 
$f:\cX\to\R$, continue par morceaux, telle que 
\begin{equation}
 \norm{f}_\infty := \sup_{x\in\cX} \abs{f(x)} < \infty\;.
\end{equation} 
On notera $\cE_\infty$ l'espace de Banach des fonctions test. 
\end{definition}

De mani\`ere analogue au cas discret, nous utiliserons la notation 
\begin{equation}
 \mu(f) = \int_{\cX} \mu(x)f(x)\6x\;.
\end{equation} 
Cette int\'egrale est bien d\'efinie pour tout $\mu\in\cE_1$ et tout $f\in\cE_\infty$, 
et on a 
\begin{equation}
 \abs{\mu(f)} \leqs \int_{\cX} \abs{\mu(x)} \abs{f(x)} \6x 
 \leqs \norm{\mu}_1 \norm{f}_\infty\;.
\end{equation} 

\begin{remark}[Continuit\'e par morceaux]
L'hypoth\`ese de continuit\'e par morceaux n'est pas vraiment n\'ecessaire. 
Toutes les int\'egrales ci-dessus peuvent \^etre interpr\'et\'ees comme des int\'egrales 
de Lebesgue, et alors on peut remplacer \myquote{continue par morceaux}\ par 
\myquote{mesurable}. Toutefois, quand nous \'etudierons les 
question de r\'ecurrence, l'hypoth\`ese de continuit\'e par morceaux simplifiera 
nettement la th\'eorie. Cette hypoth\`ese est amplement suffisante pour les applications.  
\end{remark}


\section{Exemples de \CMs\ \`a espace continu}
\label{sec:cont_ex} 

Voici quelques exemples simples de \CMs\ \`a espace d'\'etats continu. 

\begin{example}[Variables i.i.d.]
Soit $\mu$ une densit\'e de probabilit\'e sur $\cX$, et soit 
\begin{equation}
 p(x,y) = \mu(y) \qquad \forall x, y\in\cX\;.
\end{equation} 
Il est imm\'ediat de v\'erifier que $p$ est un noyau markovien \`a densit\'e. 
Pour tout $x\in\cX$ et tout bor\'elien $A\subset\cX$, on a 
\begin{equation}
 \bigprobin{x}{X_1\in A}
 = \int_A \mu(x_1)\6x_1 
 = \mu(A)\;,
\end{equation}
ce qui montre que $X_1$ a la densit\'e $\mu$. De plus, on trouve
\begin{equation}
 p^2(x,x_2) = \int_\cX p(x,x_1)p(x_1,x_2)\6x_1 
 = \int_\cX \mu(x_1)\mu(y) \6x_1 
 = \mu(y)\;.
\end{equation}
Plus g\'en\'eralement, on v\'erifie par r\'ecurrence que pour tout $n\geqs1$, on a 
\begin{equation}
 p^n(x,x_n) = \mu(x_n) \qquad \forall x, x_n\in\cX\;.
\end{equation} 
Par cons\'equent, les variables al\'eatoires $X_1, X_2, \dots$ ont toutes la m\^eme loi,
de densit\'e $\mu$. De plus, on v\'erifie facilement que 
\begin{equation}
 \bigprobin{x}{X_1\in A_1,\dots, X_n\in A_n} 
 = \mu(A_1)\dots \mu(A_n)
\end{equation} 
pour tout $n\geqs1$, et tout choix de bor\'eliens $A_1$, \dots, $A_n$. Les variables 
$X_1, X_2, \dots$ sont donc ind\'ependantes. 
\end{example}

\begin{example}[Marche al\'eatoire \`a pas Gaussiens]
Prenons $\cX = \R$. Supposons que $X_0 = 0$ et que 
\begin{equation}
 X_{n+1} = X_n + Y_{n+1} \qquad \forall n\in\N\;,
\end{equation} 
o\`u les $Y_n$ sont i.i.d., de loi normale centr\'ee et de variance $\sigma^2$. 
En d'autres termes, on a 
\begin{equation}
 X_n = \sum_{m=1}^n Y_m \qquad \forall n\in\N\;.
\end{equation} 
On dit que $(X_n)_{n\geqs0}$ est une \defwd{marche al\'eatoire \`a pas Gaussiens sur $\R$}. 
Montrons que c'est une \CM\ sur $\R$. La propri\'et\'e de Markov suit de l'ind\'ependance 
des $Y_n$. Les probabilit\'es de transition sont donn\'ees par 
\begin{align}
\bigpcond{X_{n+1}\in A}{X_n = x}
&= \bigpcond{X_n + Y_{n+1}\in A}{X_n = x} \\
&= \bigpcond{x + Y_{n+1}\in A}{X_n = x} \\
&= \bigprob{Y_{n+1}\in A - x} \\
&= \int_{A-x} \frac{\e^{-y^2/(2\sigma^2)}}{\sqrt{2\pi\sigma^2}} \6y \\
&= \int_{A} \frac{\e^{-(z-x)^2/(2\sigma^2)}}{\sqrt{2\pi\sigma^2}} \6z\;, 
\end{align}
o\`u $A-x = \setsuch{y-x}{y\in A}$. 
Il suit que le noyau de transition de la \CM\ est donn\'e par 
\begin{equation}
 p(x,y) = \frac{\e^{-(y-x)^2/(2\sigma^2)}}{\sqrt{2\pi\sigma^2}}\;.
\end{equation} 
Plus g\'en\'eralement, si les $Y_n$ ont une densit\'e $\mu$, alors 
$p(x,y) = \mu(y-x)$. 
\end{example}

\begin{example}[Mod\`ele auto-r\'egressif AR(1)]
\label{ex:AR1} 
Les \defwd{mod\`eles auto-r\'egressifs} sont couramment utilis\'es en statistiques, 
en \'econom\'etrie et en traitement du signal. Le mod\`ele AR(1) en est un cas particulier,  
o\`u le param\`etre $1$, appel\'e \defwd{ordre}, d\'esigne le temps de m\'emoire. 
Il est d\'efini par la relation de r\'ecurrence 
\begin{equation}
 X_{n+1} = aX_n + Y_{n+1}\;,
\end{equation} 
o\`u $a\in\R$, et les $Y_n$ sont i.i.d., de loi normale centr\'ee et de variance $\sigma^2$. 
Il suit alors d'un calcul analogue \`a celui de l'exemple pr\'ec\'edent que 
\begin{align}
\bigpcond{X_{n+1}\in A}{X_n = x}
&= \bigprob{Y_{n+1}\in A - ax} \\
&= \int_{A} \frac{\e^{-(z-ax)^2/(2\sigma^2)}}{\sqrt{2\pi\sigma^2}} \6z\;.
\end{align}
Le noyau de transition du mod\`ele AR(1) est donc donn\'e par 
\begin{equation}
 p(x,y) = \frac{\e^{-(y-ax)^2/(2\sigma^2)}}{\sqrt{2\pi\sigma^2}}\;.
\end{equation} 
Plus g\'en\'eralement, le mod\`ele autor\'egressif d'ordre $p$, AR($p$), est d\'efini par 
\begin{equation}
 X_{n+1} = \sum_{i=1}^p a_iX_{n-i} + Y_{n+1}\;.
\end{equation} 
Si $p\geqs2$, la suite des $X_n$ n'est pas une \CM, puisque la valeur de $X_{n+1}$ 
d\'epend des valeurs \`a $p$ temps pr\'ec\'edents. Toutefois, les vecteurs 
$Z_n = (X_n, X_{n+1}, \dots, X_{n+p-1})$ d\'efinissent une \CM\ sur $\R^p$. 
\end{example}

\begin{example}[Applications it\'er\'ees bruit\'ees]
Une autre g\'en\'eralisation du mod\`ele AR(1) est donn\'ee par la relation de 
r\'ecurrence 
\begin{equation}
 X_{n+1} = F(X_n) + Y_{n+1}\;,
\end{equation} 
o\`u $F:\R\to\R$, et les $Y_n$ sont \`a nouveau i.i.d., de loi normale centr\'ee 
et de variance $\sigma^2$. Il s'agit d'une \CM\ de noyau de transition 
\begin{equation}
 p(x,y) = \frac{\e^{-(y-F(x))^2/(2\sigma^2)}}{\sqrt{2\pi\sigma^2}}\;.
\end{equation} 
On peut \'evidemment consid\'erer d'autres lois pour les $Y_n$ que la loi normale. 
Ce genre de mod\`ele appara\^it par exemple en dynamique des populations, ou en 
\'epid\'emiologie. Sa dynamique d\'epend fortement des propri\'et\'es de $F$ 
(points fixes, stabilit\'e). 
\end{example}


\chapter{Probabilit\'es invariantes et vitesse de convergence}
\label{chap:cont_conv} 

La principale difficult\'e des \CMs\ \`a espace continu, par rapport aux \CMs\ 
\`a espace d\'enombrable, est que l'on a $\probin{x}{X_n = y} = 0$ pour tout 
choix de $x, y\in\cX$ et de $n\in\N^*$. Par cons\'equent, l'esp\'erance du temps 
de passage en un point diff\'erent du point de d\'epart est en g\'en\'eral 
infinie. La solution consiste \`a ne pas consid\'erer les temps de premier passage 
en des points, mais en des ensembles ouverts. C'est ce que nous \'etudierons plus 
en d\'etail dans la Section~\ref{sec:cont_rec}. Avec cette modification, la 
th\'eorie des fonctions de Lyapounov s'applique sans grandes modifications, comme 
nous allons le voir dans la Section~\ref{sec:cont_conv}. 


\section{Irr\'eductibilit\'e et r\'ecurrence de Harris}
\label{sec:cont_rec} 

Nous consid\'erons dans cette section une \CM\ $(X_n)_{n\geqs0}$ 
sur un ouvert $\cX$ de $\R^d$, de noyau de transition \`a densit\'e $p$. 
Le d\'efinition du temps de premier passage est la m\^eme 
que dans le cas d\'enombrable, mais nous la rappelons n\'eanmoins ici. 

\begin{definition}[Temps de premier passage]
Soit $A\subset\cX$ un bor\'elien. Alors le 
\defwd{temps de premier passage en $A$} de la \CM\ $(X_n)_{n\geqs0}$ est 
la variable al\'eatoire
\begin{equation}
 \tau_A = \inf\setsuch{n\geqs1}{X_n \in A}
 \in\N^*\cup\set{\infty}\;.
\end{equation} 
\end{definition}

La d\'efinition de l'irr\'eductibilit\'e est en revanche l\'eg\`erement 
diff\'erente de celle du cas discret, en raison du fait que les probabilit\'es 
de transition vers des points sont nulles. 

\begin{definition}[Irr\'eductibilit\'e d'une \CM\ \`a espace continu]
La \CM\ $(X_n)_{n\geqs0}$ est dite \defwd{irr\'eductible} si pour 
tout $x\in\cX$ et tout ouvert $A\subset\cX$, il existe un $n\in\N^*$ 
tel que $\probin{x}{X_n\in A} > 0$. 
De mani\`ere \'equivalente, pour tout $x\in\cX$ et $A\subset\cX$ ouvert, 
il existe un $n\in\N^*$ tel que $\probin{x}{\tau_A \leqs n} > 0$. 
\end{definition}

Remarquons que si $p(x,y) > 0$ pour tout $x,y\in\cX$, alors la \CM\ 
est irr\'eductible. C'est le cas pour tous les exemples du chapitre 
pr\'ec\'edent faisant intervenir des variables Gaussiennes. 
Nous pouvons maintenant donner les analogues continus des d\'efinitions 
de r\'ecurrence et de r\'ecurrence positive. 

\begin{definition}[R\'ecurrence (positive) au sens de Harris]
\begin{itemize}
\item  La \CM\ $(X_n)_{n\geqs0}$ est \defwd{Harris--r\'ecurrente} si 
\begin{equation}
 \bigprobin{x}{\tau_A < \infty} = 1
\end{equation} 
pour tout $x\in\cX$ et tout ouvert $A\subset\cX$. 

\item   La \CM\ $(X_n)_{n\geqs0}$  est \defwd{Harris--r\'ecurrente positive} 
si de plus 
\begin{equation}
\expecin{x}{\tau_A} < \infty
\end{equation}
pour tout $x\in\cX$ et tout ouvert $A\subset\cX$. 
\end{itemize}
\end{definition}

Remarquons que contrairement au cas discret, la d\'efinition 
fait intervenir le temps de passage en tout ensemble ouvert $A$.  
Par cons\'equent, une \CM\ Harris--r\'ecurrente est automatiquement 
irr\'e\-ducti\-ble, puisque $\bigprobin{x}{\tau_A < \infty} = 1$
implique $\probin{x}{\tau_A \leqs n} > 0$ pour un $n$ fini. 
L'int\'er\^et principal de cette d\'efinition est li\'e aux mesures et 
probabilit\'es invariantes, d\'efinies comme suit.

\begin{definition}[Mesure et probabilit\'e invariantes]
Une mesure $\mu$ sur $\cX$ est \defwd{invariante} si $\mu\cP = \mu$, 
c'est-\`a-dire si  
\begin{equation}
 \int_{\cX} \mu(x) p(x,y) \6x = \mu(y)
 \qquad 
 \forall y\in\cX\;.
 \label{def:proba_inv_cont} 
\end{equation} 
Si $\mu$ est une mesure de probabilit\'e, alors on dit que c'est une 
\defwd{probabilit\'e invariante}. 
\end{definition}

\begin{theorem}[R\'ecurrence, mesures invariantes et probabilit\'es invariantes]
\label{thm:Harris} 
Si la \CM\ $(X_n)_{n\geqs0}$ est Harris--r\'ecurrente, alors elle admet
une mesure invariante $\mu$. Si elle est de plus Harris--r\'ecurrente positive, 
alors elle admet une probabilit\'e invariante $\pi$. De plus, $\pi$ est 
essentiellement unique, c'est-\`a-dire que si $\pi'$ est une autre probabilit\'e 
invariante, alors $\pi'(A) = \pi(A)$ pour tout ouvert $A\subset\cX$. 
\end{theorem}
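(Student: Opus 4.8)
The plan is to transpose the excursion construction of Proposition~\ref{prop_stat1} and Theorem~\ref{thm:rec_pos_pi} to the continuous setting. The only genuine difficulty is that a single point $z$ is no longer an admissible ``regeneration state'', since $\probin{x}{X_1 = z} = 0$. First I would therefore replace the point $z$ by a \emph{regeneration structure} attached to a well-chosen open set. Concretely, using irreducibility together with the piecewise continuity of the density $p$, I would exhibit an open ball $C$, an integer $m\geqs1$, a constant $\delta>0$ and a probability density $\nu_0$ supported in $C$ such that
\begin{equation}
 p^m(x,y) \geqs \delta\,\nu_0(y)
 \qquad \forall x\in C,\ \forall y\in\cX\;.
\end{equation}
For the Gaussian examples of the previous chapter this holds immediately with $m=1$. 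This is exactly condition~\eqref{eq:minoration} localized to $C$, and it turns $C$ into a \emph{small set}.

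\textbf{Construction of the invariant measure.} Taking $m=1$ for readability (the general case follows by passing to the $m$-step chain), I would introduce the Nummelin splitting: at each visit to $C$ I toss an independent coin, declaring a \emph{regeneration} with probability $\delta$, in which case $X_{n+1}$ is drawn from $\nu_0$ independently of the past, and otherwise drawing $X_{n+1}$ from the residual kernel $\tilde p(x,\cdot)=(p(x,\cdot)-\delta\nu_0)/(1-\delta)$. Since $\delta\nu_0+(1-\delta)\tilde p=p$, this reproduces the law of $(X_n)_{n\geqs0}$, while the regeneration times $T_0<T_1<\dots$ play the role of the successive visits to $z$: the past is forgotten at each $T_k$, so the excursions between consecutive regenerations are i.i.d. I would then define the one-cycle occupation measure
\begin{equation}
 \mu(A) = \biggexpecin{\nu_0}{\sum_{n=0}^{T_1-1}\indicator{X_n\in A}}
\end{equation}
and check $\mu\cP=\mu$ by the telescoping computation of item~2 in the proof of Proposition~\ref{prop_stat1}: both endpoints of a cycle have law $\nu_0$, so shifting the summation index leaves $\mu$ unchanged. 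Harris recurrence guarantees $T_1<\infty$ almost surely, hence $\mu$ is a well-defined nontrivial invariant measure.

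\textbf{Positive case and normalization.} If the chain is moreover Harris positive recurrent, then the total mass is $\mu(\cX)=\expecin{\nu_0}{T_1}$, the expected length of one cycle. A regeneration occurs with probability $\delta$ at each visit to $C$, so the number of visits to $C$ per cycle is geometric of mean $1/\delta$, and each inter-visit time has finite expectation because $\expecin{x}{\tau_C}<\infty$ by positive recurrence; hence $\mu(\cX)<\infty$ and $\pi=\mu/\mu(\cX)$ is an invariant probability.

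\textbf{Uniqueness and the main obstacle.} For uniqueness I would imitate item~4 of Proposition~\ref{prop_stat1}: given any invariant probability $\pi'$, iterating the invariance identity and inserting the minorization yields $\pi'\geqs c\,\mu$ for a suitable $c>0$; choosing $c$ maximal, the measure $\pi'-c\mu$ is invariant, nonnegative and assigns mass zero to $C$, and irreducibility together with the continuity of $p$ forces it to vanish on every open set, so that $\pi'=\pi$ there. The main obstacle, which has no discrete analogue, is precisely the construction and justification of this regeneration structure: establishing that a small set exists (the localized minorization) and that the split chain both faithfully reproduces the original dynamics and genuinely creates an atom. Making the invariance computation rigorous also requires some care with the strong Markov property at the random times $T_k$ and with Tonelli's theorem when interchanging sum and expectation, but these steps are routine once the atom is in place.
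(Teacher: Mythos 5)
Your route is genuinely different from the paper's: it is the classical Nummelin regeneration (splitting) construction, whereas the paper anchors everything at a single point $x$ and works with the potential kernel $G_{B_\eps}(x,\cdot)$ of excursions away from the shrinking ball $B_\eps=\cB_\eps(x)$, obtaining a measure that is only invariant in the limit $\eps\to0$. Your construction has real advantages: the cycle measure is exactly invariant from the start, and the regeneration times create a genuine atom, which is precisely what makes a clean uniqueness argument possible --- a point the paper simply admits without proof. Two smaller remarks before the main one. First, piecewise continuity of $p$ gives a minorization $p(x,y)\geqs\delta'$ on a product $C\times D$ of balls, so the minorizing density $\nu_0$ is naturally supported in $D$, which need not be contained in $C$; this is harmless (regeneration nowhere requires the support of $\nu_0$ to lie in $C$), but the statement as you wrote it would need an extra argument. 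Second, in the uniqueness step, maximality of $c$ forces the nonnegative invariant difference $\pi'-c\mu$ to vanish at the \emph{atom} of the split chain, not on the set $C$ itself; the argument should be run through the atom, as in the discrete case where one evaluates at the state $z$.

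The genuine gap is in the normalization step. You claim $\mu(\cX)=\expecin{\nu_0}{T_1}<\infty$ because the number of visits to $C$ per cycle is geometric of mean $1/\delta$ and \myquote{each inter-visit time has finite expectation since $\expecin{x}{\tau_C}<\infty$}. But the paper's definition of Harris positive recurrence is pointwise: for each fixed $x$ and each open $A$, $\expecin{x}{\tau_A}<\infty$. Pointwise finiteness of the function $x\mapsto\expecin{x}{\tau_C}$ on $C$ gives neither boundedness on $C$ nor integrability against $\nu_0$, nor against the law of the successive positions of the chain at its visits to $C$. A Wald-type bound of the form
\begin{equation}
 \expecin{\nu_0}{T_1}
 \leqs \frac{1}{\delta}\biggpar{1+\frac{1}{1-\delta}\sup_{x\in C}\expecin{x}{\tau_C}}
\end{equation}
requires precisely the uniform bound $\sup_{x\in C}\expecin{x}{\tau_C}<\infty$, which is not available; already the very first segment of the cycle has expected length involving $\int\nu_0(y)\expecin{y}{\tau_C}\6y$, which can a priori diverge even though every value of the integrand is finite. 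The conclusion is true under the paper's hypotheses, but I do not see how to establish it inside your framework without either (i) first proving uniqueness of $\sigma$-finite invariant measures via the atom and then comparing with a finite invariant measure produced by some other construction --- which is circular, since producing that finite measure is exactly what is at stake --- or (ii) re-anchoring the excursions at a single point, which is the paper's solution: the total mass of $G_{B_\eps}(x,\cdot)$ is exactly $\expecin{x}{\tau_{B_\eps}}$, finite by hypothesis at that one point, so no uniformity is ever needed. This is the structural trade-off between the two proofs: the paper pays with an approximate-invariance-plus-limit argument but uses only the pointwise hypothesis; your atom buys exact invariance and uniqueness, but its normalization needs uniform (or integrated) control of return times to the small set, that is, a regularity hypothesis in the spirit of Meyn--Tweedie rather than the pointwise definition adopted in this chapter.
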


Afin de pr\'eparer la d\'emonstration de ce r\'esultat, nous introduisons la notion 
de processus tu\'e en touchant un sous-ensemble de $\cX$. 

\begin{definition}[Noyau du processus tu\'e en touchant $B\subset\cX$]
Soit $B$ un bor\'elien de $\cX$, $B^c = \cX\setminus B$, 
et soit $p^\dagger = p^\dagger_B$ la fonction d\'efinie par 
\begin{equation}
p^\dagger(x,y) =
p^\dagger_B(x,y) = 
\begin{cases}
  p(x,y) & \text{si $y\in B^c$\;,} \\
  0 & \text{si $y\in B$\;.}
\end{cases}
\label{eq:def_pkilled} 
\end{equation} 
On d\'efinit par r\'ecurrence des noyaux $p^\dagger_n$ par 
$p^\dagger_1 = p^\dagger$ et 
\begin{equation}
 p^\dagger_{n+1}(x,y) = \int_{B^c} p^\dagger_n(x,z) p^\dagger(z,y) \6z
 \qquad \forall n\in\N^*\;.
\label{eq:def_pkilledn} 
\end{equation} 
\end{definition}

Notons que les noyaux $p^\dagger_n$ ne sont pas en g\'en\'eral 
markoviens, car leur int\'egrale par rapport \`a $y$ est en g\'en\'eral 
strictement inf\'erieure \`a $1$. On dit que ce sont des noyaux 
\defwd{sous-markoviens}. Leur int\'er\^et pour nous est le lemme suivant.

\begin{lemma}[Processus tu\'e et loi de $\tau_B$]
\label{lem:processus_tue} 
Pour tout $n\geqs1$, tout $x\in\cX$, et tout bor\'elien $A\subset \cX$ 
tel que $A\cap B = \varnothing$, on a
\begin{equation}
 \bigprobin{x}{X_n\in A, \tau_B > n} = \int_A p^\dagger_n(x,y)\6y\;.
\end{equation} 
\end{lemma}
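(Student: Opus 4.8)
The plan is to proceed by induction on $n$, the recursive definition~\eqref{eq:def_pkilledn} of $p^\dagger_n$ being tailored precisely for such an argument. For the base case $n=1$, I would observe that $\set{\tau_B > 1} = \set{X_1 \in B^c}$, so that the hypothesis $A \cap B = \varnothing$ gives $\set{X_1 \in A, \tau_B > 1} = \set{X_1 \in A}$; then $\bigprobin{x}{X_1 \in A, \tau_B > 1} = \int_A p(x,y)\6y = \int_A p^\dagger(x,y)\6y$, where the last equality uses $p^\dagger(x,y) = p(x,y)$ for $y \in A \subset B^c$ together with $p^\dagger_1 = p^\dagger$.

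For the inductive step, assume the identity holds at rank $n$ for every admissible $A$. The first key observation is that, since $A \subset B^c$, having $X_{n+1} \in A$ already forces $X_{n+1} \notin B$; hence $\set{X_{n+1} \in A, \tau_B > n+1} = \set{X_{n+1} \in A, \tau_B > n}$, which lets me drop the constraint at the final time. Next I would condition on $X_n$: the event $\set{\tau_B > n}$ is $\cF_n$-measurable, so the Markov property yields
\begin{equation}
 \bigprobin{x}{X_{n+1} \in A, \tau_B > n}
 = \bigexpecin{x}{\indicator{\tau_B > n} \bigprobin{X_n}{X_1 \in A}}\;.
\end{equation}
The induction hypothesis, read for an arbitrary Borel $A' \subset B^c$, identifies the sub-probability measure $A' \mapsto \bigprobin{x}{X_n \in A', \tau_B > n}$ as having density $p^\dagger_n(x,\cdot)$ with respect to Lebesgue measure on $B^c$. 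Using this with the test function $z \mapsto \bigprobin{z}{X_1 \in A} = \int_A p(z,y)\6y$ gives
\begin{equation}
 \bigprobin{x}{X_{n+1} \in A, \tau_B > n}
 = \int_{B^c} p^\dagger_n(x,z) \int_A p(z,y)\6y\6z\;.
\end{equation}
Finally, since $A \subset B^c$ one has $p(z,y) = p^\dagger(z,y)$ for $y \in A$, and after exchanging the order of integration (Tonelli, all integrands being non-negative) the inner integral reproduces exactly $p^\dagger_{n+1}(x,y)$ via~\eqref{eq:def_pkilledn}, yielding $\int_A p^\dagger_{n+1}(x,y)\6y$ as required.

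The step I expect to require the most care is the passage from the induction hypothesis --- stated only as an equality of numbers for each fixed $A$ --- to its use as a genuine sub-density identity inside an expectation. Making this rigorous amounts to noting that two finite Borel measures on $B^c$ that agree on all Borel sets coincide, so that $p^\dagger_n(x,\cdot)$ is the density of $X_n$ on the event $\set{\tau_B > n}$ restricted to $B^c$, and then approximating the bounded measurable function $z \mapsto \bigprobin{z}{X_1 \in A}$ by simple functions to transfer the identity. The remaining ingredients --- the $\cF_n$-measurability of $\set{\tau_B > n}$, the Markov property in the form $\bigpcondin{x}{X_{n+1}\in A}{\cF_n} = \bigprobin{X_n}{X_1 \in A}$, and the use of Tonelli's theorem --- are routine once the joint-density description~\eqref{eq:f_x0xn} of trajectories is invoked. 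An alternative, slightly more computational route would bypass the conditioning entirely by writing the probability directly as an $(n+1)$-fold integral of $p(x,x_1)\cdots p(x_n,x_{n+1})$ over $x_1,\dots,x_n \in B^c$ and $x_{n+1}\in A$, then recognising the iterated kernel; this avoids the measure-theoretic subtlety at the cost of a less transparent induction.
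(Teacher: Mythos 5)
Your proof is correct, but it is organised differently from the paper's. The paper proves the lemma in one stroke: it uses the trajectory-density formula~\eqref{eq:f_x0xn} to write $\bigprobin{x}{X_n\in A,\tau_B>n}$ directly as the $n$-fold iterated integral of $p(x,x_1)p(x_1,x_2)\cdots p(x_{n-1},x_n)$ over $x_1,\dots,x_{n-1}\in B^c$ and $x_n\in A\setminus B=A$, then replaces each factor $p$ by $p^\dagger$ --- which is legitimate because $p^\dagger$ coincides with $p$ when the second argument lies in $B^c$ and vanishes otherwise, so the integration domains may simultaneously be extended from $B^c$ to $\cX$ --- and finally identifies the resulting iterated integral with $\int_A p^\dagger_n(x,y)\6y$ by the recursive definition~\eqref{eq:def_pkilledn} and a straightforward induction. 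This is exactly the \myquote{alternative, slightly more computational route} you sketch in your last sentence. Your main argument instead runs the induction on the probabilistic identity itself: you drop the constraint $X_{n+1}\notin B$ using $A\subset B^c$, condition on $\cF_n$ via the Markov property, and invoke the induction hypothesis as a statement about the sub-probability measure $A'\mapsto\bigprobin{x}{X_n\in A',\tau_B>n}$ having density $p^\dagger_n(x,\cdot)$ on $B^c$. The trade-off is clear: the paper's computation avoids conditional expectations and the measure-theoretic transfer step entirely (everything stays at the level of explicit integrals), while your route is more structural --- it isolates the Markov property as the operative mechanism and would adapt more readily to kernels not given by densities --- but it does hinge on the step you correctly flag as delicate, namely upgrading the set-wise equality of measures to an integral identity against the bounded function $z\mapsto\int_A p(z,y)\6y$, which you resolve in the standard way (simple functions and monotone convergence). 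Both proofs are sound; yours simply pays in measure theory what the paper pays in bookkeeping of iterated integrals.
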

\begin{proof}
Cela suit du fait que 
\begin{align}
\bigprobin{x}{X_n\in A, \tau_B > n} 
&= \bigprobin{x}{X_1\notin B, X_2\notin B, \dots, X_{n-1}\notin B, X_n\in A\setminus B} \\
&= \int_{B^c} \int_{B^c} \dots \int_{B^c} \int_{A\setminus B}
p(x,x_1) p(x_1,x_2) \dots p(x_{n-1},x_n) \6x_n \6x_{n-1} \dots \6x_2 \6x_1 \\
&= \int_{\cX} \int_{\cX} \dots \int_{\cX} \int_A
p^\dagger(x,x_1) p^\dagger(x_1,x_2) \dots p^\dagger(x_{n-1},x_n) \6x_n \6x_{n-1} \dots \6x_2 \6x_1 
\end{align}
en vertu de~\eqref{eq:def_pkilled}, et puisque $A\setminus B = A$.  
Par~\eqref{eq:def_pkilledn} et une r\'ecurrence sur $n$, ceci est bien \'egal \`a l'int\'egrale sur $A$ de $p^\dagger_n$ par rapport \`a sa seconde variable. 
\end{proof}

\begin{remark}[Processus de Markov tu\'e]
On peut associer \`a $(X_n)_{n\geqs0}$ un \defwd{processus tu\'e en touchant $B$}, 
not\'e $(X^\dagger_n)_{n\geqs0}$, de la mani\`ere suivante. On ajoute \`a $\cX$ un 
\defwd{\'etat cimeti\`ere} $\dagger$, qui est absobant, et on pose 
\begin{equation}
 X^\dagger_n = 
 \begin{cases}
  X_n & \text{si $n<\tau_B$\;,}\\
  \dagger & \text {si $n\geqs \tau_B$\;.}
 \end{cases}
\end{equation}
Son noyau restreint \`a $B^c$ est alors $p^\dagger$, et on a 
$\bigprobin{x}{X_n\in A, \tau_B > n} = \bigprobin{x}{X_n^\dagger\in A}$
si $A\cap B = \varnothing$.
\end{remark}

Un objet important li\'e au processus tu\'e est le noyau de potentiel, qui joue un 
r\^ole similaire \`a celui de la matrice fondamentale d'une \CM\ absorbante.

\begin{definition}[Noyau de potentiel]
Soit $B\subset\cX$ un ouvert. Le \defwd{noyau de potentiel} de la \CM\ $(X_n)_{n\geqs0}$ 
relatif \`a $B$ est l'application qui associe \`a chaque $x\in\cX$ et chaque bor\'elien 
$A\subset\cX$ le nombre 
\begin{equation}
\label{eq:def_GB} 
 G_B(x,A) = \biggexpecin{x}{\sum_{n=0}^{\tau_B-1}\indicator{X_n\in A}}
 \in [0,\infty]\;.
\end{equation} 
\end{definition}

Le lien entre noyau de potentiel et processus tu\'e est le suivant. 

\begin{proposition}[Densit\'e du noyau de potentiel]
Pour $x\in B$, le noyau de potentiel $G_B(x,\cdot)$ est une mesure (pas n\'ecessairement finie), 
qui admet sur $B^c$ la densit\'e 
\begin{equation}
\label{eq:densite_GB} 
 g_B(x,y) = \sum_{n=1}^\infty p_n^\dagger(x,y)\;,
\end{equation} 
pour tous les $y$ tels que cette s\'erie converge. 
\end{proposition}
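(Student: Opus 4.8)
The plan is to expand the defining expectation~\eqref{eq:def_GB} of $G_B(x,\cdot)$ as a countable sum of sub-probability measures indexed by time, and then to identify each summand by means of Lemme~\ref{lem:processus_tue}. First I would rewrite the sum appearing inside the expectation, using the fact that $\tau_B\geqs1$ always, as
\begin{equation}
 \sum_{n=0}^{\tau_B-1}\indicator{X_n\in A}
 = \sum_{n=0}^\infty \indicator{X_n\in A,\,\tau_B>n}\;,
\end{equation}
an equality valid pathwise (including when $\tau_B=\infty$), all terms being non\-n\'egatifs. By the Tonelli/monotone convergence theorem, the expectation of this series equals the series of the expectations, so that
\begin{equation}
 G_B(x,A) = \sum_{n=0}^\infty \probin{x}{X_n\in A,\,\tau_B>n}\;.
\end{equation}
Each map $A\mapsto \mu_n(A) := \probin{x}{X_n\in A,\,\tau_B>n}$ is a finite (sub-probabilit\'e) measure, and a countable sum of measures is again a measure: countable additivity follows by interchanging the two non\-n\'egatif sums, once more by Tonelli. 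This already establishes the first assertion, that $G_B(x,\cdot)$ is a measure, possibly infinie.

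Next I would restrict attention to a bor\'elien $A\subset B^c$, so that $A\cap B=\varnothing$. Here the hypothesis $x\in B$ is used: the term $n=0$ contributes $\probin{x}{X_0\in A}=\indicator{x\in A}=0$, since $x\in B$ and $A\subset B^c$ are disjoints. For $n\geqs1$, Lemme~\ref{lem:processus_tue} applies and gives $\mu_n(A)=\int_A p^\dagger_n(x,y)\6y$. Substituting, and invoking Tonelli a last time to exchange the summation over $n$ with the integration over $y$ (all integrands non\-n\'egatifs), I obtain
\begin{align}
 G_B(x,A)
 &= \sum_{n=1}^\infty \int_A p^\dagger_n(x,y)\6y \\
 &= \int_A \Bigpar{\sum_{n=1}^\infty p^\dagger_n(x,y)}\6y
 = \int_A g_B(x,y)\6y\;,
\end{align}
which exhibits $g_B(x,\cdot)$, defined in~\eqref{eq:densite_GB}, as a densit\'e of $G_B(x,\cdot)$ on $B^c$.

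There is no genuine analytic obstacle in this argument: everything rests on the non-n\'egativit\'e of the quantities involved, which makes each interchange of sum and integral legitimate by Tonelli, with no integrability hypothesis required. The only points demanding care are the correct bookkeeping of the $n=0$ term, which is precisely where $x\in B$ enters and which explains why the density series starts at $n=1$, and the remark that the identification of the density is meaningful only at those $y\in B^c$ for which $\sum_{n\geqs1} p^\dagger_n(x,y)$ converges; elsewhere the density may equal $+\infty$, in accordance with $G_B(x,\cdot)$ being a possibly infinie measure.
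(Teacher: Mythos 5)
Your proof is correct and follows essentially the same route as the paper's: the same decomposition of $G_B(x,A)$ over time, the identification of each term $\probin{x}{X_n\in A,\,\tau_B>n}$ via Lemme~\ref{lem:processus_tue}, and Tonelli to justify the interchanges, with the hypothesis $x\in B$ entering exactly as in the paper to kill the $n=0$ term when $A\subset B^c$. The only cosmetic differences are the direction of the chain of equalities (you go from $G_B$ to the integral, the paper goes the other way) and your phrasing of the measure property as a countable sum of sub-probability measures, where the paper uses additivity of indicators; both are equivalent routine arguments.
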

\begin{proof}
Notons tout d'abord que $\indicator{X_n\in A_1\cup A_2} \leqs \indicator{X_n\in A_1} 
+ \indicator{X_n\in A_2}$, avec \'egalit\'e si $A_1\cap A_2 = \varnothing$. 
Ceci montre que $G_B(x,\cdot)$ est une mesure, puisque $G_B(x,A_1 \cup A_2) 
\leqs G_B(x,A_1) + G_B(x,A_2)$, avec \'egalit\'e si $A_1\cap A_2 = \varnothing$.

Soit maintenent $A\subset\cX$ un bor\'elien tel que $A\cap B = \varnothing$. 
Alors on a 
\begin{align}
 \int_A g_B(x,y) \6y 
 &= \sum_{n=1}^\infty \int_A p_n^\dagger(x,y) \6y \\
 &= \sum_{n=1}^\infty \bigprobin{x}{X_n\in A, \tau_B > n} \\
 &= \biggexpecin{x}{\sum_{n=1}^\infty \indicator{X_n\in A, \tau_B > n}} \\
 &= \biggexpecin{x}{\sum_{n=1}^{\tau_B-1} \indicator{X_n\in A}} 
 = G_B(x,A)\;.
\label{eq:demo_densite_muB} 
\end{align}
Pour obtenir la derni\`ere ligne, nous avons utilis\'e le fait que 
$A\cap B = \varnothing$ (et donc $x\notin A$). Ceci montre que $g_B(x,y)$ est bien 
la densit\'e de $G_B(x,\cdot)$, du moins sur $B^c$.
\end{proof}

Nous sommes maintenant en mesure de donner une d\'emonstration (au moins partielle) 
du Th\'eor\`eme~\ref{thm:Harris} (nous admettrons l'unicit\'e essentielle). 

\begin{proof}[\textit{D\'emonstration du Th\'eor\`eme~\ref{thm:Harris}}]
Le d\'emonstration est inspir\'ee de la construction du cas discret, 
reposant sur les mesures donn\'ees par~\eqref{eq:gamma(y)}. Fixons un 
$x\in\cX$, et soit $B_\eps = \cB_\eps(x)$ la boule de centre $x$ et de rayon 
$\eps>0$. Pour tout bor\'elien $A\subset\cX$, nous posons 
\begin{equation}
\label{eq:mu_eps} 
 \mu_\eps(A) = G_{B_\eps}(x,A)\;.
\end{equation}
Notre but est de montrer que $\mu_\eps$ converge vers une mesure invariante 
lorsque $\eps\to0$. 
Commen\c cons par remarquer que 
\begin{equation}
 \mu_\eps(B_\eps) = 1\;.
\end{equation} 
En effet, l'hypoth\`ese de r\'ecurrence de Harris implique que 
$\probin{x}{\tau_{B_\eps} < \infty} = 1$, et par cons\'equent la somme~\eqref{eq:def_GB} 
a presque s\^urement un nombre fini de termes, dont seul le dernier contribue 
\`a $\mu_\eps(B_\eps)$. D'autre part, $\mu_\eps$ admet sur $B^c$ la densit\'e donn\'ee par~\eqref{eq:densite_GB}.
Nous observons maintenant que pour tout $y\in B_\eps^c$, on a 
\begin{align}
 \mu_\eps(y) 
 &= p^\dagger(x,y) + \sum_{n=2}^\infty p_n^\dagger(x,y) \\
 &= p(x,y) + \sum_{n=2}^\infty \int_{B_\eps^c} p_{n-1}^\dagger(x,z) p^\dagger(z,y) \6z \\
 &= p(x,y) + \int_{B_\eps^c} \sum_{m=1}^\infty p_{m}^\dagger(x,z) p(z,y) \6z \\
 &= p(x,y) + \int_{B_\eps^c} \mu_\eps(z) p(z,y) \6z\;.
\end{align}
Nous avons utilis\'e \`a deux reprises le fait que $p^\dagger(z,y) = p(z,y)$ pour tout 
$z\in\cX$, puisque $y\in B_\eps^c$. Il suit que 
\begin{equation}
 \mu_\eps(y) - \int_{\cX} \mu_\eps(z) p(z,y) \6z
 = p(x,y) - \int_{B_\eps} \mu_\eps(z) p(z,y) \6z\;.
\end{equation} 
Or, comme $\mu_\eps(B_\eps) = 1$, $\mu_\eps$ est une mesure de probabilit\'e 
sur $B$. Par cons\'equent, 
\begin{equation}
\inf_{z\in B_\eps} p(z,y) \leqs 
\int_{B_\eps} \mu_\eps(z) p(z,y) \6z
\leqs \sup_{z\in B_\eps} p(z,y)\;.
\end{equation} 
Il suit que pour tout $x$ en lequel $x\mapsto p(x,y)$ est continue, on a 
\begin{equation}
 \lim_{\eps\to0} \int_{B_\eps} \mu_\eps(z) p(z,y) \6z = p(x,z)\;.
\end{equation} 
Par cons\'equent, on a pour ces $x$
\begin{equation}
 \lim_{\eps\to0} 
 \biggbrak{\mu_\eps(y) - \int_{\cX} \mu_\eps(z) p(z,y) \6z} = 0\;.
\end{equation} 
Ceci montre que la limite de $\mu_\eps$ lorsque $\eps\to0$ est invariante 
pour presque tout $x$ (en tout point de continuit\'e de $p$, mais la valeur 
de $\mu_\eps$ en des points isol\'es n'influe pas sur les probabilit\'es). 

Consid\'erons finalement le cas Harris--r\'ecurrent positif. Alors on a 
\begin{equation}
 \mu_\eps(\cX) = \biggexpecin{x}{\sum_{n=0}^{\tau_{B_\eps}-1} \indicator{X_n\in\cX}}
 = \bigexpecin{x}{\tau_{B_\eps}}\;.
\end{equation} 
On peut alors prendre 
\begin{equation}
 \pi_\eps(y) = \frac{1}{\bigexpecin{x}{\tau_{B_\eps}}} \mu_\eps(x)\;.
\end{equation} 
C'est la densit\'e d'une mesure de probabilit\'e, qui converge vers une probabilit\'e 
invariante lorsque $\eps\to0$. 
\end{proof}

\begin{remark}[Hypoth\`eses de r\'ecurrence]
\label{rem:hypo_recurrence} 
Dans la d\'emonstration, nous n'avons pas utilis\'e les hypoth\`eses de 
r\'ecurrence (positive) de Harris dans toute leur g\'en\'eralit\'e. En fait, 
nous avons seulement suppos\'e qu'il existe un point particulier $x\in\cX$ 
et un $\eps_0 > 0$ tels que pour tout $\eps\in]0,\eps_0[$, le temps de 
passage dans la boule de rayon $\eps$ centr\'ee en $x$, partant de $x$, 
est presque s\^urement fini, respectivement d'esp\'erance finie. 
\end{remark}

Le r\'esultat suivant permet d'exprimer des esp\'erances sous la 
probabilit\'e invariante en termes d'excursions vers un ensemble fix\'e $B$. 

\begin{proposition}[Esp\'erance de fonctions test]
\label{prop:Nummelin} 
Soit $(X_n)_{n\geqs0}$ une \CM\ Harris--r\'ecurrente positive, $\pi$ 
son unique probabilit\'e invariante, et $f:\cX\to\R_+$. Alors pour tout
ouvert $B\subset\cX$, on a 
\begin{equation}
\label{eq:Nummelin} 
 \pi(f) 
 = \int_B \pi(x) \biggexpecin{x}{\sum_{n=0}^{\tau_B-1} f(X_n)} \6x 
 = \int_B \pi(x) \biggexpecin{x}{\sum_{n=1}^{\tau_B} f(X_n)} \6x \;.
\end{equation} 
\end{proposition}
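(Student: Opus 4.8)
The plan is to introduce the measure $\nu$ obtained by averaging the potential kernel~\eqref{eq:def_GB} over the starting set $B$ with weight $\pi$,
\begin{equation}
 \nu(A) = \int_B \pi(x)\, G_B(x,A)\6x
 = \int_B \pi(x)\, \biggexpecin{x}{\sum_{n=0}^{\tau_B-1}\indicator{X_n\in A}}\6x\;,
\end{equation}
and to show that $\nu = \pi$. Once this is established, the first equality in~\eqref{eq:Nummelin} follows by writing a nonnegative $f$ as an increasing limit of simple functions and applying monotone convergence. On $B$ the identity is immediate: since $\tau_B \geqs 1$ and $X_1,\dots,X_{\tau_B-1}\in B^c$ by definition of $\tau_B$, the only surviving term in the sum for $A=C\subset B$ is $n=0$, contributing $\indicator{X_0=x\in C}$, so $\nu(C) = \int_B\pi(x)\indicator{x\in C}\6x = \pi(C)$.

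The whole difficulty is therefore concentrated on $B^c$. There, by~\eqref{eq:densite_GB}, $\nu$ has density $w(y) = \sum_{n\geqs1}\int_B\pi(x)p^\dagger_n(x,y)\6x$. Using the recursion~\eqref{eq:def_pkilledn} for the killed kernel together with $p^\dagger(z,y)=p(z,y)$ for $y\in B^c$, I would split off the $n=1$ term and resum to obtain the fixed-point relation
\begin{equation}
 w(y) = \int_B \pi(x) p(x,y)\6x + \int_{B^c} w(z) p(z,y)\6z
 \qquad \forall y\in B^c\;.
\end{equation}
On the other hand, the invariance of $\pi$~\eqref{def:proba_inv_cont}, split according to whether the previous point lies in $B$ or $B^c$, gives exactly the same relation with $w$ replaced by $\pi$.

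Subtracting, the function $e = \pi - w$ satisfies $e(y) = \int_{B^c} e(z) p^\dagger(z,y)\6z$ on $B^c$, and iterating~\eqref{eq:def_pkilledn} yields $e(y) = \int_{B^c} e(z) p^\dagger_n(z,y)\6z$ for every $n\geqs1$. Integrating the modulus over $B^c$ and using $\int_{B^c} p^\dagger_n(z,y)\6y = \probin{z}{\tau_B>n}$ (Lemma~\ref{lem:processus_tue}), one gets $\int_{B^c}\abs{e(y)}\6y \leqs \int_{B^c}\abs{e(z)}\probin{z}{\tau_B>n}\6z$. Since Harris recurrence forces $\probin{z}{\tau_B>n}\to0$, dominated convergence makes the right-hand side vanish, so $e=0$ almost everywhere on $B^c$ and hence $\nu=\pi$. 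The main obstacle is precisely the integrability needed to justify this dominated convergence, i.e.\ the finiteness of $\int_{B^c}\abs{e}$, equivalently $\nu(B^c) = \int_B\pi(x)(\expecin{x}{\tau_B}-1)\6x < \infty$; this is where positive Harris recurrence must be used quantitatively (it is Kac's identity $\nu(\cX) = \int_B\pi(x)\expecin{x}{\tau_B}\6x = 1$, already morally encoded in the construction in the proof of Theorem~\ref{thm:Harris}). A clean way to handle part of it is to first prove only $\nu\geqs\pi$, which needs just $e^+\leqs\pi$ and hence no a priori integrability, deduce that $\nu$ is sub-invariant, and then use finiteness to upgrade sub-invariance to equality.

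For the second equality, I would note that the two sums differ by $\sum_{n=1}^{\tau_B} f(X_n) - \sum_{n=0}^{\tau_B-1} f(X_n) = f(X_{\tau_B}) - f(X_0)$, so the two right-hand sides of~\eqref{eq:Nummelin} differ by $\int_B\pi(x)\expecin{x}{f(X_{\tau_B})}\6x - \int_B\pi(x)f(x)\6x$, whose second term is $\pi(f\indicator{B})$. For the first term, the already established invariance $\nu\cP=\nu$ (with $\nu=\pi$), evaluated on a set $A=C\subset B$, reduces the computation of $\nu\cP-\nu$ exactly to the identity $\int_B\pi(x)\probin{x}{X_{\tau_B}\in C}\6x = \pi(C)$ for all $C\subset B$, i.e.\ that $\pi$ restricted to $B$ is invariant for the first-return (trace) chain. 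Integrating this against $f$ gives $\int_B\pi(x)\expecin{x}{f(X_{\tau_B})}\6x = \pi(f\indicator{B})$, the two terms cancel, and the second equality follows.
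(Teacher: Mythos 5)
Your overall architecture is close to the paper's (killed kernel, fixed-point relations, Harris recurrence to kill a remainder), and your treatment of the \emph{second} equality via invariance of $\pi\vert_B$ under the trace chain is actually more careful than the paper's one-line remark. The genuine gap is in how you close the \emph{first} equality. The half you can prove is exactly the one you describe: since $e^+=(\pi-w)^+\leqs\pi$ is automatically integrable, iterating $e= e P^\dagger$ and dominated convergence give $e^+=0$, i.e.\ $\nu\geqs\pi$. But your patch for the other half is circular. From $\nu\geqs\pi$ and $\nu\cP=\nu$ on $B^c$ you get $\nu\cP\geqs\pi\cP=\pi=\nu$ on $B$, so $\nu$ is \emph{super}-invariant ($\nu\cP\geqs\nu$), not sub-invariant; and upgrading $\nu\cP\geqs\nu$ to equality by mass conservation requires $\nu(\cX)<\infty$. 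Now $\nu(\cX)=\int_B\pi(x)\expecin{x}{\tau_B}\6x$, so this finiteness \emph{is} Kac's identity, i.e.\ the proposition being proved with $f\equiv1$: positive Harris recurrence only gives pointwise finiteness of $\expecin{x}{\tau_B}$, not its $\pi$-integrability, and the construction in the proof of Theorem~\ref{thm:Harris} controls occupation measures of small balls started at their centre, not this $\pi$-weighted quantity. So as written you establish $\pi\leqs w$ on $B^c$ but never $w\leqs\pi$.

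The missing direction is the cheap one, and it is obtained by iterating the invariance relation rather than subtracting. For $y\in B^c$, \eqref{def:proba_inv_cont} reads $\pi(y)=\int_B\pi(x)p^\dagger(x,y)\6x+\int_{B^c}\pi(z)p^\dagger(z,y)\6z$; substituting this relation into itself and using \eqref{eq:def_pkilledn} gives, for every $N\geqs1$,
\begin{equation}
 \pi(y) \;=\; \sum_{n=1}^{N}\int_B \pi(x)\,p^\dagger_{n}(x,y)\6x
 \;+\; \int_{B^c}\pi(z)\,p^\dagger_{N}(z,y)\6z
 \;\geqs\; \sum_{n=1}^{N}\int_B \pi(x)\,p^\dagger_{n}(x,y)\6x\;,
\end{equation}
because the remainder is nonnegative; letting $N\to\infty$ yields $w\leqs\pi$ on $B^c$ with no integrability hypothesis. (This is precisely the paper's induction \eqref{eq:rec_pi}.) Once you have $0\leqs e\leqs\pi$, your dominated-convergence argument applies verbatim, using $\int_{B^c}p_n^\dagger(z,y)\6y=\probin{z}{\tau_B>n}\to0$ from Lemma~\ref{lem:processus_tue} and Harris recurrence, and gives $e=0$. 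With this correction your proof essentially merges into the paper's: the paper proves $w\leqs\pi$ by the induction above and then kills the remainder by the same dominated-convergence-under-$\pi$ step that you use, while your added value is the rigorous justification of the second equality, which the paper glosses over.
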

\begin{proof}
Montrons par r\'ecurrence que pour tout $N\in\N$, on a 
\begin{equation}
 \pi(x) = \pi(x)\indicator{x\in B}
 + \sum_{n=1}^N \int_B \pi(y) p_n^\dagger(y,x)\6y 
 + \int_{\cX} \pi(y) p^\dagger_{N+1}(y,x)\6y\;.
\label{eq:rec_pi} 
\end{equation} 
L'initialisation suit de la d\'ecomposition 
$\pi(x) = \pi(x)\indicator{x\in B} + \pi(x)\indicator{x\in B^c}$ et du fait que 
\begin{equation}
 \pi(x)\indicator{x\in B^c}
 = (\pi\cP)(x)\indicator{x\in B^c}
 = \int_\cX \pi(y)p(y,x)\indicator{x\in B^c} \6y 
 = \int_\cX \pi(y)p^\dagger(y,x) \6y\;. 
\end{equation} 
L'h\'er\'edit\'e vient de 
\begin{align}
\int_\cX \pi(y) p^\dagger_{N+1}(y,x)\6y 
&= \int_\cX \biggbrak{\pi(y)\indicator{y\in B} 
+ \int_\cX \pi(z)p^\dagger(z,y)\6z} p^\dagger_{N+1}(y,x)\6y \\
&= \int_B \pi(y) p_{N+1}^\dagger(y,x)\6y 
 + \int_{\cX} \pi(y) p^\dagger_{N+2}(y,x)\6y\;.
\end{align}
Faisons alors tendre $N$ vers l'infini dans~\eqref{eq:rec_pi}. Le Lemme~\ref{lem:processus_tue}
montre que 
\begin{equation}
 \lim_{N\to\infty} \int_A p^\dagger_{N+1}(y,x)\6x 
 = \lim_{N\to\infty} \bigprobin{y}{X_{N+1}\in A, \tau_B > N+1} 
 = 0
\end{equation} 
pour tout bor\'elien $A$, par r\'ecurrence du processus. Il suit que $p^\dagger_{N+1}(y,x)$
tend vers $0$, d'o\`u 
\begin{equation}
 \pi(x) = \pi(x)\indicator{x\in B}
 + \int_B \pi(y) g_B(y,x) \6y\;, 
\end{equation} 
o\`u $g_B$ est la densit\'e~\eqref{eq:densite_GB} du noyau de potentiel $G_B$. 
En int\'egrant cette relation contre $f$, il vient, en permutant les variables $x$ et $y$, 
\begin{align}
 \pi(f) 
 &= \int_B \pi(x)f(x) \6x 
 + \int_{B^c} \int_B \pi(y) g_B(y,x)\6y f(x) \6x \\
 &= \int_B \pi(x) \biggbrak{f(x) + \int_{B^c} g_B(x,y) f(y)\6y} \6x\;.
\end{align}
Or on a 
\begin{align}
 f(x) + \int_{B^c} g_B(x,y) f(y)\6y
 = \bigexpecin{x}{f(X_0)} + \biggexpecin{x}{\sum_{n=1}^{\tau_B-1}f(X_n)}\;.
\end{align}
Ceci montre la premi\`ere \'egalit\'e dans~\eqref{eq:Nummelin}. La seconde 
\'egalit\'e vient du fait que le terme $n=0$ de la premi\`ere somme est \'egal 
au terme $n=\tau_B$ de la seconde. 
\end{proof}

\begin{remark}[Lien entre probabilit\'e invariante et temps de r\'ecurrence moyen]
En prenant $f = 1$ dans~\eqref{eq:Nummelin} avec $B = B_\eps = \cB_\eps(x)$, il vient 
\begin{equation}
 \int_B \pi(x) \bigexpecin{x}{\tau_B} \6x = 1\;.
\end{equation}  
Il suit de~\eqref{def:proba_inv_cont} que $\pi$ est continue presque partout 
(en tout point de continuit\'e de $y\mapsto p(x,y)$). En ces points, le th\'eor\`eme 
de la valeur moyenne implique 
\begin{equation}
\pi(x) 
 = \lim_{\eps\to0} \frac{1}{\abs{\cB_\eps(x)}\bigexpecin{x}{\tau_{B_\eps}}}\;.
\end{equation} 
C'est l'analogue de la relation ~\eqref{eq:piEtau} du cas discret. 
\end{remark}

\begin{example}[Processus auto-r\'egressif AR(1)]
Nous avons d\'ej\`a observ\'e que le processus AR(1) \'etait Harris--r\'ecurrent, 
puisque sa densit\'e de transition est minor\'ee par une constante strictement 
positive sur tout compact. Comme les $Y_n$ sont Gaussiennes, et que toute somme 
de Gaussiennes est encore Gaussienne, on s'attend \`a avoir une probabilit\'e 
invariante Gaussienne. En fait, on a la relation de r\'ecurrence 
\begin{equation}
 \variance(X_{n+1}) = a^2 \variance(X_n) + \sigma^2\;.
\end{equation} 
On v\'erifie par r\'ecurrence que 
\begin{equation}
 \variance(X_n) = a^{2n}\variance(X_0) + \frac{\sigma^2}{1-a^2}\;. 
\end{equation} 
Ainsi, si $\abs{a} < 1$, la loi de $X_n$ converge vers une loi normale centr\'ee 
de variance $\sigma^2/(1-a^2)$. C'est aussi la probabilit\'e invariante. On notera 
que dans le cas $a=1$, on obtient la marche al\'eatoire \`a pas Gaussiens. Dans ce cas, 
la loi de $X_n$ ne converge pas, et la \CM\ n'est pas Harris--r\'ecurrente positive. 
\end{example}


\section{Fonctions de Lyapounov et vitesse de convergence}
\label{sec:cont_conv} 

Dans cette section, nous consid\'erons une \CM\ sur $\cX\subset\R^d$, admettant 
une densit\'e de transition $p$ continue par morceaux. Par simplicit\'e, nous 
la supposerons \'egalement irr\'eductible, m\^eme si certains r\'esultats peuvent 
\^etre \'etendus \`a des situations plus g\'en\'erales. 

Il s'av\`ere que l'approche par fonctions de Lyapounov \`a l'\'etude de propri\'et\'es 
de r\'ecurrence et de convergence des lois se transpose assez facilement au cas 
d'un espace continu. La d\'efinition de fonction de Lyapounov est la m\^eme que dans 
le cas discret. 

\begin{definition}[Fonction de Lyapounov]
Une \defwd{fonction de Lyapounov} est une fonction 
$V: \cX\to \R_+ = [0,\infty[$ satisfaisant
\begin{equation}
\label{eq:gen} 
 V(x) \to +\infty 
 \qquad \text{pour $\norm{x}\to\infty$\;.}
\end{equation} 
\end{definition}

La d\'efinition de g\'en\'erateur s'adapte aussi tr\`es facilement au cas continu. 

\begin{definition}[G\'en\'erateur]
Le \defwd{g\'en\'erateur} $\cL$ d'une \CM\ sur un ensemble ouvert $\cX\subset\R^d$ est 
d\'efini, pour toute fonction $f:\cX\to\R$, par 
\begin{equation}
 (\cL f)(x) 
 = (\cP f)(x) - f(x) 
 = \int_{\cX} p(x,y)f(y)\6y - f(x)\;.
\end{equation} 
\end{definition}

Les trois r\'esultats suivants, concernant la formule de Dynkin, la croissance sous-exponen\-tielle 
et la non-explosion, restent inchang\'es par rapport au cas discret, avec essentiellement 
les m\^emes d\'emonstrations. Nous r\'ep\'etons donc simplement ici leurs \'enonc\'es. 

\begin{proposition}[Formule de Dynkin]
\label{prop:Dynkin_cont} 
Pour toute fonction de Lyapounov $V$, on a 
\begin{equation}
\label{eq:Dynkin_cont} 
 \bigexpecin{x}{V(X_n)} 
 = V(x) + \biggexpecin{x}{\sum_{m=0}^{n-1} (\cL V)(X_m)}\;, 
\end{equation} 
De plus, si $\tau$ est un temps d'arr\^et tel que $\expecin{x}{\tau} < \infty$, alors 
\begin{equation}
 \bigexpecin{x}{V(X_\tau)} 
 = V(x) + \biggexpecin{x}{\sum_{m=0}^{\tau-1} (\cL V)(X_m)}\;.
\end{equation} 
\end{proposition}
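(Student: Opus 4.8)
The plan is to mirror, essentially verbatim, the proof of the discrete Dynkin formula (Proposition~\ref{prop:Dynkin}), replacing sums over $\cX$ by integrals against the transition density $p$ and invoking the continuous Markov property~\eqref{eq:Markov_cont}. The two assertions are handled separately: first the deterministic-time identity~\eqref{eq:Dynkin_cont}, then the stopping-time version.

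First I would establish~\eqref{eq:Dynkin_cont} by induction on $n$. The base case $n=1$ is merely the definition of the generator: since $\expecin{x}{V(X_1)} = (\cP V)(x)$ and $\cL = \cP - \one$, we have $(\cP V)(x) = V(x) + (\cL V)(x)$, which is~\eqref{eq:Dynkin_cont} at rank $1$. For the inductive step I would write
\begin{equation}
 \bigexpecin{x}{V(X_{n+1})}
 = \bigexpecin{x}{V(X_n)}
 + \bigexpecin{x}{V(X_{n+1}) - V(X_n)}
\end{equation}
and evaluate the last term by conditioning on the $\sigma$-algebra $\cF_n$ generated by $(X_0,\dots,X_n)$. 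The Markov property gives $\bigexpecin{x}{V(X_{n+1}) - V(X_n) \mid \cF_n} = \bigexpecin{X_n}{V(X_1) - V(X_0)} = (\cL V)(X_n)$, the last equality being the base case applied with $X_n$ as starting point. Taking expectations and inserting the induction hypothesis yields~\eqref{eq:Dynkin_cont} at rank $n+1$. (Alternatively one may expand $\expecin{x}{V(X_{n+1})}$ directly through the Chapman--Kolmogorov relation, using Tonelli to interchange the integral over $y$ with the integration against $p^n(x,\cdot)$; positivity of $p$ makes the interchange legitimate.)

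For the stopping-time formula I would introduce the process
\begin{equation}
 M_n = V(X_n) - V(x) - \sum_{m=0}^{n-1} (\cL V)(X_m)
\end{equation}
and check that $(M_n)_{n\geqs0}$ is a mean-zero $(\cF_n)$-martingale: the zero-mean property is exactly~\eqref{eq:Dynkin_cont}, while $\bigexpecin{x}{M_{n+1} - M_n \mid \cF_n} = (\cL V)(X_n) - (\cL V)(X_n) = 0$ by the same conditioning argument. Applying optional stopping to the bounded stopping time $\tau \wedge T$ gives $\expecin{x}{M_{\tau \wedge T}} = 0$, that is,
\begin{equation}
 \bigexpecin{x}{V(X_{\tau \wedge T})}
 = V(x) + \biggexpecin{x}{\sum_{m=0}^{\tau \wedge T - 1} (\cL V)(X_m)}\;.
\end{equation}
Since $\expecin{x}{\tau} < \infty$ forces $\tau < \infty$ almost surely, $\tau \wedge T \to \tau$, so $V(X_{\tau \wedge T}) \to V(X_\tau)$ and the partial sums converge pointwise to $\sum_{m=0}^{\tau-1}(\cL V)(X_m)$.

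The hard part will be justifying the interchange of limit and expectation as $T\to\infty$, which is not automatic because $V$ and $\cL V$ need not be bounded. I would control it by combining $\expecin{x}{\tau} < \infty$ with a growth bound of the form $\abs{(\cL V)(x)} \leqs c V(x) + d$: the sub-exponential growth estimate (the continuous analogue of Theorem~\ref{thm:sous_exp}, with the same proof) bounds $\expecin{x}{V(X_m)}$, which furnishes the domination needed to pass to the limit in the sum, while a uniform-integrability argument disposes of the boundary term $\expecin{x}{V(X_{\tau\wedge T})}$. Under such a hypothesis the identity passes to the limit and gives the stated stopping-time formula; without any integrability control on $\cL V$ the interchange can genuinely fail, so this is the step where care is required.
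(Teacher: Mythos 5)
Your proof of~\eqref{eq:Dynkin_cont} is the paper's own: the text handles the continuous case by stating that the discrete proof of Proposition~\ref{prop:Dynkin} carries over unchanged, and that proof is exactly your induction (base case $n=1$ from the definition of $\cL$, heredity by conditioning on $\cF_n$ and applying the Markov property). So on the deterministic-time identity you and the paper coincide.

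For the stopping-time identity you are on your own, and rightly so: neither the discrete nor the continuous chapter proves it --- it is asserted without argument. Your martingale $M_n = V(X_n) - V(x) - \sum_{m=0}^{n-1}(\cL V)(X_m)$ with optional stopping at $\tau\wedge T$ is the standard route, and your worry about the limit $T\to\infty$ is not excess caution: with only $\expecin{x}{\tau}<\infty$ the formula is actually false. Consider the irreducible, positive recurrent chain on $\set{0}\cup\setsuch{2^k}{k\geqs0}$ with $p_{0,1}=1$ and $p_{2^k,2^{k+1}}=p_{2^k,0}=\frac12$, take $V(x)=x$, $x=1$ and $\tau=\tau_0$. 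Then $(\cL V)(2^k)=0$ and $(\cL V)(0)=1$, so $\expecin{1}{\tau_0}=2<\infty$ and the right-hand side equals $V(1)+0=1$, whereas $\expecin{1}{V(X_{\tau_0})}=V(0)=0$. Note that here $\cL V$ is even bounded and $\expecin{1}{V(X_n)}$ grows only linearly, so the domination you sketch (a growth bound on $\cL V$ plus the sub-exponential growth theorem) cannot by itself dispose of the boundary term: one computes $\expecin{1}{V(X_{\tau\wedge T})}=1$ for every $T$, so uniform integrability of $(V(X_{\tau\wedge T}))_{T\geqs0}$ is precisely what fails, and it must be assumed outright (or replaced by a bounded-jump hypothesis $\abs{V(X_{n+1})-V(X_n)}\leqs C$), not derived from drift-type bounds. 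Smoothing each point of this example into a narrow bump gives a version with a transition density, so the continuous statement is affected equally. Your proposal is therefore correct for the first identity, and correct for the second under the additional hypothesis you flag; the flag itself points at a real gap in the proposition as stated. It is worth observing that everywhere the paper actually invokes the formula (Theorems~\ref{thm:rec_pos} and~\ref{thm:rec_pos_cont}) it does so for the bounded stopping time $\tau_K\wedge T$, where your optional-stopping step already concludes and no interchange of limits is needed.
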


\begin{theorem}[Croissance sous-exponentielle]
\label{thm:sous_exp_cont} 
Supposons qu'il existe une fonction de Lyapounov $V$ et $c > 0$, $d\geqs0$ tels que 
\begin{equation}
 (\cL V)(x) \leqs c V(x) + d
 \qquad \forall x\in\cX\;.
\end{equation} 
Alors on a 
\begin{equation}
 \bigexpecin{x}{V(X_n)} \leqs (1+c)^n V(x) + \frac{(1+c)^n-1}{c}d
\end{equation} 
pour tout $n\in\N$ et tout $x\in\cX$. 
\end{theorem}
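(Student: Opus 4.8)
The plan is to reproduce, essentially verbatim, the argument used for the discrete analogue (Theorem~\ref{thm:sous_exp}), since the Dynkin formula is available unchanged in the continuous setting as Proposition~\ref{prop:Dynkin_cont}. First I would set $f_n(x) = \bigexpecin{x}{V(X_n)}$ and apply Dynkin's formula, which rewrites the increment of $V$ along the chain as a sum of the generator evaluated along the trajectory. Substituting the drift hypothesis $(\cL V)(x) \leqs cV(x) + d$ into the resulting expression then yields the recursive inequality
\begin{equation}
 f_n(x) \leqs V(x) + c \sum_{m=0}^{n-1} f_m(x) + nd\;,
\end{equation}
with base value $f_0(x) = V(x)$. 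This single inequality contains all the information needed; the rest is a controlled induction.

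Next I would treat the case $d = 0$ first, where the recursion reduces to $f_n(x) \leqs V(x) + c\sum_{m=0}^{n-1} f_m(x)$, and prove $f_n(x) \leqs (1+c)^n V(x)$ by induction on $n$: the inductive hypothesis bounds each $f_m$ with $m<n$ by $(1+c)^m V(x)$, and summing the resulting geometric series closes the estimate. For the general case $d > 0$ I would posit a bound of the form $f_n(x) \leqs (1+c)^n V(x) + k_n d$; plugging this into the recursion forces the auxiliary sequence to satisfy
\begin{equation}
 k_n = c\sum_{m=0}^{n-1} k_m + n\;,
 \qquad
 k_0 = 0\;,
\end{equation}
and a further elementary induction verifies $k_n = \frac1c\Bigpar{(1+c)^n - 1}$, which is exactly the constant appearing in the statement.

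The only point that genuinely requires care, beyond the routine bookkeeping, is justifying that the Dynkin formula applies at each step, i.e.\ that $f_n(x) = \bigexpecin{x}{V(X_n)}$ is finite so that the expectations are well defined and the sum $\bigexpecin{x}{\sum_{m=0}^{n-1}(\cL V)(X_m)}$ may be split term by term. This is not an extra hypothesis but is produced automatically by the induction itself: since $V(x) < \infty$ and every $f_m$ with $m < n$ is finite by the inductive hypothesis, the recursive inequality above immediately gives $f_n(x) < \infty$. Thus the main ``obstacle'' is purely a matter of recording finiteness at each stage; once that is in place, the continuous proof is identical to the discrete one.
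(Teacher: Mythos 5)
Your proposal is correct and follows exactly the route the paper intends: the paper states that Theorem~\ref{thm:sous_exp_cont} has \emph{essentially the same proof} as its discrete counterpart (Theorem~\ref{thm:sous_exp}), and your argument reproduces that proof verbatim --- Dynkin's formula, the recursive inequality $f_n(x) \leqs V(x) + c\sum_{m=0}^{n-1} f_m(x) + nd$, and the two inductions yielding $k_n = \frac1c\bigl((1+c)^n-1\bigr)$. Your added remark on propagating finiteness of $f_n(x)$ through the induction is a minor refinement the paper omits, not a departure from its approach.
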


\begin{theorem}[Non-explosion]
\label{thm:non_explosion_cont} 
Supposons qu'il existe $d \geqs 0$ et un ensemble born\'e $K\subset\cX$ tel que 
pour tout $x\in\cX$, on ait 
\begin{equation}
 (\cL V)(x) 
 \leqs d \indicator{K}(x) 
 = 
 \begin{cases}
  d    & \text{si $x\in K$\;,} \\
  0    & \text{sinon\;.}
 \end{cases}
\end{equation} 
Alors 
\begin{equation}
 \biggprobin{x}{\lim_{n\to\infty} \norm{X_n} = \infty} = 0
 \qquad \forall x\in\cX\;.
\end{equation} 
\end{theorem}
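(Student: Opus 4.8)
The plan is to follow the structure sketched for Theorem~\ref{thm:non_explosion} in the discrete case, but to treat the full hypothesis with $d>0$ and $K$ merely bounded. The essential new point (the sketch only handled $(\cL V)(x)\leqs0$ everywhere) is that on $K$ the drift may be positive, so the supermartingale property of $V(X_n)$ survives only \emph{away} from $K$. The key geometric fact I will use repeatedly is that, $V$ being a Lyapunov function, its sublevel sets $\setsuch{x\in\cX}{V(x)\leqs M}$ are bounded; hence if $V(X_n)$ stays bounded along a trajectory, so does $\norm{X_n}$, and conversely $\norm{X_n}\to\infty$ forces $V(X_n)\to\infty$.

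First I would establish the core lemma: for every $y\in\cX\setminus K$,
\begin{equation}
 h(y) := \probin{y}{\tau_K = \infty,\ \lim_{n\to\infty}\norm{X_n}=\infty} = 0,
\end{equation}
where $\tau_K = \inf\setsuch{n\geqs1}{X_n\in K}$. To see this, apply the Dynkin formula (Proposition~\ref{prop:Dynkin_cont}) to the stopping time $\tau_K\wedge n$, which has finite expectation since it is at most $n$. For $0\leqs m<\tau_K$ one has $X_m\notin K$, whence $(\cL V)(X_m)\leqs0$ by hypothesis, so
\begin{equation}
 \bigexpecin{y}{V(X_{\tau_K\wedge n})} = V(y) + \biggexpecin{y}{\sum_{m=0}^{\tau_K\wedge n-1}(\cL V)(X_m)} \leqs V(y).
\end{equation}
Since $V\geqs0$, Fatou's lemma gives $\bigexpecin{y}{\liminf_n V(X_{\tau_K\wedge n})}\leqs V(y)<\infty$, so $\liminf_n V(X_{\tau_K\wedge n})$ is finite almost surely. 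On $\set{\tau_K=\infty}$ one has $V(X_{\tau_K\wedge n})=V(X_n)$, and on $\set{\tau_K=\infty}\cap\set{\norm{X_n}\to\infty}$ the Lyapunov property forces $V(X_n)\to\infty$; comparing with the almost sure finiteness of the $\liminf$ shows this event is null, i.e. $h(y)=0$.

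Next I would globalize by a last-exit decomposition. Because $K$ is bounded, on the event $\Omega_1 = \bigsetsuch{\omega}{\lim_{n\to\infty}\norm{X_n(\omega)}=\infty}$ the chain visits $K$ only finitely often, so $\Omega_1 \subseteq \bigcup_{N\geqs0}E_N$ with $E_N = \bigsetsuch{\omega}{X_n\notin K\ \forall n\geqs N}$. For each fixed $N$, the event $\Omega_1\cap E_N$ depends only on $X_N,X_{N+1},\dots$ (escape to infinity is a tail event), so the Markov property at time $N$ yields
\begin{equation}
 \probin{x}{\Omega_1\cap E_N} = \bigexpecin{x}{h(X_N)} = 0,
\end{equation}
since $h\equiv0$ on $\cX\setminus K$ by the lemma, while the post-$N$ event is empty when $X_N\in K$. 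As $\Omega_1 = \bigcup_N(\Omega_1\cap E_N)$ is a countable union of null events, it is null, which is the claim.

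I expect the main obstacle to be the bookkeeping around the positive drift on $K$: the naive bound $\bigexpecin{x}{V(X_n)}\leqs V(x)+nd$ coming from the unrestricted Dynkin formula grows with $n$ and is useless, so one genuinely must isolate the excursions outside $K$ as above. A secondary technical point is justifying the Markov step, namely recognizing $\set{\norm{X_n}\to\infty}$ as a tail event measurable with respect to the post-$N$ process and checking that $h(X_N)$ is the correct conditional escape probability; both are routine once the last-exit decomposition is in place.
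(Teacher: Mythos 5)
Your proof is correct, and it actually establishes more than the argument the paper provides. For this continuous-space theorem the paper simply refers back to the discrete case (Th\'eor\`eme~\ref{thm:non_explosion}), whose proof is explicitly labelled as an \emph{id\'ee} and only treats the case $d=0$: there $(\cL V)(x)\leqs 0$ holds globally, the unstopped Dynkin formula gives $\bigexpecin{x}{V(X_n)}\leqs V(x)$ for all $n$, and splitting this expectation over $\Omega_1$ and its complement forces $\probin{x}{\Omega_1}=0$. The case $d>0$ with $K$ bounded is never addressed there, and, as you rightly observe, the naive bound $\bigexpecin{x}{V(X_n)}\leqs V(x)+nd$ is useless. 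Your two extra ingredients are precisely what completes the argument: (i) the excursion lemma, i.e.\ the stopped estimate $\bigexpecin{y}{V(X_{\tau_K\wedge n})}\leqs V(y)$ for $y\notin K$ (note that the term $m=0$ of Dynkin's sum requires $X_0=y\notin K$, which your standing assumption provides), followed by Fatou; and (ii) the last-exit decomposition $\Omega_1=\bigcup_{N}(\Omega_1\cap E_N)$ plus the Markov property at time $N$, which reduces everything to the lemma because on $E_N$ the post-$N$ trajectory never meets $K$. Two minor points of rigour: the identity should read $\probin{x}{\Omega_1\cap E_N}=\bigexpecin{x}{\indexfct{X_N\notin K}\,h(X_N)}$ rather than $\bigexpecin{x}{h(X_N)}$, since $h$ is only defined off $K$ --- your parenthetical remark about the post-$N$ event being empty when $X_N\in K$ shows you saw this, and the conclusion is unchanged; and the Markov property is here applied to an infinite-horizon event, which requires the usual monotone-class extension, a routine step that you correctly flag as such.
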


Le r\'esultat de r\'ecurrence positive et sa d\'emonstration doivent \^etre tr\`es l\'eg\`erement adapt\'es. La principale diff\'erence est que l'ensemble $K$ doit \^etre un ouvert born\'e. 
On pourrait \'egalement prendre un compact d'int\'erieur non vide, le point important \'etant que $K$ doit \^etre born\'e et contenir un ensemble ouvert, afin de pouvoir appliquer la Harris--r\'ecurrence. 

\begin{theorem}[R\'ecurrence positive]
\label{thm:rec_pos_cont} 
Soit $f: \cX\to[1,\infty[$ et $V$ une fonction de Lyapounov telle que 
\begin{equation}
 (\cL V)(x) \leqs -cf(x) + d\indicator{K}(x) 
 \qquad \forall x\in \cX\;,
\end{equation} 
pour un ouvert born\'e $K\subset \cX$ et des constantes $c>0$ et $d\geqs0$. 
Supposons de plus qu'il existe $\delta > 0$ tels que 
\begin{equation}
\label{eq:proba_lb_cont}
p(x,y) \geqs \delta    
\qquad \forall x, y\in K\;.
\end{equation} 
Alors la \CM\ est Harris--r\'ecurrente positive, et admet donc une mesure de 
probabilit\'e invariante $\pi$. De plus, 
\begin{equation}
 \pi(f) < \infty\;.
\end{equation} 
\end{theorem}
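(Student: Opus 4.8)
The plan is to transpose the discrete proof of Théorème~\ref{thm:rec_pos} to the continuous setting, replacing the single recurrent point by a small ball and using the minorisation~\eqref{eq:proba_lb_cont} to play the role of the fact that ``a recurrent chain on a finite set is positive recurrent''. Throughout I assume, as is natural for a Lyapunov function, that $V$ is locally bounded, so that $V_K := \sup_{x\in K} V(x) < \infty$ (automatic if $V$ is continuous, since $K$ is bounded). First I would control the expected hitting time of $K$. Fixing $x\in\cX$ and applying the formule de Dynkin (Proposition~\ref{prop:Dynkin_cont}) to the bounded stopping time $\tau_K\wedge T$, and using that $V\geqs0$ while $X_m\notin K$ for $1\leqs m<\tau_K$ (so the term $d\indicator{K}(X_m)$ contributes at most once), I get
\begin{equation}
 0 \leqs \bigexpecin{x}{V(X_{\tau_K\wedge T})}
 \leqs V(x) - c\biggexpecin{x}{\sum_{m=0}^{\tau_K\wedge T-1} f(X_m)} + d\;.
\end{equation}
Since $f\geqs1$, this gives $\expecin{x}{\tau_K\wedge T}\leqs (V(x)+d)/c$ uniformly in $T$, and monotone convergence yields $\expecin{x}{\tau_K}\leqs(V(x)+d)/c<\infty$ for every $x$, in particular $\expecin{x}{\tau_K}\leqs(V_K+d)/c$ for $x\in K$.

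Next I would establish positive recurrence of a small ball. Choose an interior point $x^*$ of $K$ and $\eps_0>0$ with $B_\eps:=\cB_\eps(x^*)\subset K$ for $\eps<\eps_0$. Let $\sigma_1=\tau_K$ and $\sigma_{k+1}=\inf\setsuch{n>\sigma_k}{X_n\in K}$ be the successive return times to $K$; these are a.s.\ finite by the previous step, so the embedded chain $Y_k=X_{\sigma_k}$ on $K$ is well defined. The minorisation~\eqref{eq:proba_lb_cont} gives, conditionally on the past,
\begin{equation}
 \bigprobin{x}{X_{\sigma_k+1}\in B_\eps} \geqs \delta\abs{B_\eps} =: p_\eps > 0\;,
\end{equation}
a bound independent of the current position in $K$. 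Hence the number $N$ of $K$-visits before the chain lands in $B_\eps$ is dominated by a geometric variable of parameter $p_\eps$, so $\expecin{x^*}{N}\leqs 1/p_\eps$, and combining this with the strong Markov property and the increment bound $\expecin{Y_k}{\sigma_{k+1}-\sigma_k}\leqs(V_K+d)/c$ (a Wald-type estimate) gives $\expecin{x^*}{\tau_{B_\eps}}\leqs (V_K+d)/(c\,p_\eps)<\infty$ for every $\eps<\eps_0$. By Remarque~\ref{rem:hypo_recurrence}, this single-point condition is exactly what the proof of Théorème~\ref{thm:Harris} requires, so the chain admits an essentially unique invariant probability $\pi$ with $\pi(1+V)<\infty$; together with irreducibility, the finite expected hitting times furnished by the drift give Harris--récurrence positive in the full sense.

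Finally, the bound $\pi(f)<\infty$ follows from the excursion representation. By Proposition~\ref{prop:Nummelin} applied to the open set $B=B_\eps$,
\begin{equation}
 \pi(f) = \int_{B_\eps}\pi(x)\biggexpecin{x}{\sum_{n=0}^{\tau_{B_\eps}-1} f(X_n)}\6x\;.
\end{equation}
Running Dynkin once more with $\tau_{B_\eps}\wedge T$ from $x\in B_\eps$, the term $d\indicator{K}$ is now bounded by $d$ times the number of $K$-visits before $\tau_{B_\eps}$, whose expectation is $\leqs 1/p_\eps$; after letting $T\to\infty$ this yields $\expecin{x}{\sum_{n=0}^{\tau_{B_\eps}-1} f(X_n)}\leqs (V(x)+d/p_\eps)/c$. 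Since $V(x)\leqs V_K$ on $B_\eps$ and $\pi(B_\eps)\leqs1$, I obtain $\pi(f)\leqs(V_K+d/p_\eps)/c<\infty$. The main obstacle is the middle step: in the discrete case the trace of the chain on a finite $K$ is automatically positive recurrent, whereas here $K$ is uncountable and no such shortcut exists. The minorisation~\eqref{eq:proba_lb_cont} is precisely the substitute, but turning ``every visit to $K$ carries probability $\geqs p_\eps$ of immediately entering $B_\eps$'' into a genuine geometric bound on $N$ demands a careful application of the strong Markov property at the successive $K$-return times, together with the bookkeeping of how the $d\indicator{K}$ contribution accumulates over excursions — the very point that keeps the constant in $\pi(f)$ finite rather than merely guaranteeing that $\pi$ exists.
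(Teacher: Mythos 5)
Your construction follows the paper's own proof quite closely: the Dynkin bound $\expecin{x}{\tau_K}\leqs(V(x)+d)/c$ obtained from $\tau_K\wedge T$, the trace chain on $K$ combined with the minoration~\eqref{eq:proba_lb_cont} to dominate the number of visits to $K$ before entering a small ball $B_\eps\subset K$ by a geometric variable of parameter $p_\eps=\delta\abs{B_\eps}$, Remarque~\ref{rem:hypo_recurrence} to produce the invariant probability $\pi$, and Proposition~\ref{prop:Nummelin} plus Dynkin to bound $\pi(f)$. These parts are correct (including your explicit local-boundedness assumption $V_K=\sup_{x\in K}V(x)<\infty$, which the paper uses implicitly), and your Wald-type bookkeeping is if anything more careful than the paper's. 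The only stylistic difference is in the last step: you take $B=B_\eps$ in the excursion formula and therefore pay a factor $1/p_\eps$ for the expected number of $K$-visits before $\tau_{B_\eps}$, whereas the paper takes $B=K$, so that the term $d\indicator{K}$ contributes at most $d$ once along each excursion, giving directly $\pi(f)\leqs\pi(K)\sup_{x\in K}(V(x)+d)/c$. Both computations are valid; the paper's is shorter.

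The genuine gap is the sentence \myquote{together with irreducibility, the finite expected hitting times furnished by the drift give Harris--r\'ecurrence positive in the full sense}. Harris positive recurrence, as defined in the paper, requires $\expecin{x}{\tau_A}<\infty$ for \emph{every} $x\in\cX$ and \emph{every} open $A\subset\cX$; you prove this only for $A=K$ and for balls $B_\eps\subset K$. Irreducibility alone does not close the distance: it gives, for each individual $y\in K$, some $n(y)$ with $\probin{y}{X_{n(y)}\in A}>0$, but your geometric-trials scheme needs a single finite $n_1$ and a $q>0$ such that $\probin{y}{\tau_A\leqs n_1}\geqs q$ \emph{uniformly} over the uncountable set $K$, and that uniformity must be proved. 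This is exactly what the paper's third step supplies, via the trace process on $K\cup A$ and the boundedness of $K$. A concrete repair inside your framework: the sets $K_m=\setsuch{z\in K}{\probin{z}{X_m\in A}>0}$ cover $K$ by irreducibility, so some $K_{m_1}$ has positive Lebesgue measure, hence $\int_K\probin{z}{X_{m_1}\in A}\6z>0$; then~\eqref{eq:proba_lb_cont} gives, for every $y\in K$, $\probin{y}{\tau_A\leqs m_1+1}\geqs\probin{y}{X_{m_1+1}\in A}\geqs\delta\int_K\probin{z}{X_{m_1}\in A}\6z=:q>0$, after which your excursion argument (returns to $K$ have uniformly bounded expected length by the drift) applies verbatim. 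Finally, a small unjustified aside: your claim that $\pi(1+V)<\infty$ follows neither from Remarque~\ref{rem:hypo_recurrence} nor from the hypotheses in general --- the present drift condition only controls $\pi(f)$; fortunately you never use it.
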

\begin{proof}
Nous allons consid\'erer d'abord le passage en $K$, puis celui en $A\subset K$, 
puis celui en un $A$ g\'en\'eral. 
\begin{enumerate}
\item  Fixons $x\in\cX$, et soit $T\in\N^*$. Nous noterons $\tau_K\wedge T = 
\min\set{\tau_K, T}$. Alors la formule de Dynkin implique 
\begin{align}
 0 \leqs \bigexpecin{x}{V(X_{\tau_K\wedge T})}
 &= V(x) + \biggexpecin{x}{\sum_{m=0}^{\tau_K\wedge T -1} (\cL V)(X_m)} \\
 \label{eq:EVXfm} 
 &\leqs V(x) - c \biggexpecin{x}{\sum_{m=0}^{\tau_K\wedge T -1} f(X_m)} + d \\
 &\leqs V(x) - c \bigexpecin{x}{\tau_K\wedge T} + d\;.
\end{align}
Par cons\'equent, on a 
\begin{equation}
 \bigexpecin{x}{\tau_K\wedge T} \leqs \frac{V(x)+d}{c}
\end{equation} 
pour tout $T\in\N^*$. Comme le membre de droite ne d\'epend pas de $T$, on 
obtient, en faisant tendre $T$ vers l'infini, 
\begin{equation}
\label{eq:EtauK} 
 \bigexpecin{x}{\tau_K} \leqs \frac{V(x)+d}{c}
 \qquad\forall x\in\cX\;.
\end{equation} 

\item   Soit $\tau_{K,n}$ le temps du $n$i\`eme passage de la \CM\ en 
$K$, qui est d'esp\'erance finie en vertu de~\eqref{eq:EtauK}. Soit $(Y_n)_{n\geqs0}$ 
la \CM\ trace sur $K$, d\'efinie par $Y_n = X_{\tau_{K,n}}$. 
Alors, pour tout $x_0\in K$ et tout ouvert $A\subset K$, on a 
\begin{equation}
 \bigprobin{x_0}{Y_1 \notin A}
 = 1 - \bigprobin{x_0}{Y_1 \in A}
 \leqs 1 - \bigprobin{x_0}{X_1\in A}
 \leqs 1 - \delta\abs{A}
\end{equation} 
par l'hypoth\`ese~\eqref{eq:proba_lb_cont}. Notons que cette borne est ind\'ependante 
du $x_0\in K$ choisi. 
Si $\hat\tau_A = \inf\setsuch{n\geqs1}{Y_n\in A}$ est le temps du premier passage 
de $Y_n$ en $A$, alors on a pour tout $n\in\N$ 
\begin{equation}
 \bigprobin{x_0}{\hat \tau_A \geqs n+1}
 = \bigprobin{x_0}{\hat \tau_A \geqs n, Y_n\notin A} 
 \leqs \bigpar{1 - \delta\abs{A}} \bigprobin{x_0}{\hat\tau_A \geqs n}\;.
\end{equation} 
Par r\'ecurrence sur $n$, on obtient alors 
\begin{equation}
 \bigprobin{x_0}{\hat\tau_A \geqs n}
 \leqs \bigpar{1 - \delta\abs{A}}^n
 \qquad \forall n\in\N\;.
\end{equation} 
Il suit que 
\begin{equation}
 \bigexpecin{x_0}{\hat \tau_A} 
 = \sum_{n\geqs0} n \bigprobin{x_0}{\hat\tau_A \geqs n}
 \leqs \sum_{n\geqs0} n \bigpar{1 - \delta\abs{A}}^n 
 < \infty\;.
\end{equation} 
Par cons\'equent, $\bigexpecin{x_0}{\tau_A < \infty}$, en vertu de~\eqref{eq:EtauK}
et du fait que $\expecin{x_0}{V(X_1)}$ est born\'e par la formule de Dynkin. 

Ici, nous pouvons invoquer la Remarque~\ref{rem:hypo_recurrence} pour conclure 
que la \CM\ admet une probabilit\'e invariante $\pi$. En effet, il suffit d'appliquer 
le r\'esultat que nous venons d'obtenir aux $A$ donn\'es par des boules de centre $x_0$ 
et de rayon $\eps$ assez petit. 

\item   Afin de montrer que la \CM\ est Harris--r\'ecurrente positive, il faut 
encore v\'erifier que $\bigexpecin{x}{\tau_A}$ est fini pour tout $x\in\cX$ et tout 
ouvert $A\subset\cX$. La majoration~\eqref{eq:EtauK} nous permet de nous limiter 
aux $x\in K$. On peut alors adapter l'argument du point pr\'ec\'edent. 
Soit $(Z_n)_{n\geqs0}$ le processus trace sur $K\cup A$. Il suit de l'hypoth\`ese 
d'irr\'eductibilit\'e que $Z_n$ va visiter $A$ avec probabilit\'e strictement 
positive au bout d'un temps assez long. Comme $K$ est born\'e, on peut 
trouver un entier $n_1$ tel que $\probin{x}{\hat\tau_A > n_1}$ soit major\'e 
par un $p<1$ pour tout $x\in K$. On peut alors proc\'eder comme au point 
pr\'ec\'edent pour montrer que $\bigexpecin{x}{\tau_A}$ est fini. 

\item   Afin de majorer $\pi(f)$, nous utilisons le Lemme~\ref{lem:majo_LH} 
avec $B = K$ et~\eqref{eq:EVXfm} pour obtenir 
\begin{equation}
 \pi(f) 
 = \int_K \pi(x) \biggexpecin{x}{\sum_{n=0}^{\tau_K-1} f(X_n)\6x} 
 \leqs \pi(K) \sup_{x\in K}\frac{V(x)+d}{c}\;,
\end{equation} 
ce qui est fini puisque $K$ est born\'e. 
\qed
\end{enumerate}
\renewcommand{\qed}{}
\end{proof}

Afin de formuler un r\'esultat de convergence, nous travaillerons \`a nouveau avec 
des normes et des distances \`a poids. Les d\'efinitions suivantes sont des adaptations 
naturelles de celles du cas discret.

\begin{definition}[Normes et distances \`a poids]
\begin{itemize}
\item   Un \defwd{poids} sur $\cX$ est une application $W:\cX\to[1,\infty[$.

\item   La norme \`a poids d'une fonction test $f$ est d\'efinie par 
\begin{equation}
 \norm{f}_W = \sup_{x\in\cX} \frac{\abs{f(x)}}{W(x)}\;.
\end{equation} 
On note $\cE_\infty^W$ l'espace de Banach des fonctions test de norme 
$\norm{f}_W$ finie. 

\item   Pour deux mesures sign\'ees finies \`a densit\'e, on d\'efinit 
\begin{align}
\rho_W(\mu,\nu)
&= \sup_{f\colon\norm{f}_W\leqs 1} \int_\cX f(x) \abs{\mu(x)-\nu(x)} \6x \\
&= \sup_{f\colon\norm{f}_W\neq 0} \frac{1}{\norm{f}_W} 
\int_\cX f(x) \abs{\mu(x)-\nu(x)} \6x \\
&= \int_x W(x) \abs{\mu(x)-\nu(x)} \6x\;.
\end{align}
\end{itemize}
\end{definition}

L'analogue continu du Th\'eor\`eme~\ref{thm:convergence} prend alors la forme suivante.
Sa d\'emonstration est la m\^eme que celle du cas discret. 

\begin{theorem}[Ergodicit\'e g\'eom\'etrique]
\label{thm:convergence_cont} 
Supposons que les deux conditions suivantes soient satisfaites.
\begin{enumerate}
\item  \textbf{Condition de d\'erive g\'eom\'etrique\,:}
Il existe $d\geqs0$, $c>0$ et une fonction de Lyapounov $V$ tels que 
\begin{equation}
\label{eq:derive_geom_cont} 
 (\cL V)(x) \leqs -c V(x) + d
 \qquad \forall x\in\cX\;.
\end{equation} 

\item   \textbf{Condition de minoration\,:}
Pour un $R > 2d/c$, soit $K = \setsuch{x\in\cX}{V(x) < R}$.
Alors il existe $\alpha\in]0,1[$ et une mesure de probabilit\'e $\nu$ telle que 
\begin{equation}
\label{eq:minoration_cont} 
 \inf_{x\in K} p(x,y) =
 \inf_{x\in K} \bigprobin{x}{X_1 = y} \geqs \alpha \nu(y)
 \qquad \forall y\in\cX\;.
\end{equation} 
\end{enumerate}

\noindent
Alors il existe des constantes $M>0$ et $\bar\gamma < 1$ telles que 
\begin{equation}
\label{eq:borne_cv_expec_cont} 
 \norm{\expecin{\cdot}{f(X_n)} - \pi(f)}_{1+V}
 \leqs M\bar\gamma^n \norm{f - \pi(f)}_{1+V}
\end{equation} 
pour toute fonction test $f\in\cE_\infty^{1+V}$. 
\end{theorem}

Les constantes $\bar\gamma$ et $M$ sont \`a nouveau d\'etermin\'ees par les 
relations~\eqref{eq:cond_gamma0}, \eqref{eq:beta_gammabar} et \eqref{eq:M}. 

\begin{example}[Processus auto-r\'egressif AR(1)]
Nous avons vu dans l'Exemple~\ref{ex:AR1} que le noyau de transition du mod\`ele 
AR(1) \'etait donn\'e par 
\begin{equation}
 p(x,y) = \frac{\e^{-(y-ax)^2/(2\sigma^2)}}{\sqrt{2\pi\sigma^2}}\;.
\end{equation} 
Prenons comme fonction de Lyapounov $V(x) = x^2$. Alors on a 
\begin{align}
 (\cL V)(x) 
 &= \frac{1}{\sqrt{2\pi\sigma^2}}
 \int_{-\infty}^{\infty} y^2 \e^{-(y-ax)^2/(2\sigma^2)} \6y - V(x) \\
 &= \frac{1}{\sqrt{2\pi}} \int_{-\infty}^{\infty} (ax+\sigma z)^2 \e^{-z^2/2} \6z 
 - x^2 \\
 &= -(1-a^2)x^2 + \sigma^2\;.
\end{align}
Pour obtenir la deuxi\`eme ligne, nous avons utilis\'e le changement 
de variables $y=ax+\sigma z$. La derni\`ere ligne suit des propri\`et\'es 
de la densit\'e d'une loi gaussienne standard. La condition de d\'erive 
g\'eom\'etrique~\eqref{eq:derive_geom_cont} est donc v\'erifi\'ee 
avec $c = 1 - a^2$ et $d = \sigma^2$. 

Pour v\'erifier la condition de minoration, nous devons choisir un 
$R > 2d/c = 2\sigma^2/(1-a^2)$. Alors nous avons $K=[-\sqrt{R},\sqrt{R}]$ et 
\begin{equation}
 \inf_{x\in K} p(x,y) 
 = \inf_{\abs{x}<\sqrt{R}}  \frac{\e^{-(y-ax)^2/(2\sigma^2)}}{\sqrt{2\pi\sigma^2}}
 = \frac{\e^{-(\abs{y}+a\sqrt{R})^2/(2\sigma^2)}}{\sqrt{2\pi\sigma^2}}\;,
\end{equation} 
l'infimum \'etant atteint pour $x=\pm \sqrt{R}$, selon le signe de $y$. 
Ceci sugg\`ere de prendre pour $\nu$ la mesure de densit\'e 
\begin{equation}
 \nu(y) = \frac{\e^{-(\abs{y}+a\sqrt{R})^2/(2\sigma^2)}}{\cN \sqrt{2\pi\sigma^2}}
 \indicator{\abs{y}\leqs \sqrt{R}}\;,
\end{equation} 
o\`u
\begin{equation}
 \cN = \frac{1}{\sqrt{2\pi\sigma^2}}\int_{-\sqrt{R}}^{\sqrt{R}} 
 \e^{-(\abs{y}+a\sqrt{R})^2/(2\sigma^2)} \6y 
 = \frac{1}{\sqrt{2\pi}}
 \int_{-\sqrt{R}/\sigma}^{\sqrt{R}/\sigma} 
 \e^{-(\abs{z}+a\sqrt{R}/\sigma)^2/2} \6y 
\end{equation} 
est la constante de normalisation assurant que $\nu$ soit une mesure 
de probabilit\'e (nous avons pos\'e $y=\sigma z$ pour obtenir la seconde in\'egalit\'e). 
En effet, la condition de minoration~\eqref{eq:minoration_cont} 
est alors satisfaite en prenant 
\begin{equation}
 \alpha = \cN\;.
\end{equation} 
En choisissant $R=4\sigma^2/(1-a^2)$, on obtient un $\alpha$ ind\'ependant de $\sigma$. 
On v\'erifie alors que cela donne un taux de convergence $1-\bar\gamma$ strictement 
positif si $a^2 < 1$, mais qui tend vers $0$ lorsque $a^2$ tend vers $1$. 
\end{example}


\section{Exercices}
\label{sec:Lyapounov_cont_exo} 

\begin{exercise}
On consid\`ere la \CM\ sur $\R$ donn\'ee par 
\[
X_{n+1} = aX_n + Y_{n+1}
\]
avec $a\in[0,\infty[$, les $Y_n$ \'etant ind\'ependantes, identiquement distribu\'ees, 
de loi uniforme sur $[-\delta,\delta]$ pour un $\delta > 0$.

\begin{enumerate}
\item   Donner les probabilit\'es de transition $p(x,y)$ de la cha\^ine.

\item   Soit la fonction de Lyapounov $V(x) = x^2$. 
Calculer $(\cL V)(x)$. 

\item   Pour quelles valeurs de $a$ la cha\^ine est-elle \`a croissance sous-exponentielle~?

\item   \`A l'aide de la formule de Dynkin, calculer la variance de $X_n$ lorsque $a=1$. 

\item   Pour quelles valeurs de $a$ la cha\^ine satisfait-elle une condition de d\'erive g\'eom\'etrique~? Quels en sont les param\`etres~?

\item   Lorsque la condition de d\'erive g\'eom\'etrique est satisfaite, trouver $\alpha\in]0,1[$,  
une mesure de probabilit\'e $\nu$, et une condition sur $p$ tels que la condition de minoration soit satisfaite. Que peut-on en d\'eduire~?
\end{enumerate} 
\end{exercise}

\begin{exercise}
On consid\`ere la cha\^ine de Markov sur $\R$ donn\'ee par 
\[
X_{n+1} = aX_n + Y_{n+1}
\]
avec $a\in[0,\infty[$, les $Y_n$ \'etant ind\'ependantes, identiquement distribu\'ees, 
de loi de Cauchy de param\`etre $c>0$. 

Soit la fonction de Lyapounov $V(x) = \abs{x}^\beta$. Pour quelles valeurs de 
$\beta>0$ la quantit\'e $(\cL V)(x)$ est-elle finie~?  
\end{exercise}

\begin{exercise}
On consid\`ere la cha\^ine de Markov sur $\R_+ = [0,\infty[$ donn\'ee par 
\[
X_{n+1} = aX_n + Y_{n+1}
\]
avec $a\in[0,\infty[$, les $Y_n$ \'etant ind\'ependantes, identiquement distribu\'ees, de loi exponentielle de param\`etre $\lambda>0$.

\begin{enumerate}
\item   Donner les probabilit\'es de transition $p(x,y)$ de la cha\^ine.

\item   Calculer 
\[
\int_0^\infty x^k\e^{-\lambda x}\6x
\]
pour $k\in\set{0,1}$. 

\item   Soit la fonction de Lyapounov $V(x) = x$. 
Calculer $(\cL V)(x)$. 

\item   Pour quelles valeurs de $a$ la cha\^ine est-elle \`a croissance sous-exponentielle~?

\item   \`A l'aide de la formule de Dynkin, calculer l'esp\'erance de $X_n$ lorsque $a=1$.
Que se passe-t-il lorsque $n\to\infty$~?

\item   Pour quelles valeurs de $a$ la cha\^ine satisfait-elle une condition de d\'erive g\'eom\'etrique~? Quels en sont les param\`etres~?

\item   Lorsque la condition de d\'erive g\'eom\'etrique est satisfaite, trouver $\alpha\in]0,1[$ et une mesure $\nu$ tels que la condition de minoration soit satisfaite. Que peut-on en d\'eduire~?
\end{enumerate}
\end{exercise}


\bibliographystyle{plain}
\bibliographystyle{alpha}
\bibliography{KESM}

\nocite{Nummelin84,Durrett1}

\vfill

\bigskip\bigskip\noindent
{\small
Nils Berglund \\
Institut Denis Poisson (IDP) \\ 
Universite d'Orleans, Universite de Tours, CNRS -- UMR 7013 \\
B\^atiment de Mathematiques, B.P. 6759\\
45067~Orleans Cedex 2, France \\
{\it E-mail address: }
{\tt nils.berglund@univ-orleans.fr} \\
{\tt https://www.idpoisson.fr/berglund} 

\end{document}